\renewcommand{\email}[2][]{%
  \ifx\emails\@empty\relax\else{\g@addto@macro\emails{,\space}}\fi%
  \@ifnotempty{#1}{\g@addto@macro\emails{\textrm{(#1)}\space}}%
  \g@addto@macro\emails{#2}%
}
\newcommand{\midarrow}{\tikz \draw[-triangle 45] (0,0) -- +(.1,0);}
\pgfplotsset{
        compat=1.16,
    }
\theoremstyle{plain}
\newtheorem{theorem}{Theorem}[section]
\newtheorem{lemma}[theorem]{Lemma}
\newtheorem{proposition}[theorem]{Proposition}
\newtheorem{corollary}[theorem]{Corollary}
\newtheorem{conjecture}[theorem]{Conjecture}
\theoremstyle{definition}
\newtheorem{definition}[theorem]{Definition}
\newtheorem{remark}[theorem]{Remark}
\newtheorem{algorithm}[theorem]{Algorithm}
\newtheorem{construction}[theorem]{Construction}
\newcommand{\mbbC}{{\mathbb C}}
\newcommand{\mbbN}{{\mathbb N}}
\newcommand{\mbbR}{{\mathbb R}}
\newcommand{\mbbZ}{{\mathbb Z}}
\newcommand{\mcalC}{{\mathcal C}}
\newcommand{\mcalH}{{\mathcal H}}
\newcommand{\mcalK}{{\mathcal K}}
\newcommand{\mcalP}{{\mathcal P}}
\newcommand{\mcalT}{{\mathcal T}}
\newcommand{\mcalW}{{\mathcal W}}
\newcommand{\mcalX}{{\mathcal X}}
\newcommand{\mfrako}{{\mathfrak{o}}}
\newcommand{\va}{{\mathbf a}}
\newcommand{\vb}{{\mathbf b}}
\newcommand{\ve}{{\mathbf e}}
\newcommand{\vE}{{\mathbf E}}
\newcommand{\vDelta}{{\mathbf \Delta}}
\newcommand{\vUpsilon}{{\mathbf \Upsilon}}
\newcommand{\bmcalC}{\overline{\mathcal C}}
\newcommand{\bmcalP}{\overline{\mathcal P}}
\def\dunderline#1{\underline{\underline{#1}}}
\DeclareMathOperator{\Hom}{\operatorname{Hom}}
\DeclareMathOperator{\spec}{\operatorname{Spec}}
\DeclareMathOperator{\imag}{\operatorname{Im}}
\DeclareMathOperator{\ab}{\operatorname{ab}}
\DeclareMathOperator{\vol}{\operatorname{vol}}
\DeclareMathOperator{\Jac}{\operatorname{Jac}}
\numberwithin{equation}{section}
\begin{document}

\title{Homological Spectral Graph Theory and Weighted Cycle Counting}
\author{Ye Luo} 
\address[A1]{School of Informatics, Xiamen University, Xiamen, Fujian 361005, China}
\email[A1]{luoye@xmu.edu.cn}

\author{Arindam Roy}
\address[A2]{Department of Mathematics and Statistics, University of North Carolina at Charlotte, 9201 University City Blvd., Charlotte, NC 28223, USA}
\email[A2]{arindam.roy@charlotte.edu}

\subjclass[2020]{05C50, 05C38, 11M41}

\keywords{Homological spectral graph theory, Spectral antisymmetry, Twisted weighted graph adjacency, Canonical character, Period character, Weighted cycle counting, Trace formula}

\date{}

\begin{abstract}
Twisting a finite graph by its homology characters produces families of generally non-Hermitian weighted vertex and edge adjacency matrices.  We show that these families are organized by the canonical $2$-torsion character and, when it exists, a period character.  The canonical character $\theta$ translates every complex character to an anti-isospectral partner, for every positive directed weight.  On the unitary character torus, the untwisted matrices maximize spectral radius strictly apart from topologically forced exceptions; for edge adjacency, the only possible additional maximizer is the period character $\eta$.  For symmetric weights, these results locate the two outer spectral edges of maximal-abelian and periodic Bloch families.  Fourier inversion of twisted edge traces then yields weighted circuit and prime-cycle formulas in integral and finite-quotient homology classes, with $\theta$ and $\eta$, when present, governing the resulting parity effects.  Thus spectral antisymmetry, spectral-edge extremality, and weighted cycle counting arise from the same homological structure.
\end{abstract}

\maketitle

\section{Introduction}\label{S:intro}
Prime cycles on a finite graph carry several layers of information.  Their
lengths are encoded by the Ihara zeta function, their homology classes are
resolved by characters, and a directed edge weight assigns a multiplicative
size to each traversal.  These layers do not automatically coexist well.
Homological twisting introduces phases, while asymmetric directed weights
destroy self-adjointness and need not respect reversal of a cycle.  One might
therefore expect the resulting family of twisted matrices to have little
global spectral organization.  The central phenomenon of this paper is that a
topological organization nevertheless survives.  Canonical translation
controls spectral sign reversal, while a possible period character gives the
complete equality locus for the edge spectral radius; both are independent
of the weights.

The prime number theorem and its Dirichlet refinement provide the arithmetic
model.  They exhibit two mechanisms that recur in geometric counting
problems: an explicit formula obtained from a zeta or $L$-function, and Fourier
inversion over a character group.  In their classical form,
$$
\pi(x)\sim\frac{x}{\log x},
\qquad
\pi(x;q,a)\sim\frac{1}{\phi(q)}\frac{x}{\log x}
\quad ((a,q)=1).
$$
Analogues based on Dedekind and Selberg zeta functions lead to prime-ideal and
prime-geodesic theorems.  This paper develops a weighted, homology-resolved
version of the corresponding prime-cycle problem for finite graphs.  The
spectral input is not merely an analogue of Fourier analysis in arithmetic:
the graph has a distinguished half-period, invisible in the classical
Dirichlet model, which pairs the twisted spectra and creates a parity bias in
the counting problem.

Throughout, $G$ is a connected finite graph with vertex set $V(G)$, $n$
vertices, $m$ unoriented edges, and genus $g=m-n+1$; multiple edges and loops
are allowed.  As usual, $G$ is \emph{bipartite} if its vertex set admits a
partition $V(G)=V_1\sqcup V_2$ such that every edge has one endpoint in each
part.  In particular, a bipartite graph has no loops; any graph with a loop is
non-bipartite.
An orientation $\mfrako$ chooses one oriented edge above each unoriented edge;
the chosen set is $\vE_{\mfrako}(G)$ and the set of all $2m$ oriented edges is
$\vE(G)$.  For $\va\in\vE(G)$, its initial and terminal vertices are
$\va(0)$ and $\va(1)$, and $\va^{-1}$ denotes the reverse orientation.  A
\emph{positive directed weight} is a function
$W:\vE(G)\to\mbbR_{>0}$; no relation between $W(\va)$ and
$W(\va^{-1})$ is imposed.

A walk $\Delta=\va_1\cdots\va_l$ has length $\tau(\Delta)=l$ and satisfies
$\va_j(1)=\va_{j+1}(0)$.  Its inverse is
$\Delta^{-1}=\va_l^{-1}\cdots\va_1^{-1}$; a sub-walk is a contiguous
subsequence, and a path is a walk with no repeated vertices.  A
closed walk has a \emph{backtrack} if
$\va_{j+1}=\va_j^{-1}$ and a \emph{tail} if
$\va_l=\va_1^{-1}$.  A \emph{circuit} is a closed walk with neither, and a
circuit is \emph{prime} if it is not a proper power.  Every circuit has a
unique expression $C=P^r$ with $P$ prime; write $r(C)=r$.  Quotienting
circuits by cyclic change of base vertex gives \emph{cycles}, with prime
cycles defined similarly.  We denote the sets of circuits and prime circuits
by $\mcalC$ and $\mcalP$, and the corresponding cycle sets by $\bmcalC$ and
$\bmcalP$.  The weight of a walk is
$W(\Delta)=\prod_{j=1}^lW(\va_j)$, and $W\equiv1$ recovers ordinary counting.

The graph prime-counting function is
$$
\pi_G(l):=\#\{[P]\in\bmcalP:\tau(P)=l\}.
$$
Its Euler product is the Ihara zeta function
$$
z_G(u)=\prod_{[P]\in\bmcalP}(1-u^{\tau(P)})^{-1}.
$$
When $g\geq2$, if $R_G$ is its radius of convergence and
$\nu_G=\gcd\{\tau(P):[P]\in\bmcalP\}$, then, along admissible lengths,
$$
\pi_G(l)\sim\frac{\nu_G}{lR_G^l};
$$
see, for example, Hashimoto~\cite{H1989Zeta,H1992Artin} and
Horton--Stark--Terras~\cite{HST2006What}.  The associated
\emph{edge adjacency matrix} (or nonbacktracking matrix) $B$ is indexed by
$\vE(G)$, with $B_{\va\vb}=1$ exactly when
$\va(1)=\vb(0)$ and $\vb\neq\va^{-1}$; in this case we say that $\va$
\emph{feeds into} $\vb$ and write $\va\shortrightarrow\vb$.  Its trace gives the explicit formula
$$
N(l)=\operatorname{tr}(B^l)
=\sum_{\lambda\in\spec B}\lambda^l
=\sum_{d\mid l}d\,\pi_G(d),
$$
where $N(l)$ is the number of length-$l$ circuits.

Our refinement asks how these circuits and prime cycles are distributed after
their abelianizations are prescribed in $H_1(G,\mbbZ)$ or in a finite
quotient.  Characters provide the Fourier variables, while twisted weighted
vertex and edge adjacency matrices provide the spectral transforms.  A
distinguished $2$-torsion character, the \emph{canonical character}, creates
an antisymmetry absent from the classical Dirichlet model.  When it exists, a
second $2$-torsion invariant, the \emph{period character}, refines the edge
extrema and the parity in the counting formulas.  We refer to this
interaction between homology, characters, and twisted spectra as
\emph{homological spectral graph theory}.

Three questions are intertwined.  First, does there exist a spectral
involution that pairs every character twist, even when the weights are
asymmetric and the matrices are non-Hermitian?  Second, in the Hermitian
specialization, does that involution locate distinguished edges of the
Floquet spectrum, including when a periodic graph samples only part of the
character torus?  Third, can the same mechanism be seen arithmetically in the
distribution of weighted prime cycles among homology classes?  Our answers
begin with the canonical character $\theta$ but reveal a second invariant.
Canonical translation $\xi\mapsto\theta+\xi$ negates both the vertex and edge
spectra.  The vertex extrema occur at the trivial and canonical characters.
For the edge matrix, normalized cycle-length parity defines at most one
additional \emph{period character} $\eta$, and the complete equality locus
is $\{0\}$ or $\{0,\eta\}$.  In the non-bipartite case $\eta=\theta$;
in the bipartite case it may not exist.  The same period character determines
which finite-quotient homology classes vanish and when the main counting term
doubles.  Thus spectral antisymmetry, edge extremality, and weighted
prime-cycle distribution are different faces of one homological mechanism.

\subsection{Related work}
The closest geometric precedent is the distribution of closed geodesics in
homology classes.  The theory of trace formulas and prime geodesics goes back to
Delsarte, Huber, and Hejhal~\cite{D1942Sur,H1961Zur,H1976Selberg}, with
arithmetic refinements by Sarnak, Iwaniec, and Soundararajan--Young
\cite{S1982Class,I1984Prime,SY2013The}; Luo--Sarnak's quantum-ergodicity work
is another important arithmetic application of this circle of ideas
\cite{LS1995Quantum}.  For a compact
negatively curved manifold, Adachi--Sunada proved that the exponential growth
rate in every fixed homology class equals the topological entropy
\cite{AS1987Homology}; sharper results were obtained in
\cite{PS1987Geodesics,KS1987Homology,L1989Closed,P1991Homology}.  For a
hyperbolic surface of genus $g$, in particular,
$$
\pi(\alpha,x)\sim\frac{(g-1)^g e^x}{x^{g+1}},
$$
so prime geodesics are asymptotically uniform across fixed homology classes.
Following this geometric precedent, Liu obtained homology-class asymptotics
for metrized finite graphs under weak-mixing and Diophantine
assumptions~\cite{L2004Asymptotic}; his weak-mixing hypothesis requires the
cycle lengths to generate a dense subgroup of $\mbbR$ and therefore excludes
the combinatorial setting, where all cycle lengths are integers.  In this
discrete setting, our asymptotic constants admit a direct interpretation
through twisted weighted edge spectra, and the canonical-character
antisymmetry of these spectra governs parity-dependent vanishing and the
possible doubling of the leading term.

Ihara introduced his zeta function for discrete subgroups of $p$-adic groups
\cite{I1966Padic}, and Serre emphasized its graph-theoretic meaning
\cite{S1980Trees}.  Hashimoto obtained the nonbacktracking determinant and
prime-cycle theorem and proved a graph-theoretic Chebotarev density theorem
\cite{H1989Zeta,H1992Artin}, while Bass removed regularity restrictions in the
vertex determinant~\cite{B1992Ihara}.  Multivariate, weighted, and Artin-type
extensions were developed in
\cite{ST1996Zeta,ST2000Zeta,TS2007Zeta,HST2006What,MS2004Weighted}; see also
\cite{HFGO1999Emerging,KS2000Zeta,T2011Zeta}.  Twisted Perron--Frobenius theory
and its associated $L$-functions were developed by
Adachi--Sunada~\cite{AS1987Twisted}.  Thus character- and gain-twisted
nonbacktracking transfer operators themselves have substantial precedents in
the zeta and covering literature.  The signed special case also appears
explicitly through sign assignments on Hashimoto matrices and their
nonbacktracking transitions
\cite{GRW2017Balance,LHW2024SignedZeta}.  Hasegawa--Saito studied prime-cycle
lengths in arithmetic progressions~\cite{HS2015Gen}, and weighted
prime-geodesic theorems in a broader operator setting appear in
Deitmar~\cite{D2023Weighted}.

Our spectral input belongs to several neighboring theories.  Classical
spectral graph theory studies graph structure through adjacency and Laplacian
spectra~\cite{C1997Spectral,CRS2010Introduction,BH2012Spectra}.  A full complex
character is represented by a $\mbbC^\times$-valued gain, and switching
classes of such gains with fixed underlying graph form the full complex
character torus.  Unit gains give its compact unitary subtorus and the
familiar complex unit gain graphs~\cite{Z1989Biased,R2012Spectral};
balancedness, sign-symmetry, and spectral symmetry in that specialization
have been studied in~\cite{MKS2022On,WvD2023Symmetry}.  General complex gain
graphs also admit a discrete line-bundle interpretation
\cite{K2011Spanning,BPT2015Matrix}.  Our approach uses all complex switching
classes simultaneously rather than one class at a time.  It identifies one
topological translation that pairs every class with an anti-isospectral
partner.  The positive directed weight $W$ is independent of the gain and
need not be symmetric or reciprocal.  Thus the pairing holds over the full
complex character torus for arbitrary $W$, while its spectral-radius
consequences are confined to the unitary subtorus.

For symmetric weights the unitary vertex family belongs to magnetic graph
theory: gauge-equivalent potentials are unitarily equivalent, and zero field
is globally extremal by the one-particle diamagnetic inequality
\cite{LL1993Fluxes}.  The local Morse and nodal behavior at signings, critical
submanifolds, and local extrema are studied in
\cite{B2013Nodal,CV2013Magnetic,AG2023Morse,ABG2025Smooth}.  In the simple
unweighted unit-gain setting, the vertex spectral-radius equality case also
follows from~\cite[Theorem~4.4]{MKS2022On}.  All of these results concern
vertex operators.  On the edge side, the
preceding zeta, covering, and signed-graph works provide operator-level
precedents, so our novelty claim is not the introduction of a gain-twisted
Hashimoto matrix.  Rather, we study the entire character family and identify
the canonical translation that negates its spectra, then classify the
spectral-radius equality locus.  The resulting genus-one behavior and the
period-character classification for the generally non-Hermitian edge
operator are not supplied by the existing zeta or signed-operator analyses,
nor by the magnetic vertex arguments.  We further allow full complex
characters, directed weights, and weighted homology-resolved counting.

The unitary tori are also Bloch tori for abelian covers~\cite{K2016Overview}.
A general cover samples an affine subtorus, while the maximal abelian cover
samples the whole character torus.  The spectral edges conjecture predicts
generic isolation, non-degeneracy, and single-band attainment of band extrema
\cite{BCCM2022Local,FS2025Corners}.  Our complementary theorem identifies the
two outermost edges and when an affine Bloch subtorus inherits them, with
uniqueness and simplicity when inheritance occurs; it makes no claim about
every internal band edge or critical point.

\subsection{Our contributions and paper organization}
The paper has four principal contributions.

\medskip
\noindent\textbf{(1) Canonical and period characters: spectral antisymmetry and extremality.}
Theorem~\ref{T:SpecDuality} is the organizing result of the paper.  It
introduces the canonical character $\theta$, which records closed-walk parity
and pairs every full complex character with an anti-isospectral partner for
arbitrary positive directed weights.  On the unitary character torus, the
same theorem proves that the untwisted operators maximize spectral radius and
classifies every equality case.  For edge adjacency in genus at least two,
the equality problem exhibits a subtle topological alternative: besides the
trivial character there is at most one period character $\eta$.  It equals
$\theta$ for non-bipartite graphs, whereas for bipartite graphs it may or may
not exist.

These two distinguished characters connect the other parts of the paper.
For symmetric weights, the canonical pairing and strict extremality locate
the outer spectral edges of the associated Bloch families.  For weighted
edge traces and cycle counting, the same extremal characters govern the
vanishing, parity, and leading-term phenomena in homology classes.  The
strictness proof combines coherent-walk criteria with Gelfand's formula, and
its independence from the numerical values of the weight explains the stable
maximizing loci in Figure~\ref{F:SpecRad}.

\medskip
\noindent\textbf{(2) Outer spectral edges.}
For symmetric weights the vertex matrices are Hermitian.
Theorem~\ref{T:outer-spectral-edges} then identifies the upper and lower outer edges
of the maximal-abelian-cover spectrum with the trivial and canonical
characters.  More generally, it determines when these two edges are inherited
by an affine Bloch subtorus, including a fixed real sign gain.  This is an
outer-edge result, not a claim about all internal band edges.  In gain-graph
language, the theorem organizes all switching classes over a fixed underlying
graph.  An outer edge is inherited exactly when the affine Bloch subtorus
contains the appropriate trivial or canonical translate; its attaining
character is then unique and the fiber eigenvalue is simple.

\medskip
\noindent\textbf{(3) Weighted trace and Fourier formulas.}
For every positive directed weight, Theorem~\ref{T:Fourier} identifies
$\operatorname{tr}(B_{\omega,W}^l)$ with the Fourier transform of the
$W$-weighted circuit sums in integral homology and in finite-quotient
homology.  Its inverse formulas isolate individual classes, and
Theorem~\ref{T:trace-formula} extends the trace identity from powers to
analytic test functions.  No symmetry or reciprocity of $W$ is needed.
The canonical character also yields exact parity-vanishing laws.  Setting
$W\equiv1$ recovers ordinary circuit counts.

\medskip
\noindent\textbf{(4) Weighted prime cycles in homology classes.}
We derive exact prime-power identities for weighted prime-cycle and cycle
sums.  Repetition replaces $W$ by the pointwise power $W^r$, a feature absent
from literal counting.  A strict powered-weight spectral gap makes proper
powers exponentially negligible and yields Theorem~\ref{T:counting}.  The
period character produces parity vanishing and a factor $2$ on admissible
classes.  In the non-bipartite case this is precisely the canonical-character
parity effect; in the bipartite case it occurs exactly when normalized length
parity descends to homology.

\medskip
\noindent\textbf{Organization.}
Section~\ref{S:prelim} collects the preliminary material, with proofs deferred
to Appendix~\ref{B:prelim-proofs}.  Section~\ref{S:main} states the main
spectral, Floquet, trace, and counting results, and
Section~\ref{S:examples} gives weighted and periodic examples.
Section~\ref{S:proof-antisymmetry} proves the principal spectral theorem.
Appendix~\ref{A:notation-guide} is a notation guide, and
Appendix~\ref{B:prelim-proofs} contains the deferred proofs of the preliminary
facts.  Appendix~\ref{C:regular-specializations} records the regular-graph
specializations, Appendix~\ref{D:supplementary-cotree} gives a complementary
cotree treatment of edge spectral-radius rigidity, and
Appendix~\ref{E:weighted-transforms} develops the weighted zeta and
$L$-function transforms.

\section{Preliminaries} \label{S:prelim}
The material in this section fixes notation and records standard or elementary facts used later. We keep the statements here so that the main results can be read without interruption; proofs of the preliminary facts are collected in Appendix~\ref{B:prelim-proofs}.

\subsection{Orthogonal decomposition} \label{SS:ortho-decomp}

Let $\mcalT(G)$ be the real linear space on $\vE(G)$ such that $1\cdot \ve^{-1} = -1\cdot \ve$ for any oriented edge $\ve$. Alternatively, by fixing an orientation on the edges, we may also consider $\mcalT(G)$ as a real linear space on $\vE_{\mfrako}(G)=\{\ve_1,\cdots,\ve_m\}$ where  $\ve_1,\cdots,\ve_m$ are the positively oriented edges. Geometrically, one may consider an element of $\mcalT(G)$ as a \emph{``vector field''} on $G$.  Then a \emph{``$1$-form''} $\omega$ is an element of  the linear dual $\Omega(G)$ of $\mcalT(G)$. We call $\mcalT(G)$ the \emph{tangent space} of $G$, and $\Omega(G)$ the \emph{cotangent space} of $G$, sometimes also written as $\mcalT$ and $\Omega$ respectively for simplicity when $G$ is provided.  For $\alpha\in \mcalT$ and $\omega\in \Omega$, we use $\omega(\alpha)$ to represent the value of the pairing between $\omega$ and $\alpha$. For $i=1,\cdots,m$, let $d\ve_i$ be a $1$-form such that $d\ve_i(\ve_j)=\delta_{ij}$ for $j=1,\cdots,m$. Then $\{d\ve_1,\cdots,d\ve_m\}$ is a basis of $\Omega$. In addition, the \emph{standard inner product} on $\mcalT$ with respect to the basis $\{\ve_1,\cdots,\ve_m\}$ and the \emph{standard inner product} on $\Omega$ with respect to $\{d\ve_1,\cdots,d\ve_m\}$ are defined by $\langle\ve_i,\ve_j\rangle=\langle d\ve_i,d\ve_j\rangle := d\ve_i(\ve_j)=\delta_{ij}$ for $i,j=1,\cdots,m$.

For a walk $\Delta=\va_1\cdots\va_N$, let the \emph{abelianization} of $\Delta$ be $\Delta^{\ab}=\va_1+\cdots+\va_N\in \mcalT$, and the integral of a $1$-form $\omega$ along $\Delta$ is $\int_\Delta \omega:=\omega(\Delta^{\ab})=\omega(\va_1)+\cdots+\omega(\va_N)$.

Let $R$ be a commutative ring with multiplicative identity. Let $C_0(G,R)$ be the free $R$-module on $V(G)$ and $C_1(G,R)$ be the free $R$-module on $\vE_{\mfrako}(G)$. Let $C^0(G,R)=\Hom_R(C_0(G,R),R)$ and $C^1(G,R)=\Hom_R(C_1(G,R),R)$. By convention, an element of $C_0(G,R)$ (resp.\ $C_1(G,R)$) is called a \emph{$0$-chain} (resp.\ \emph{$1$-chain}) with coefficients in $R$, and an element of $C^0(G,R)$ (resp.\ $C^1(G,R)$) is called a \emph{$0$-cochain} (resp.\ \emph{$1$-cochain}) with coefficients in $R$. Note that $C_1(G,\mbbZ)$ is a full-rank lattice in $C_1(G,\mbbR)=\mcalT(G)$, and $C^1(G,\mbbZ)$ is a full-rank lattice in $C^1(G,\mbbR)=\Omega(G)$. 

There is a canonical isomorphism $i_R^0:C_0(G,R)\xrightarrow{\sim} C^0(G,R)$ sending a $0$-chain $\sum_{v\in V(G)} c_v\cdot v \in C_0(G,R)$ to a $0$-cochain $f\in C^0(G,R)$ given by $f(v)=c_v$, and a canonical isomorphism $i_R^1:C_1(G,R)\xrightarrow{\sim} C^1(G,R)$ sending a $1$-chain $\sum_{\ve\in \vE_{\mfrako}(G)} c_{\ve}\cdot \ve \in C_1(G,R)$ to a $1$-cochain $\omega\in C^1(G,R)$ given by $\omega(\ve)=c_{\ve}$.  Note that $i_{\mbbR}^1(\ve_i)=d\ve_i$ for $i=1,\cdots,m$.

Let $\partial_R: C_1(G,R) \to C_0(G,R)$ be the \emph{boundary map} defined by $$\partial_R\left(\sum_{\ve\in \vE_{\mfrako}(G)}c_\ve \cdot \ve\right)=\sum_{\ve\in \vE_{\mfrako}(G)}\left(c_\ve\cdot \ve(1)-c_\ve\cdot \ve(0)\right),$$ and  $d_R: C^0(G,R) \to C^1(G,R)$ be the \emph{differential map} defined by 
$$d_Rf=\sum_{\ve\in \vE_{\mfrako}(G)} \left(f(\ve(1))-f(\ve(0))\right) \cdot d\ve$$ for each $f\in C^0(G,R)$. Then the adjoint operator $\partial_R^*:C_0(G,R) \to C_1(G,R)$ of $\partial_R$ is defined by $\partial_R^*=(i_R^1)^{-1}\circ d_R\circ i_R^0$, and the adjoint operator $d_R^*:C^1(G,R) \to C^0(G,R)$ is defined by $d_R^*=i_R^0 \circ \partial_R\circ (i_R^1)^{-1}$.  For simplicity of notation, we also write $d_\mbbR$ and $d_\mbbR^*$ as $d$ and $d^*$ respectively. We say $d^*d$ is the \emph{Laplacian on functions} and $d d^*$ is the \emph{Laplacian on $1$-forms}, both denoted  by $\Delta$.  

In the following, we will introduce the notion of harmonic $1$-form and present a proposition of ``Hodge'' orthogonal decomposition which states that any $1$-form can be uniquely decomposed into a harmonic $1$-form and an exact $1$-form. More detailed discussions can be found in \cite{BF2011Metric}. 

\begin{definition} \label{D:harmonic}
We say $\omega\in\Omega(G)$ is a \emph{harmonic $1$-form} if  $\Delta\omega=0$. For each $f\in C^0(G,\mbbR)$, we say $df$ is an \emph{exact $1$-form}. 
\end{definition}

\begin{lemma} \label{L:harmonic}
$\omega=\sum_{\ve\in \vE_{\mfrako}(G)}  \omega_\ve\cdot d\ve$ is a harmonic $1$-form if and only if $d^*\omega = 0$ if and only if $ \sum_{\ve\in \vE_{\mfrako}(G),\ve(1)=v}\omega_\ve = \sum _{\ve\in \vE_{\mfrako}(G),\ve(0)=v}\omega_\ve $ for all $v\in V(G)$. 
\end{lemma}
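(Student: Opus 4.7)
The plan is to prove the chain of equivalences by going (iii)$\Leftrightarrow$(ii) via a direct unpacking of the adjoint operator $d^*$, and then (ii)$\Leftrightarrow$(i) via the usual inner-product trick that makes $\ker(dd^*)=\ker(d^*)$ in any inner-product setup.

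First I would compute $d^*\omega$ explicitly. By definition $d^*=i_\mbbR^0\circ\partial_\mbbR\circ(i_\mbbR^1)^{-1}$. Applied to $\omega=\sum_{\ve\in\vE_O(G)}\omega_\ve\,d\ve$, the isomorphism $(i_\mbbR^1)^{-1}$ returns the $1$-chain $\sum_\ve\omega_\ve\cdot\ve\in C_1(G,\mbbR)$. The boundary map then yields
\[
\partial_\mbbR\!\left(\sum_{\ve\in\vE_O(G)}\omega_\ve\cdot\ve\right)=\sum_{v\in V(G)}\!\left(\sum_{\ve(1)=v}\omega_\ve-\sum_{\ve(0)=v}\omega_\ve\right)\!\cdot v,
\]
and composing with $i_\mbbR^0$ turns this $0$-chain into the $0$-cochain whose value at $v$ is the bracketed expression. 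Hence $d^*\omega$ vanishes as a $0$-cochain if and only if that expression vanishes at every $v\in V(G)$, which is precisely the third condition in the lemma. This settles (ii)$\Leftrightarrow$(iii).

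For (i)$\Leftrightarrow$(ii), the implication $d^*\omega=0\Rightarrow\Delta\omega=dd^*\omega=0$ is immediate. Conversely, since $d^*$ is defined (via the isomorphisms $i_\mbbR^0,i_\mbbR^1$ carrying the standard inner products across) as the genuine adjoint of $d$ with respect to the positive-definite standard inner products on $C^0(G,\mbbR)$ and $C^1(G,\mbbR)=\Omega(G)$, one has
\[
\langle\Delta\omega,\omega\rangle=\langle dd^*\omega,\omega\rangle=\langle d^*\omega,d^*\omega\rangle=\|d^*\omega\|^2.
\]
If $\Delta\omega=0$, the left-hand side is zero, so $d^*\omega=0$ by positive-definiteness. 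This gives the remaining equivalence.

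The only point requiring care is the bookkeeping with the canonical identifications $i_\mbbR^0$, $i_\mbbR^1$ between chains and cochains, and verifying that the standard inner product on $\Omega$ is the one transported from $C_1(G,\mbbR)$ through $i_\mbbR^1$ (so that $d^*$, defined formally as $i_\mbbR^0\circ\partial_\mbbR\circ(i_\mbbR^1)^{-1}$, really is the Hilbert-space adjoint of $d$). Once that identification is in place the argument is entirely formal; no further obstacle arises.
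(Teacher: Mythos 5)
Your proof is correct. For the bridge from (i) to (ii), though, you take a different road from the paper. The paper's one-line argument is: $\Delta\omega = dd^*\omega = 0$ means $d^*\omega \in \ker d$, which (since $G$ is connected) consists of constant functions; but $\sum_{v\in V(G)} (d^*\omega)(v) = 0$ identically (each edge contributes $+\omega_\ve$ at its terminal vertex and $-\omega_\ve$ at its initial vertex), so a constant function in the image of $d^*$ must be the zero function. Your argument instead invokes $\langle dd^*\omega,\omega\rangle = \|d^*\omega\|^2$ and positive-definiteness of the standard inner product — the standard Hodge-theoretic move. Both are sound. Yours has the mild advantage of not invoking connectedness of $G$ (the paper's does, since $\ker d =$ constants only on connected graphs), and of being the argument that generalizes cleanly to higher degree. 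The paper's is shorter once you take $d^*$ as adjoint for granted, and it reveals the combinatorial reason the vanishing happens (total inflow equals total outflow across the whole graph). Your remark about carefully transporting the inner products through $i^0_\mbbR, i^1_\mbbR$ so that $d^* = i^0_\mbbR\circ\partial_\mbbR\circ(i^1_\mbbR)^{-1}$ really is the adjoint of $d$ is well placed; that verification is the only nontrivial step in either approach, and you would do well to actually carry it out (it is the direct computation $\langle df,\omega\rangle_{\Omega} = \sum_\ve (f(\ve(1))-f(\ve(0)))\,\omega_\ve = \sum_v f(v)\bigl(\sum_{\ve(1)=v}\omega_\ve - \sum_{\ve(0)=v}\omega_\ve\bigr) = \langle f, d^*\omega\rangle_{C^0}$).
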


Denote the space of all harmonic $1$-forms by $\mcalH^1(G)$ and the space of all exact $1$-forms by $\imag(d)$. Clearly $\mcalH^1(G)$ and $\imag(d)$ are both linear subspaces of $\Omega(G)$. The following proposition says that they are orthogonal complements of each other. 

\begin{proposition}[Hodge orthogonal decomposition] \label{P:hodge}
There is a canonical Hodge decomposition $\Omega(G)  = \mcalH^1(G) \oplus \imag(d)$. 
That is, any $1$-form $\omega$ can be written uniquely as $\omega= \phi_1(\omega)+\phi_2(\omega)$ where  $\phi_1(\omega)$ is a harmonic $1$-form and $\phi_2(\omega)$ is an exact $1$-form. In addition, $\langle \phi_1(\omega), \phi_2(\omega) \rangle = 0$. 
\end{proposition}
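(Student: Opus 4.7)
The plan is to deduce the Hodge decomposition from the general finite-dimensional fact that a subspace of an inner product space has an orthogonal complement, together with Lemma~\ref{L:harmonic}'s identification $\mcalH^1(G) = \ker(d^*)$. The main structural input is that $d^*$, as defined via the isomorphisms $i_\mbbR^0$ and $i_\mbbR^1$, coincides with the Hilbert-space adjoint of $d$ once $C^0(G,\mbbR)$ is equipped with the inner product transported from $C_0(G,\mbbR)$ (so that the indicator functions of the vertices form an orthonormal basis).

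The first step is to verify the adjoint identity $\langle df,\omega\rangle = \langle f, d^*\omega\rangle$ for all $f\in C^0(G,\mbbR)$ and $\omega\in\Omega(G)$. Writing $\omega = \sum_{\ve\in\vE_O(G)} c_\ve\, d\ve$, the definition of $d$ gives $\langle df,\omega\rangle = \sum_\ve c_\ve\bigl(f(\ve(1))-f(\ve(0))\bigr)$. Unraveling $d^*=i_\mbbR^0\circ\partial\circ(i_\mbbR^1)^{-1}$ produces $(d^*\omega)(v)=\sum_{\ve(1)=v}c_\ve - \sum_{\ve(0)=v}c_\ve$; the sum $\sum_v f(v)(d^*\omega)(v)$ then reindexes over edges to yield the same expression. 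This confirms the adjoint relation.

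The second step is the standard consequence $\ker(d^*)=\imag(d)^\perp$: $\omega\perp\imag(d)$ iff $\langle df,\omega\rangle=0$ for every $f$, iff $\langle f,d^*\omega\rangle=0$ for every $f$, iff $d^*\omega=0$. By Lemma~\ref{L:harmonic} this gives $\mcalH^1(G)=\imag(d)^\perp$, and since $\Omega(G)$ is finite-dimensional we obtain $\Omega(G)=\imag(d)\oplus\mcalH^1(G)$ as an orthogonal direct sum. Uniqueness of the decomposition $\omega=\phi_1(\omega)+\phi_2(\omega)$ then follows from $\mcalH^1(G)\cap\imag(d)=\{0\}$: if $h_1+e_1=h_2+e_2$ with $h_i$ harmonic and $e_i$ exact, then $h_1-h_2=e_2-e_1$ lies in the trivial intersection. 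The orthogonality $\langle\phi_1(\omega),\phi_2(\omega)\rangle=0$ is immediate from the orthogonal-complement relationship.

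The whole argument is a short exercise in finite-dimensional linear algebra, so I do not anticipate any genuine obstacle. The one step warranting care is the adjoint verification, which amounts to an edge-vs.-vertex reindexing made transparent by the conventions for $i_\mbbR^0$, $i_\mbbR^1$, and the boundary map fixed in Section~\ref{S:prelim}; once that identity is in hand, the orthogonal splitting is automatic.
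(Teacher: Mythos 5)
Your proof is correct and takes a genuinely different route from the paper. The paper constructs the decomposition explicitly: given $\omega$, it solves the graph Poisson equation $\Delta f = d^*\omega$ (invoking that the Laplacian of a connected graph has rank $n-1$ and that $d^*\omega$ sums to zero over the vertices), sets $\phi_2(\omega)=df$, and then verifies $\langle df, \omega-df\rangle=0$ by an edge-to-vertex reindexing that uses the harmonicity of $\omega-df$. You instead first establish the adjoint identity $\langle df,\omega\rangle=\langle f,d^*\omega\rangle$ — the same reindexing computation, but packaged conceptually — and from it derive $\ker(d^*)=\imag(d)^\perp$; Lemma~\ref{L:harmonic} identifies this kernel with $\mcalH^1(G)$, and the finite-dimensional orthogonal-complement theorem $\Omega(G)=\imag(d)\oplus\imag(d)^\perp$ then yields existence, uniqueness, and orthogonality all at once, with no need to invoke rank of the Laplacian or solve any equation. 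The computational core is shared, but your organization makes the adjoint structure explicit and cedes the existence step to abstract linear algebra, while the paper's version has the advantage of exhibiting the decomposition as the solution of a concrete Poisson problem (which could be useful if one wanted to compute $\phi_1,\phi_2$ in practice). Both rely, directly or via Lemma~\ref{L:harmonic}, on $G$ being connected.
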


\begin{remark}
Since the first real cohomology group is computed as $H^1(G,\mbbR)=\Omega(G)/ \imag(d)$, we have $H^1(G,\mbbR)\simeq \mcalH^1(G)$. This means that any cohomology class in $H^1(G,\mbbR)$
can be represented canonically by a unique harmonic $1$-form. 
\end{remark}

The first real and integral homology groups are  $H_1(G,\mbbR)=\ker \partial_{\mbbR}$ and $H_1(G,\mbbZ)=\ker \partial_{\mbbZ}$ respectively. In particular,  $H_1(G,\mbbZ)=H_1(G,\mbbR)\bigcap C_1(G,\mbbZ)$. Note that integral $1$-cycles (elements of $H_1(G,\mbbZ)$) are also called  \emph{circulations} in graph theory. 

For a spanning tree $T$ of $G$ and vertices $x,y\in V(G)$, let $T(x,y)$ denote the unique path in $T$ with initial vertex $x$ and terminal vertex $y$.

\begin{lemma} \label{L:GraphHomology}
We have the following properties of $H_1(G,R)$:
\begin{enumerate}[(a)]
\item $H_1(G,R)$ is isomorphic to  $R^g$;
\item $H_1(G,\mbbZ)$ is a full-rank lattice in $H_1(G,\mbbR)$;
\item $H_1(G,\mbbZ/t\mbbZ)$ is canonically isomorphic to $H_1(G,\mbbZ)/tH_1(G,\mbbZ)$. 
\end{enumerate}
\end{lemma}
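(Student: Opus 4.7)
The plan is to prove all three parts by exhibiting an explicit $R$-basis for $H_1(G,R)$ constructed from a spanning tree of $G$, and then deducing the integral and torsion-coefficient statements from the fact that this basis is defined over $\mathbb Z$ and that $C_1(G,\mathbb Z)$ is a free $\mathbb Z$-module.

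First I would fix a spanning tree $T\subseteq G$. Then $|E(T)|=n-1$ and the complement $E(G)\setminus E(T)$ consists of exactly $g=m-n+1$ edges; call them $e_{i_1},\dots,e_{i_g}$, oriented arbitrarily. For each $k$, since $T$ is a tree, there is a unique reduced path $\sigma_k$ in $T$ from $\ve_{i_k}(1)$ to $\ve_{i_k}(0)$; define $\gamma_k:=\ve_{i_k}+\sigma_k^{\mathrm{ab}}\in C_1(G,\mathbb Z)$. By construction $\partial_R\gamma_k=0$ for every ring $R$, so $\gamma_k\in H_1(G,R)$. I would then prove that $\{\gamma_1,\dots,\gamma_g\}$ is a free $R$-basis of $H_1(G,R)$: the map $\pi:H_1(G,R)\to R^g$ that reads off the coefficients of the non-tree edges sends $\gamma_k$ to the standard basis vector $e_k$, so $\pi$ is surjective, and for injectivity, if $\omega\in H_1(G,R)$ has $\pi(\omega)=0$ then $\omega$ is a $1$-chain on $T$ with $\partial_R\omega=0$, which forces $\omega=0$ (a tree has no nonzero circulation, proved inductively by repeatedly removing a leaf: a leaf edge's coefficient must vanish in $\partial_R\omega=0$). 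This establishes (a).

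For (b), note that the $\gamma_k$ just constructed live in $C_1(G,\mathbb Z)$, hence lie in $H_1(G,\mathbb Z)$. Tensoring the basis identity $H_1(G,\mathbb Z)=\bigoplus_{k=1}^g \mathbb Z\cdot \gamma_k$ with $\mathbb R$ and comparing with part (a) over $R=\mathbb R$, we see that $\{\gamma_1,\dots,\gamma_g\}$ is simultaneously a $\mathbb Z$-basis of $H_1(G,\mathbb Z)$ and an $\mathbb R$-basis of $H_1(G,\mathbb R)$, which is exactly the statement that $H_1(G,\mathbb Z)$ is a full-rank lattice in $H_1(G,\mathbb R)$. More explicitly, I would verify $H_1(G,\mathbb Z)=H_1(G,\mathbb R)\cap C_1(G,\mathbb Z)$ by writing any integral $1$-chain satisfying $\partial_{\mathbb R}\omega=0$ as a real combination of the $\gamma_k$ and observing, via $\pi$, that the coefficients coincide with the integer coefficients at the non-tree edges.

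For (c), the cleanest route is via the canonical reduction-mod-$t$ map on $1$-chains. There is an obvious surjection $C_1(G,\mathbb Z)\twoheadrightarrow C_1(G,\mathbb Z/t\mathbb Z)$ with kernel $tC_1(G,\mathbb Z)$, compatible with $\partial$. Restricting to circulations gives a natural map $\Phi:H_1(G,\mathbb Z)/tH_1(G,\mathbb Z)\to H_1(G,\mathbb Z/t\mathbb Z)$. Using the spanning-tree basis from (a) for both coefficient rings, each side is canonically $(\mathbb Z/t\mathbb Z)^g$ with basis $\{\gamma_1\bmod t,\dots,\gamma_g\bmod t\}$, and $\Phi$ sends basis to basis; hence $\Phi$ is an isomorphism. (Alternatively, since $C_\bullet(G,\mathbb Z)$ is a complex of free abelian groups, the universal coefficient theorem yields the same conclusion with no Tor contribution because $H_0(G,\mathbb Z)=\mathbb Z$ is free.) The only substantive point is checking that $\Phi$ is well-defined and canonical, which is immediate from the naturality of the mod-$t$ map. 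I do not expect a serious obstacle here; the main care-point is the inductive leaf-stripping argument establishing $H_1(T,R)=0$, which must be phrased so that it works for an arbitrary commutative ring $R$ rather than relying on rank–nullity over a field.
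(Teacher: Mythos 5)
Your proposal is correct and follows essentially the same route as the paper: both fix a spanning tree $T$, form the fundamental cycles $\gamma_k = \ve_{i_k}+\sigma_k^{\ab}$ (the paper's $u_i$) from the $g$ non-tree edges, and prove (a) via the projection $\pi$ onto non-tree-edge coordinates, then derive (b) from the integrality of that basis and (c) from reduction mod $t$. The only differences are cosmetic: you spell out the leaf-stripping argument that a tree has no nonzero circulation (which the paper leaves implicit) and prove (c) by noting $\Phi$ sends the common $\{\gamma_k\}$-basis to itself rather than checking kernel and surjectivity directly.
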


\begin{remark} \label{R:lattices}
Since $H_1(G,\mbbZ)$ is a full-rank lattice in $H_1(G,\mbbR)$ (or a discrete cocompact subgroup of $H_1(G,\mbbR)$) and the pairing $\mcalH^1(G)\times H_1(G,\mbbR) \to \mbbR$ is perfect, the dual lattice $H_1(G,\mbbZ)^\vee=\Hom(H_1(G,\mbbZ),\mbbZ)$ of $H_1(G,\mbbZ)$ is a full-rank lattice in $\mcalH^1(G)$. Moreover, $i_\mbbZ^1(H_1(G,\mbbZ))$ is a finite-index subgroup of $H_1(G,\mbbZ)^\vee$.
\end{remark}

The proof of Lemma~\ref{L:GraphHomology} uses the standard spanning-tree basis $u_1,\ldots,u_g$ of $H_1(G,\mbbZ)$. The following proposition gives its dual basis in $H_1(G,\mbbZ)^\vee$.

\begin{proposition} \label{P:basis_gen}
Let $G$ be a graph of genus $g$, let $T$ be a spanning tree, and fix an orientation $\mfrako$. If $\ve_1,\ldots,\ve_g\in\vE_{\mfrako}(G)$ are the oriented cotree edges, then
$$
\{\phi_1(d\ve_1),\ldots,\phi_1(d\ve_g)\}
$$
is a basis of $H_1(G,\mbbZ)^\vee$ in $\mcalH^1(G)$. Moreover,
$H_1(G,\mbbZ)^\vee=\phi_1(C^1(G,\mbbZ))$.
\end{proposition}

\subsection{Unitary and full character groups} \label{SS:character-group}
We use two related character spaces.  The first is the compact character group used for Fourier analysis and spectral-radius extremality; the second is its complexification, where the algebraic spectral antisymmetry also holds.

\begin{definition}[Unitary and full complex characters]\label{D:unit-full-character}
The \emph{unitary character group} of $G$ is
$$\mcalX(G):=\Hom(H_1(G,\mbbZ),S^1).$$
The \emph{full complex character torus} of $G$ is
$$\mcalX_\mbbC(G):=\Hom(H_1(G,\mbbZ),\mbbC^\times).$$
Thus $\mcalX(G)$ is the compact subtorus of $\mcalX_\mbbC(G)$ consisting of characters with image in $S^1$.  We write both character groups additively: for
$\xi,\xi'\in\mcalX_\mbbC(G)$ and $\alpha\in H_1(G,\mbbZ)$, set
$$
\bigl(\xi+\xi'\bigr)(\alpha):=\xi(\alpha)\cdot\xi'(\alpha),
\qquad
\bigl(-\xi\bigr)(\alpha):=\xi(\alpha)^{-1}.
$$
On $\mcalX(G)$ this agrees with the additive harmonic-quotient notation used
below.  Although the character groups are written additively, character values
in $\mbbC^\times$ are multiplied.  For a closed walk $C$, we write
$\xi(C):=\xi(C^{\ab})$.
\end{definition}

A unitary character can be associated to each $1$-form $\omega\in\Omega$ by letting $\chi_\omega(\alpha)=e(\omega(\alpha))$ for all $\alpha\in \mcalT$. Here $e(x):=\exp(2\pi\sqrt{-1}x)$ and we will use this conventional notation throughout the paper. Note that $\chi_{-\omega}=\overline{\chi_\omega}$ where $\overline{\chi_\omega}$ is the conjugate character of $\chi_\omega$.  

For a locally compact group $\Gamma$, denote by $\widehat{\Gamma}$ the \emph{Pontryagin dual} of $\Gamma$, which is the group of unitary characters (continuous group homomorphisms from $\Gamma$ to the unit circle $S^1$) on $\Gamma$. 

\begin{lemma} \label{L:PontrDual}
We have the following commutative diagram where the bottom row is exact and $\mcalX(G)=\mcalH^1(G)/H_1(G,\mbbZ)^\vee$:
\[
\begin{tikzcd}[row sep=normal, column sep = small]
&&&\widehat{\mcalT(G)}\arrow[d,two heads]& \\
&&\Omega(G)\arrow[ur,"\sim"{sloped,anchor=south}, "\omega\mapsto \chi_\omega"{sloped,anchor=south, yshift=6pt}]\arrow[d,two heads,"\phi_1" ', pos = 0.4]& \widehat{H_1(G,\mbbR)}\arrow[r,two heads]&\widehat{H_1(G,\mbbZ)} \\
 0 \arrow[r,] &  H_1(G,\mbbZ)^\vee \arrow[r,hook] &\mcalH^1(G) \arrow[r,two heads]\arrow[ur,"\sim"{sloped,anchor=south}] &\mcalX(G)\arrow[ur,"\sim"{sloped,anchor=south}] \arrow[r] &0.
\end{tikzcd}
\]
\end{lemma}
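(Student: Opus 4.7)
The plan is to verify the diagram piece by piece. First I would confirm the isomorphism $\Omega(G)\xrightarrow{\sim}\widehat{\mcalT(G)}$ sending $\omega\mapsto\chi_\omega$. Since $\mcalT(G)\cong\mbbR^m$ is a finite-dimensional real vector space regarded as a locally compact abelian group, the standard Pontryagin duality for $\mbbR^m$ tells us that every continuous character has the form $\alpha\mapsto e(L(\alpha))$ for a unique linear functional $L\in\Omega(G)$, giving the claimed identification. Next, to establish $\mcalH^1(G)\xrightarrow{\sim}\widehat{H_1(G,\mbbR)}$, I would show that the pairing $\mcalH^1(G)\times H_1(G,\mbbR)\to\mbbR$ induced from $\Omega(G)\times\mcalT(G)\to\mbbR$ is perfect: exact $1$-forms vanish on closed $1$-chains, so only the harmonic part of a $1$-form contributes to an integral over a cycle, and the two spaces have the same real dimension $g$. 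The dual bases $\{\phi_1(d\ve_i)\}$ of $\mcalH^1(G)$ and $\{u_i\}$ of $H_1(G,\mbbR)$ from Proposition~\ref{P:basis_gen} make the non-degeneracy manifest. Composing the resulting isomorphism $\mcalH^1(G)\cong\Hom_\mbbR(H_1(G,\mbbR),\mbbR)$ with $L\mapsto(\alpha\mapsto e(L(\alpha)))$ produces the required map.

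The bottom row is exact by the very definition of $\mcalX(G)$ as $\mcalH^1(G)/H_1(G,\mbbZ)^\vee$, together with Remark~\ref{R:lattices} which ensures $H_1(G,\mbbZ)^\vee$ is a full-rank sublattice of $\mcalH^1(G)$. For the rightmost diagonal isomorphism $\mcalX(G)\xrightarrow{\sim}\widehat{H_1(G,\mbbZ)}$, I would analyze the restriction surjection $\widehat{H_1(G,\mbbR)}\twoheadrightarrow\widehat{H_1(G,\mbbZ)}$: surjectivity holds because $H_1(G,\mbbZ)$ is a discrete cocompact subgroup of $H_1(G,\mbbR)$, so every character on the lattice extends to the ambient vector space. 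Under the identification $\mcalH^1(G)\cong\widehat{H_1(G,\mbbR)}$ just established, the kernel of this restriction consists of harmonic forms $\omega$ with $\omega(\alpha)\in\mbbZ$ for every $\alpha\in H_1(G,\mbbZ)$, which is exactly $H_1(G,\mbbZ)^\vee$. Passing to the quotient $\mcalH^1(G)/H_1(G,\mbbZ)^\vee$ yields the claimed isomorphism.

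Commutativity is then an elementwise check: both paths from $\Omega(G)$ to $\widehat{H_1(G,\mbbR)}$ send $\omega$ to the character $\alpha\mapsto e(\omega(\alpha))=e(\phi_1(\omega)(\alpha))$, using that $\phi_2(\omega)$ integrates to zero on every real $1$-cycle; and both paths from $\mcalH^1(G)$ to $\widehat{H_1(G,\mbbZ)}$ reduce to the same character on integral cycles. The main subtlety I anticipate is the bookkeeping of dual lattices---specifically, keeping straight the interplay between Remark~\ref{R:lattices} (which realizes $H_1(G,\mbbZ)^\vee$ concretely inside $\mcalH^1(G)$) and its Pontryagin-dual description as the annihilator of $H_1(G,\mbbZ)$ in $\widehat{H_1(G,\mbbR)}$. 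Everything else is routine chasing of elements through canonical identifications.
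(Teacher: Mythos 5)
Your proposal is correct and takes essentially the same route as the paper: identify $\Omega(G)\cong\widehat{\mcalT(G)}$ via Pontryagin duality for $\mbbR^m$, pass through the Hodge projection $\phi_1$ to get $\mcalH^1(G)\cong\widehat{H_1(G,\mbbR)}$, and then identify the kernel of the restriction to $\widehat{H_1(G,\mbbZ)}$ with $H_1(G,\mbbZ)^\vee$. The only cosmetic difference is that the paper gives an explicit hands-on argument for the isomorphism $\Omega(G)\cong\widehat{\mcalT(G)}$ (solving a first-order ODE for $\chi(\sum c_i\ve_i)$) and leans on injectivity of $S^1$ as a $\mbbZ$-module for the surjectivity of the dual maps, whereas you invoke standard Pontryagin duality for $\mbbR^m$ and the cocompactness of the lattice directly; these are equivalent packagings of the same facts.
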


\begin{remark} \label{R:PontrDual}
For the rest of the paper, we will identify $\widehat{\mcalT(G)}$, $\widehat{H_1(G,\mbbR)}$ and  $\widehat{H_1(G,\mbbZ)}$ with $\Omega(G)$, $\mcalH^1(G)$ and the unitary character group $\mcalX(G)$ respectively. In particular, $\mcalX(G)$ is a $g$-dimensional real torus. Provided that $G$ is known, we also simply write $\mcalX(G)$ as $\mcalX$. For $\omega\in \Omega(G)$, we write $\underline{\omega} :=\phi_1(\omega)+H_1(G,\mbbZ)^\vee\in\mcalX$. By Lemma~\ref{L:PontrDual}, when restricted to $H_1(G,\mbbZ)$, we also write $\chi_\omega$ as $\chi_{\underline{\omega}}$ for all $\omega\in \Omega(G)$. 
In comparison to the unitary character group, it is worth mentioning some other related groups. Note that both being lattices in $H_1(G,\mbbR)$, $H_1(G,\mbbZ)$ is subgroup of $(i_\mbbZ^1)^{-1}(H_1(G,\mbbZ)^\vee)$ of finite index (Remark~\ref{R:lattices}). Then  $(i_\mbbZ^1)^{-1}(H_1(G,\mbbZ)^\vee)/H_1(G,\mbbZ)$ is the Jacobian group $\Jac(G)$ of the graph $G$ \cite{BN2007Riemann}, and the $g$-dimensional real torus $H_1(G,\mbbR)/H_1(G,\mbbZ)$ is the (tropical) Jacobian group $\Jac(\Gamma)$ of the metric graph $\Gamma$ obtained from $G$ by identifying edges with the unit interval \cite{BF2011Metric}. 
\end{remark}

\begin{definition}[Switching representatives of full characters]\label{D:full-character}
A \emph{complex gain} on $G$ is a function
$$\gamma:\vE(G)\to\mbbC^\times,\qquad \gamma(\ve^{-1})=\gamma(\ve)^{-1}.$$
Gains and their values are written multiplicatively.  For a walk
$\Delta=\va_1\cdots\va_l$, put
$$\gamma(\Delta):=\prod_{j=1}^l\gamma(\va_j).$$
A gain $\gamma$ represents a full character $\xi\in\mcalX_\mbbC(G)$ if
$$\gamma(C)=\xi(C^{\ab})$$
for every closed walk $C$.  Every full character has such representatives, and two representatives of the same full character differ by switching: there is a map $s:V(G)\to\mbbC^\times$ such that
$$\gamma'(\ve)=s(\ve(0))^{-1}\cdot\gamma(\ve)\cdot s(\ve(1)).$$
For $\omega\in\Omega(G)$, the unitary gain $\gamma_\omega(\ve):=\chi_\omega(\ve)$ represents the unitary character $\underline{\omega}$.
\end{definition}

For elements $x$ and $y$ in a set $X$, let $\delta_{xy}$ be the delta function, i.e., $\delta_{xy}=1$ if $x=y$, and $\delta_{xy}=0$ otherwise. 

\begin{proposition} \label{P:CharacterOrth}
Fixing $\alpha,\beta\in H_1(G,\mbbZ)$, we have the following orthogonal relation
$$\frac{1}{\vol(\mcalX)}\int_{\mcalX}\chi_\omega(\alpha)\overline{\chi_\omega(\beta)}dV_\omega = \delta_{\alpha\beta}$$
where $\chi_\omega(\alpha)\overline{\chi_\omega(\beta)}$ is considered as a function on $\mcalX$ with variable $\omega$, $dV_\omega$ is the volume $g$-form on $\mcalX$  and  $\vol(\mcalX)$ is the total  volume of $\mcalX$. 
\end{proposition}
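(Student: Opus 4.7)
My approach is to invoke character orthogonality on the compact abelian group $\mcalX$ by reducing to an iterated integral over a fundamental domain. First, I would simplify the integrand: since $\omega \mapsto \chi_\omega$ is a group homomorphism into $S^1$ when restricted to $H_1(G,\mbbZ)$, we have $\chi_\omega(\alpha)\overline{\chi_\omega(\beta)} = \chi_\omega(\alpha-\beta)$. Writing $\gamma = \alpha - \beta \in H_1(G,\mbbZ)$, the claim reduces to
$$\frac{1}{\vol(\mcalX)}\int_{\mcalX}\chi_\omega(\gamma)\,dV_\omega = \delta_{\gamma,0}.$$
A key point to spell out is that the integrand is well-defined on the quotient $\mcalX = \mcalH^1(G)/H_1(G,\mbbZ)^\vee$: if $\phi_1(\omega)$ is shifted by an element of $H_1(G,\mbbZ)^\vee$, then $\phi_1(\omega)(\gamma)$ shifts by an integer, which is precisely why $\gamma\in H_1(G,\mbbZ)$ is needed.

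Second, I would install explicit coordinates on $\mcalX$ via Proposition~\ref{P:basis_gen}. Fix a spanning tree $T$ of $G$, let $\ve_1,\dots,\ve_g$ be the positively oriented edges of $G\setminus T$, and let $u_1,\dots,u_g\in H_1(G,\mbbZ)$ be the dual circulations constructed in the proof of Lemma~\ref{L:GraphHomology}. By Proposition~\ref{P:basis_gen}, $\{\phi_1(d\ve_1),\dots,\phi_1(d\ve_g)\}$ is a $\mbbZ$-basis of the lattice $H_1(G,\mbbZ)^\vee$ in $\mcalH^1(G)$, dual to $\{u_1,\dots,u_g\}$. Writing $\phi_1(\omega) = \sum_{i=1}^g t_i\,\phi_1(d\ve_i)$, the coordinates $(t_1,\dots,t_g)$ modulo $\mbbZ^g$ parametrize $\mcalX$ with fundamental domain $[0,1)^g$. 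Under this change of variables, $dV_\omega$ pulls back to $J\,dt_1\cdots dt_g$ for a constant Jacobian $J$ (namely the Gram determinant of $\phi_1(d\ve_1),\dots,\phi_1(d\ve_g)$ in the restricted inner product), and $\vol(\mcalX)=J$. Hence $\frac{1}{\vol(\mcalX)}dV_\omega$ is simply Lebesgue measure $dt_1\cdots dt_g$ on $[0,1)^g$.

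Third, I would expand $\gamma = \sum_{i=1}^g n_i u_i$ with $n_i\in\mbbZ$ using the basis $\{u_1,\dots,u_g\}$ of $H_1(G,\mbbZ)$. Duality gives $\phi_1(\omega)(\gamma) = \sum_i t_i n_i$, so
$$\chi_\omega(\gamma) = e\!\left(\sum_{i=1}^g n_i t_i\right) = \prod_{i=1}^g e(n_i t_i).$$
The normalized integral therefore factors as
$$\frac{1}{\vol(\mcalX)}\int_{\mcalX}\chi_\omega(\gamma)\,dV_\omega \;=\; \prod_{i=1}^g \int_0^1 e(n_i t_i)\,dt_i,$$
and each factor equals $1$ if $n_i=0$ and $0$ otherwise. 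The product vanishes unless every $n_i$ is zero, i.e., unless $\gamma=0$, which is precisely $\alpha=\beta$. No deep obstacle is expected; the argument is essentially Fourier orthogonality on the torus, and the only substantive content is the intrinsic identification of $\mcalX$ with $\mbbR^g/\mbbZ^g$ supplied by Proposition~\ref{P:basis_gen}, which automatically matches the lattice on the character side with the dual lattice to the natural basis of $H_1(G,\mbbZ)$.
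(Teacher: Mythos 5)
Your proof is correct and follows essentially the same route as the paper's: both reduce the integrand to a single character value, parametrize $\mcalX$ by a fundamental domain for $H_1(G,\mbbZ)^\vee$ using a dual-basis pair $(u_i, u^i)$, absorb the constant Jacobian into $\vol(\mcalX)$, and finish with Fourier orthogonality on $[0,1)^g$. The only cosmetic difference is that you invoke the concrete basis $\{\phi_1(d\ve_i)\}$ from Proposition~\ref{P:basis_gen}, whereas the paper works with an unspecified dual basis; the mechanism is identical.
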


Now let  us consider  a subgroup $\Lambda$ of $H_1(G,\mbbZ)$ of finite index. Then $\Lambda$ is also a rank-$g$ lattice in $H_1(G,\mbbR)$, and the corresponding dual lattice $\Lambda^\vee$  in $\mcalH^1(G)$ contains $H_1(G,\mbbZ)^\vee$ as a sub-lattice. Let $Q_\Lambda:=H_1(G,\mbbZ)/\Lambda$ and $\mcalX_\Lambda:=\mcalH^1(G)/\Lambda^\vee$. For $\alpha\in H_1(G,\mbbZ)$ and $\omega\in \Omega(G)$, we write $\underline{\alpha}=\alpha+\Lambda\in Q_\Lambda$ and $\dunderline{\omega} :=\phi_1(\omega)+\Lambda^\vee\in\mcalX_\Lambda$.

\begin{lemma}  \label{L:PontrDualLambda}
There exists a commutative diagram with the rows being exact:
\[
\begin{tikzcd}[row sep=large, column sep = small]
0 \arrow[r] & \widehat{Q_\Lambda} \arrow[r,hook] &\widehat{H_1(G,\mbbZ)} \arrow[r,two heads] &\widehat{\Lambda} \arrow[r] &0 \\
0 \arrow[r] & \Lambda^\vee/H_1(G,\mbbZ)^\vee \arrow[r,hook] \arrow[u,"\sim"{sloped,anchor=south}]&\mcalX \arrow[r, two heads, "\underline{\omega}\mapsto \dunderline{\omega}" '] \arrow[u,"\sim"{sloped,anchor=south}, "\underline{\omega}\mapsto\chi_{\underline{\omega}}" '{sloped, anchor = north}] & \mcalX_\Lambda \arrow[r] \arrow[u,"\sim"{sloped,anchor=south}] & 0
\end{tikzcd}
\]
\end{lemma}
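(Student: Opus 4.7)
The plan is to derive this diagram by applying Pontryagin duality to the short exact sequence $0\to\Lambda\hookrightarrow H_1(G,\mbbZ)\twoheadrightarrow Q_\Lambda\to 0$ and then identify each term on the top row with a quotient of $\mcalH^1(G)$ via the procedure already developed in Lemma~\ref{L:PontrDual}.

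Exactness of the top row is immediate: since $S^1\simeq\mbbR/\mbbZ$ is divisible and hence an injective $\mbbZ$-module, the Pontryagin dual functor $\Hom(\cdot,S^1)$ is exact on abelian groups, so it sends $0\to\Lambda\to H_1(G,\mbbZ)\to Q_\Lambda\to 0$ to the top row as displayed.

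For the right vertical isomorphism, I would repeat the argument of Lemma~\ref{L:PontrDual} with $H_1(G,\mbbZ)$ replaced by $\Lambda$. Since $\Lambda$ is still a full-rank lattice in $H_1(G,\mbbR)$, two characters $\chi_\omega, \chi_{\omega'}$ agree on $\Lambda$ iff $\omega(\alpha)-\omega'(\alpha)\in\mbbZ$ for all $\alpha\in\Lambda$, iff $\phi_1(\omega)-\phi_1(\omega')\in\Lambda^\vee$. This yields the canonical isomorphism $\mcalX_\Lambda=\mcalH^1(G)/\Lambda^\vee\simeq\widehat{\Lambda}$ via $\dunderline{\omega}\mapsto\chi_\omega|_\Lambda$. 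The middle vertical isomorphism is just Lemma~\ref{L:PontrDual}.

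For the left vertical isomorphism and the bottom row, the containment $\Lambda\subseteq H_1(G,\mbbZ)$ dualizes to $H_1(G,\mbbZ)^\vee\subseteq\Lambda^\vee$ inside $\mcalH^1(G)$, and the bottom row is then the standard quotient-of-quotients sequence
\begin{equation*}
0\to \Lambda^\vee/H_1(G,\mbbZ)^\vee \to \mcalH^1(G)/H_1(G,\mbbZ)^\vee \to \mcalH^1(G)/\Lambda^\vee \to 0.
\end{equation*}
Under the middle isomorphism $\mcalX\simeq\widehat{H_1(G,\mbbZ)}$, the subgroup $\Lambda^\vee/H_1(G,\mbbZ)^\vee$ corresponds exactly to the characters of $H_1(G,\mbbZ)$ that are trivial on $\Lambda$: indeed $\phi_1(\omega)\in\Lambda^\vee$ iff $\omega(\alpha)\in\mbbZ$ for every $\alpha\in\Lambda$ iff $\chi_\omega|_\Lambda\equiv 1$, and these are precisely the characters belonging to $\widehat{Q_\Lambda}$. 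Commutativity of both squares is then a tautology on representatives, since along either route one obtains the same restriction of $\chi_\omega$. The only subtle point in the whole argument is the reversal of inclusion under lattice duality; beyond that bookkeeping, no step poses a real obstacle.
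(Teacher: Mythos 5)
Your proposal is correct and follows essentially the same route as the paper's proof: dualize the short exact sequence $0\to\Lambda\to H_1(G,\mbbZ)\to Q_\Lambda\to 0$ using injectivity of $S^1$ as a $\mbbZ$-module, write the bottom row as the quotient-of-quotients sequence for $H_1(G,\mbbZ)^\vee\subseteq\Lambda^\vee$ in $\mcalH^1(G)$, and induce the outer vertical isomorphisms from the middle one of Lemma~\ref{L:PontrDual}. The only cosmetic difference is that you spell out the identification of $\Lambda^\vee/H_1(G,\mbbZ)^\vee$ with the annihilator of $\Lambda$ where the paper simply asserts the compatible isomorphisms, but the underlying content is identical.
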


\begin{remark}
As in Remark~\ref{R:PontrDual}, for the rest of the paper, we will identify $\widehat{Q_\Lambda}$ and $\widehat{\Lambda}$ with $\Lambda^\vee/H_1(G,\mbbZ)^\vee$ and $\mcalX_\Lambda$, respectively. Note that $\widehat{Q_\Lambda}$ is a finite abelian group noncanonically isomorphic to $Q_\Lambda$. In addition, by Lemma~\ref{L:PontrDualLambda}, when restricted to $\Lambda$, we also write $\chi_\omega$ as $\chi_{\dunderline{\omega}}$ for all $\omega\in \Omega(G)$. 
\end{remark}

The following proposition is a  generalization of Proposition~\ref{P:CharacterOrth}.

\begin{proposition}\phantomsection\label{P:CharacterOrthLambda}

\begin{enumerate}[(a)]
 \item Fixing $\alpha,\beta\in \Lambda$, we have the following orthogonal relation
$$\frac{1}{\vol(\mcalX_\Lambda)}\int_{\mcalX_\Lambda}\chi_\omega(\alpha)\overline{\chi_\omega(\beta)}dV_\omega = \delta_{\alpha\beta}$$
where $\chi_\omega(\alpha)\overline{\chi_\omega(\beta)}$ is considered as a function on $\mcalX_\Lambda$ with variable $\omega$, $dV_\omega$ is the volume $g$-form on $\mcalX_\Lambda$  and  $\vol(\mcalX_\Lambda)$ is the total  volume of $\mcalX_\Lambda$. 
 
 \item Fixing $\underline{\alpha},\underline{\beta}\in Q_\Lambda$, we have the following orthogonal relation
$$\frac{1}{|Q_\Lambda|}\sum_{\underline{\omega}\in \widehat{Q_\Lambda}}\chi_{\underline{\omega}}(\alpha)\overline{\chi_{\underline{\omega}}(\beta)} = \delta_{\underline{\alpha}\underline{\beta}}.
$$
\end{enumerate}
\end{proposition}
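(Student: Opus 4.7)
\smallskip
\noindent\textbf{Proof plan.} The plan is to prove part (a) by directly mimicking the argument used for Proposition~\ref{P:CharacterOrth}, replacing $H_1(G,\mbbZ)$ and $H_1(G,\mbbZ)^\vee$ with $\Lambda$ and $\Lambda^\vee$ respectively, and to prove part (b) either as the standard character orthogonality for the finite abelian group $Q_\Lambda$ or by extracting it from part (a) through the short exact sequence of Lemma~\ref{L:PontrDualLambda}.

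For part (a), first identify the domain of integration $\mcalX_\Lambda$ with a fundamental domain of the full-rank lattice $\Lambda^\vee$ in $\mcalH^1(G)$. Pick any $\mbbZ$-basis $\{u_1,\ldots,u_g\}$ of $\Lambda$ and let $\{u^1,\ldots,u^g\}$ be its dual basis of $\Lambda^\vee$, characterized by $u^i(u_j)=\delta_{ij}$. Writing $\alpha=\sum_i \alpha_i u_i$, $\beta=\sum_i \beta_i u_i$ with $\alpha_i,\beta_i\in\mbbZ$, and parametrizing $\omega=\sum_i \omega_i u^i$ over the unit cube in the $\omega_i$-coordinates, the integrand becomes $e\bigl(\sum_i \omega_i(\alpha_i-\beta_i)\bigr)$ while the volume form is $\sqrt{|\det(\langle u^i,u^j\rangle)|}\,d\omega_1\cdots d\omega_g$. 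The Jacobian factor is constant, hence cancels against $\vol(\mcalX_\Lambda)$, and each one-dimensional integral $\int_0^1 e((\alpha_i-\beta_i)\omega_i)\,d\omega_i$ evaluates to $\delta_{\alpha_i\beta_i}$ because $\alpha_i-\beta_i\in\mbbZ$. Multiplying across $i=1,\ldots,g$ gives $\delta_{\alpha\beta}$, as required.

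For part (b), the cleanest route is the standard orthogonality of characters of the finite abelian group $Q_\Lambda$: since $|\widehat{Q_\Lambda}|=|Q_\Lambda|$, if $\underline{\alpha}=\underline{\beta}$ every summand equals $1$ and the sum is $|Q_\Lambda|$, while if $\underline{\alpha}\neq\underline{\beta}$ one selects (by Pontryagin duality for finite abelian groups) a character $\chi_{\underline{\omega_0}}\in\widehat{Q_\Lambda}$ with $\chi_{\underline{\omega_0}}(\underline{\alpha}-\underline{\beta})\neq 1$; multiplying the sum by $\chi_{\underline{\omega_0}}(\underline{\alpha}-\underline{\beta})$ shows it equals itself times a nontrivial root of unity, forcing it to vanish. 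Alternatively, one can specialize part (a): via Lemma~\ref{L:PontrDualLambda}, $\widehat{Q_\Lambda}$ is realized as the finite subgroup $\Lambda^\vee/H_1(G,\mbbZ)^\vee\subset\mcalX$, and the orthogonality on this finite group is the discrete analogue of the torus integration in (a), obtained by the same basis/dual basis calculation but with the continuous integrals replaced by geometric sums $\frac{1}{k}\sum_{j=0}^{k-1}e(jn/k)=\delta_{n\equiv 0\bmod k}$.

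I do not anticipate a substantive obstacle: both statements are essentially built into the Pontryagin duality framework already developed in Lemma~\ref{L:PontrDual} and Lemma~\ref{L:PontrDualLambda}, and the only small bookkeeping point is ensuring that the normalization by $\vol(\mcalX_\Lambda)$ (respectively $|Q_\Lambda|$) absorbs the Gram determinant of the chosen basis (respectively the cardinality of the dual group), both of which are automatic from the choice of fundamental domain.
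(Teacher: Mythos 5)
Your proposal is correct and takes essentially the same approach as the paper: part (a) is proved by repeating the fundamental-domain/dual-basis argument from Proposition~\ref{P:CharacterOrth} with $\Lambda$ and $\Lambda^\vee$ in place of $H_1(G,\mbbZ)$ and $H_1(G,\mbbZ)^\vee$, and part (b) is the classical orthogonality of characters of the finite abelian group $Q_\Lambda$. The paper states both steps more tersely (for (b) it simply rewrites the summand as $\chi_{\underline{\omega}}(\alpha-\beta)$ and cites the classical fact), while you usefully spell out the standard twisting-by-a-nontrivial-character argument, but the underlying route is identical.
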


\begin{remark}
Lemma~\ref{L:PontrDualLambda} gives
$|Q_\Lambda|\cdot\vol(\mcalX_\Lambda)=\vol(\mcalX)$.  The volume
$\vol(\mcalX)$ also has a spanning-tree interpretation.  Indeed,
$\mcalX=\mcalH^1(G)/H_1(G,\mbbZ)^\vee$, while the Jacobian of the associated
unit-length metric graph is
$\Jac(\Gamma)=H_1(G,\mbbR)/H_1(G,\mbbZ)$.  Hence
$\vol(\mcalX)\cdot\vol(\Jac(\Gamma))=1$.  Moreover,
$\vol(\Jac(\Gamma))=\vol(\mcalX)\cdot|\Jac(G)|$, where $\Jac(G)$ is the finite
graph Jacobian, or critical group.  By Kirchhoff's matrix-tree theorem,
$|\Jac(G)|$ is the number of spanning trees of $G$; the geometric refinement
of An--Baker--Kuperberg--Shokrieh gives a canonical break-divisor
representative for every class in $\Jac(G)$~\cite[Theorem~1.5 and
Corollary~1.6]{ABKS2014canonical}.  These observations give the following
proposition.
\end{remark}

\begin{proposition}
 Let $w(G)$ be the complexity of $G$, i.e., the number of spanning trees of $G$. Then $\vol(\mcalX) = 1/\sqrt{w(G)}$ and  $\vol(\mcalX_\Lambda) = 1/(\sqrt{w(G)}\cdot|Q_\Lambda|)$. 
\end{proposition}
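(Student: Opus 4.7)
The plan is to read off both volumes from the two relations summarized in the preceding remark, namely $\vol(\mcalX)\cdot \vol(\Jac(\Gamma)) = 1$ and $\vol(\Jac(\Gamma)) = \vol(\mcalX)\cdot |\Jac(G)|$, together with the classical matrix-tree theorem $|\Jac(G)| = w(G)$. The first relation expresses the standard fact that a lattice and its dual have reciprocal covolumes: under the isometry $i_\mbbR^1$ (which restricts to an isomorphism $H_1(G,\mbbR)\xrightarrow{\sim}\mcalH^1(G)$ because the conditions $\partial\alpha=0$ and $d^*\omega=0$ correspond), the pairing $\mcalH^1(G)\times H_1(G,\mbbR)\to \mbbR$ matches the standard inner product, and $H_1(G,\mbbZ)^\vee\subset \mcalH^1(G)$ is precisely the dual lattice of $H_1(G,\mbbZ)\subset H_1(G,\mbbR)$. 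The second relation is obtained by viewing $\mcalX$ as the quotient of $\Jac(\Gamma)=H_1(G,\mbbR)/H_1(G,\mbbZ)$ by the finite subgroup $(i_\mbbZ^1)^{-1}(H_1(G,\mbbZ)^\vee)/H_1(G,\mbbZ)\cong \Jac(G)$, so the covolume scales by $|\Jac(G)|$.

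Assembling these, I would first substitute $\vol(\Jac(\Gamma)) = \vol(\mcalX)\cdot |\Jac(G)|$ into $\vol(\mcalX)\cdot \vol(\Jac(\Gamma))=1$ to get $\vol(\mcalX)^2\cdot |\Jac(G)| = 1$, then invoke Kirchhoff's matrix-tree theorem (equivalently, the dual version of An--Baker--Kuperberg--Shokrieh already cited) to replace $|\Jac(G)|$ by $w(G)$, yielding the first formula $\vol(\mcalX)=1/\sqrt{w(G)}$.

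For the second formula, I would appeal to Lemma~\ref{L:PontrDualLambda}, which provides the short exact sequence
\[ 0 \to \Lambda^\vee/H_1(G,\mbbZ)^\vee \hookrightarrow \mcalX \twoheadrightarrow \mcalX_\Lambda \to 0, \]
together with the identification of the kernel with $\widehat{Q_\Lambda}$. Since $\widehat{Q_\Lambda}$ is a finite abelian group non-canonically isomorphic to $Q_\Lambda$, in particular $|\Lambda^\vee/H_1(G,\mbbZ)^\vee|=|Q_\Lambda|$. Hence $\mcalX$ is a $|Q_\Lambda|$-fold cover of $\mcalX_\Lambda$ with locally isometric projection (both equipped with the induced metric from $\mcalH^1(G)$), giving $\vol(\mcalX)=|Q_\Lambda|\cdot\vol(\mcalX_\Lambda)$ and thus $\vol(\mcalX_\Lambda)=1/(|Q_\Lambda|\sqrt{w(G)})$.

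Since all pieces are essentially in place in the preceding remarks and lemmas, I do not anticipate a serious obstacle. The only subtlety that deserves explicit mention is making sure that the two volume forms entering the duality $\vol(\mcalX)\cdot \vol(\Jac(\Gamma))=1$ are compatible, which is exactly what the isometry $i_\mbbR^1$ guarantees; the rest is combinatorial bookkeeping with the exact sequences and the matrix-tree theorem.
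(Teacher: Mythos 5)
Your proposal is correct and takes essentially the same route as the paper: the paper's Proposition has no separate proof because it is immediate from the preceding Remark (the three volume relations) together with Kirchhoff's matrix-tree theorem, and your argument assembles exactly those same ingredients in the same order.

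One small point worth appreciating about your write-up is that you made explicit why $\vol(\mcalX)\cdot\vol(\Jac(\Gamma))=1$ is legitimate: the lattice--dual-lattice covolume reciprocity needs the dual to be taken with respect to an inner product, and the identification of $H_1(G,\mbbZ)^\vee\subset\mcalH^1(G)$ with the inner-product dual of $H_1(G,\mbbZ)\subset H_1(G,\mbbR)$ relies on $i_\mbbR^1$ being an isometry. This is left tacit in the paper's Remark and is the one place a careless reader could go wrong.
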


\subsection{Twisted weighted adjacency matrices}\label{SS:weighted-adjacency}
The vertex and edge adjacency matrices can be twisted by either a unitary character $\chi_\omega$ or, more generally, by a full complex character represented by a complex gain.  We state the weighted version first, since this is the setting of the main theorem.

A \emph{positive directed weight} on $G$ is a function
$$W:\vE(G)\to\mbbR_{>0}.$$
For a walk $\Delta=\va_1\cdots\va_l$, put
$$W(\Delta):=\prod_{j=1}^l W(\va_j).$$
We use the following terminology for special cases:
\begin{enumerate}[(i)]
\item $W$ is the \emph{trivial weight}, written $W\equiv 1$, if $W(\ve)=1$ for every $\ve\in\vE(G)$;
\item $W$ is \emph{symmetric} if $W(\ve)=W(\ve^{-1})$ for every $\ve\in\vE(G)$;
\item $W$ is \emph{reciprocal} if $W(\ve^{-1})=W(\ve)^{-1}$ for every $\ve\in\vE(G)$.
\end{enumerate}
The trivial weight is both symmetric and reciprocal.

\begin{definition}\label{D:weighted-adj}
Let $W$ be a positive directed weight on $G$.
For a complex gain $\gamma:\vE(G)\to\mbbC^\times$, we define the following weighted twisted matrices:
\begin{enumerate}[(i)]
\item The \emph{twisted weighted vertex adjacency matrix} $A_{\gamma,W}$ is the $n\times n$ matrix
$$\left(A_{\gamma,W}\right)_{vw}=\sum_{\substack{\ve\in \vE(G)\\ \ve(0)=v,\ \ve(1)=w}}W(\ve)\gamma(\ve).$$
\item The \emph{twisted weighted edge adjacency matrix} $B_{\gamma,W}$ is the $2m\times 2m$ matrix
$$\left(B_{\gamma,W}\right)_{\va\vb}=
\begin{cases}
W(\vb)\gamma(\vb),&\text{if }\va\shortrightarrow\vb,\\
0,&\text{otherwise.}
\end{cases}$$
\end{enumerate}
If $\xi\in\mcalX_\mbbC(G)$ and $\gamma$ represents $\xi$, define
$$\spec A_{\xi,W}:=\spec A_{\gamma,W},\qquad
\spec B_{\xi,W}:=\spec B_{\gamma,W}.$$
These spectra are independent of the representative by
Lemma~\ref{L:switching-similarity} below.  For a unitary character $\underline{\omega}\in\mcalX(G)$, represented by $\gamma_\omega(\ve)=\chi_\omega(\ve)$, we retain the notation
$$A_{\omega,W}:=A_{\gamma_\omega,W},\qquad
B_{\omega,W}:=B_{\gamma_\omega,W}.$$
For the trivial character, write
$$
A_W:=A_{0,W},\qquad B_W:=B_{0,W}.
$$
At the spectral level we may equivalently write
$$
\spec A_{\underline{\omega},W}:=\spec A_{\omega,W},
\qquad
\spec B_{\underline{\omega},W}:=\spec B_{\omega,W},
$$
since these multisets depend only on the additive class
$\underline{\omega}$.
When $W\equiv 1$, we write $A_\omega:=A_{\omega,1}$ and call it the \emph{twisted vertex adjacency matrix}; similarly, we write $B_\omega:=B_{\omega,1}$ and call it the \emph{twisted edge adjacency matrix}. Explicitly,
$$\left(A_\omega\right)_{vw}=\sum_{\substack{\ve\in \vE(G)\\ \ve(0)=v,\ \ve(1)=w}}\chi_\omega(\ve),$$
and
$$\left(B_\omega\right)_{\va\vb}=
\begin{cases}
\chi_\omega(\vb),&\text{if }\va\shortrightarrow\vb,\\
0,&\text{otherwise.}
\end{cases}$$
\end{definition}

\begin{lemma}\label{L:switching-similarity}
Let $W$ be a positive directed weight.  If two complex gains $\gamma$ and
$\gamma'$ are switching equivalent, then $A_{\gamma,W}$ is similar to
$A_{\gamma',W}$ and $B_{\gamma,W}$ is similar to $B_{\gamma',W}$.  If the
switching function is unitary, these similarities are unitary.  In particular,
unitary gains representing the same unitary character yield unitarily similar
vertex and edge matrices.
\end{lemma}

The powers of the weighted edge adjacency matrices are naturally indexed by
the following \emph{edge-walks}.  An edge-walk $\vDelta$ of length $N$ is a
sequence of oriented edges
$$
\va_0\blacktriangleright\va_1\blacktriangleright\cdots
\blacktriangleright\va_N
$$
such that $\va_{i+1}(0)=\va_i(1)$ for $0\leq i<N$.  Its
\emph{initial oriented edge} is $\vDelta(0):=\va_0$, and its
\emph{terminal oriented edge} is $\vDelta(1):=\va_N$.  The walk
$\va_1\cdots\va_N$ is the \emph{associated walk} of $\vDelta$, while
$\va_1\cdots\va_{N-1}$ is its \emph{interior walk}.  The inverse edge-walk is
$$
\vDelta^{-1}:=\va_N^{-1}\blacktriangleright\cdots
\blacktriangleright\va_0^{-1}.
$$
An edge-walk
$\vDelta'=\vb_0\blacktriangleright\cdots\blacktriangleright\vb_M$ is a
\emph{sub-edge-walk} of $\vDelta$ if
$\vb_j=\va_{i+j}$ for $0\leq j\leq M$ and some
$i\in\{0,\ldots,N-M\}$.  The edge-walk $\vDelta$ has a
\emph{backtrack} if $\va_{i+1}=\va_i^{-1}$ for some $0\leq i<N$, and it is
\emph{closed} at the \emph{base oriented edge} $\ve$ if
$\ve=\va_0=\va_N$.  The associated walk of a closed edge-walk is closed; if
the edge-walk has no backtrack, its associated walk is a circuit.

If $\Delta$ is the associated walk of $\vDelta$, write
$$
\chi_\omega(\vDelta):=\chi_\omega(\Delta)
:=\chi_\omega(\Delta^{\ab}),\qquad
W(\vDelta):=W(\Delta).
$$
Then
$\chi_\omega(\Delta^{-1})=\chi_{-\omega}(\Delta)
=\overline{\chi_\omega(\Delta)}$.

\begin{remark}
When $\omega=0$, $A_\omega$ and $B_{\omega}$ specialize to the conventional (vertex) adjacency matrix $A$ and edge adjacency matrix $B$ respectively.\footnote{The matrix $B$ is more commonly called the nonbacktracking matrix/operator, or the Hashimoto matrix; see Hashimoto~\cite{H1989Zeta} and Terras~\cite{T2011Zeta}.} In this case, if $G$ has multiple edges or loops, then for distinct vertices $v$ and $w$, $A_{vw}$ is exactly the number of edges between $v$ and $w$, and $A_{vv}$ is twice the number of loops at $v$ (corresponding to the two orientations for each loop). 
\end{remark}

\begin{remark}\label{R:weighted-nonhermitian}
A positive directed weight is not required to be symmetric.  The trivial specialization $W\equiv 1$ recovers the unweighted twisted matrices $A_\omega$ and $B_\omega$.  If $W$ is symmetric, then $A_{\omega,W}$ is Hermitian for every $\omega$.  For a general directed weight, $A_{\omega,W}$ is typically non-Hermitian, while the nonbacktracking edge matrix $B_{\omega,W}$ is generally non-Hermitian even in the trivial or symmetric cases. Since $W$ is real, however,
$$A_{-\omega,W}=\overline{A_{\omega,W}},\qquad B_{-\omega,W}=\overline{B_{\omega,W}}.$$
Thus the spectra of the $-\omega$ matrices are the complex conjugates of the spectra of the $\omega$ matrices.
\end{remark}

\begin{remark}
If $W$ is reciprocal, then
$$\phi_{\omega,W}(\ve):=W(\ve)\chi_\omega(\ve)$$
defines a complex gain on $G$, in the sense that $\phi_{\omega,W}(\ve^{-1})=\phi_{\omega,W}(\ve)^{-1}$.  Thus $A_{\omega,W}$ and $B_{\omega,W}$ can be viewed as the vertex and nonbacktracking edge adjacency matrices attached to this complex gain graph.  The unweighted case $W\equiv1$ is the special case of a complex unit gain graph, with gain $\chi_\omega(\ve)$ on each oriented edge $\ve$~\cite{Z1989Biased,R2012Spectral}.  Equivalently, a complex gain graph may be interpreted as a discrete complex line bundle with connection over $G$: after choosing a one-dimensional fiber at each vertex, the gain gives the parallel transport along an oriented edge, and switching gains corresponds to changing local trivializations; see Kenyon~\cite{K2011Spanning}, and also Section~3.1 and Definition~3.1 of~\cite{BPT2015Matrix}. 
\end{remark}

By convention, the \emph{spectrum} of a square matrix $M$, denoted by $\spec M$, is the multiset of eigenvalues of $M$ counted with multiplicity. For two square matrices $M$ and $N$, we say that $M$ is \emph{isospectral} to $N$ if $\spec M=\spec N$. We say that \emph{the spectrum of $M$ is the complex conjugate (resp.\ negation) of the spectrum of $N$}, written as $\spec M=\overline{\spec N}$ (resp.\ $\spec M=-\spec N$), if each $\lambda\in\spec M$ of multiplicity $m$ corresponds to $\overline{\lambda}\in\spec N$ of multiplicity $m$ (resp.\ to $-\lambda\in\spec N$ of multiplicity $m$). We say that $\spec M$ is \emph{invariant under complex conjugation (resp.\ negation)} if $\spec M=\overline{\spec M}$ (resp.\ $\spec M=-\spec M$).

Some basic conjugation properties of the weighted twisted matrices are stated in the following lemma.

\begin{lemma}\phantomsection\label{L:PropAdj}
Let $W:\vE(G)\to\mbbR_{>0}$ be a positive directed weight.  For every $\omega\in\Omega(G)$:
\begin{enumerate}[(a)]
\item $A_{-\omega,W}=\overline{A_{\omega,W}}$ and $B_{-\omega,W}=\overline{B_{\omega,W}}$.  Consequently
$$\spec A_{-\omega,W}=\overline{\spec A_{\omega,W}},
\qquad
\spec B_{-\omega,W}=\overline{\spec B_{\omega,W}}.$$
\item If $W$ is symmetric, then $A_{\omega,W}$ is Hermitian and the eigenvalues of $A_{\omega,W}$ are all real.  In particular,
$$\overline{A_{\omega,W}}=A_{-\omega,W}=A_{\omega,W}^T,$$
and $A_{\omega,W}$ and $A_{-\omega,W}$ are similar matrices.
\item If $W$ is symmetric, then $B_{\omega,W}$ and $B_{-\omega,W}$ are similar matrices.  Hence $\spec B_{\omega,W}$ is invariant under complex conjugation.
\end{enumerate}
In particular, taking $W\equiv1$ recovers the corresponding unweighted statements for $A_\omega$ and $B_\omega$.
\end{lemma}

\subsection{Periodic Floquet spectra and spectral-edge terminology}\label{SS:floquet-edges}
We record the periodic-graph language needed in Subsection~\ref{SS:outer-spectral-edges}.  This is deliberately stated for a general abelian periodic graph, since the maximal abelian cover of a finite graph is only one special case.

Let $\Gamma$ be a connected graph with a free co-compact action of a lattice $\Lambda\simeq\mbbZ^d$, and let $G=\Gamma/\Lambda$ be the finite quotient graph.  After choosing lifts of the vertices of $G$, each oriented edge $\ve\in\vE(G)$ has a displacement $\nu(\ve)\in\Lambda$.  This gives a homomorphism, still denoted by
$$\nu:H_1(G,\mbbZ)\longrightarrow \Lambda,$$
by summing displacements along closed walks.  If $\widetilde W:\vE(G)\to\mbbR$ is a real symmetric periodic edge weight, meaning $\widetilde W(\ve)=\widetilde W(\ve^{-1})$ and no positivity is assumed, the Bloch fiber over a unitary character $\zeta\in\widehat{\Lambda}:=\Hom(\Lambda,U(1))$ is the Hermitian matrix
$$\left(H_\zeta\right)_{vw}
=\sum_{\substack{\ve\in\vE(G)\\ \ve(0)=v,\ \ve(1)=w}}
\widetilde W(\ve)\zeta(\nu(\ve)).
$$
Since loops are allowed in our graph convention, a real periodic vertex potential can be encoded by zero-displacement loops and need not be treated as a separate term here.

\begin{definition}[Bands and spectral edges]\label{D:floquet-edges}
Let
$$\lambda_1(\zeta)\leq\cdots\leq\lambda_n(\zeta),\qquad \zeta\in\widehat{\Lambda},$$
be the ordered eigenvalues of the Hermitian fibers $H_\zeta$.  These functions are called the \emph{band functions}.  The \emph{Floquet spectrum} is
$$\Sigma(H):=\bigcup_{\zeta\in\widehat{\Lambda}}\spec H_\zeta
=\bigcup_{j=1}^n \lambda_j(\widehat{\Lambda}).$$
The endpoints of the intervals $\lambda_j(\widehat{\Lambda})$ are called \emph{band edges}.  A \emph{spectral edge} is an endpoint of the spectrum $\Sigma(H)$, equivalently an endpoint of a spectral gap or one of the two outer endpoints
$$\sup\Sigma(H),\qquad \inf\Sigma(H).$$
\end{definition}

\begin{conjecture}[Spectral edges conjecture, periodic graph form]\label{Conj:spectral-edges}
For a fixed abelian periodic graph, a generic self-adjoint periodic graph operator has the following property: every spectral edge is attained by a single band, the set of Bloch characters at which it is attained is finite, and at each such character the corresponding band is locally simple and has a non-degenerate extremum.
\end{conjecture}

\begin{remark}
The word ``generic'' depends on the chosen parameter space, for instance real symmetric periodic edge weights, with vertex potentials included through loops when desired.  The conjecture is usually understood as a local statement about the dispersion relation near spectral edges: isolation, non-degeneracy of the Hessian, and single-band attainment.  This formulation is in the spirit of Kuchment's periodic-operator perspective~\cite{K2016Overview}; for graph-theoretic developments, see for example Berkolaiko--Canzani--Cox--Marzuola~\cite{BCCM2022Local} and Faust--Sottile~\cite{FS2025Corners}.
\end{remark}

\subsection{Weighted L-functions and determinant formulas}\label{SS:CompLfunctions}
For a walk or cycle $P$, write $\chi_\omega(P):=\chi_\omega(P^{\ab})$.  Let $W$ be a positive directed weight.

\begin{definition}[Weighted L-function and weighted Ihara zeta function]
The \emph{weighted L-function} associated with $W$ and $\chi_\omega$ is
$$L_W(u,\chi_\omega):=\prod_{[P]\in\bmcalP}\left(1-W(P)\chi_\omega(P)u^{\tau(P)}\right)^{-1}.$$
Its untwisted specialization
$$z_W(u):=L_W(u,\chi_0)=\prod_{[P]\in\bmcalP}\left(1-W(P)u^{\tau(P)}\right)^{-1}$$
is the \emph{weighted Ihara zeta function}.  When $W\equiv1$, we retain the notation
$$L(u,\chi_\omega):=L_1(u,\chi_\omega),\qquad z(u):=z_1(u).$$
\end{definition}

\begin{lemma} \label{L:LFuncExp}
As formal power series,
$$L_W(u,\chi_\omega)=\exp\left(\sum_{[C]\in\bmcalC}\frac{W(C)\chi_\omega(C)}{r(C)}u^{\tau(C)}\right),$$
and in particular
$$z_W(u)=\exp\left(\sum_{[C]\in\bmcalC}\frac{W(C)}{r(C)}u^{\tau(C)}\right).$$
\end{lemma}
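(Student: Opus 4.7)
The plan is the standard Euler-product-to-exponential argument: take $\log$ of both sides, expand with the Mercator series $-\log(1-x)=\sum_{k\ge 1} x^k/k$, and then reindex the resulting double sum over prime cycles and exponents as a single sum over all cycles. The special case $z(u)$ follows by setting $\omega=0$, in which case $\chi_\omega\equiv 1$.

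More concretely, I would first take the formal logarithm of the Euler product defining $L(u,\chi_\omega)$ to obtain
\begin{equation*}
\log L(u,\chi_\omega) \;=\; -\sum_{[P]\in\bmcalP}\log\!\bigl(1-\chi_\omega(P)\,u^{\tau(P)}\bigr) \;=\; \sum_{[P]\in\bmcalP}\sum_{k\ge 1}\frac{\chi_\omega(P)^k\,u^{k\tau(P)}}{k}.
\end{equation*}
Next I would use the two multiplicativity/additivity identities that follow immediately from the definitions in Section~\ref{S:intro}: for a prime circuit $P$ and $k\ge 1$, setting $C=P^k$ we have $\tau(C)=k\tau(P)$, and since $\chi_\omega$ is a character of $\mcalT(G)$,
\[
\chi_\omega(P)^k \;=\; e\bigl(k\,\omega(P^{\ab})\bigr) \;=\; e\bigl(\omega((P^k)^{\ab})\bigr) \;=\; \chi_\omega(C).
\]

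The key combinatorial step is the reindexing: by the unique prime decomposition $C=P^r$ of any circuit (and its cycle class $[C]=[P^r]$), the map $([P],k)\mapsto [P^k]$ is a bijection between $\bmcalP\times\mbbZ_{\ge 1}$ and $\bmcalC$, with the integer $k$ on the left corresponding exactly to $r([C])$ on the right. Applying this bijection to the double sum gives
\begin{equation*}
\sum_{[P]\in\bmcalP}\sum_{k\ge 1}\frac{\chi_\omega(P)^k\,u^{k\tau(P)}}{k} \;=\; \sum_{[C]\in\bmcalC}\frac{\chi_\omega(C)}{r(C)}\,u^{\tau(C)},
\end{equation*}
and exponentiating yields the first identity. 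The formula for $z(u)$ is the specialization $\omega=0$, since then $\chi_\omega(C)=1$ for every cycle and $L(u,\chi_0)$ coincides with $z(u)$ by definition.

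I do not expect any real obstacle here; the only thing to justify with a little care is the bijection $([P],k)\leftrightarrow [C]$ and the agreement of the weights (the factor $1/k$ becoming $1/r(C)$), which are formal consequences of the definitions of prime decomposition and of the equivalence relation defining cycles. Since these are formal power series identities in $u$ (we are not asserting convergence at this stage), no analytic issues arise.
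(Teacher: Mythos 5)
Your proof is correct and follows essentially the same route as the paper's: take logs, expand via the Mercator series, use $\chi_\omega(P)^k=\chi_\omega(P^k)$ and $\tau(P^k)=k\tau(P)$, and reindex the double sum over $\bmcalP\times\mbbZ_{\ge1}$ as a single sum over $\bmcalC$ via unique prime decomposition. You spell out the bijection and weight-matching a bit more explicitly than the paper does, but there is no difference in substance.
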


\begin{lemma} \label{L:L-symmetry}
If $W$ is symmetric, then $L_W(u,\chi_\omega)=L_W(u,\chi_{-\omega})$.
\end{lemma}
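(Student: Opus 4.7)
The plan is to reduce the claim via Lemma~\ref{L:LFuncExp} to an identity about the ``cycle-generating series''
$$S_\omega(u) := \sum_{[C]\in\bmcalC}\frac{\chi_\omega(C)}{r(C)}u^{\tau(C)},$$
since $L(u,\chi_\omega)=\exp(S_\omega(u))$. Thus it suffices to show $S_\omega(u)=S_{-\omega}(u)$ as formal power series (equivalently, coefficient-by-coefficient in $u$).

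The key observation will be that the map $[C]\mapsto [C^{-1}]$ is an involution on $\bmcalC$ that preserves both $\tau$ and $r$ and that converts $\chi_\omega$ into $\chi_{-\omega}$. First I would check that inversion descends to equivalence classes: if $C'$ is obtained from $C$ by a cyclic rotation of its edge sequence, then $(C')^{-1}$ is a cyclic rotation of $C^{-1}$, so $[C^{-1}]$ depends only on $[C]$. Next, clearly $\tau(C^{-1})=\tau(C)$; and if $C=P^r$ with $P$ prime, then $C^{-1}=(P^{-1})^r$ with $P^{-1}$ still prime (inversion is a bijection on $\mcalP$), so $r(C^{-1})=r(C)$.

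The character computation is the crux, but it is a one-liner: since the abelianization satisfies $(C^{-1})^{\ab}=-C^{\ab}$, we have
$$\chi_\omega(C^{-1}) = e\bigl(\omega((C^{-1})^{\ab})\bigr) = e\bigl(-\omega(C^{\ab})\bigr) = \chi_{-\omega}(C).$$
Reindexing the sum $S_\omega(u)$ by the involution $[C]\mapsto [C^{-1}]$ therefore gives
$$S_\omega(u) = \sum_{[C]\in\bmcalC}\frac{\chi_\omega(C^{-1})}{r(C^{-1})}u^{\tau(C^{-1})} = \sum_{[C]\in\bmcalC}\frac{\chi_{-\omega}(C)}{r(C)}u^{\tau(C)} = S_{-\omega}(u),$$
and exponentiating yields $L(u,\chi_\omega)=L(u,\chi_{-\omega})$.

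I do not anticipate a genuine obstacle here; the only mildly delicate point is verifying that inversion is a well-defined involution on $\bmcalC$ preserving $\tau$ and $r$. Everything else is the elementary identity $\chi_\omega(C^{-1})=\chi_{-\omega}(C)$ coming from linearity of the $1$-form pairing on abelianizations.
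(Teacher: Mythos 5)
Your proof is correct and rests on the same core idea as the paper's: reindexing by the involution $P\mapsto P^{-1}$ (or $C\mapsto C^{-1}$) together with the identity $\chi_\omega(C^{-1})=\chi_{-\omega}(C)$. The paper applies the involution directly to the Euler product over prime cycles, which is slightly more economical since it avoids having to check that $r(C^{-1})=r(C)$; your detour through Lemma~\ref{L:LFuncExp} is equally valid but requires that one extra (easy) verification.
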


\begin{proposition}[Weighted character detection] \label{P:LFuncIden}
Let $W$ be a positive directed weight and let $\omega,\omega'\in\Omega(G)$.
\begin{enumerate}[(a)]
\item $L_W(u,\chi_\omega)=z_W(u)$ if and only if $\underline{\omega}=0$, equivalently,
$$\omega\in\ker(\Omega(G)\twoheadrightarrow\mcalX(G))
=\imag(d_\mbbR)\oplus H_1(G,\mbbZ)^\vee,$$
or equivalently $\phi_1(\omega)\in H_1(G,\mbbZ)^\vee$.
\item If $\underline{\omega}=\underline{\omega'}$, then
$$L_W(u,\chi_\omega)=L_W(u,\chi_{\omega'}).$$
\end{enumerate}
\end{proposition}

\begin{remark}
The positivity of $W$ is essential in Proposition~\ref{P:LFuncIden}(a): it prevents cancellation when the coefficients of $\log L_W$ and $\log z_W$ are compared.  This character-detection property will be used in Lemma~\ref{L:weighted-spectral-rigidity}.
\end{remark}

The determinant formulas have a long history.  Ihara introduced the zeta function in the setting of discrete subgroups of $p$-adic groups~\cite{I1966Padic}.  Hashimoto obtained its nonbacktracking edge-matrix formulation~\cite{H1989Zeta}, and Bass established the vertex-matrix determinant formula for arbitrary finite graphs~\cite{B1992Ihara}.  Stark and Terras subsequently introduced the multivariate edge zeta function, assigning independent variables to oriented edges~\cite{ST1996Zeta}; its edge determinant specializes to the weighted twisted identity below by assigning the variable $uW(\ve)\chi_\omega(\ve)$ to $\ve$.  Weighted zeta and $L$-functions, together with determinant expressions, were further developed by Mizuno and Sato~\cite{MS2004Weighted}.  Since conventions for placing an oriented-edge weight in the transition matrix vary, we record the precise vertex reduction compatible with Definition~\ref{D:weighted-adj}.

For an oriented edge $\ve$, put
$$r_W(\ve):=W(\ve)\cdot W(\ve^{-1}).$$
Define the following $n\times n$ matrices of rational functions:
\begin{align*}
\bigl(\mathsf A_{\omega,W}(u)\bigr)_{vw}
&:=\sum_{\substack{\ve\in\vE(G)\\ \ve(0)=v,\ \ve(1)=w}}
\frac{uW(\ve)\chi_\omega(\ve)}{1-u^2r_W(\ve)},\\
\bigl(\mathsf D_W(u)\bigr)_{vw}
&:=\begin{cases}
\displaystyle\sum_{\substack{\ve\in\vE(G)\\ \ve(0)=v}}
\frac{u^2r_W(\ve)}{1-u^2r_W(\ve)},&v=w,\\
0,&v\neq w.
\end{cases}
\end{align*}
Also, let $Q$ be the diagonal matrix with entries $Q_{vv}=\deg(v)-1$.

\begin{theorem}[Weighted Hashimoto--Bass determinant formulas]\phantomsection\label{T:DertminantL}
\begin{enumerate}[(a)]
\item Let $p_{\omega,W}$ be the characteristic polynomial of $B_{\omega,W}$. Then
$$L_W(u,\chi_\omega)^{-1}=\det(I-uB_{\omega,W})=u^{2m}p_{\omega,W}(u^{-1}).$$
\item Fix an orientation $\mfrako$ of $G$.  For every positive directed weight $W$,
\begin{align*}
L_W(u,\chi_\omega)^{-1}
&=\prod_{\ve\in\vE_{\mfrako}(G)}\bigl(1-u^2r_W(\ve)\bigr)
\det\bigl(I-\mathsf A_{\omega,W}(u)+\mathsf D_W(u)\bigr).
\end{align*}
The right-hand side is independent of $\mfrako$, and its apparent denominators cancel.  In particular, if $W$ is reciprocal, then
$$L_W(u,\chi_\omega)^{-1}
=(1-u^2)^{g-1}\det\bigl(I-uA_{\omega,W}+u^2Q\bigr).$$
Thus the classical twisted Ihara--Bass formula is recovered by taking $W\equiv1$.
\end{enumerate}
\end{theorem}

\begin{remark}
Symmetry and reciprocity play different roles here.  If $W$ is symmetric, then $A_{\omega,W}$ is Hermitian, but $r_W(\ve)=W(\ve)^2$ generally depends on the edge.  Consequently the factors $1-u^2W(\ve)^2$ and the corresponding denominators in $\mathsf A_{\omega,W}(u)$ and $\mathsf D_W(u)$ cannot in general be replaced by the single classical factor $(1-u^2)^{g-1}$ and the matrix $Q$.  By contrast, reciprocity gives $r_W(\ve)=1$ on every edge and yields the clean three-term determinant above.  A positive weight that is both symmetric and reciprocal is necessarily trivial.
\end{remark}

\section{Main results} \label{S:main}
Throughout this section, $G$ is a connected finite graph with $n$ vertices,
$m$ unoriented edges, and genus $g=m-n+1$; multiple edges and loops are
allowed.
With the preliminary notation in place, we now introduce the canonical character and state the main results (Theorems~\ref{T:SpecDuality},~\ref{T:outer-spectral-edges},~\ref{T:trace-formula}, and~\ref{T:counting}). Recall that $\Omega(G)$ is the space of $1$-forms, $\mcalX(G)$ is the unitary character group, $\mcalX_\mbbC(G)$ is the full complex character torus, and $\underline{\omega}\in\mcalX(G)$ denotes the character class of a $1$-form $\omega$. The twisted weighted matrices are defined in Definition~\ref{D:weighted-adj}; for a full character $\xi\in\mcalX_\mbbC(G)$, the notation $\spec A_{\xi,W}$ and $\spec B_{\xi,W}$ means the spectrum of any switching representative.

\subsection{Canonical and period characters: spectral antisymmetry and extremality}
For a square matrix $M$, its \emph{spectral radius} is
$\rho(M):=\max\{|\lambda|:\lambda\in\spec M\}$.

\begin{lemma}\label{L:unique}
For an orientation $\mfrako$ of $G$, define
$$\omega_{\mfrako}:=\frac{1}{2}\sum_{\ve\in\vE_{\mfrako}(G)}d\ve.$$
Then the character
$$\theta:=\underline{\omega_{\mfrako}}\in\mcalX(G)$$
is independent of $\mfrako$.  It is the unique full complex character satisfying
$$\theta(C)=(-1)^{\tau(C)}$$
for every closed walk $C$.
\end{lemma}

\begin{proof}
By definition, $\chi_{\omega_{\mfrako}}(\ve)=-1$ for every oriented edge $\ve\in\vE(G)$, so $\theta(C)=(-1)^{\tau(C)}$ for every closed walk $C$.  Conversely, fix a spanning tree $T$ and let $\ve_1,\ldots,\ve_g$ be the positively oriented cotree edges.  The spanning-tree cycles
$$u_i=\ve_i+T(\ve_i(1),\ve_i(0))^{\ab},\qquad i=1,\ldots,g,$$
form a basis of $H_1(G,\mbbZ)$, as in the proof of Proposition~\ref{P:basis_gen}.  If a full complex character $\xi\in\mcalX_\mbbC(G)$ has the stated property, then its values on these basis cycles agree with the values of $\theta$.  Hence $\xi=\theta$ in $\mcalX_\mbbC(G)$.  This proves uniqueness, and therefore the class $\underline{\omega_{\mfrako}}$ is independent of the orientation $\mfrako$.
\end{proof}

\begin{definition}[Canonical character and canonical $1$-forms]\label{D:canonical-character}
The character $\theta$ in Lemma~\ref{L:unique} is called the \emph{canonical character} of $G$.  A $1$-form $\omega\in\Omega(G)$ is called a \emph{canonical $1$-form} of $G$ if
$$\underline{\omega}=\theta.$$
In particular, $\omega_{\mfrako}$ is a canonical $1$-form for every orientation $\mfrako$, but canonical $1$-forms are not unique.
\end{definition}

\begin{construction}[An alternative cotree construction of canonical $1$-forms]\label{Cs:CanoForm2}
Let $T$ be a spanning tree of $G$, and fix an orientation $\mfrako$ of $G$.  Since $T$ is a tree, it is bipartite: choose the bipartition
$$V(G)=V_1\sqcup V_2$$
such that every edge of $T$ has one endpoint in $V_1$ and the other in $V_2$.  Because $T$ is connected, this bipartition is unique up to interchanging $V_1$ and $V_2$, which does not affect the construction below.  For each positively oriented cotree edge $\ve\in\vE_{\mfrako}(G)\setminus T$, set
$$
\kappa(\ve)=
\begin{cases}
0,&\text{if $\ve$ joins $V_1$ and $V_2$,}\\
1/2,&\text{otherwise.}
\end{cases}
$$
Equivalently, $\kappa(\ve)=0$ or $1/2$ according as the unique tree path
$T(\ve(1),\ve(0))$ has odd or even length.
Define
$$\omega_{(T,\mfrako)}:=\sum_{\ve\in\vE_{\mfrako}(G)\setminus T}\kappa(\ve)d\ve.$$
This construction is topological; it contains no reference to the weight $W$.
Theorem~\ref{T:SpecDuality}(SA2) shows that
$\underline{\omega_{(T,\mfrako)}}=\theta$, so $\omega_{(T,\mfrako)}$ is a
canonical $1$-form.
\end{construction}

\begin{remark}\label{R:X_TO}
Let $\mcalX_{(T,\mfrako)}:=\{\sum_{i=1}^g\omega_i d\ve_i:0\leq\omega_i<1\}$, where $\{\ve_1,\cdots,\ve_g\}=\vE_{\mfrako}(G)\setminus T$.  By Proposition~\ref{P:basis_gen}, the quotient map $\omega\mapsto\underline{\omega}$ restricts to a bijection of sets
$$\mcalX_{(T,\mfrako)}\longrightarrow \mcalX.$$
Thus $\mcalX_{(T,\mfrako)}$ is a set-theoretic fundamental domain, not a canonical group-theoretic identification.  In particular, every character has a unique representative in $\mcalX_{(T,\mfrako)}$.
In these coordinates, the $2$-torsion characters are precisely the $2^g$
points whose cotree coordinates all belong to $\{0,1/2\}$.
\end{remark}

For a graph of genus $g\geq2$, put
$$
\nu_G:=\gcd\{\tau(P):[P]\in\bmcalP\}.
$$
We call $\nu_G$ the \emph{nonbacktracking period}, or simply the \emph{period},
of $G$.
Equivalently, by the classical period theorem for irreducible nonnegative
matrices, $\nu_G$ is the Perron--Frobenius period of the irreducible edge
adjacency matrix on the nonbacktracking core; see
\cite[Theorem~1.3]{S1981Nonnegative}.
Thus $\nu_G$ depends only on the graph and not on the positive directed
weight.

The spectral-radius extremality problem for twisted edge adjacency on the
unitary character torus exhibits a subtle bipartite alternative; see
Theorem~\ref{T:SpecDuality}(SR4)--(SR5).  In the non-bipartite case, the
maximizers are the trivial and canonical characters.  In the bipartite case,
however, the canonical character is trivial, and a further subtlety arises:
the edge dynamics may admit either no additional maximizer or exactly one,
governed by a different parity obstruction.  The following definition
isolates the character associated with that obstruction, which
Theorem~\ref{T:SpecDuality}(SR5) identifies, when it exists, as the unique
possible additional maximizer of the weighted edge spectral radius in the
bipartite case.

\begin{definition}[Period character]\label{D:period-character}
Let $G$ have genus $g\geq2$, and let $\nu_G$ denote its period.  The assignment
$$
P^{\ab}\longmapsto \frac{\tau(P)}{\nu_G}\pmod 2,
\qquad [P]\in\bmcalP,
$$
need not be compatible with the integral homology relations among prime
cycles.  If it descends to a homomorphism
$$
\varepsilon_G:H_1(G,\mbbZ)\longrightarrow\mbbZ/2\mbbZ,
$$
we define
$$
\eta(\alpha):=(-1)^{\varepsilon_G(\alpha)},
\qquad \alpha\in H_1(G,\mbbZ),
$$
and call $\eta\in\mcalX(G)$ the \emph{period character} of $G$.
Whenever $\eta$ exists, a $1$-form $\omega\in\Omega(G)$ satisfying
$\underline{\omega}=\eta$ is called a \emph{period $1$-form}.
\end{definition}

With the canonical character and the possible period character now defined,
we can state the main theorem.

\begin{theorem}[\textbf{Weighted spectral antisymmetry and unitary spectral-radius extremality}]\label{T:SpecDuality}
Let $G$ be a connected graph of genus $g$, let $W:\vE(G)\to\mbbR_{>0}$ be a positive directed weight, let $\mcalX=\mcalX(G)$ be the unitary character group of $G$, and let $\mcalX_\mbbC=\mcalX_\mbbC(G)$ be the full complex character torus. Let $\theta\in\mcalX\subset\mcalX_\mbbC$ be the canonical character of Definition~\ref{D:canonical-character}. Then $\theta$ depends only on $G$, and the following statements hold.

\medskip
\noindent\textbf{Canonical character and spectral antisymmetry.}
\begin{enumerate}[\textup{(SA}1\textup{)}]
\item For every $\xi\in\mcalX_\mbbC$,
$$\spec A_{\theta+\xi,W}=-\spec A_{\xi,W},\qquad
\spec B_{\theta+\xi,W}=-\spec B_{\xi,W}.$$
If $W$ is symmetric, then in addition, for every $\omega\in\Omega(G)$,
$$
\spec A_{\theta-\underline{\omega},W}=-\spec A_{\underline{\omega},W},
\qquad
\spec B_{\theta-\underline{\omega},W}=-\spec B_{\underline{\omega},W}.
$$
\item The character $\theta$ is the unique full complex character satisfying
$$\theta(C)=(-1)^{\tau(C)}$$
for every closed walk $C$.  For every orientation $\mfrako$ and every spanning tree $T$,
$$\theta=\underline{\omega_{\mfrako}}=\underline{\omega_{(T,\mfrako)}},$$
so the spanning-tree form $\omega_{(T,\mfrako)}$ of Construction~\ref{Cs:CanoForm2} is also canonical.  In particular, $\theta$ is a $2$-torsion character of $\mcalX$.
\item $\theta=0$ if and only if $G$ is bipartite. If $G$ is bipartite, then
for every full complex character $\xi\in\mcalX_\mbbC$, the spectra of
$A_{\xi,W}$ and $B_{\xi,W}$ are invariant under negation.
\item For every positive directed weight $W$ and every $\omega\in\Omega(G)$, the trivial and canonical characters are spectrally rigid on the unitary torus:
\begin{align*}
\underline{\omega}=0
&\Longleftrightarrow \spec A_{\omega,W}=\spec A_{0,W}
\Longleftrightarrow \spec B_{\omega,W}=\spec B_{0,W},\\
\underline{\omega}=\theta
&\Longleftrightarrow \spec A_{\omega,W}=-\spec A_{0,W}
\Longleftrightarrow \spec B_{\omega,W}=-\spec B_{0,W}.
\end{align*}
\end{enumerate}

\medskip
\noindent\textbf{Period character and spectral rotation.}
\begin{enumerate}[\textup{(PC}1\textup{)}]
\item Suppose $g\geq2$.  Whenever the period character $\eta$ exists, it is
unique and is a nontrivial $2$-torsion character of $\mcalX$.  If
$G$ is non-bipartite, then $\eta$ always exists and $\eta=\theta$; if $G$ is
bipartite, it may or may not exist.
\item If the period character $\eta$ exists, then for every positive directed
weight $W$ and every full complex character $\xi\in\mcalX_\mbbC$,
$$
\spec B_{\eta+\xi,W}
=e\left(\frac{1}{2\nu_G}\right)\spec B_{\xi,W}.
$$
\end{enumerate}

\medskip
\noindent\textbf{Spectral-radius extremality on the unitary character torus.}
\begin{enumerate}[\textup{(SR}1\textup{)}]
\item For all $\omega\in\Omega(G)$,
$$\rho(A_{\omega,W})\leq \rho(A_{0,W}),\qquad
\rho(B_{\omega,W})\leq \rho(B_{0,W}).$$
\item $\rho(A_{\omega,W})<\rho(A_{0,W})$ for all $\omega\in\Omega(G)$ such that $\underline{\omega}\notin\{0,\theta\}$.
\item If $g=1$, then $\rho(B_{\omega,W})=\rho(B_{0,W})$ for all $\omega\in\Omega(G)$.
\item If $G$ is non-bipartite and has genus $g\geq 2$, then
$$
\rho(B_{\omega,W})=\rho(B_{0,W})
\quad\Longleftrightarrow\quad
\underline{\omega}\in\{0,\theta\}.
$$
\item Suppose that $G$ is bipartite and has genus $g\geq2$.  If the period
character does not exist, then
$$
\rho(B_{\omega,W})=\rho(B_{0,W})
\quad\Longleftrightarrow\quad
\underline{\omega}=0.
$$
If the period character $\eta$ exists, then
$$
\rho(B_{\omega,W})=\rho(B_{0,W})
\quad\Longleftrightarrow\quad
\underline{\omega}\in\{0,\eta\}.
$$
\end{enumerate}
\end{theorem}

\begin{remark}
By (SA2), the canonical translation in (SA1) is an involution of
$\mcalX_\mbbC$.  At the level of gain representatives, if $\gamma$ represents
$\xi$ and $\gamma_\theta$ represents $\theta$, then
$\gamma_\theta\cdot\gamma$ represents $\theta+\xi$ and
$$
A_{\gamma_\theta\cdot\gamma,W}\sim-A_{\gamma,W},\qquad
B_{\gamma_\theta\cdot\gamma,W}\sim-B_{\gamma,W}.
$$
This canonical translation is available for every positive directed weight.
If $W$ is symmetric, (SA1) also gives spectral antisymmetry at the
complementary character $\theta-\underline{\omega}$, which is characterized by
$$\underline{\omega}+(\theta-\underline{\omega})=\theta.$$
\end{remark}

\begin{remark}
In the language of complex unit gain graphs, fixing a unitary character $\underline{\omega}$ amounts to fixing a switching class of gains on the underlying graph $G$.  The usual gain-graph notions of balancedness, sign-symmetry, and spectral symmetry are properties of an individual switching class or compare that class with its negative~\cite{Z1989Biased,R2012Spectral,WvD2023Symmetry}.  Statement (SA1) gives a torus-level analogue: canonical translation pairs every full character $\xi$ with $\theta+\xi$, and these two switching classes have spectra related by sign-negation.  Thus the canonical character plays the role of a global antibalancing twist for the whole family of character gains.
\end{remark}

\begin{remark}
The theorem is stated in the positive directed-weight setting.  The spectral antisymmetry in (SA1) is purely algebraic and does not require symmetric weights, Hermitian matrices, or even unitary gains.  The symmetric-weight refinement follows by combining the representative-level similarity above with Lemma~\ref{L:PropAdj}, which gives $\spec A_{-\omega,W}=\spec A_{\omega,W}$ and $\spec B_{-\omega,W}=\spec B_{\omega,W}$ in the symmetric case.
If $W$ is reciprocal, then the product $W(\ve)\gamma(\ve)$ is itself a complex gain for every complex gain $\gamma$ representing a full character.  In this gain-graph reading, (SA1) says that multiplying the gain by the canonical sign gain, which is switching equivalent to the all-edge sign $-1$, sends both the vertex and nonbacktracking edge spectra to their negatives.  In the unweighted specialization $W\equiv1$, similarity holds for the edge matrices as well as for the vertex matrices.  In particular, for $1$-forms $\omega,\omega'$ satisfying $\underline{\omega'}=\theta+\underline{\omega}$,
$$\spec A_{\omega'}=-\spec A_\omega,\qquad \spec B_{\omega'}=-\spec B_\omega.$$
The unitary rigidity statement in (SA4) specializes to the usual unweighted uniqueness clause, now for both vertex and edge matrices: $\underline{\omega}=\theta$ if and only if $\spec A_\omega=-\spec A$ and if and only if $\spec B_\omega=-\spec B$.
\end{remark}

\begin{remark}\label{R:period-character}
For non-bipartite $G$, the period $\nu_G$ is odd and $\eta=\theta$; for
bipartite $G$, $\nu_G$ is even, $\theta=0$, and $\eta$ may not exist.  Thus
(SR5) shows that among the $2^g$ $2$-torsion characters, at most one
nontrivial character can be an additional spectral-radius maximizer.  Its
existence and the equality locus are independent of the positive directed
weight.  The bipartite theta graphs $G_2$ and $G_3$ in
Subsection~\ref{SS:example_spec} realize the two alternatives: the period
character does not exist for $G_2$, whereas it exists for $G_3$.  The detailed
homological verification is given there.  Whenever $\eta$ exists, the
rotation in (PC2) is consistent with its being $2$-torsion: applying (PC2)
twice shows that every twisted edge spectrum $\spec B_{\xi,W}$ is invariant
under multiplication by $e(1/\nu_G)$.  The rotation persists even when $W$ is
symmetric.  Indeed, because $\eta=-\eta$, the symmetry
$\spec B_{-\omega,W}=\spec B_{\omega,W}$ becomes tautological at $\eta$ and
does not identify its spectrum with the untwisted one; (SA4) instead gives
$\spec B_{\eta,W}\neq\spec B_{0,W}$.  For $G_3$, more precisely,
$\spec B_{\eta_{G_3},W}=\sqrt{-1}\,\spec B_{0,W}$.
Unlike (SA1), the period-translation law concerns only edge adjacency: there
is no corresponding scalar-rotation formula for the vertex matrices in
general.  When $G$ is non-bipartite, $\eta=\theta$, and the vertex statement
is the sign-negation already supplied by (SA1).
\end{remark}

The conceptual definition of the period character leads to the following
finite decision algorithm.  When the character exists, the same algorithm
also constructs a cotree-supported period $1$-form, parallel to
Construction~\ref{Cs:CanoForm2} for the canonical character.

\begin{algorithm}[Period-character decision and cotree construction]
\label{A:period-character-algorithm}
\textbf{Input.}\quad A connected graph $G$ of genus $g\geq2$, an orientation
$\mfrako$, and a spanning tree $T$ of $G$.

\smallskip
\noindent\textbf{Step 1.}\quad Form the cyclic core $G^\circ$ of
Lemma~\ref{L:genus-2-edge-walk} by repeatedly deleting vertices of degree one
and their incident edges.  On the oriented
edges of $G^\circ$, use the nonbacktracking successor relation
$$
\va\shortrightarrow\vb
\quad\Longleftrightarrow\quad
\va(1)=\vb(0),\qquad \vb\neq\va^{-1}.
$$
Thus a sequence of successive oriented edges is precisely a nonbacktracking
edge-walk.  Compute the period $\nu_G$ of $G$.

\smallskip
\noindent\textbf{Step 2.}\quad Write
$$
\{\ve_1,\ldots,\ve_g\}=\vE_{\mfrako}(G)\setminus T,
\qquad
C_i:=\ve_i\cdot T(\ve_i(1),\ve_i(0)).
$$
Since each $C_i$ is a prime cycle, $\nu_G\mid\tau(C_i)$.  Let
$q_i\in\{0,1\}$ be determined by
$$
q_i\equiv\frac{\tau(C_i)}{\nu_G}\pmod2.
$$
Form the candidate cotree-supported $1$-form
\begin{equation}\label{E:period-cotree-form}
\omega^{\mathrm{per}}_{(T,\mfrako)}
:=\frac12\sum_{i=1}^g q_i\,d\ve_i.
\end{equation}
Equivalently, define $c_T:\vE(G^\circ)\to\mbbZ/2\mbbZ$ by
$$
c_T(\ve_i)=c_T(\ve_i^{-1})=q_i,
\qquad
c_T(\ve)=0
$$
on the oriented tree edges.

\smallskip
\noindent\textbf{Step 3.}\quad Choose a base oriented edge $\va_\ast$.
For each $\vb\neq\va_\ast$, choose a predecessor
$\operatorname{pred}(\vb)$ such that
$\operatorname{pred}(\vb)\shortrightarrow\vb$ and repeated application of
$\operatorname{pred}$ eventually reaches $\va_\ast$.  Such a choice exists
because $G^\circ$ is edge-walk-connected; for example, choose the
predecessors along a family of shortest edge-walks from $\va_\ast$.

Set $p(\va_\ast)=0$.  Beginning with the oriented edges nearest to
$\va_\ast$, define recursively
$$
p(\vb)\equiv p\bigl(\operatorname{pred}(\vb)\bigr)
+\nu_Gc_T(\vb)-1\pmod{2\nu_G}.
$$
This assigns $p(\vb)$ to every oriented edge.  Finally, test the congruence
\begin{equation}\label{E:period-character-congruence}
p(\vb)-p(\va)\equiv \nu_Gc_T(\vb)-1\pmod{2\nu_G}
\end{equation}
for every allowed transition $\va\shortrightarrow\vb$ not already used as a
chosen predecessor transition.

\smallskip
\noindent\textbf{Output.}\quad
If any allowed transition fails the congruence, then the period character
does not exist.  If every transition passes, then the period character exists
and
\begin{equation}\label{E:period-form-output}
\eta=\underline{\omega^{\mathrm{per}}_{(T,\mfrako)}},
\qquad
\eta(C_i^{\ab})=(-1)^{q_i}\qquad(i=1,\ldots,g).
\end{equation}
Thus $\omega^{\mathrm{per}}_{(T,\mfrako)}$ is a period $1$-form.  The output
character and the success or failure of the test are independent of the
chosen orientation and spanning tree.
\end{algorithm}

\begin{proposition}\label{P:period-character-algorithm}
Algorithm~\ref{A:period-character-algorithm} terminates and reports success
if and only if the period character exists.  When it succeeds, its output is
the period character and
$\omega^{\mathrm{per}}_{(T,\mfrako)}$ is a cotree-supported period
$1$-form.  The output character and the success or failure of the test are
independent of the chosen orientation and spanning tree.
\end{proposition}

The algorithm requires neither cycle enumeration nor spectral calculation.
The proposition is proved at the end of
Section~\ref{S:proof-antisymmetry}; a faster local obstruction is recorded in
Subsection~\ref{SS:2adic-refinement}.

We defer the technical and lengthy proof of Theorem~\ref{T:SpecDuality} to
Section~\ref{S:proof-antisymmetry}.  Its unweighted regular-graph
specialization is recorded in Appendix~\ref{C:regular-specializations}.

\subsection{Outer spectral edges and periodic graph interpretation}\label{SS:outer-spectral-edges}
We next spell out what the strict spectral-radius part of Theorem~\ref{T:SpecDuality} says in the language of periodic graph spectra.  The terminology used here is collected in Subsection~\ref{SS:floquet-edges}; the point of the present subsection is to specialize that general periodic language to the maximal abelian cover of $G$.

Let $W$ be a positive symmetric weight, so every $A_{\omega,W}$ is
Hermitian.  The deck group of the maximal abelian cover is canonically
identified with $H_1(G,\mbbZ)$.  Under this identification, the displacement
homomorphism of Subsection~\ref{SS:floquet-edges} sends the homology class of
a closed walk to itself.  Its dual therefore identifies the Bloch torus of
the cover with
$$
\widehat{H_1(G,\mbbZ)}=\mcalX(G).
$$
Thus, unlike a general abelian periodic cover, the maximal abelian cover
samples every unitary character of $G$.

For $\xi\in\mcalX(G)$, choose a $1$-form $\omega$ with
$\underline{\omega}=\xi$.  The Bloch fiber at $\xi$ is represented by
$A_{\omega,W}$.  Different choices of $\omega$ give unitarily similar
matrices by Lemma~\ref{L:switching-similarity}, so write
$$\lambda_1(\xi)\leq\cdots\leq\lambda_n(\xi)$$
for its ordered eigenvalues and $\spec A_{\xi,W}$ for its spectrum.  Set
$$\Sigma_{\operatorname{Floq}}(W):=\bigcup_{\xi\in\mcalX(G)}\spec A_{\xi,W}.$$
The endpoints of $\Sigma_{\operatorname{Floq}}(W)$ are the two \emph{outer spectral edges} of this finite-dimensional Floquet family.
For a general periodic cover, the Bloch characters themselves form a
subtorus.  More precisely, let $\Gamma$ be an abelian periodic cover with
finite quotient $G$ and lattice deck group $\Lambda$, and let
$$\nu:H_1(G,\mbbZ)\longrightarrow\Lambda$$
be its displacement homomorphism.  The pullback
$$\nu^*:\widehat{\Lambda}\longrightarrow\mcalX(G),\qquad
\nu^*\zeta:=\zeta\circ\nu,$$
has image the closed Bloch subtorus
$$\mathcal S_\Gamma:=\nu^*\widehat{\Lambda}\subset\mcalX(G)$$
of $\Gamma$.  For the maximal abelian cover,
$\mathcal S_\Gamma=\mcalX(G)$.

Now let $\widetilde W$ be a nowhere-zero real symmetric periodic weight,
regarded also as a weight on $G$, and put
$$
W:=|\widetilde W|,
\qquad
\sigma:=\operatorname{sgn}\widetilde W.
$$
Let $\zeta_\sigma\in\mcalX(G)$ be the $2$-torsion switching class of the sign
gain $\sigma$.  We use $A_{\xi,\widetilde W}$ for the matrix defined by the
same entrywise formula as in Definition~\ref{D:weighted-adj}, with
$\widetilde W$ in place of a positive weight.  For every
$\zeta\in\widehat\Lambda$, absorbing the sign into the Bloch gain gives
$$
\spec A_{\nu^*\zeta,\widetilde W}
=\spec A_{\zeta_\sigma+\nu^*\zeta,W}.
$$
Consequently, rewriting the signed Bloch family using the positive weight
$W$ replaces $\mathcal S_\Gamma$ by the affine subtorus
$$
\mcalT_{\Gamma,\widetilde W}
:=\zeta_\sigma+\mathcal S_\Gamma.
$$
This affine subtorus is a subtorus precisely when
$\zeta_\sigma\in\mathcal S_\Gamma$.  In particular, it is a subtorus when
$\widetilde W$ is positive.  Thus the affine parameter set below is not an
extra Floquet parameter; it is produced by the sign of $\widetilde W$.

For the statement, let $\mathcal S\subset\mcalX(G)$ be a closed subtorus and
put
$$
\mcalT:=\zeta_\sigma+\mathcal S.
$$
For $\xi\in\mcalX(G)$, write
$$\lambda_1^W(\xi)\leq\cdots\leq\lambda_n^W(\xi)$$
for the ordered eigenvalues of $A_{\xi,W}$, and set
$$
\Sigma_{\mathcal S}(\widetilde W):=
\bigcup_{\xi\in\mathcal S}\spec A_{\xi,\widetilde W}.
$$
Notice that the lower universal value need not be the smallest eigenvalue of
the untwisted matrix $A_{0,W}$.  Rather, spectral antisymmetry gives
$$
\lambda_1^W(\theta)=-\lambda_n^W(0)=-\rho(A_{0,W}),
$$
so this value is realized by the canonical twist $A_{\theta,W}$.  It is also
realized by $A_{0,W}$ precisely when $G$ is bipartite, in which case
$\theta=0$.

\begin{theorem}[\textbf{Outer spectral edges on affine Bloch tori}]\label{T:outer-spectral-edges}
Under the preceding notation,
\begin{enumerate}[(i)]
\item The outer spectral edges are
$$
\sup\Sigma_{\mathcal S}(\widetilde W)
=\max_{\xi\in\mcalT}\lambda_n^W(\xi),
\qquad
\inf\Sigma_{\mathcal S}(\widetilde W)
=-\max_{\xi\in\theta+\mcalT}\lambda_n^W(\xi).
$$

\item The upper endpoint equals $\rho(A_{0,W})$ precisely when
$$
\sup\Sigma_{\mathcal S}(\widetilde W)=\rho(A_{0,W})
\quad\Longleftrightarrow\quad
\zeta_\sigma\in\mathcal S.
$$
When these equivalent conditions hold, the supremum is attained among
$\xi\in\mathcal S$ only at $\xi=\zeta_\sigma$, and the eigenvalue
$\rho(A_{0,W})$ of $A_{\zeta_\sigma,\widetilde W}$ is simple.

\item The lower endpoint equals $-\rho(A_{0,W})$ precisely when
$$
\inf\Sigma_{\mathcal S}(\widetilde W)=-\rho(A_{0,W})
\quad\Longleftrightarrow\quad
\theta+\zeta_\sigma\in\mathcal S.
$$
When these equivalent conditions hold, the infimum is attained among
$\xi\in\mathcal S$ only at $\xi=\theta+\zeta_\sigma$, and the eigenvalue
$-\rho(A_{0,W})$ of $A_{\theta+\zeta_\sigma,\widetilde W}$ is simple.

\item If neither membership condition holds, then
$$
-\rho(A_{0,W})
<\inf\Sigma_{\mathcal S}(\widetilde W)
\leq\sup\Sigma_{\mathcal S}(\widetilde W)
<\rho(A_{0,W}).
$$
\end{enumerate}

In particular, for a positive symmetric weight $W$ on the maximal abelian cover,
$$
\sup\Sigma_{\operatorname{Floq}}(W)
=\lambda_n^W(0)=\rho(A_{0,W}),
\qquad
\inf\Sigma_{\operatorname{Floq}}(W)
=\lambda_1^W(\theta)=-\rho(A_{0,W}).
$$
\end{theorem}

\begin{proof}
The sign gain represents the $2$-torsion character $\zeta_\sigma$.  If a gain $\gamma$ represents $\xi$, then
$$
A_{\gamma,\widetilde W}=A_{\sigma\cdot\gamma,W},
$$
and $\sigma\cdot\gamma$ represents $\zeta_\sigma+\xi$.  Hence
$$
\Sigma_{\mathcal S}(\widetilde W)
=\bigcup_{\xi\in\mcalT}\spec A_{\xi,W}.
$$

Since $W$ is symmetric, every $A_{\xi,W}$ is Hermitian.  Therefore
$$
\sup\Sigma_{\mathcal S}(\widetilde W)
=\max_{\xi\in\mcalT}\lambda_n^W(\xi).
$$
Theorem~\ref{T:SpecDuality}(SA1) gives
$$
\lambda_1^W(\xi)=-\lambda_n^W(\theta+\xi),
$$
and consequently
$$
\inf\Sigma_{\mathcal S}(\widetilde W)
=-\max_{\xi\in\theta+\mcalT}\lambda_n^W(\xi).
$$

The matrix $A_{0,W}$ is irreducible and nonnegative, so
$\lambda_n^W(0)=\rho(A_{0,W})$ is simple.  By
Theorem~\ref{T:SpecDuality}(SR2), a positive-weight character carries the
top eigenvalue $\rho(A_{0,W})$ only if it is trivial; indeed, the other
possible spectral-radius extremum $\theta$ can carry the top eigenvalue only
in the bipartite case, when $\theta=0$.  A signed parameter
$\xi\in\mathcal S$ corresponds to the positive-weight parameter
$\zeta_\sigma+\xi$.  Hence the upper universal value is attained if and
only if $\zeta_\sigma+\xi=0$, or equivalently
$\xi=\zeta_\sigma$, since $\zeta_\sigma$ is $2$-torsion.  This proves the
upper-edge criterion and the uniqueness assertion in (ii).

For the lower edge, spectral antisymmetry gives
$$
\lambda_1^W(\zeta_\sigma+\xi)
=-\lambda_n^W(\theta+\zeta_\sigma+\xi).
$$
Thus the lower universal value is attained if and only if
$\theta+\zeta_\sigma+\xi=0$, or equivalently
$\xi=\theta+\zeta_\sigma$, since both $\theta$ and $\zeta_\sigma$ are
$2$-torsion.  This proves the criterion and uniqueness in
(iii), while spectral antisymmetry transfers the Perron--Frobenius
simplicity from $\lambda_n^W(0)$ to $\lambda_1^W(\theta)$.

If neither distinguished character belongs to the relevant translated
torus, compactness and continuity of the band functions give the strict
inequalities.  Finally, taking $\widetilde W=W$, $\zeta_\sigma=0$, and
$\mathcal S=\mcalX(G)$ gives the maximal-abelian-cover statement.
\end{proof}

\begin{remark}
Theorem~\ref{T:outer-spectral-edges} identifies the two endpoints of the union
of spectra over the relevant character torus or affine Bloch subtorus.  When
one of the universal values $\pm\rho(A_{0,W})$ is attained, the simplicity in
parts (ii)--(iii) is the Perron--Frobenius simplicity of that extremal fiber
eigenvalue.  Its relation to the spectral edges conjecture is through these
two global outer endpoints: the conjecture concerns the finer generic
structure of all band extrema, including internal band edges, such as their
isolation, non-degeneracy, and single-band attainment.
\end{remark}

\begin{remark}[Spanning-tree test]
The two membership conditions in the theorem are easy to test in spanning-tree coordinates.  Choose an orientation $\mfrako$ and a spanning tree $T$, let $\ve_1,\ldots,\ve_g$ be the positively oriented cotree edges, and use Remark~\ref{R:X_TO} to identify $\mcalX(G)$ set-theoretically with $[0,1)^g$.  Construction~\ref{Cs:CanoForm2} gives the canonical coordinate vector
$$\kappa_i=
\begin{cases}
0,&\text{if }\ve_i\text{ joins the two parts of the bipartition of }T,\\
1/2,&\text{otherwise.}
\end{cases}$$
If the displacement map $\nu$ has matrix $N$ in the spanning-tree basis,
then $\mathcal S_\Gamma=\nu^*\widehat{\Lambda}$ is the image of the induced
torus homomorphism
$N^T:(\mbbR/\mbbZ)^d\to(\mbbR/\mbbZ)^g$.  For a signed weight
$\widetilde W=\sigma\cdot W$, the upper and lower outer edges are inherited
exactly when the coordinate vectors of $\zeta_\sigma$ and
$\theta+\zeta_\sigma$, respectively, lie in this image.  This gives a
concrete way to build examples in which the periodic family sees either,
both, or neither distinguished outer edge.  Subsection~\ref{SS:example-periodic-edges}
applies this test to a $\mbbZ^2$-periodic graph and exhibits the cases in
which only the upper edge, only the lower edge, or neither edge is inherited.
\end{remark}

\subsection{Weighted trace and Fourier formulas}\label{SS:trace}

\emph{Circuit classes.}
For $l\geq1$, let $\mcalC(l)$ be the set of all circuits of length $l$.  For
$\alpha\in H_1(G,\mbbZ)$ and $\underline{\alpha}\in Q_\Lambda$, using the
quotient notation of Subsection~\ref{SS:character-group}, set
$$
\mcalC(\alpha,l):=\{C\in\mcalC(l):C^{\ab}=\alpha\},\qquad
\mcalC(\underline{\alpha},l):=\{C\in\mcalC(l):\underline{C^{\ab}}=\underline{\alpha}\}.
$$
Since $G$ is finite, $\mcalC(l)$ is finite.  Thus every sum over homology
classes below has finite support when the length $l$ is fixed.

\begin{definition}[Weighted and unweighted circuit sums]\label{D:weighted-circuit-sums}\label{D:count-function-N}
Let $W:\vE(G)\to\mbbR_{>0}$ be a positive directed weight.  For
$\alpha\in H_1(G,\mbbZ)$ and $\underline{\alpha}\in Q_\Lambda$, define
$$
N_W(l):=\sum_{C\in\mcalC(l)}W(C),\qquad
N_W(\alpha,l):=\sum_{C\in\mcalC(\alpha,l)}W(C),\qquad
N_W(\underline{\alpha},l):=\sum_{C\in\mcalC(\underline{\alpha},l)}W(C).
$$

For the trivial weight, write
$$
N(l):=N_1(l)=|\mcalC(l)|,
$$
and
$$
N(\alpha,l):=N_1(\alpha,l)=|\mcalC(\alpha,l)|,\qquad
N(\underline{\alpha},l):=N_1(\underline{\alpha},l)
=|\mcalC(\underline{\alpha},l)|.
$$
\end{definition}

\begin{definition}[Weighted trace distribution]\label{D:weighted-trace}\label{D:trace}
Let $W:\vE(G)\to\mbbR_{>0}$ be a positive directed weight. For each $\omega\in\Omega(G)$ and $l\geq1$, define
$$\mcalK_W(\omega,l):=\operatorname{tr}\left(B_{\omega,W}^l\right).$$
We call $\mcalK_W(\omega,l)$ the \emph{weighted trace distribution} of order $l$.
For the trivial weight $W\equiv1$, we write
$$\mcalK(\omega,l):=\mcalK_1(\omega,l).$$
\end{definition}

\begin{lemma} \label{L:trace-def}
Let $W$ be a positive directed weight. The following are equivalent definitions of $\mcalK_W(\omega,l)$:
\begin{enumerate}[(a)]
\item $\mcalK_W(\omega,l)=\sum_{C\in \mcalC(l)}W(C)\chi_\omega(C)$;
\item $\mcalK_W(\omega,l)=\sum_{\lambda\in \spec B_{\omega,W}}\lambda^l$.
\end{enumerate}
\end{lemma}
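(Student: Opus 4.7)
The plan is to prove (a) by expanding the trace combinatorially in terms of the matrix entries and matching the surviving terms with circuits of length $l$, and to prove (b) by the standard spectral fact that eigenvalues of $M^l$ are $l$-th powers of eigenvalues of $M$.

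For part (a), I would begin by writing out the trace directly:
\[
\mcalK(\omega,l) = \operatorname{tr}(W_{1,\omega}^l) = \sum_{\va_0 \in \vE(G)} (W_{1,\omega}^l)_{\va_0\va_0} = \sum_{\va_0,\va_1,\dots,\va_{l-1} \in \vE(G)} \prod_{i=0}^{l-1}(W_{1,\omega})_{\va_i\va_{i+1}},
\]
where indices are taken cyclically so that $\va_l = \va_0$. Since $(W_{1,\omega})_{\va\vb} = \chi_\omega(\vb)$ if $\va \shortrightarrow \vb$ and $0$ otherwise, a summand is nonzero precisely when $\va_i \shortrightarrow \va_{i+1}$ holds for every $i \in \{0,1,\dots,l-1\}$ cyclically, i.e., $\va_{i+1}(0) = \va_i(1)$ and $\va_{i+1} \neq \va_i^{-1}$. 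The cyclic conditions on consecutive edges express exactly that the sequence $\va_0\va_1\cdots\va_{l-1}$ is a closed walk of length $l$ with no backtracks, and the wrap-around condition $\va_0 \shortrightarrow \va_1$ together with $\va_{l-1} \shortrightarrow \va_0$ forbids both a backtrack at the closing step and the tail condition $\va_{l-1} = \va_0^{-1}$. Thus the surviving indexing set is exactly $\mcalC_l$.

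Next I would collect the weight. On each surviving term the product of matrix entries equals $\prod_{i=0}^{l-1}\chi_\omega(\va_{i+1}) = \chi_\omega\bigl(\va_0 + \va_1 + \cdots + \va_{l-1}\bigr) = \chi_\omega(C^{\ab}) = \chi_\omega(C)$, where $C = \va_0\va_1\cdots\va_{l-1}$. This gives (a) immediately:
\[
\mcalK(\omega,l) = \sum_{C \in \mcalC_l} \chi_\omega(C).
\]

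For part (b), I would invoke the Schur (or Jordan) decomposition of $W_{1,\omega}$: there exists an invertible $P$ such that $P^{-1}W_{1,\omega}P = T$ is upper triangular with the eigenvalues of $W_{1,\omega}$ (with multiplicity) on the diagonal. Since $W_{1,\omega}^l = P T^l P^{-1}$ and $T^l$ is upper triangular with diagonal entries $\lambda^l$ for $\lambda \in \spec W_{1,\omega}$, the cyclic invariance of trace yields $\operatorname{tr}(W_{1,\omega}^l) = \operatorname{tr}(T^l) = \sum_{\lambda \in \spec W_{1,\omega}} \lambda^l$.

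No step here is a serious obstacle; the only point requiring care is the bookkeeping in (a), namely verifying that the cyclic chain of feed-in relations $\va_i \shortrightarrow \va_{i+1}$ precisely encodes the ``no tail and no backtracks'' condition defining a circuit, and that the weight assembles into $\chi_\omega$ evaluated on the abelianization.
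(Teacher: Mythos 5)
Your proof is correct and follows essentially the same route as the paper: identify the $(\ve,\ve)$-entries of $W_{1,\omega}^l$ with $\chi_\omega$-weighted circuits of length $l$ based at $\ve$, and use the standard spectral fact that $\spec W_{1,\omega}^l = \{\lambda^l : \lambda\in\spec W_{1,\omega}\}$. The only difference is that you spell out the cyclic index bookkeeping and the Schur-form argument that the paper leaves implicit.
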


\begin{proof}[Proof of Lemma~\ref{L:trace-def}]
For (a), first note that by definition of the twisted weighted edge adjacency matrix $B_{\omega,W}$, the $(\ve,\ve)$-entry of $B_{\omega,W}^l$ is the sum over the circuits at the base oriented edge $\ve$, each weighted by $W$ and twisted by $\chi_\omega$. Therefore, being the trace of $B_{\omega,W}^l$, $\mcalK_W(\omega,l)$ is exactly $\sum_{C\in \mcalC(l)}W(C)\chi_\omega(C)$. 

(b) follows from the facts that  $\mcalK_W(\omega,l)=\sum_{\lambda'\in \spec B_{\omega,W}^l}\lambda'$ and $\spec B_{\omega,W}^l=\{\lambda^l\mid \lambda\in \spec B_{\omega,W}\}$ (as a multiset). 
\end{proof}

By Lemma~\ref{L:trace-def}(a), for fixed $W$ the function
$\omega\mapsto\mcalK_W(\omega,l)$ is smooth.

\begin{lemma} \label{L:trace-alt}
Let $W$ be a positive directed weight. For $\omega,\omega'\in\Omega$, if $\underline{\omega}=\underline{\omega'}$, then $\mcalK_W(\omega,l)=\mcalK_W(\omega',l)$ for all $l$. 
\end{lemma}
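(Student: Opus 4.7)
The plan is to reduce the claim to the sum expression for $\mcalK(\omega,l)$ given by Lemma~\ref{L:trace-def}(a), and then to observe that, when restricted to integral $1$-cycles, the character $\chi_\omega$ depends only on the class $\underline{\omega}\in\mcalX$.

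First I would apply Lemma~\ref{L:trace-def}(a) to rewrite
\[
\mcalK(\omega,l)=\sum_{C\in\mcalC_l}\chi_\omega(C),\qquad \mcalK(\omega',l)=\sum_{C\in\mcalC_l}\chi_{\omega'}(C),
\]
so it suffices to show that $\chi_\omega(C)=\chi_{\omega'}(C)$ for every circuit $C\in\mcalC_l$. Since a circuit is a closed walk, its abelianization $C^{\ab}$ lies in $H_1(G,\mbbZ)=\ker\partial_\mbbR$, so any exact $1$-form vanishes on it; thus, using the Hodge decomposition $\omega=\phi_1(\omega)+\phi_2(\omega)$ of Proposition~\ref{P:hodge}, we have $\omega(C^{\ab})=\phi_1(\omega)(C^{\ab})$, and analogously for $\omega'$.

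Next I would invoke the hypothesis $\underline{\omega}=\underline{\omega'}$, which by definition of $\mcalX=\mcalH^1(G)/H_1(G,\mbbZ)^\vee$ means $\phi_1(\omega)-\phi_1(\omega')\in H_1(G,\mbbZ)^\vee$. By definition of the dual lattice, $(\phi_1(\omega)-\phi_1(\omega'))(C^{\ab})\in\mbbZ$, so
\[
\chi_\omega(C)=e\bigl(\phi_1(\omega)(C^{\ab})\bigr)=e\bigl(\phi_1(\omega')(C^{\ab})\bigr)=\chi_{\omega'}(C).
\]
Summing over $C\in\mcalC_l$ gives the desired equality $\mcalK(\omega,l)=\mcalK(\omega',l)$.

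There is no substantive obstacle here: the lemma is essentially the observation, already implicit in the remark following Lemma~\ref{L:PontrDual} (where $\chi_\omega|_{H_1(G,\mbbZ)}$ is relabelled $\chi_{\underline{\omega}}$), packaged into a statement about the trace function. The only point worth being careful about is verifying that every circuit's abelianization indeed lies in $H_1(G,\mbbZ)$ rather than merely in $C_1(G,\mbbZ)$, which follows because the initial and terminal vertices of a closed walk coincide.
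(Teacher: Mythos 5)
Your proof is correct, but it takes a different route from the paper's. The paper's proof of this lemma is a one-liner that leans on Lemma~\ref{L:adj_similar}(b) (isospectrality of $W_{1,\omega}$ and $W_{1,\omega'}$ when $\underline{\omega}=\underline{\omega'}$, itself established via $L$-functions and the determinant formula of Theorem~\ref{T:DertminantL}(a)) together with the spectral expression $\mcalK(\omega,l)=\sum_{\lambda\in\spec W_{1,\omega}}\lambda^l$ from Lemma~\ref{L:trace-def}(b). You instead work directly with the combinatorial expression $\mcalK(\omega,l)=\sum_{C\in\mcalC_l}\chi_\omega(C)$ from Lemma~\ref{L:trace-def}(a) and show termwise that $\chi_\omega(C)=\chi_{\omega'}(C)$ using the Hodge decomposition and the definition of $H_1(G,\mbbZ)^\vee$. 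Your argument is the more self-contained of the two: it bypasses the $L$-function machinery and the isospectrality lemma entirely, and is in essence the same computation that drives the proof of Proposition~\ref{P:LFuncIden}(b), only applied one level earlier. The paper's version is shorter on the page but carries a heavier dependency graph; yours carries fewer prerequisites and makes the reason for the invariance transparent. Both are sound. Your closing remark about why $C^{\ab}\in H_1(G,\mbbZ)$ (rather than merely $C_1(G,\mbbZ)$) is the right thing to check and is correctly justified by the closed-walk condition.
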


\begin{proof}
We see from Lemma~\ref{L:switching-similarity} that $B_{\omega,W}$ and $B_{\omega',W}$ are isospectral whenever  $\underline{\omega}=\underline{\omega'}$. Therefore, $\mcalK_W(\omega,l)=\mcalK_W(\omega',l)$ by Lemma~\ref{L:trace-def}(b). 
\end{proof}

By this lemma, for each $\underline{\omega} \in \mcalX$, we may also write $\mcalK_W(\underline{\omega},l)=\mcalK_W(\omega,l)$, considered as a function over the character group $\mcalX$.

\begin{lemma} \label{L:Kappa}
Let $W$ be a positive directed weight, and let $\theta$ be the canonical character of $G$.  The weighted trace distribution has the following properties:
\begin{enumerate}[(a)]
\item $\max_{\omega\in\Omega}|\mcalK_W(\omega,l)|=\mcalK_W(0,l)=N_W(l)$. 
\item For every $\omega\in\Omega$, $\mcalK_W(-\omega,l)=\overline{\mcalK_W(\omega,l)}$.  If $W$ is symmetric, then $\mcalK_W(\omega,l)$ is real and $\mcalK_W(\omega,l)=\mcalK_W(-\omega,l)$.
\item For every $\omega\in\Omega$,
$$\mcalK_W(\theta+\underline{\omega},l)=(-1)^l\mcalK_W(\underline{\omega},l).$$
In particular, $\mcalK_W(\theta,l)=\mcalK_W(0,l)$ when $l$ is even, and $\mcalK_W(\theta,l)=-\mcalK_W(0,l)$ when $l$ is odd.  If $W$ is symmetric, equivalently
$$\mcalK_W(\theta-\underline{\omega},l)=(-1)^l\mcalK_W(\underline{\omega},l).$$
\item If $G$ is bipartite, then $\mcalK_W(\omega,l)=0$ for all $\omega\in\Omega$ when $l$ is odd. 
\end{enumerate}
\end{lemma}
\begin{proof}
For (a), note that $\mcalK_W(\omega,l)=\sum_{C\in \mcalC(l)}W(C)\chi_\omega(C)$ (Lemma~\ref{L:trace-def}(a)). Therefore, $\mcalK_W(0,l)=\sum_{C\in\mcalC(l)}W(C)=N_W(l)$ and $|\mcalK_W(\omega,l)|\leq \mcalK_W(0,l)$. 

For (b), the first assertion follows from Lemma~\ref{L:trace-def}(a), since $W$ is real and $\chi_{-\omega}(C)=\overline{\chi_\omega(C)}$.  If $W$ is symmetric, then Lemma~\ref{L:PropAdj}(c) gives that $B_{\omega,W}$ and $B_{-\omega,W}$ are similar, so Lemma~\ref{L:trace-def}(b) gives $\mcalK_W(\omega,l)=\mcalK_W(-\omega,l)$.  Together with the first assertion, this also implies that $\mcalK_W(\omega,l)$ is real.

For (c), by Lemma~\ref{L:unique}, $\theta(C)=(-1)^l$ for every $C\in\mcalC(l)$. Hence Lemma~\ref{L:trace-def}(a) gives
$$\mcalK_W(\theta+\underline{\omega},l)
=\sum_{C\in\mcalC(l)}W(C)\theta(C)\chi_\omega(C)
=(-1)^l\mcalK_W(\underline{\omega},l).$$
The final equivalent form in the symmetric case follows from (b), because $\theta-\underline{\omega}=\theta+(-\underline{\omega})$.

For (d), if $G$ is bipartite, then there are no odd-length circuits, so $\mcalC(l)=\emptyset$ when $l$ is odd.  The assertion follows from Lemma~\ref{L:trace-def}(a).

\end{proof}

\begin{theorem}[\textbf{Weighted Fourier formulas}]\label{T:Fourier}\label{T:weighted-trace-formula}
Let $W:\vE(G)\to\mbbR_{>0}$ be a positive directed weight and let $l\geq1$.
\begin{enumerate}[(a)]
\item For every $\omega\in\Omega(G)$,
$$
\mcalK_W(\omega,l)
=\sum_{\lambda\in\spec B_{\omega,W}}\lambda^l
=\sum_{C\in\mcalC(l)}W(C)\chi_\omega(C)
=\sum_{\alpha\in H_1(G,\mbbZ)}\chi_\omega(\alpha)N_W(\alpha,l),
$$
and, for every $\alpha\in H_1(G,\mbbZ)$,
$$
N_W(\alpha,l)=\frac{1}{\vol(\mcalX)}
\int_{\mcalX}\chi_{-\omega}(\alpha)\mcalK_W(\underline{\omega},l)dV_\omega.
$$
\item Let $\Lambda\subseteq H_1(G,\mbbZ)$ be a full-rank sublattice.  For
$\underline{\omega}\in\widehat{Q_\Lambda}$ and
$\underline{\alpha}\in Q_\Lambda$,
$$
\mcalK_W(\underline{\omega},l)
=\sum_{\underline{\alpha}\in Q_\Lambda}
\chi_{\underline{\omega}}(\underline{\alpha})N_W(\underline{\alpha},l),
$$
and
$$
N_W(\underline{\alpha},l)=\frac{1}{|Q_\Lambda|}
\sum_{\underline{\omega}\in\widehat{Q_\Lambda}}
\chi_{-\underline{\omega}}(\underline{\alpha})
\mcalK_W(\underline{\omega},l).
$$
\end{enumerate}
\end{theorem}

\begin{proof}
By Lemma~\ref{L:trace-def},
$$
\mcalK_W(\omega,l)
=\sum_{\lambda\in\spec B_{\omega,W}}\lambda^l
=\sum_{C\in\mcalC(l)}W(C)\chi_\omega(C).
$$
Since $\chi_\omega(C)=\chi_\omega(C^{\ab})$, grouping the circuits according
to their homology classes gives
\begin{align*}
\mcalK_W(\omega,l)
&=\sum_{\alpha\in H_1(G,\mbbZ)}
\sum_{C\in\mcalC(\alpha,l)}W(C)\chi_\omega(C)\\
&=\sum_{\alpha\in H_1(G,\mbbZ)}
\chi_\omega(\alpha)\sum_{C\in\mcalC(\alpha,l)}W(C)\\
&=\sum_{\alpha\in H_1(G,\mbbZ)}
\chi_\omega(\alpha)N_W(\alpha,l).
\end{align*}
Only finitely many homology classes contribute, because $\mcalC(l)$ is
finite.  Thus both the grouping above and the passage of the homology sum
through the integral below are finite rearrangements.

Now multiply this identity by $\chi_{-\omega}(\alpha_0)$ and integrate over
the character torus.  Proposition~\ref{P:CharacterOrth} gives
\begin{align*}
&\frac1{\vol(\mcalX)}
\int_{\mcalX}\chi_{-\omega}(\alpha_0)
\mcalK_W(\underline{\omega},l)\,dV_\omega\\
&\qquad =
\sum_{\alpha\in H_1(G,\mbbZ)}N_W(\alpha,l)
\frac1{\vol(\mcalX)}
\int_{\mcalX}\chi_\omega(\alpha-\alpha_0)\,dV_\omega\\
&\qquad =
\sum_{\alpha\in H_1(G,\mbbZ)}
N_W(\alpha,l)\delta_{\alpha,\alpha_0}
=N_W(\alpha_0,l).
\end{align*}
This proves the continuous Fourier pair.

Suppose next that $\underline{\omega}\in\widehat{Q_\Lambda}$.  If
$\alpha-\alpha'\in\Lambda$, then
$\chi_{\underline{\omega}}(\alpha)=\chi_{\underline{\omega}}(\alpha')$; hence
the character descends to a character of $Q_\Lambda$.  Therefore
\begin{align*}
\mcalK_W(\underline{\omega},l)
&=\sum_{\alpha\in H_1(G,\mbbZ)}
\chi_{\underline{\omega}}(\alpha)N_W(\alpha,l)\\
&=\sum_{\underline{\alpha}\in Q_\Lambda}
\chi_{\underline{\omega}}(\underline{\alpha})
\sum_{C\in\mcalC(\underline{\alpha},l)}W(C)\\
&=\sum_{\underline{\alpha}\in Q_\Lambda}
\chi_{\underline{\omega}}(\underline{\alpha})
N_W(\underline{\alpha},l).
\end{align*}
Since $Q_\Lambda$ and $\widehat{Q_\Lambda}$ are finite, all rearrangements
in the finite Fourier inversion are finite.
Multiplying by $\chi_{-\underline{\omega}}(\underline{\alpha}_0)$ and summing
over the finite dual group gives, by
Proposition~\ref{P:CharacterOrthLambda}(b),
\begin{align*}
&\frac1{|Q_\Lambda|}
\sum_{\underline{\omega}\in\widehat{Q_\Lambda}}
\chi_{-\underline{\omega}}(\underline{\alpha}_0)
\mcalK_W(\underline{\omega},l)\\
&\qquad =
\sum_{\underline{\alpha}\in Q_\Lambda}
N_W(\underline{\alpha},l)
\frac1{|Q_\Lambda|}
\sum_{\underline{\omega}\in\widehat{Q_\Lambda}}
\chi_{\underline{\omega}}(\underline{\alpha}-\underline{\alpha}_0)\\
&\qquad =
N_W(\underline{\alpha}_0,l).
\end{align*}
This proves the finite Fourier pair.
\end{proof}

\begin{remark}
For the trivial weight $W\equiv1$, Theorem~\ref{T:Fourier} gives the Fourier
and inverse Fourier transforms between $\mcalK$ and the circuit-counting
functions $N$ of Definition~\ref{D:count-function-N}.
\end{remark}

The canonical character imposes an immediate support restriction on these
weighted circuit sums.  By Lemma~\ref{L:unique} and
Definition~\ref{D:canonical-character}, it is characterized by
$\theta(C^{\ab})=(-1)^{\tau(C)}$ for every closed walk $C$; in particular,
$2\theta=0$.
\begin{corollary}[\textbf{Weighted parity vanishing}]\label{C:vanishing}
Let $W$ be a positive directed weight.
\begin{enumerate}[(a)]
\item If $\alpha\in H_1(G,\mbbZ)$, then
$$N_W(\alpha,l)=0\qquad\text{unless}\qquad
\theta(\alpha)=(-1)^l.$$
\item If $\theta\in\widehat{Q_\Lambda}$ and
$\underline{\alpha}\in Q_\Lambda$, then
$$N_W(\underline{\alpha},l)=0\qquad\text{unless}\qquad
\theta(\underline{\alpha})=(-1)^l.$$
\item If $G$ is bipartite, then $N_W(\alpha,l)=0$ and
$N_W(\underline{\alpha},l)=0$ for every odd $l$.
\end{enumerate}
In each assertion, the same vanishing holds for one positive directed weight
if and only if it holds for every such weight, equivalently for $W\equiv1$.
\end{corollary}

\begin{proof}
If $C\in\mcalC(l)$, then Lemma~\ref{L:unique} gives
$\theta(C^{\ab})=(-1)^l$.  Thus the indicated circuit class is empty when
the displayed parity condition fails.  Conversely, every term in a weighted
circuit sum is positive, so $N_W(\alpha,l)$ or
$N_W(\underline{\alpha},l)$ vanishes exactly when its indexing circuit class
is empty.  The bipartite assertion follows because there are no odd-length
circuits.
\end{proof}

The preceding formulas use the monomial test functions $z\mapsto z^l$.
Their analytic extension below is closer in form to a Selberg trace formula:
it equates a spectral sum against an analytic test function with a
homology-resolved weighted sum over closed circuits.

\begin{theorem}[\textbf{Weighted analytic trace formulas}]\label{T:trace-formula}
Let $W$ be a positive directed weight, let $R>\rho(B_{0,W})$, and let $h$ be
analytic on the disk $D_R=\{z\in\mbbC:|z|<R\}$.  Write
$$
h(z)=\sum_{l\geq0}\widehat h(l)z^l,
\qquad
\widehat h(l)=\frac{1}{2\pi\sqrt{-1}}
\oint_\Gamma h(z)z^{-l-1}\,dz,
$$
where $\Gamma$ is any positively oriented simple closed contour about the
origin contained in $D_R$.  Then:
\begin{enumerate}[(a)]
\item For every $\omega\in\Omega(G)$,
$$
\sum_{\lambda\in\spec B_{\omega,W}}\bigl(h(\lambda)-h(0)\bigr)
=\sum_{l\geq1}\sum_{\alpha\in H_1(G,\mbbZ)}
\chi_\omega(\alpha)N_W(\alpha,l)\widehat h(l).
$$
\item For every $\alpha\in H_1(G,\mbbZ)$,
\begin{align*}
&\int_{\mcalX}\chi_{-\omega}(\alpha)
\sum_{\lambda\in\spec B_{\omega,W}}\bigl(h(\lambda)-h(0)\bigr)dV_\omega\\
&\hspace{35mm}=\vol(\mcalX)\sum_{l\geq1}N_W(\alpha,l)\widehat h(l).
\end{align*}
\item For $\underline{\omega}\in\widehat{Q_\Lambda}$,
$$
\sum_{\lambda\in\spec B_{\omega,W}}\bigl(h(\lambda)-h(0)\bigr)
=\sum_{l\geq1}\sum_{\underline{\alpha}\in Q_\Lambda}
\chi_{\underline{\omega}}(\underline{\alpha})
N_W(\underline{\alpha},l)\widehat h(l).
$$
\item For $\underline{\alpha}\in Q_\Lambda$,
\begin{align*}
&\sum_{\underline{\omega}\in\widehat{Q_\Lambda}}
\chi_{-\underline{\omega}}(\underline{\alpha})
\sum_{\lambda\in\spec B_{\omega,W}}\bigl(h(\lambda)-h(0)\bigr)\\
&\hspace{35mm}=|Q_\Lambda|\sum_{l\geq1}
N_W(\underline{\alpha},l)\widehat h(l).
\end{align*}
\end{enumerate}
\end{theorem}

\begin{proof}
Theorem~\ref{T:SpecDuality}(SR1) gives
$\rho(B_{\omega,W})\leq\rho(B_{0,W})<R$, so the Taylor series for $h$ converges
at every eigenvalue of $B_{\omega,W}$.  For fixed $\omega$, the spectrum is a
finite multiset, so the eigenvalue sum and the Taylor series may be
interchanged:
\begin{align*}
\sum_{\lambda\in\spec B_{\omega,W}}\bigl(h(\lambda)-h(0)\bigr)
&=\sum_{\lambda\in\spec B_{\omega,W}}\sum_{l\geq1}
\widehat h(l)\lambda^l\\
&=\sum_{l\geq1}\widehat h(l)
\sum_{\lambda\in\spec B_{\omega,W}}\lambda^l\\
&=\sum_{l\geq1}\mcalK_W(\omega,l)\widehat h(l).
\end{align*}
Substituting the Fourier expansion in Theorem~\ref{T:Fourier}(a) gives
\begin{align*}
\sum_{\lambda\in\spec B_{\omega,W}}\bigl(h(\lambda)-h(0)\bigr)
&=\sum_{l\geq1}\widehat h(l)
\sum_{\alpha\in H_1(G,\mbbZ)}
\chi_\omega(\alpha)N_W(\alpha,l),
\end{align*}
which proves (a).

For (b), multiply the displayed identity by $\chi_{-\omega}(\alpha_0)$ and
integrate over $\mcalX$.  To justify term-by-term integration, choose $r$ with
$\rho(B_{0,W})<r<R$.  If $q$ is the common size of the matrices, then,
uniformly in $\omega$,
$$
\sum_{\lambda\in\spec B_{\omega,W}}\sum_{l\geq1}
\left|\widehat h(l)\lambda^l\right|
\leq q\sum_{l\geq1}|\widehat h(l)|r^l<\infty.
$$
The inverse transform in Theorem~\ref{T:Fourier}(a) therefore gives
\begin{align*}
&\int_{\mcalX}\chi_{-\omega}(\alpha_0)
\sum_{\lambda\in\spec B_{\omega,W}}\bigl(h(\lambda)-h(0)\bigr)dV_\omega\\
&\qquad =
\sum_{l\geq1}\widehat h(l)
\int_{\mcalX}\chi_{-\omega}(\alpha_0)
\mcalK_W(\omega,l)dV_\omega\\
&\qquad =
\vol(\mcalX)\sum_{l\geq1}N_W(\alpha_0,l)\widehat h(l).
\end{align*}

If $\underline{\omega}\in\widehat{Q_\Lambda}$, the same initial identity and
Theorem~\ref{T:Fourier}(b) give
\begin{align*}
\sum_{\lambda\in\spec B_{\omega,W}}\bigl(h(\lambda)-h(0)\bigr)
&=\sum_{l\geq1}\widehat h(l)\mcalK_W(\underline{\omega},l)\\
&=\sum_{l\geq1}\widehat h(l)
\sum_{\underline{\alpha}\in Q_\Lambda}
\chi_{\underline{\omega}}(\underline{\alpha})
N_W(\underline{\alpha},l),
\end{align*}
which proves (c).  Finally, multiply this identity by
$\chi_{-\underline{\omega}}(\underline{\alpha}_0)$ and sum over
$\widehat{Q_\Lambda}$.  Using the inverse transform in
Theorem~\ref{T:Fourier}(b), we obtain
\begin{align*}
&\sum_{\underline{\omega}\in\widehat{Q_\Lambda}}
\chi_{-\underline{\omega}}(\underline{\alpha}_0)
\sum_{\lambda\in\spec B_{\omega,W}}\bigl(h(\lambda)-h(0)\bigr)\\
&\qquad =
\sum_{l\geq1}\widehat h(l)
\sum_{\underline{\omega}\in\widehat{Q_\Lambda}}
\chi_{-\underline{\omega}}(\underline{\alpha}_0)
\mcalK_W(\underline{\omega},l)\\
&\qquad =
|Q_\Lambda|\sum_{l\geq1}
N_W(\underline{\alpha}_0,l)\widehat h(l).
\end{align*}
This is (d).
\end{proof}

\subsection{Weighted cycle counting in homology classes}

\emph{Prime-cycle and cycle classes.}
For $l\geq1$, $\alpha\in H_1(G,\mbbZ)$, and
$\underline{\alpha}\in Q_\Lambda$, set
\begin{align*}
\bmcalP(l)&:=\{[P]\in\bmcalP:\tau(P)=l\},
&\bmcalC(l)&:=\{[C]\in\bmcalC:\tau(C)=l\},\\
\bmcalP(\alpha,l)&:=\{[P]\in\bmcalP(l):P^{\ab}=\alpha\},
&\bmcalC(\alpha,l)&:=\{[C]\in\bmcalC(l):C^{\ab}=\alpha\},\\
\bmcalP(\underline{\alpha},l)&:=
\{[P]\in\bmcalP(l):\underline{P^{\ab}}=\underline{\alpha}\},
&\bmcalC(\underline{\alpha},l)&:=
\{[C]\in\bmcalC(l):\underline{C^{\ab}}=\underline{\alpha}\}.
\end{align*}
All these sets are finite for fixed $l$.

\begin{definition}[Weighted and unweighted cycle sums]\label{D:weighted-pi}
Let $W$ be a positive directed weight.  Define
\begin{align*}
\pi_W(l)&:=\sum_{[P]\in\bmcalP(l)}W(P),
&\Pi_W(l)&:=\sum_{[C]\in\bmcalC(l)}W(C),\\
\pi_W(\alpha,l)&:=\sum_{[P]\in\bmcalP(\alpha,l)}W(P),
&\Pi_W(\alpha,l)&:=\sum_{[C]\in\bmcalC(\alpha,l)}W(C),\\
\pi_W(\underline{\alpha},l)&:=
\sum_{[P]\in\bmcalP(\underline{\alpha},l)}W(P),
&\Pi_W(\underline{\alpha},l)&:=
\sum_{[C]\in\bmcalC(\underline{\alpha},l)}W(C).
\end{align*}
For the trivial weight, write
$$
\pi:=\pi_1,\qquad \Pi:=\Pi_1.
$$
Equivalently, these unweighted functions are the cardinalities of the
corresponding sets above.

For $r\geq1$, the \emph{pointwise $r$-th power} of $W$ is the positive
directed weight
$$W^r(\ve):=W(\ve)^r.$$
Thus $W(P^r)=W(P)^r=W^r(P)$.
\end{definition}

For $r\geq1$, put
$$
[\alpha\mid r]:=\{\beta\in H_1(G,\mbbZ):r\beta=\alpha\},\qquad
[\underline{\alpha}\mid r]:=
\{\underline{\beta}\in Q_\Lambda:r\underline{\beta}=\underline{\alpha}\},
$$
and let $\mu$ denote the M\"obius function.

\begin{proposition}[\textbf{Weighted prime-power identities}]\label{P:IdenCountFunc}
Let $W$ be a positive directed weight and $l\geq1$.
\begin{enumerate}[(a)]
\item For $\alpha\in H_1(G,\mbbZ)$,
\begin{align*}
\Pi_W(\alpha,l)
&=\sum_{r\mid l}\sum_{\beta\in[\alpha\mid r]}
\pi_{W^r}\left(\beta,\frac lr\right),\\
N_W(\alpha,l)
&=\sum_{r\mid l}\frac lr\sum_{\beta\in[\alpha\mid r]}
\pi_{W^r}\left(\beta,\frac lr\right),\\
\pi_W(\alpha,l)
&=\frac1l\sum_{r\mid l}\mu(r)
\sum_{\beta\in[\alpha\mid r]}N_{W^r}\left(\beta,\frac lr\right).
\end{align*}
\item For $\underline{\alpha}\in Q_\Lambda$,
\begin{align*}
\Pi_W(\underline{\alpha},l)
&=\sum_{r\mid l}\sum_{\underline{\beta}\in
[\underline{\alpha}\mid r]}
\pi_{W^r}\left(\underline{\beta},\frac lr\right),\\
N_W(\underline{\alpha},l)
&=\sum_{r\mid l}\frac lr\sum_{\underline{\beta}\in
[\underline{\alpha}\mid r]}
\pi_{W^r}\left(\underline{\beta},\frac lr\right),\\
\pi_W(\underline{\alpha},l)
&=\frac1l\sum_{r\mid l}\mu(r)
\sum_{\underline{\beta}\in[\underline{\alpha}\mid r]}
N_{W^r}\left(\underline{\beta},\frac lr\right).
\end{align*}
\end{enumerate}
\end{proposition}

\begin{proof}
Every cycle has a unique expression $[C]=[P^r]$, where $[P]$ is a prime
cycle and $r\geq1$.  If $\tau(C)=l$, then $r\mid l$ and
$\tau(P)=l/r$.  Moreover
$$
C^{\ab}=rP^{\ab},\qquad W(C)=W(P^r)=W(P)^r=W^r(P).
$$
Thus a cycle $[C]\in\bmcalC(\alpha,l)$ with prime root $[P]$ and exponent
$r$ is the same thing as a prime cycle
$[P]\in\bmcalP(\beta,l/r)$ with $\beta\in[\alpha\mid r]$, counted with
the powered weight $W^r(P)$.  Summing over the possible exponents $r\mid l$
proves
$$
\Pi_W(\alpha,l)
=\sum_{r\mid l}\sum_{\beta\in[\alpha\mid r]}
\pi_{W^r}\left(\beta,\frac lr\right).
$$
The quotient-homology formula for $\Pi_W(\underline{\alpha},l)$ is the same,
with $Q_\Lambda$ in place of $H_1(G,\mbbZ)$.

For circuits, fix a prime cycle $[P]$ of length $d=l/r$.  The closed
edge-walk $P^r$ has length $l$, and its cyclic orbit contains exactly $d$
based circuits, one for each choice of a starting oriented edge on the prime
cycle $P$.  Hence the same decomposition of cycles into prime powers gives
the additional factor $d=l/r$ and proves
$$
N_W(\alpha,l)
=\sum_{r\mid l}\frac lr
\sum_{\beta\in[\alpha\mid r]}
\pi_{W^r}\left(\beta,\frac lr\right),
$$
and similarly in $Q_\Lambda$.

It remains to invert the second formula.  Let
$$
F_W(\alpha,l):=\frac{N_W(\alpha,l)}{l}.
$$
The identity just proved can be rewritten as
$$
F_W(\alpha,l)
=\sum_{r\mid l}\frac1r
\sum_{\beta\in[\alpha\mid r]}
\pi_{W^r}\left(\beta,\frac lr\right).
$$
Now compute
\begin{align*}
&\frac1l\sum_{r\mid l}\mu(r)
\sum_{\beta\in[\alpha\mid r]}
N_{W^r}\left(\beta,\frac lr\right)\\
&\quad =
\sum_{r\mid l}\frac{\mu(r)}{r}
\sum_{\beta\in[\alpha\mid r]}
F_{W^r}\left(\beta,\frac lr\right)\\
&\quad =
\sum_{r\mid l}\frac{\mu(r)}{r}
\sum_{\beta\in[\alpha\mid r]}
\sum_{s\mid l/r}\frac1s
\sum_{\gamma\in[\beta\mid s]}
\pi_{W^{rs}}\left(\gamma,\frac{l}{rs}\right)\\
&\quad =
\sum_{t\mid l}\frac1t
\left(\sum_{r\mid t}\mu(r)\right)
\sum_{\gamma\in[\alpha\mid t]}
\pi_{W^t}\left(\gamma,\frac lt\right).
\end{align*}
In the last line we put $t=rs$ and used the equivalence
$r\beta=\alpha,\ s\gamma=\beta \Longleftrightarrow t\gamma=\alpha$.
Since $\sum_{r\mid t}\mu(r)$ is $1$ for $t=1$ and $0$ otherwise, only the
$t=1$ term remains, namely $\pi_W(\alpha,l)$.  This proves the inversion
formula in integral homology.  The quotient-homology inversion is identical:
one replaces $\alpha,\beta,\gamma$ by their classes in $Q_\Lambda$.
\end{proof}

\begin{remark}[Weighted and unweighted vanishing]\label{R:vanishing}
Because every weight is positive, a weighted sum in
Definitions~\ref{D:weighted-circuit-sums} and~\ref{D:weighted-pi} vanishes if
and only if its indexing set is empty.  In particular,
\begin{align*}
N_W(\alpha,l)=0
&\Longleftrightarrow \Pi_W(\alpha,l)=0\\
&\Longleftrightarrow
\pi_{W^r}(\beta,l/r)=0
\quad\text{for all }r\mid l\text{ and }\beta\in[\alpha\mid r],
\end{align*}
and the same equivalence holds in $Q_\Lambda$.  These conditions are also
equivalent to the corresponding unweighted vanishing statements.
\end{remark}

We next control the contribution of proper powers.  Let $G^\circ$ be the cyclic
core of Lemma~\ref{L:genus-2-edge-walk}, and write $W^\circ$ for the restriction
of $W$ to $G^\circ$.  The weighted edge matrices of $G$ and $G^\circ$ have the
same nonzero spectrum by Theorem~\ref{T:DertminantL}(a).

\begin{lemma}[\textbf{Powered-weight spectral gap}]\label{L:powered-weight-gap}
Assume that $g\geq2$ and let $W$ be a positive directed weight.  For every
integer $r\geq2$,
$$
\rho(B_{W^r})^{1/r}
\leq \rho(B_{W^2})^{1/2}
<\rho(B_W).
$$
\end{lemma}

\begin{proof}
Put
$$
M:=B^\circ_{W^\circ}=(m_{\ve\ve'})_{\ve,\ve'\in\vE(G^\circ)}.
$$
By Lemma~\ref{L:genus-2-edge-walk}, every pair of oriented edges of
$G^\circ$ is joined by a nonbacktracking edge-walk.  Since $W^\circ$ is
positive, the positive entries of powers of $M$ record exactly the possible
nonbacktracking edge-walks.  Hence $M$ is irreducible.

For $s>0$, write
$$
M^{\circ s}:=(m_{\ve\ve'}^s)_{\ve,\ve'\in\vE(G^\circ)}
$$
for the entrywise $s$-th power.  It has the same zero pattern as $M$ and is
therefore irreducible.  Moreover, for every integer $r\geq1$,
$$
M^{\circ r}=B^\circ_{(W^\circ)^r}.
$$
For a positive vector $v=(v_i)$, use the corresponding notation
$v^{\circ s}:=(v_i^s)$.

At least one row of $M$ has at least two positive entries.  Indeed, if every
oriented edge had only one possible nonbacktracking successor, then every
vertex of $G^\circ$ would have degree two.  Since $G^\circ$ is connected, it
would be a cycle and would have genus one, contrary to $g\geq2$.  Fix such a
row $i_0$.

Let $x>0$ be a right Perron eigenvector of $M$, so
$Mx=\rho(M)x$.  For every $s>1$, the elementary inequality
$\sum_j a_j^s\leq(\sum_j a_j)^s$ for nonnegative numbers $a_j$ gives, for
every row $i$,
$$
\bigl(M^{\circ s}x^{\circ s}\bigr)_i
=\sum_j(m_{ij}x_j)^s
\leq\left(\sum_j m_{ij}x_j\right)^s
=\rho(M)^s x_i^s.
$$
In the row $i_0$, at least two of the numbers $m_{i_0j}x_j$ are positive.
For positive $a,b$ and $s>1$, one has $(a+b)^s>a^s+b^s$; hence the preceding
inequality is strict when $i=i_0$.

Let $y>0$ be a left Perron eigenvector of $M^{\circ s}$.  Multiplication by
$y^T$ gives a strict inequality because $y_{i_0}>0$:
$$
\rho(M^{\circ s})\,y^Tx^{\circ s}
=y^TM^{\circ s}x^{\circ s}
<\rho(M)^s y^Tx^{\circ s}.
$$
Consequently $\rho(M^{\circ s})<\rho(M)^s$ for every $s>1$.  Taking
$s=2$ yields
$$
\rho(B^\circ_{(W^\circ)^2})^{1/2}<\rho(B^\circ_{W^\circ}).
$$

It remains to compare the different powered weights.  Fix $r\geq2$, put
$t=r/2\geq1$, and let $z>0$ be a right Perron eigenvector of
$M^{\circ2}$.  Applying the same elementary inequality with exponent $t$
gives
\begin{align*}
\bigl(M^{\circ r}z^{\circ t}\bigr)_i
&=\sum_j(m_{ij}^2z_j)^t\\
&\leq\left(\sum_jm_{ij}^2z_j\right)^t
=\rho(M^{\circ2})^t z_i^t.
\end{align*}
Let $q>0$ be a left Perron eigenvector of $M^{\circ r}$.  Multiplying the
coordinatewise inequality by $q^T$ gives
$$
\rho(M^{\circ r})\,q^Tz^{\circ t}
=q^TM^{\circ r}z^{\circ t}
\leq\rho(M^{\circ2})^t q^Tz^{\circ t}.
$$
Since $q^Tz^{\circ t}>0$, division by this scalar gives
$$
\rho(M^{\circ r})\leq\rho(M^{\circ2})^t.
$$
The choice $t=r/2$ has two roles: the assumption $r\geq2$ gives $t\geq1$,
which is needed for the power-sum inequality above, while $t/r=1/2$.
Therefore, taking $r$-th roots proves
$$
\rho(M^{\circ r})^{1/r}\leq\rho(M^{\circ2})^{1/2}.
$$

Finally, Theorem~\ref{T:DertminantL}(a), applied to $W$, $W^2$, and $W^r$,
shows that the corresponding weighted edge matrices of $G$ and $G^\circ$
have the same nonzero spectra and hence the same spectral radii.  The two
inequalities on the cyclic core therefore give the asserted inequalities on
$G$.
\end{proof}

\begin{lemma}[\textbf{Negligibility of proper powers}]\label{L:proper-powers}
Assume $g\geq2$, let $W$ be a positive directed weight, and let $\Lambda$ be a
full-rank sublattice.  There exists
$0<\delta_{G,W}<\rho(B_W)$ such that, uniformly for
$\alpha\in H_1(G,\mbbZ)$,
\begin{align*}
\pi_W(\alpha,l)
&=\frac{N_W(\alpha,l)}{l}+O(\delta_{G,W}^l),\\
\Pi_W(\alpha,l)
&=\pi_W(\alpha,l)+O(\delta_{G,W}^l).
\end{align*}
Moreover, uniformly for $\underline{\alpha}\in Q_\Lambda$,
\begin{align*}
\pi_W(\underline{\alpha},l)
&=\frac{N_W(\underline{\alpha},l)}{l}+O(\delta_{G,W}^l),\\
\Pi_W(\underline{\alpha},l)
&=\pi_W(\underline{\alpha},l)+O(\delta_{G,W}^l).
\end{align*}
\end{lemma}

\begin{proof}
Put
$$
\delta_0:=\rho(B_{W^2})^{1/2}.
$$
Lemma~\ref{L:powered-weight-gap} gives
$$
\rho(B_{W^r})^{1/r}\leq\delta_0<\rho(B_W)
\qquad (r\geq2).
$$
Choose $\delta_{G,W}$ with
$\delta_0<\delta_{G,W}<\rho(B_W)$.

We first bound the total weight of the circuits arising from an $r$-th
power.  If $r\geq2$ divides $l$, Lemma~\ref{L:trace-def} gives
$$
N_{W^r}(l/r)=\operatorname{tr}(B_{W^r}^{l/r}).
$$
The matrix $B_{W^r}$ has size $2m$, and every eigenvalue has modulus at most
$\rho(B_{W^r})$.  Since the trace above is nonnegative, it follows that
\begin{align*}
N_{W^r}(l/r)
&\leq \sum_{\lambda\in\spec B_{W^r}}|\lambda|^{l/r}\\
&\leq 2m\,\rho(B_{W^r})^{l/r}
\leq 2m\,\delta_0^l.
\end{align*}

Fix $\alpha\in H_1(G,\mbbZ)$.  Separating the term $r=1$ in the Möbius
formula of Proposition~\ref{P:IdenCountFunc} gives
\begin{align*}
&\pi_W(\alpha,l)-\frac{N_W(\alpha,l)}l\\
&\qquad=\frac1l
\sum_{\substack{r\mid l\\r\geq2}}\mu(r)
\sum_{\beta\in[\alpha\mid r]}
N_{W^r}\left(\beta,\frac lr\right).
\end{align*}
For fixed $r$, the sets of circuits indexed by the classes $\beta$ are
disjoint.  Positivity therefore gives
$$
\sum_{\beta\in[\alpha\mid r]}
N_{W^r}\left(\beta,\frac lr\right)
\leq N_{W^r}(l/r).
$$
Using $|\mu(r)|\leq1$, the preceding total trace bound, and the fact that
$l$ has at most $l$ positive divisors, we obtain
$$
\left|\pi_W(\alpha,l)-\frac{N_W(\alpha,l)}l\right|
\leq\frac{2m}{l}\sum_{r\mid l}\delta_0^l
\leq2m\,\delta_0^l
=O(\delta_{G,W}^l),
$$
uniformly in $\alpha$.

For the cycle sum, the first identity in
Proposition~\ref{P:IdenCountFunc} gives
$$
\Pi_W(\alpha,l)-\pi_W(\alpha,l)
=\sum_{\substack{r\mid l\\r\geq2}}
\sum_{\beta\in[\alpha\mid r]}
\pi_{W^r}\left(\beta,\frac lr\right).
$$
A prime cycle of length $n$ gives $n$ based circuits of length $n$, all with
the same weight.  Hence
$$
n\,\pi_{W^r}(\beta,n)\leq N_{W^r}(\beta,n).
$$
With $n=l/r$, summing over the relevant $\beta$ and then over $r$ yields
\begin{align*}
0\leq\Pi_W(\alpha,l)-\pi_W(\alpha,l)
&\leq\sum_{\substack{r\mid l\\r\geq2}}
\frac r l\,N_{W^r}(l/r)\\
&\leq2m l\,\delta_0^l
=O(\delta_{G,W}^l).
\end{align*}
The last estimate follows because
$l(\delta_0/\delta_{G,W})^l$ is bounded.  This proves the two
integral-homology estimates uniformly in $\alpha$.

The quotient classes in $Q_\Lambda$ also partition the circuits.  Replacing
$\alpha$ and $\beta$ above by $\underline{\alpha}$ and
$\underline{\beta}$ therefore gives the same two bounds, independent of
$\underline{\alpha}$.  This proves the quotient-homology estimates.
\end{proof}

Recall that $\nu_G$ is the period of $G$.  For a full-rank sublattice
$\Lambda$, Theorem~\ref{T:SpecDuality}(SR4)--(SR5) shows that, for every
positive directed weight $W$, the following set is independent of $W$:
$$
\mathcal E_G(\Lambda):=
\{\underline{\omega}\in\widehat{Q_\Lambda}:
\rho(B_{\omega,W})=\rho(B_W)\}
=
\begin{cases}
\{0,\eta\},&\text{if $\eta$ exists and belongs to $\widehat{Q_\Lambda}$,}\\
\{0\},&\text{otherwise.}
\end{cases}
$$

\begin{theorem}[\textbf{Weighted prime-cycle distribution}]\label{T:counting}
Assume $g\geq2$, let $W$ be a positive directed weight, and let
$\underline{\alpha}\in Q_\Lambda$.
\begin{enumerate}[(a)]
\item If $\nu_G\nmid l$, then
$$
N_W(\underline{\alpha},l)=\pi_W(\underline{\alpha},l)
=\Pi_W(\underline{\alpha},l)=0.
$$
If $\eta$ exists and belongs to $\widehat{Q_\Lambda}$, the same vanishing
holds whenever
$$
\eta(\underline{\alpha})\neq(-1)^{l/\nu_G}.
$$
\item Suppose $\mathcal E_G(\Lambda)=\{0\}$.  Along integers satisfying
$\nu_G\mid l$,
$$
\pi_W(\underline{\alpha},l)
\sim\Pi_W(\underline{\alpha},l)
\sim\frac{N_W(\underline{\alpha},l)}{l}
\sim\frac{\nu_G\rho(B_W)^l}{l|Q_\Lambda|}.
$$
\item Suppose that $\eta$ exists and belongs to $\widehat{Q_\Lambda}$.
Along integers satisfying
$\nu_G\mid l$ and
$\eta(\underline{\alpha})=(-1)^{l/\nu_G}$,
$$
\pi_W(\underline{\alpha},l)
\sim\Pi_W(\underline{\alpha},l)
\sim\frac{N_W(\underline{\alpha},l)}{l}
\sim\frac{2\nu_G\rho(B_W)^l}{l|Q_\Lambda|}.
$$
\end{enumerate}
\end{theorem}

\begin{proof}
The edge matrix of the cyclic core is a nonnegative irreducible matrix with
period $\nu_G$.  Therefore every closed nonbacktracking edge-walk has length
divisible by $\nu_G$, and there are no circuits of length $l$ when
$\nu_G\nmid l$.  Hence $N_W(l)=0$ for such $l$; positivity and
Remark~\ref{R:vanishing} then give the same vanishing for
$N_W(\underline{\alpha},l)$, $\pi_W(\underline{\alpha},l)$, and
$\Pi_W(\underline{\alpha},l)$.  If $\eta$ exists and belongs to
$\widehat{Q_\Lambda}$, Definition~\ref{D:period-character} gives
$$
\eta(C^{\ab})=(-1)^{l/\nu_G}
$$
for every length-$l$ circuit.  Thus no such circuit can represent
$\underline{\alpha}$ when
$\eta(\underline{\alpha})\neq(-1)^{l/\nu_G}$.  Positivity again gives
the three asserted vanishings.  This proves (a).

For the asymptotics, Perron--Frobenius theory for an irreducible matrix of
period $\nu_G$ gives, along multiples of $\nu_G$,
$$
\mcalK_W(0,l)=N_W(l)
=\nu_G\rho(B_W)^l+O(\sigma^l)
$$
for some $\sigma<\rho(B_W)$.  If
$\underline{\omega}\in\widehat{Q_\Lambda}$ is not extremal, then
$\rho(B_{\omega,W})<\rho(B_W)$; by Lemma~\ref{L:trace-def}(b),
$$
\mcalK_W(\underline{\omega},l)
=\sum_{\lambda\in\spec B_{\omega,W}}\lambda^l
=O(\sigma_\omega^l)
$$
for some $\sigma_\omega<\rho(B_W)$.  Since $\widehat{Q_\Lambda}$ is finite,
we may choose one $\sigma'<\rho(B_W)$ which works for all non-extremal
characters in this finite group.

If $\mathcal E_G(\Lambda)=\{0\}$, finiteness of $\widehat{Q_\Lambda}$ and
the finite Fourier inversion in Theorem~\ref{T:Fourier}(b) give
\begin{align*}
N_W(\underline{\alpha},l)
&=\frac1{|Q_\Lambda|}
\sum_{\underline{\omega}\in\widehat{Q_\Lambda}}
\chi_{-\underline{\omega}}(\underline{\alpha})
\mcalK_W(\underline{\omega},l)\\
&=\frac{\mcalK_W(0,l)}{|Q_\Lambda|}+O((\sigma')^l)\\
&=\frac{\nu_G\rho(B_W)^l}{|Q_\Lambda|}+O(\sigma_1^l)
\end{align*}
for some $\sigma_1<\rho(B_W)$.  Lemma~\ref{L:proper-powers} converts this
estimate into the asserted asymptotics for $\pi_W$ and $\Pi_W$.

In (c), Theorem~\ref{T:SpecDuality}(SR4)--(SR5) gives
$\mathcal E_G(\Lambda)=\{0,\eta\}$, while (PC2) gives
$$
\mcalK_W(\eta,l)
=e\left(\frac{l}{2\nu_G}\right)\mcalK_W(0,l)
=(-1)^{l/\nu_G}\mcalK_W(0,l)
$$
when $\nu_G\mid l$.  Fourier inversion gives
\begin{align*}
N_W(\underline{\alpha},l)
&=\frac1{|Q_\Lambda|}
\left(\mcalK_W(0,l)
 +\eta(\underline{\alpha})\mcalK_W(\eta,l)\right)
+O(\sigma_2^l)\\
&=\frac{1+\eta(\underline{\alpha})(-1)^{l/\nu_G}}{|Q_\Lambda|}
\nu_G\rho(B_W)^l+O(\sigma_2^l),
\end{align*}
where $\sigma_2<\rho(B_W)$ and we used the fact that $\eta$ is
$2$-torsion.  Under the stated parity condition the leading factor is $2$.
Lemma~\ref{L:proper-powers}
again gives the asymptotics for $\pi_W$ and $\Pi_W$.
\end{proof}

We specialize to $\Lambda=tH_1(G,\mbbZ)$.  Then
$Q_\Lambda=H_1(G,\mbbZ/t\mbbZ)$ has order $t^g$, and its elements are the
$t$-circulations on $G$.

\begin{corollary}[\textbf{Weighted $t$-circulations}]\label{C:weighted-t-circulations}
Assume $g\geq2$, let $W$ be a positive directed weight, and let
$\underline{\alpha}\in H_1(G,\mbbZ/t\mbbZ)$.
\begin{enumerate}[(a)]
\item If $t$ is odd, then, along $\nu_G\mid l$,
$$
\pi_W(\underline{\alpha},l)
\sim\Pi_W(\underline{\alpha},l)
\sim\frac{N_W(\underline{\alpha},l)}l
\sim\frac{\nu_G\rho(B_W)^l}{lt^g}.
$$
\item Suppose $t$ is even and the period character does not exist.  Then the
same uniform asymptotic holds along $\nu_G\mid l$.
\item Suppose $t$ is even and the period character $\eta$ exists.  All
three sums vanish when
$$
\eta(\underline{\alpha})\neq(-1)^{l/\nu_G};
$$
along the remaining integers with $\nu_G\mid l$,
$$
\pi_W(\underline{\alpha},l)
\sim\Pi_W(\underline{\alpha},l)
\sim\frac{N_W(\underline{\alpha},l)}l
\sim\frac{2\nu_G\rho(B_W)^l}{lt^g}.
$$
\end{enumerate}
\end{corollary}

\begin{proof}
For $\Lambda=tH_1(G,\mbbZ)$, we have
$Q_\Lambda=H_1(G,\mbbZ/t\mbbZ)$ and $|Q_\Lambda|=t^g$.  Its dual is the
subgroup of unitary characters whose order divides $t$.  Hence, when $t$ is
odd, this dual group has no nonzero $2$-torsion; when $t$ is even, it contains
all $2$-torsion characters.

Since $\eta$ is nontrivial and $2$-torsion whenever it exists, it does not
belong to the finite dual group when $t$ is odd.  Thus
$\mathcal E_G(tH_1(G,\mbbZ))=\{0\}$ and
Theorem~\ref{T:counting}(b) proves (a).  When $t$ is even, the period
character belongs to the finite dual exactly when it exists.  Theorem~
\ref{T:counting}(b) then gives (b), while parts (a) and (c) of that theorem
give the vanishing and doubled asymptotic in (c).
\end{proof}

The unweighted regular-bipartite specialization is stated separately in
Appendix~\ref{C:regular-specializations}.

\section{Examples}\label{S:examples}

The examples are organized around three questions suggested by the main
results.  First, how do the canonical and period characters appear in an
actual spectral-radius landscape?  Three genus-$2$ theta graphs exhibit the
different extremal-character loci allowed by Theorem~\ref{T:SpecDuality}; a
comparison of the trivial and nontrivial weights shows that the surfaces and
their maximal values change, while these distinguished loci do not.  Second,
what remains of the outer-edge theorem when a periodic graph samples only a
proper part of the character torus?  A $\mbbZ^2$-periodic graph with a
genus-$3$ quotient realizes the affine-subtorus membership conditions in
Theorem~\ref{T:outer-spectral-edges}.  Third, how do the spectral identities
pass into weighted cycle counting?  On $K_4$, we follow the trace formulas
through Fourier inversion to the resulting cycle sums and their exponential
growth rates.

\subsection{Weighted spectral radii and trace distributions on
\texorpdfstring{genus-$2$ theta graphs}{genus-2 theta graphs}} \label{SS:example_spec}
We consider a particular family of genus-$2$ graphs, rather than all such
graphs.  Let $G$ be a theta graph formed by three internally vertex-disjoint
paths $\Delta_0$, $\Delta_1$ and $\Delta_2$ joining two distinct vertices
$v$ and $w$ (Figure~\ref{F:genus2graph}).  Thus $G$ has neither loops nor
vertices of degree one.  Let $l_j\geq1$ be the length of $\Delta_j$ for
$j=0,1,2$. Let $v_1$ and $v_2$ be the vertices adjacent to $v$ on $\Delta_1$ and $\Delta_2$, respectively, and let $\ve_1$ and $\ve_2$ be the oriented edges from $v$ to $v_1$ and $v_2$. Then $\{\phi_1(d\ve_1), \phi_1(d\ve_2)\}$ is a basis of $H_1(G,\mbbZ)^\vee$ in $\mcalH^1(G)$ (Proposition~\ref{P:basis_gen}), and the character group $\mcalX=\mcalH^1(G)/H_1(G,\mbbZ)^\vee$ is a $2$-dimensional real torus. Each $\underline{\omega}\in \mcalX$ has a unique representative
$$
\omega=\omega_1\cdot\phi_1(d\ve_1)+\omega_2\cdot\phi_1(d\ve_2),
\qquad 0\leq\omega_1,\omega_2<1.
$$

We consider three cases of this type of graphs: a non-bipartite graph $G_1$ with $l_0=1$, $l_1=2$ and $l_2=3$; a bipartite graph $G_2$ with $l_0=1$, $l_1=3$ and $l_2=5$; and a bipartite graph $G_3$ with $l_0=2$, $l_1=2$ and $l_2=4$. 

Besides the trivial weight, we use the same positive symmetric weight $W^\star$ on all three graphs.  Put
$$
(a_0,a_1,a_2)=\left(1,2,\frac12\right)
$$
and, for either orientation $\ve$ of an edge belonging to $\Delta_j$, define
$$
W^\star(\ve):=a_j^{1/l_j}.
$$
Thus the total weight in either direction along $\Delta_j$ is $W^\star(\Delta_j)=a_j$.  Normalizing by the path length makes the three displayed weighted examples directly comparable even though their path lengths differ.

\begin{figure}[H]
\centering
\begin{tikzpicture}[x=1cm,y=1cm]
\coordinate (v) at (0,4.3);
\coordinate (w) at (0,-0.3);

\coordinate (v0) at (0,3);
\coordinate (v01) at (0,2.5);
\coordinate (w0) at (0,1);
\coordinate (w01) at (0,1.5);
\coordinate (vw0) at (0,2.1);

\coordinate (v1) at (1,3);
\coordinate (v11) at (1.05,2.5);
\coordinate (w1) at (1,1);
\coordinate (w11) at (1.05,1.5);
\coordinate (vw1) at (1.05,2.1);

\coordinate (v2) at (2,3);
\coordinate (v21) at (2.1,2.5);
\coordinate (w2) at (2,1);
\coordinate (w21) at (2.1,1.5);
\coordinate (vw2) at (2.1,2.1);

\path[-,font=\scriptsize,  line width=1.5pt, black]
(v) edge[out=-90,in=90]   (v0)
(w) edge[out=90,in=-90]  (w0)
(v0) edge[out=-90,in=90]   (v01)
(w0) edge[out=90,in=-90] (w01);

\path[-,font=\scriptsize,  line width=1.5pt, black]
(v) edge[out=-15,in=100, gray] node[pos=0.7,left,black]{\large $\ve_1$} node[pos=0.65,sloped,allow upside down]{\midarrow}  (v1)
(w) edge[out=15,in=-100]    (w1)
(v1) edge[out=-80,in=90]   (v11)
(w1) edge[out=80,in=-90] (w11);

\path[-,font=\scriptsize,  line width=1.5pt, black]
(v) edge[out=-5,in=110, gray] node[pos=0.55, below,black]{\large $\ve_2$}  node[pos=0.6,sloped,allow upside down]{\midarrow}  (v2)
(w) edge[out=5,in=-110]  (w2)
(v2) edge[out=-70,in=90]   (v21)
(w2) edge[out=70,in=-90] (w21);

\fill [black] (v) circle (2.5pt);
    \draw (v) node[anchor=east] {\large $v$};
\fill [black] (w) circle (2.5pt);
    \draw (w) node[anchor=east] {\large $w$};
\fill [black] (v0) circle (2.5pt);
\fill [black] (w0) circle (2.5pt);
\draw (vw0) node {\LARGE $\vdots$};
\draw (vw0)+(-0.1,-0.1)  node[anchor=east] {\large $\Delta_0$};

\fill [black] (v1) circle (2.5pt);
	\draw (v1) node[anchor=east] {\large $v_1$};
\fill [black] (w1) circle (2.5pt);
\draw (vw1) node {\LARGE $\vdots$};
\draw (vw1)+(-0.1,-0.1) node[anchor=east] {\large $\Delta_1$};

\fill [black] (v2) circle (2.5pt);
	\draw (v2) node[anchor=east] {\large $v_2$};
\fill [black] (w2) circle (2.5pt);
\draw (vw2) node {\LARGE $\vdots$};
\draw (vw2)+(-0.1,-0.1)  node[anchor=east] {\large $\Delta_2$};
\end{tikzpicture}
\caption{A genus-$2$ graph: two vertices $v$ and $w$ are connected by three paths $\Delta_0$, $\Delta_1$ and $\Delta_2$ of length $l_0$, $l_1$, and $l_2$ respectively. } \label{F:genus2graph}
\end{figure}

\subsubsection*{Distribution of $\rho(B_{\omega})$}

The upper panels of Figure~\ref{F:SpecRad}(a)--(c) show the distributions of the unweighted spectral radii $\rho(B_{\omega})$ over $\mcalX(G_1)$, $\mcalX(G_2)$ and $\mcalX(G_3)$, respectively.  In every panel the horizontal and vertical axes are the coordinates $\omega_1$ and $\omega_2$ with respect to $\phi_1(d\ve_1)$ and $\phi_1(d\ve_2)$.

\begin{figure}[H]
\centering
\includegraphics[width=.84\textwidth]{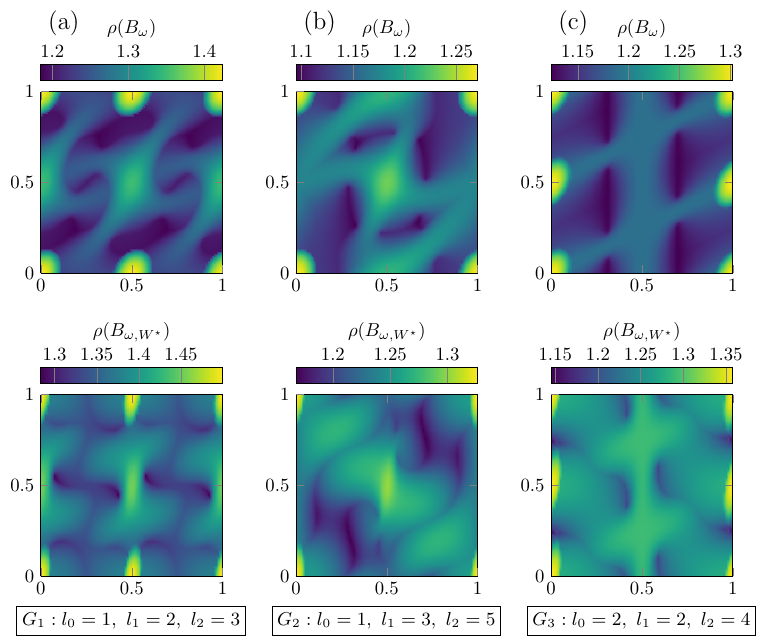}
\caption{Distributions of the spectral radii for $G_1$, $G_2$ and $G_3$.  Parts (a)--(c) correspond to these three graphs, respectively.  In each part, the upper panel is the unweighted distribution $\rho(B_\omega)$ and the lower panel is the corresponding weighted distribution $\rho(B_{\omega,W^\star})$, where the path products of $W^\star$ are $(1,2,1/2)$.  The horizontal and vertical axes are the coordinates $\omega_1$ and $\omega_2$, respectively.}\label{F:SpecRad}
\end{figure}

For the graph $G_1$, spectral antisymmetry (Theorem~\ref{T:SpecDuality}) gives a unique canonical character $\theta_{G_1}\in\mcalX(G_1)$, which is a nontrivial $2$-torsion character because $G_1$ is non-bipartite.  Construction~\ref{Cs:CanoForm2} gives the coordinates $(0.5,0)$ for $\theta_{G_1}$, as can also be observed in Figure~\ref{F:SpecRad}(a). In particular, $\rho(B_{\omega})=\rho(B)$ (of value approximately $1.42405$) if $\underline{\omega}\in\{0,\theta_{G_1}\}$, and $\rho(B_{\omega})<\rho(B)$ otherwise.  The distribution of $\rho(B_{\omega})$ is symmetric with respect to both the trivial and canonical characters, as expected from Lemma~\ref{L:PropAdj}(c) and Theorem~\ref{T:SpecDuality}(SA1).

Graphs $G_2$ and $G_3$ are both bipartite, so their canonical characters are
trivial.  Their edge periods are nevertheless the same:
$$
\nu_{G_2}=\gcd(4,6,8)=2,
\qquad
\nu_{G_3}=\gcd(4,6,6)=2.
$$
For $G_2$, normalized length parity on two basis cycles of lengths $4$ and
$6$ has values $+1$ and $-1$, while the homological difference of those
cycles has length $8$ and would have to receive both $-1$ and $+1$.
Consequently the period character does not exist, and (SR5) gives
$\rho(B_\omega)<\rho(B)\approx1.27065$ for every nontrivial character.
For $G_3$, the corresponding cycle lengths are $4,6,6$, so normalized length
parity is compatible with the homology relation.  The period character exists
and has coordinates $(0,0.5)$.  Hence
$\rho(B_{\eta_{G_3}})=\rho(B)\approx1.30216$, while the inequality is strict
at every other nontrivial character.  These two alternatives are visible in
the upper panels of Figure~\ref{F:SpecRad}(b) and (c).

The lower panels of Figure~\ref{F:SpecRad}(a)--(c) repeat the calculation with $W^\star$.  The weights change both the surfaces and their maximal values:
$$
\rho(B_{0,W^\star})\approx1.49947,\quad1.32633,\quad1.35804
$$
for $G_1,G_2,G_3$, respectively.  They do not change the loci of the maxima.  These loci are $\{0,\theta_{G_1}\}$ for $G_1$, $\{0\}$ for $G_2$, and $\{0,\eta_{G_3}\}$ for $G_3$, exactly as in the unweighted panels.  This agreement is the weight-independence asserted in Theorem~\ref{T:SpecDuality}(SR4)--(SR5), rather than a numerical accident.  We use a symmetric weight here only so that the weighted trace distributions below are real-valued.

The complete edge spectra for $G_3$ make the rotation in (PC2) explicit.
For a polynomial $p$, write $\mathsf Z(p)$ for its multiset of complex roots,
counting multiplicity.  In the trivial-weight case,
\begin{align*}
\spec B
={}&\{1,1,-1,-1,\sqrt{-1},-\sqrt{-1}\}
\sqcup\mathsf Z(z^4+2z^2+2)
\sqcup\mathsf Z(z^6-z^4-2),\\
\spec B_{\eta_{G_3}}
={}&\{1,-1,\sqrt{-1},\sqrt{-1},-\sqrt{-1},-\sqrt{-1}\}
\sqcup\mathsf Z(z^4-2z^2+2)
\sqcup\mathsf Z(z^6+z^4+2).
\end{align*}
Thus all sixteen eigenvalues, including multiplicities, satisfy
$$
\spec B_{\eta_{G_3}}=\sqrt{-1}\,\spec B.
$$
For the weighted example, put
$$
q_\pm(z):=2z^8-4z^4-3z^2\pm4.
$$
Direct factorization of the two characteristic polynomials gives
\begin{align*}
\spec B_{0,W^\star}
&=\mathsf Z(q_-)\sqcup\mathsf Z(q_+),\\
\spec B_{\eta_{G_3},W^\star}
&=\mathsf Z(q_-(-\sqrt{-1}z))
  \sqcup\mathsf Z(q_+(-\sqrt{-1}z))
 =\sqrt{-1}\,\spec B_{0,W^\star}.
\end{align*}
Equivalently, the two weighted characteristic polynomials are
\begin{align*}
\det(zI-B_{0,W^\star})
&=\frac14(2z^8-4z^4-3z^2-4)(2z^8-4z^4-3z^2+4),\\
\det(zI-B_{\eta_{G_3},W^\star})
&=\frac14(2z^8-4z^4+3z^2-4)(2z^8-4z^4+3z^2+4).
\end{align*}

\raggedbottom
\subsubsection*{Distribution of $\mcalK(\omega,l)$}

Figure~\ref{F:Kappa} compares the distributions of the unweighted traces $\mcalK(\omega,l)$ with the corresponding weighted traces $\mcalK_{W^\star}(\omega,l)$ over the character groups of graphs $G_1$, $G_2$ and $G_3$ (see Subsection~\ref{SS:trace}).  For each graph, the unweighted and weighted cases are placed in consecutive rows so that panels in the same column have the same length.

For $G_1$ (the unweighted row of Figure~\ref{F:Kappa}(a)), we show the cases of $l=3,4,20,21$. In particular, $\mcalK(0,3)=6$, $\mcalK(0,4)=8$, $\mcalK(0,20)=1278$, and $\mcalK(0,21)=1574$, which correspond to the number of circuits of length $3$, $4$, $20$ and $21$ respectively (Lemma~\ref{L:Kappa}(a)). One may observe that the distribution of $\mcalK(\omega,l)$ is symmetric with respect to the trivial character for all cases (Lemma~\ref{L:Kappa}(b)). 
As shown previously, the canonical character $\theta_{G_1}$ has coordinates $(0.5,0)$. The distribution of $\mcalK(\omega,l)$ is symmetric with respect to $\theta_{G_1}/2$ at $(0.25,0)$ when $l=4,20$, and antisymmetric with respect to $\theta_{G_1}/2$ when $l=3,21$ (Lemma~\ref{L:Kappa}(c)).  For small $l$ ($3$ and $4$ here), it is possible that $|\mcalK(\omega,l)|=\mcalK(0,l)$ for $\underline{\omega}\notin \{0,\theta_{G_1}\}$: in the case $l=3$, $\mcalK(\omega,3)=6$ for all characters $\underline{\omega}$ with the first coordinate $0$, and $\mcalK(\omega,3)=-6$ for all characters $\underline{\omega}$ with the first coordinate $0.5$; in the case $l=4$, $\mcalK(\omega,4)=8$ for all characters $\underline{\omega}$ with the second coordinate $0$, and $\mcalK(\omega,4)=-8$ for all characters $\underline{\omega}$ with the second coordinate $0.5$. For $G_1$, the matrix $B$ has the unique dominant eigenvalue $\rho(B)$.  Theorem~\ref{T:SpecDuality} therefore gives the unique dominant eigenvalue $-\rho(B)$ of $B_{\omega}$ when $\underline{\omega}=\theta_{G_1}$, while $\rho(B_{\omega})<\rho(B)$ for every $\underline{\omega}\notin\{0,\theta_{G_1}\}$.  Hence, as $l\to\infty$, $\mcalK(0,l)\sim \rho(B)^l$ and $\mcalK(\theta_{G_1},l)=(-1)^l\mcalK(0,l)\sim (-1)^l\rho(B)^l$.  Consequently, for sufficiently large $l$ ($20$ and $21$ here), the equality $|\mcalK(\omega,l)|=\mcalK(0,l)$ can hold only when $\underline{\omega}\in\{0,\theta_{G_1}\}$.

The factor $(-1)^l$ shows that opposite colorbar endpoints are forced only
for odd $l$; for even $l$, spectral antisymmetry gives a symmetry of the trace
function on the character torus, not of its range about zero.

\begin{figure}[H]
\centering
\includegraphics[width=.82\textwidth]{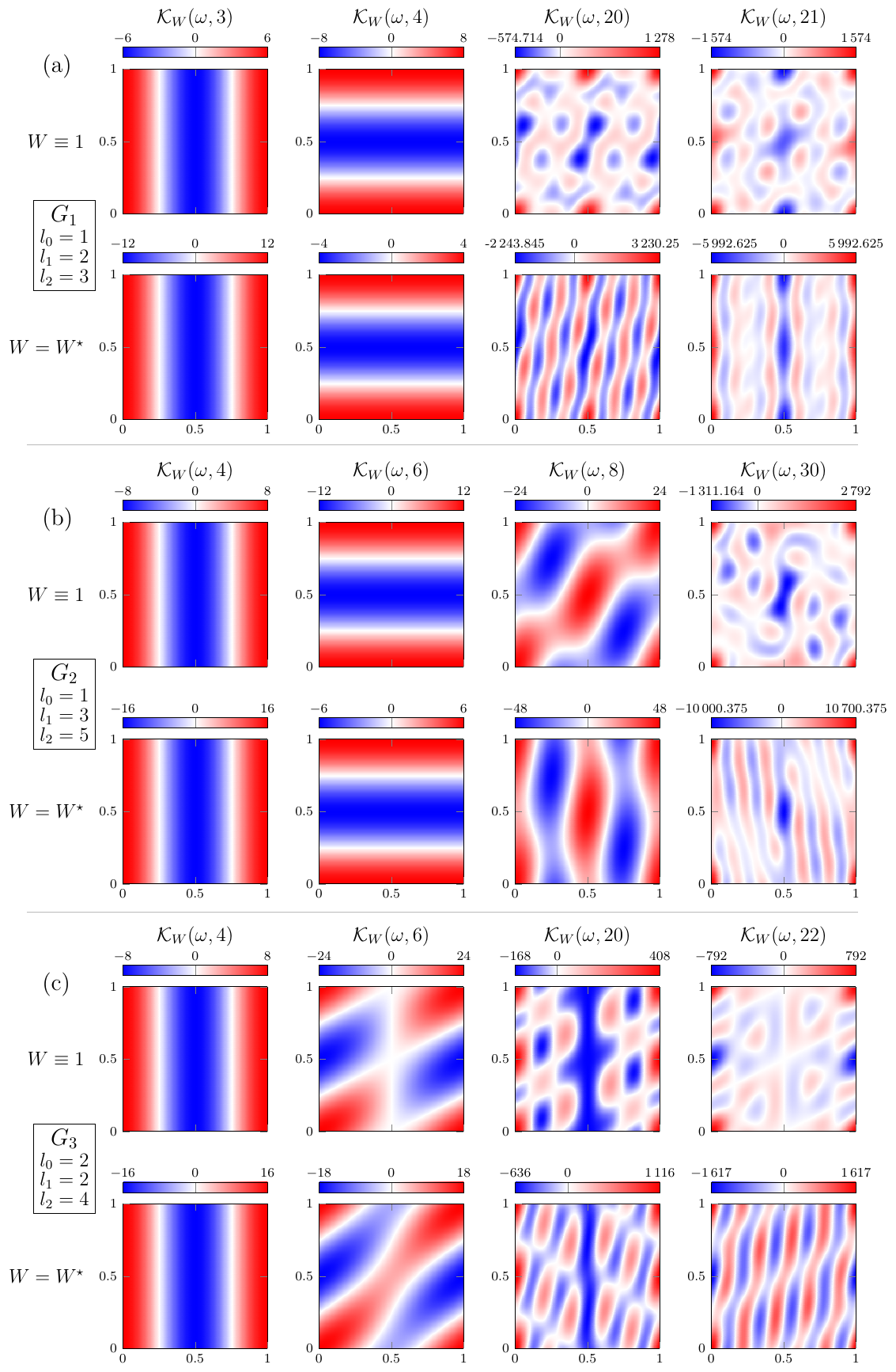}
\caption{Distributions of the unweighted and weighted traces over the character groups of $G_1$, $G_2$ and $G_3$.  Parts (a)--(c) correspond to $G_1,G_2,G_3$, respectively.  In each part, the first row has $W\equiv1$ and the second has $W=W^\star$, where the path products of $W^\star$ are $(1,2,1/2)$; a shared heading $\mcalK_W(\omega,l)$ applies to each vertical pair.  Every colorbar marks its actual minimum, zero, and its actual maximum.  The horizontal and vertical axes are the coordinates $\omega_1$ and $\omega_2$.}\label{F:Kappa}
\end{figure}

Graphs $G_2$ and $G_3$ are bipartite. Then the corresponding canonical characters are both trivial, and by Lemma~\ref{L:Kappa}(d), $\mcalK(\omega,l)$ vanishes for all characters $\underline{\omega}$ whenever $l$ is odd. Therefore, here we only show cases for even $l$.  For $G_2$ and $G_3$ (the unweighted rows of Figure~\ref{F:Kappa}(b) and (c)), we show the cases $l=4,6,8,30$ and $l=4,6,20,22$, respectively. Recall from Figure~\ref{F:SpecRad} that for $G_2$, $\rho(B_{\omega})<\rho(B)$ for every nontrivial character $\underline{\omega}$.  For $G_3$, however, the period character $\eta_{G_3}$ has coordinates $(0,0.5)$ and satisfies
$$
\spec B_{\eta_{G_3}}=e(1/4)\spec B=\sqrt{-1}\,\spec B.
$$
Consequently,
$$
\mcalK(\eta_{G_3},l)=(\sqrt{-1})^l\mcalK(0,l)
=(-1)^{l/2}\mcalK(0,l)
$$
for even $l$.  Thus the exceptional trace agrees with the untwisted trace at
$l=20$ and is its negative at $l=22$.  For $G_2$, by contrast, when $l$ is a
sufficiently large even number ($30$ in this example),
$|\mcalK(\omega,l)|<\mcalK(0,l)$ for every nontrivial character.

The weighted row in each part of Figure~\ref{F:Kappa} gives the corresponding traces $\mcalK_{W^\star}(\omega,l)=\operatorname{tr}(B_{\omega,W^\star}^l)$. Because $W^\star$ is symmetric, these functions are real-valued.  Their values at the trivial character are weighted circuit sums rather than cardinalities.  For $G_1$, for example, $\mcalK_{W^\star}(0,3)=12$, $\mcalK_{W^\star}(0,4)=4$, $\mcalK_{W^\star}(0,20)=3230.25$, and $\mcalK_{W^\star}(0,21)=5992.625$.  Although the path weights substantially deform the trace surfaces, the canonical relation $\mcalK_{W^\star}(\theta_{G_1}+\underline{\omega},l)=(-1)^l\mcalK_{W^\star}(\underline{\omega},l)$ remains unchanged.  On $G_3$, the equally weight-independent period relation is
$$
\mcalK_{W^\star}(\eta_{G_3},l)
=(\sqrt{-1})^l\mcalK_{W^\star}(0,l).
$$
The long-length panels are again governed by the extremal-character loci identified in Figure~\ref{F:SpecRad}.

\subsection{Outer spectral edges for a
\texorpdfstring{$\mbbZ^2$}{Z2}-periodic graph}
\label{SS:example-periodic-edges}

We now give an example in which the Bloch torus is a proper subtorus of the
character torus of the quotient graph.  Let
$$
\mathbf a_1=(1,0),\qquad \mathbf a_2=(0,1)
$$
be the standard basis of $\mbbZ^2$.  The periodic graph $\Gamma$ has vertices
$u_{\mathbf n},v_{\mathbf n}$, $\mathbf n\in\mbbZ^2$, and the following four
edge orbits:
\begin{align*}
\ve_0&:u_{\mathbf n}\longrightarrow
u_{\mathbf n+\mathbf a_1+\mathbf a_2},&
\ve_1&:u_{\mathbf n}\longrightarrow v_{\mathbf n},\\
\ve_2&:u_{\mathbf n}\longrightarrow v_{\mathbf n+\mathbf a_1},&
\ve_3&:u_{\mathbf n}\longrightarrow v_{\mathbf n+\mathbf a_2}.
\end{align*}
Thus $\ve_1,\ve_2,\ve_3$ form a honeycomb-type bipartite graph,
whereas $\ve_0$ joins two vertices in the same vertex orbit and makes
$\Gamma$ non-bipartite.

\begin{figure}[t]
\centering
\makebox[\textwidth][c]{%
\begin{minipage}[t]{7.30cm}
\centering
\begin{tikzpicture}[x=.80cm,y=.80cm]
\path[use as bounding box] (-4.55,-3.20) rectangle (4.55,3.05);

\begin{scope}[xshift=-.55cm]
\clip (-2.30,-2.55) rectangle (4.40,2.55);
\foreach \i in {-3,-2,...,3}{
  \foreach \j in {-3,-2,...,3}{
    \pgfmathsetmacro{\ux}{1.65*\i-.24}
    \pgfmathsetmacro{\uy}{1.65*\j-.24}
    \pgfmathsetmacro{\vx}{\ux-.585}
    \pgfmathsetmacro{\vy}{\uy-.585}
    \pgfmathsetmacro{\vxone}{\ux+1.065}
    \pgfmathsetmacro{\vxtwo}{\ux-.585}
    \pgfmathsetmacro{\vytwo}{\uy+1.065}
    \pgfmathsetmacro{\utopx}{\ux+1.65}
    \pgfmathsetmacro{\utopy}{\uy+1.65}
    \draw[line width=1.05pt] (\ux,\uy) -- (\vx,\vy);
    \draw[line width=1.05pt] (\ux,\uy) -- (\vxone,\vy);
    \draw[line width=1.05pt] (\ux,\uy) -- (\vxtwo,\vytwo);
    \draw[line width=1.05pt,black!45]
      (\ux,\uy) to[bend right=19] (\utopx,\utopy);
  }
}
\foreach \i in {-3,-2,...,3}{
  \foreach \j in {-3,-2,...,3}{
    \pgfmathsetmacro{\ux}{1.65*\i-.24}
    \pgfmathsetmacro{\uy}{1.65*\j-.24}
    \pgfmathsetmacro{\vx}{\ux-.585}
    \pgfmathsetmacro{\vy}{\uy-.585}
    \fill[black] (\ux,\uy) circle (2.15pt);
    \filldraw[fill=white,draw=black,line width=.8pt]
      (\vx,\vy) circle (2.15pt);
  }
}

\coordinate (u) at (-.24,-.24);
\coordinate (v0) at (-.825,-.825);
\coordinate (v1) at (.825,-.825);
\coordinate (v2) at (-.825,.825);
\coordinate (up) at (1.41,1.41);
\path[-,font=\normalsize,line width=1.5pt,black]
(u) edge
  node[pos=.55,above left=1pt,black,fill=white,inner sep=1pt]{$\ve_1$}
  node[pos=.63,sloped,allow upside down]{\midarrow}
  (v0)
(u) edge
  node[pos=.55,below=3pt,black,fill=white,inner sep=1pt]{$\ve_2$}
  node[pos=.63,sloped,allow upside down]{\midarrow}
  (v1)
(u) edge
  node[pos=.52,right=4pt,black,fill=white,inner sep=1pt]{$\ve_3$}
  node[pos=.63,sloped,allow upside down]{\midarrow}
  (v2);
\draw[font=\normalsize,line width=1.5pt,gray]
  (u) to[bend right=19]
  node[pos=.68,below=14pt,xshift=-4pt,black,fill=white,inner sep=1pt]{$\ve_0$}
  node[pos=.82,sloped,allow upside down]{\midarrow}
  (up);
\fill[black] (u) circle (2.5pt);
\filldraw[fill=white,draw=black,line width=.9pt]
  (v0) circle (2.5pt);
\filldraw[fill=white,draw=black,line width=.9pt]
  (v1) circle (2.5pt);
\filldraw[fill=white,draw=black,line width=.9pt]
  (v2) circle (2.5pt);
\fill[black] (up) circle (2.5pt);
\end{scope}

\coordinate (period-origin) at (-4.00,-2.85);
\draw[->,densely dashed,line width=.9pt,shorten >=2.5pt]
  (period-origin) -- ++(1.65,0)
  node[midway,below=2pt,fill=white,inner sep=1pt]{$\mathbf a_1$};
\draw[->,densely dashed,line width=.9pt,shorten >=2.5pt]
  (period-origin) -- ++(0,1.65)
  node[midway,left=2pt,fill=white,inner sep=1pt]{$\mathbf a_2$};

\fill[black] (-1.55,-2.88) circle (2.5pt);
\node[anchor=west] at (-1.42,-2.88) {$u_{\mathbf n}$};
\filldraw[fill=white,draw=black,line width=.9pt]
  (.55,-2.88) circle (2.5pt);
\node[anchor=west] at (.68,-2.88) {$v_{\mathbf n}$};
\end{tikzpicture}

\makebox[\linewidth][c]{\textup{(a) A periodic window of $\Gamma$}}
\end{minipage}\hspace{.45cm}%
\begin{minipage}[t]{6.12cm}
\centering
\begin{tikzpicture}[x=1.30cm,y=1.27cm]
\path[use as bounding box] (-.85,-1.70) rectangle (3.85,2.23);
\coordinate (u) at (-.28,0);
\coordinate (v) at (3.28,0);

\path[-,font=\normalsize,line width=1.5pt,black]
(u) edge
  node[pos=.50,above=3pt,fill=white,inner sep=1pt]{$\ve_2$}
  node[pos=.62,sloped,allow upside down]{\midarrow}
  (v)
(u) edge[out=35,in=145]
  node[pos=.50,above=3pt,fill=white,inner sep=1pt]{$\ve_1$}
  node[pos=.63,sloped,allow upside down]{\midarrow}
  (v)
(u) edge[out=-35,in=-145]
  node[pos=.50,below=3pt,fill=white,inner sep=1pt]{$\ve_3$}
  node[pos=.64,sloped,allow upside down]{\midarrow}
  (v);
\draw[font=\normalsize,line width=1.5pt,gray,
  decoration={markings,
    mark=at position .20 with {\arrow{latex}}},
  postaction={decorate}]
(u) to[loop above,out=60,in=120,min distance=2.35cm] (u);
\node[font=\normalsize,black,fill=white,inner sep=1pt]
  at (-.28,1.48) {$\ve_0$};

\fill[black] (u) circle (2.5pt);
\filldraw[fill=white,draw=black,line width=.9pt]
  (v) circle (2.5pt);
\draw (u) node[anchor=east] {$u$};
\draw (v) node[anchor=west] {$v$};
\end{tikzpicture}

\makebox[\linewidth][c]{\textup{(b) The quotient graph $G$}}
\end{minipage}
}
\caption{The $\mbbZ^2$-periodic graph and its voltage quotient.  Filled
vertices represent the orbit $u_{\mathbf n}$ and open vertices the orbit
$v_{\mathbf n}$.  The gray arcs in (a) are the translates of $\ve_0$.
The dashed arrows show the equal horizontal and vertical periods
$\mathbf a_1$ and $\mathbf a_2$.
The four thicker arrows mark one lifted set of the edge-orbit
representatives.  The voltages in (b) are
$\mathbf a_1+\mathbf a_2,0,\mathbf a_1,\mathbf a_2$ on
$\ve_0,\ve_1,\ve_2,\ve_3$, respectively, and
$T=\{\ve_1\}$ is the chosen spanning tree.}
\label{F:periodic-quotient}
\end{figure}

The quotient graph $G=\Gamma/\mbbZ^2$, shown in
Figure~\ref{F:periodic-quotient}(b), has two vertices, four edges, and genus
$g(G)=3$.
After deleting the loop $\ve_0$, the remaining graph is bipartite.  The
quotient $G$ itself is non-bipartite because it contains the loop $\ve_0$.
Thus this example illustrates the non-bipartite case in
Theorem~\ref{T:SpecDuality}(SA3) and (SR4).
With the spanning tree $T=\{\ve_1\}$, the fundamental cycles
$$
C_1=\ve_2\cdot\ve_1^{-1},\qquad
C_2=\ve_3\cdot\ve_1^{-1},\qquad
C_3=\ve_0
$$
form a basis of $H_1(G,\mbbZ)$.  Their displacements are
$$
\nu(C_1)=\mathbf a_1,\qquad
\nu(C_2)=\mathbf a_2,\qquad
\nu(C_3)=\mathbf a_1+\mathbf a_2.
$$
Thus, in the bases just chosen, the displacement homomorphism has matrix
$$
N=
\begin{pmatrix}
1&0&1\\
0&1&1
\end{pmatrix}.
$$

Use additive coordinates $(x_1,x_2,x_3)\in(\mbbR/\mbbZ)^3$ on
$\mcalX(G)$ dual to $C_1,C_2,C_3$.  The Bloch-subtorus description preceding
Theorem~\ref{T:outer-spectral-edges} identifies the Bloch torus with
$$
\mcalT_0=\nu^*\widehat{\mbbZ^2}
=\{(k_1,k_2,k_1+k_2):k_1,k_2\in\mbbR/\mbbZ\}.
$$
The first two fundamental cycles have length $2$, while $C_3$ has length
$1$.  Therefore the canonical character is
$$
\theta=(0,0,1/2).
$$
In particular,
$$
0\in\mcalT_0,\qquad \theta\notin\mcalT_0.
$$
This is the simplest codimension-one configuration in which a periodic
family sees one of the two distinguished characters but not the other.

Assign positive symmetric weights $c,b_1,b_2,b_3$ to the edge orbits
$\ve_0,\ve_1,\ve_2,\ve_3$, respectively.  For
$\alpha\in\mbbR/\mbbZ$, also allow a fixed unitary background having gain
$e^{2\pi\sqrt{-1}\alpha}$ on $\ve_0$ and gain $1$ on the other three
positively oriented edges.  Its affine Bloch torus is
$$
\mcalT_\alpha
=\{(k_1,k_2,k_1+k_2+\alpha):
  k_1,k_2\in\mbbR/\mbbZ\}.
$$
Notice that
$$
0\in\mcalT_\alpha\ \Longleftrightarrow\ \alpha=0,
\qquad
\theta\in\mcalT_\alpha\ \Longleftrightarrow\ \alpha=1/2.
$$

Put
$$
p(k_1,k_2):=
b_1+b_2e^{2\pi\sqrt{-1}k_1}
   +b_3e^{2\pi\sqrt{-1}k_2}.
$$
The Bloch fiber over $(k_1,k_2)\in(\mbbR/\mbbZ)^2$ is
$$
H_\alpha(k_1,k_2)=
\begin{pmatrix}
2c\cos\bigl(2\pi(k_1+k_2+\alpha)\bigr)&p(k_1,k_2)\\
\overline{p(k_1,k_2)}&0
\end{pmatrix}.
$$
Consequently, its two band functions are
\begin{align*}
\lambda_\pm^{(\alpha)}(k_1,k_2)
={}&c\cos\bigl(2\pi(k_1+k_2+\alpha)\bigr)\\
&{}\pm
\sqrt{c^2\cos^2\bigl(2\pi(k_1+k_2+\alpha)\bigr)
      +|p(k_1,k_2)|^2}.
\end{align*}
If $S:=b_1+b_2+b_3$, then
$$
A_{0,W}=
\begin{pmatrix}2c&S\\S&0\end{pmatrix},
\qquad
\rho(A_{0,W})=c+\sqrt{c^2+S^2}.
$$
Let
$$
\Sigma_\alpha:=
\bigcup_{(k_1,k_2)\in(\mbbR/\mbbZ)^2}
\spec H_\alpha(k_1,k_2).
$$
Theorem~\ref{T:outer-spectral-edges} now gives three different outer-edge
patterns on the same periodic graph.

\smallskip
\noindent\emph{Positive weights.}
For $\alpha=0$, the Bloch torus contains $0$ but not $\theta$.  Hence
$$
\sup\Sigma_0=\rho(A_{0,W}),\qquad
\inf\Sigma_0>-\rho(A_{0,W}).
$$
The upper edge is attained only at $(k_1,k_2)=(0,0)$, and the corresponding
fiber eigenvalue is simple.

\smallskip
\noindent\emph{A signed edge orbit.}
Replace the weight $c$ on $\ve_0$ by $-c$, leaving the other three weights
positive.  The sign character is
$$
\zeta_\sigma=(0,0,1/2)=\theta.
$$
Equivalently, this is the positive absolute weight with $\alpha=1/2$.
Since $\zeta_\sigma\notin\mcalT_0$ but
$\theta+\zeta_\sigma=0\in\mcalT_0$, Theorem~\ref{T:outer-spectral-edges}
gives
$$
\sup\Sigma_{1/2}<\rho(A_{0,W}),\qquad
\inf\Sigma_{1/2}=-\rho(A_{0,W}).
$$
The lower edge is attained only at $(0,0)$ and its fiber eigenvalue is
simple.  Thus changing the sign of one edge orbit interchanges which of the
two global outer edges is inherited by the periodic family.

\smallskip
\noindent\emph{A genuine affine background.}
For $\alpha=1/4$, the affine Bloch torus contains neither $0$ nor $\theta$.
Compactness and Theorem~\ref{T:outer-spectral-edges} therefore give
$$
-\rho(A_{0,W})
<\inf\Sigma_{1/4}
\leq\sup\Sigma_{1/4}
<\rho(A_{0,W}).
$$
Both outer edges move strictly into the spectral interval obtained by
varying over all unitary characters.

For example, if $b_1=b_2=b_3=c=1$, then
$$
\rho(A_{0,W})=1+\sqrt{10}.
$$
The three choices $\alpha=0,1/2,1/4$ therefore realize, respectively,
inheritance of only the upper edge, inheritance of only the lower edge,
and inheritance of neither edge.  This example concerns the two outermost
edges of the union of the two Bloch bands; it makes no claim that every
internal band edge satisfies the stronger conclusions predicted by the
spectral edges conjecture.

\subsection{Weighted trace formulas and cycle counting on
\texorpdfstring{$K_4$}{K4}}
\label{SS:example_trace_cycle}

We conclude the section by illustrating the weighted trace and cycle-counting
formulas on the complete graph $G=K_4$.  Label the vertices
$v_1,v_2,v_3,v_4$ as in Figure~\ref{F:K4}, and denote by $\ve_{ij}$ the
oriented edge from $v_i$ to $v_j$.  For $\varphi\in\mbbR$, put
$$
\omega_\varphi:=\frac{\varphi}{2\pi}
(d\ve_{12}+d\ve_{23}+d\ve_{34}+d\ve_{41}).
$$
Thus the four positively oriented square edges have gain
$e^{\sqrt{-1}\varphi}$, their inverses have gain
$e^{-\sqrt{-1}\varphi}$, and the two diagonals have gain $1$.

To compare the unweighted and weighted formulas without changing the
underlying character family, let $a,b>0$ and define the positive symmetric
weight $W_{a,b}$ by
$$
W_{a,b}(\ve)=
\begin{cases}
a,&\text{if the underlying edge of $\ve$ belongs to the square},\\
b,&\text{if the underlying edge of $\ve$ is a diagonal}.
\end{cases}
$$
The trivial weight is $W_{1,1}\equiv1$.  Our nontrivial example will be
$$
W^\star:=W_{2,1}.
$$

\begin{figure}[ht]
\centering
\begin{tikzpicture}[>=to,x=2cm,y=2cm]
\coordinate (v1) at (0,0);
\coordinate (v2) at (2.4,0);
\coordinate (v3) at (2.4,1.7);
\coordinate (v4) at (0,1.7);

\path[-,font=\large,line width=1.5pt,black]
(v1) edge[gray] node[pos=.28,anchor=south east,black]{$b$} (v3)
(v2) edge[gray] node[pos=.28,anchor=south west,black]{$b$} (v4)
(v1) edge
 node[pos=.5,anchor=north,black]{$ae^{\sqrt{-1}\varphi}$}
 node[pos=.5,sloped,allow upside down]{\midarrow} (v2)
(v2) edge
 node[pos=.5,anchor=west,black]{$ae^{\sqrt{-1}\varphi}$}
 node[pos=.55,sloped,allow upside down]{\midarrow} (v3)
(v3) edge
 node[pos=.5,anchor=south,black]{$ae^{\sqrt{-1}\varphi}$}
 node[pos=.6,sloped,allow upside down]{\midarrow} (v4)
(v4) edge
 node[pos=.5,anchor=east,black]{$ae^{\sqrt{-1}\varphi}$}
 node[pos=.5,sloped,allow upside down]{\midarrow} (v1);

\fill[black] (v1) circle (2.5pt);
\draw (v1) node[anchor=north east] {\large $v_1$};
\fill[black] (v2) circle (2.5pt);
\draw (v2) node[anchor=north west] {\large $v_2$};
\fill[black] (v3) circle (2.5pt);
\draw (v3) node[anchor=south west] {\large $v_3$};
\fill[black] (v4) circle (2.5pt);
\draw (v4) node[anchor=south east] {\large $v_4$};
\end{tikzpicture}
\caption{The weighted $K_4$ example.  The unweighted specialization is
$(a,b)=(1,1)$, and the displayed weighted specialization is
$W^\star=W_{2,1}$.}
\label{F:K4}
\end{figure}

\subsubsection*{The weighted twisted matrices}

With vertices ordered as $(v_1,v_2,v_3,v_4)$, the twisted weighted vertex
adjacency matrix is
$$
A_{\omega_\varphi,W_{a,b}}=
\begin{pmatrix}
0&ae^{\sqrt{-1}\varphi}&b&ae^{-\sqrt{-1}\varphi}\\
ae^{-\sqrt{-1}\varphi}&0&ae^{\sqrt{-1}\varphi}&b\\
b&ae^{-\sqrt{-1}\varphi}&0&ae^{\sqrt{-1}\varphi}\\
ae^{\sqrt{-1}\varphi}&b&ae^{-\sqrt{-1}\varphi}&0
\end{pmatrix}.
$$
This is a circulant matrix with eigenvalues
\begin{align*}
\mu_1&=b+2a\cos\varphi,&
\mu_2&=-b-2a\sin\varphi,\\
\mu_3&=b-2a\cos\varphi,&
\mu_4&=-b+2a\sin\varphi.
\end{align*}

The edge spectrum requires a little more care.  The weight $W_{a,b}$ is
symmetric, but it is reciprocal only when $a=b=1$.  Thus the classical
three-term Ihara--Bass formula cannot be applied to a nontrivial
$W_{a,b}$.  The general weighted formula in
Theorem~\ref{T:DertminantL}(b), however, remains explicit in this example.
Put
$$
x_a(u):=\frac{ua}{1-u^2a^2},\qquad
x_b(u):=\frac{ub}{1-u^2b^2},
$$
and
$$
d_{a,b}(u):=
\frac{2u^2a^2}{1-u^2a^2}
+\frac{u^2b^2}{1-u^2b^2}.
$$
Let $\nu_1,\ldots,\nu_4$ be the four expressions above with $a$ and $b$
replaced by $x_a(u)$ and $x_b(u)$, respectively.  Then
\begin{equation}\label{E:K4-weighted-determinant}
\det(I-uB_{\omega_\varphi,W_{a,b}})
=(1-u^2a^2)^4(1-u^2b^2)^2
\prod_{j=1}^4\bigl(1+d_{a,b}(u)-\nu_j\bigr).
\end{equation}
The apparent denominators cancel.  At $a=b=1$, this reduces to
$$
\det(I-uB_{\omega_\varphi})
=(1-u^2)^2\prod_{j=1}^4(1-\mu_ju+2u^2),
$$
and hence
$$
\spec B_{\omega_\varphi}
=
\left\{\frac{\mu_j\pm\sqrt{\mu_j^2-8}}2:1\leq j\leq4\right\}
\cup\{1,1,-1,-1\}
$$
as multisets.  This recovers the unweighted computation.

For later comparison, the partition of the oriented edges into square
edges and diagonals is equitable for $B_{0,W^\star}$.  Its quotient matrix
is
$$
\begin{pmatrix}2&1\\4&0\end{pmatrix},
$$
so Perron--Frobenius theory gives the exact weighted growth rate
\begin{equation}\label{E:K4-weighted-rho}
\rho_{G,W^\star}=\rho(B_{0,W^\star})=1+\sqrt5.
\end{equation}
The corresponding unweighted spectral radius is $\rho_{G,1}=2$.

\subsubsection*{Weighted trace identities and finite Fourier inversion}

We first retain the unweighted computation and place its weighted analogue
beside it.  When $\varphi=0$, the unweighted edge matrix has eigenvalues
$2$ with multiplicity $1$,
$(-1+\sqrt{-7})/2$ and $(-1-\sqrt{-7})/2$ each with multiplicity $3$,
$1$ with multiplicity $3$, and $-1$ with multiplicity $2$.  Therefore
$$
\mcalK(0,l)
=2^l+3\left(\frac{-1+\sqrt{-7}}2\right)^l
+3\left(\frac{-1-\sqrt{-7}}2\right)^l+3+2(-1)^l.
$$

Now set
$$
\omega_1:=\frac16
(d\ve_{12}+d\ve_{23}+d\ve_{34}+d\ve_{41}),\qquad
\omega_2:=\frac13
(d\ve_{12}+d\ve_{23}+d\ve_{34}+d\ve_{41}).
$$
Thus $\underline{\omega_2}=-\underline{\omega_1}$.  Since both
$W\equiv1$ and $W^\star$ are symmetric,
Lemma~\ref{L:Kappa}(b) gives
$$
\mcalK_W(\omega_1,l)=\mcalK_W(\omega_2,l).
$$
Some representative trace values are as follows.
\begin{equation}\label{E:K4-trace-table}
\begin{array}{c|r@{\quad}r|r@{\quad}r}
l&
\mcalK(0,l)&\mcalK_{W^\star}(0,l)&
\mcalK(\omega_1,l)&\mcalK_{W^\star}(\omega_1,l)\\ \hline
3&24&96&-12&-48\\
4&24&192&12&0\\
6&96&1536&-12&-192\\
7&168&5376&0&2688\\
8&168&9216&36&-384\\
9&528&33792&96&6144\\
10&1200&153600&-60&-30720\\
12&3960&1179648&-252&-92160\\
15&31944&42663936&768&509952
\end{array}
\end{equation}

Let
$$
\Lambda:=\{\alpha\in H_1(G,\mbbZ):
\chi_{\omega_1}(\alpha)=1\}.
$$
As in the unweighted computation, put
$$
\alpha_1=\ve_{12}+\ve_{23}+\ve_{31},\qquad
\alpha_2=\ve_{14}+\ve_{43}+\ve_{31}.
$$
Then $\Lambda$ has index $3$,
$$
Q_\Lambda=\{\underline{\alpha_0},\underline{\alpha_1},
\underline{\alpha_2}\}\simeq\mbbZ/3\mbbZ,
\qquad
\widehat{Q_\Lambda}
=\{0,\underline{\omega_1},\underline{\omega_2}\},
$$
where $\underline{\alpha_0}=\Lambda$ and
$\underline{\alpha_i}=\alpha_i+\Lambda$ for $i=1,2$.  The weighted finite
Fourier formula gives, simultaneously for $W\equiv1$ and $W=W^\star$,
\begin{align}
N_W(\underline{\alpha_0},l)
&=\frac{\mcalK_W(0,l)+2\mcalK_W(\omega_1,l)}3,\label{E:K4-N0}\\
N_W(\underline{\alpha_1},l)
=N_W(\underline{\alpha_2},l)
&=\frac{\mcalK_W(0,l)-\mcalK_W(\omega_1,l)}3.\label{E:K4-N1}
\end{align}
For example, at $l=6$ the weighted circuit sums are
$$
\bigl(
N_{W^\star}(\underline{\alpha_0},6),
N_{W^\star}(\underline{\alpha_1},6),
N_{W^\star}(\underline{\alpha_2},6)
\bigr)=(384,576,576),
$$
whereas the unweighted values are $(24,36,36)$.

The analytic trace formula is equally concrete.  For $h(z)=e^z$,
Theorem~\ref{T:trace-formula}(a) gives
$$
\sum_{\lambda\in\spec B_{0,W}}(e^\lambda-1)
=\sum_{l\geq1}\frac{N_W(l)}{l!}.
$$
For the two weights under consideration, the values of the two sides are
$$
\begin{cases}
e^2+3e+2e^{-1}
+6e^{-1/2}\cos(\sqrt7/2)-12
=5.172675227\cdots,&W\equiv1,\\
27.577567810\cdots,&W=W^\star.
\end{cases}
$$
The finite-quotient version for $\underline{\alpha_0}$ reads
$$
\sum_{\omega\in\{0,\omega_1,\omega_2\}}
\sum_{\lambda\in\spec B_{\omega,W}}(e^\lambda-1)
=3\sum_{l\geq1}\frac{N_W(\underline{\alpha_0},l)}{l!},
$$
whose common value is $2.141622583\cdots$ for $W\equiv1$ and
$12.108795534\cdots$ for $W=W^\star$.

\subsubsection*{Weighted prime cycles and cycles}

We next apply Proposition~\ref{P:IdenCountFunc}.  The powered weights in
that proposition are visible even in the first nontrivial repeated-cycle
calculation.  In the class $\underline{\alpha_1}$ at length $6$,
\begin{align*}
N_{W^\star}(\underline{\alpha_1},6)
&=6\pi_{W^\star}(\underline{\alpha_1},6)
+3\pi_{(W^\star)^2}(\underline{\alpha_2},3)\\
&=6\cdot64+3\cdot64=576,
\end{align*}
and
$$
\Pi_{W^\star}(\underline{\alpha_1},6)
=\pi_{W^\star}(\underline{\alpha_1},6)
+\pi_{(W^\star)^2}(\underline{\alpha_2},3)
=128.
$$
The second term records squares of prime triangles.  It involves
$(W^\star)^2$, not $W^\star$; this distinction disappears in the
unweighted specialization.

The following tables compare representative unweighted and weighted
values.  The two nonzero classes have equal sums, so it suffices to display
$\underline{\alpha_1}$.
\begin{equation}\label{E:K4-cycle-table}
\begin{array}{c|rr|rr}
&\multicolumn{2}{c|}{\pi_W(\underline{\alpha_0},l)}
&\multicolumn{2}{c}{\Pi_W(\underline{\alpha_0},l)}\\
l&W\equiv1&W=W^\star&W\equiv1&W=W^\star\\ \hline
3&0&0&0&0\\
4&4&16&4&16\\
6&4&64&4&64\\
8&8&320&12&384\\
12&92&24320&102&33792\\
15&744&970752&744&970752
\end{array}
\qquad
\begin{array}{c|rr|rr}
&\multicolumn{2}{c|}{\pi_W(\underline{\alpha_1},l)}
&\multicolumn{2}{c}{\Pi_W(\underline{\alpha_1},l)}\\
l&W\equiv1&W=W^\star&W\equiv1&W=W^\star\\ \hline
3&4&16&4&16\\
4&1&16&1&16\\
6&4&64&8&128\\
8&5&272&6&528\\
12&114&34560&122&36608\\
15&692&935936&696&940032
\end{array}
\end{equation}

The graph $K_4$ is non-bipartite and has $\nu_G=1$, so its period character
is the canonical character $\eta_G=\theta$.  The order-$3$ group
$\widehat{Q_\Lambda}$ does not contain this character.  Hence
Theorem~\ref{T:counting}(b), together with
\eqref{E:K4-weighted-rho}, gives, for every
$\underline{\alpha}\in Q_\Lambda$,
$$
\pi_W(\underline{\alpha},l)
\sim\Pi_W(\underline{\alpha},l)
\sim
\begin{cases}
\displaystyle\frac{2^l}{3l},&W\equiv1,\\[6pt]
\displaystyle\frac{(1+\sqrt5)^l}{3l},&W=W^\star.
\end{cases}
$$
Thus the weight changes the exponential growth rate while preserving the
uniform distribution among the three quotient-homology classes.

\subsubsection*{The canonical-character refinement}

Finally, we retain the order-$6$ refinement from the unweighted example and
observe that it works without change for $W^\star$.  Put
\begin{align*}
\omega_3&:=\frac14
(d\ve_{12}+d\ve_{23}+d\ve_{34}+d\ve_{41}),\\
\omega_4&:=\frac1{12}
(d\ve_{12}+d\ve_{23}+d\ve_{34}+d\ve_{41}),\\
\omega_5&:=\frac5{12}
(d\ve_{12}+d\ve_{23}+d\ve_{34}+d\ve_{41}).
\end{align*}
Then $\underline{\omega_3}=\theta$,
$2\underline{\omega_4}=\underline{\omega_1}$,
$3\underline{\omega_4}=\theta$,
$4\underline{\omega_4}=\underline{\omega_2}$, and
$5\underline{\omega_4}=\underline{\omega_5}=-\underline{\omega_4}$.
For either $W\equiv1$ or $W=W^\star$, spectral antisymmetry and symmetry of
the weight give
\begin{align*}
\mcalK_W(\omega_3,l)&=(-1)^l\mcalK_W(0,l),\\
\mcalK_W(\omega_4,l)=\mcalK_W(\omega_5,l)
&=(-1)^l\mcalK_W(\omega_1,l).
\end{align*}

Let
$$
\Lambda':=\{\alpha\in H_1(G,\mbbZ):
\chi_{\omega_4}(\alpha)=1\},
\qquad
\underline{\alpha_i'}:=i\alpha_1+\Lambda',
\quad 0\leq i\leq5.
$$
Then $Q_{\Lambda'}\simeq\mbbZ/6\mbbZ$,
$\widehat{Q_{\Lambda'}}$ is generated by
$\underline{\omega_4}$, and
$$
\underline{\alpha_i}
=\underline{\alpha_i'}\sqcup\underline{\alpha_{i+3}'},
\qquad i=0,1,2.
$$
Finite Fourier inversion now gives
\begin{align*}
N_W(\underline{\alpha_0'},l)
&=\frac{1+(-1)^l}{6}
\bigl(\mcalK_W(0,l)+2\mcalK_W(\omega_1,l)\bigr),\\
N_W(\underline{\alpha_1'},l)
=N_W(\underline{\alpha_5'},l)
&=\frac{1-(-1)^l}{6}
\bigl(\mcalK_W(0,l)-\mcalK_W(\omega_1,l)\bigr),\\
N_W(\underline{\alpha_2'},l)
=N_W(\underline{\alpha_4'},l)
&=\frac{1+(-1)^l}{6}
\bigl(\mcalK_W(0,l)-\mcalK_W(\omega_1,l)\bigr),\\
N_W(\underline{\alpha_3'},l)
&=\frac{1-(-1)^l}{6}
\bigl(\mcalK_W(0,l)+2\mcalK_W(\omega_1,l)\bigr).
\end{align*}
More compactly, for
$X_W\in\{N_W,\pi_W,\Pi_W\}$ and $0\leq i\leq5$,
\begin{equation}\label{E:K4-parity-refinement}
X_W(\underline{\alpha_i'},l)
=
\begin{cases}
X_W(\underline{\alpha_{i\bmod3}},l),&i+l\ \text{is even},\\
0,&i+l\ \text{is odd}.
\end{cases}
\end{equation}
Indeed, the two classes over
$\underline{\alpha_{i\bmod3}}$ are distinguished by the canonical
character, and Corollary~\ref{C:vanishing} gives the parity restriction.
Positivity of the weight transfers the same restriction from circuits to
prime cycles and cycles.

Since $\eta_G=\theta\in\widehat{Q_{\Lambda'}}$,
Theorem~\ref{T:counting}(c) gives
the corresponding parity-sensitive asymptotics.  If $i+l$ is odd, all
three sums in \eqref{E:K4-parity-refinement} vanish.  If $i+l$ is even,
then
$$
\pi_W(\underline{\alpha_i'},l)
\sim\Pi_W(\underline{\alpha_i'},l)
\sim
\begin{cases}
\displaystyle\frac{2^l}{3l},&W\equiv1,\\[6pt]
\displaystyle\frac{(1+\sqrt5)^l}{3l},&W=W^\star.
\end{cases}
$$
Thus the nontrivial weight changes the magnitude of the cycle sums but not
the canonical-character parity law.
 
\section{Proofs of Theorem~\ref{T:SpecDuality} and Proposition~\ref{P:period-character-algorithm}} \label{S:proof-antisymmetry}

Throughout this section $W:\vE(G)\to\mbbR_{>0}$ is a fixed positive
directed weight.  We prove Theorem~\ref{T:SpecDuality} directly in this
weighted setting.  The argument proceeds through a series of lemmas.  We
first identify the canonical character and establish spectral antisymmetry
and spectral rigidity.  We then develop the walk and edge-walk criteria for
strict spectral-radius decrease, analyze their equality cases, and obtain
the period-character rotation and the complete extremal-character
classification.  These results are assembled in the proof of the main
theorem near the end of the section.  We conclude with the proof of
Proposition~\ref{P:period-character-algorithm}.  The
spectral-antisymmetry statements allow full complex characters, whereas the
spectral-radius arguments use unitary characters and the identity
$|\chi_\omega|=1$.  The unweighted formulation is recovered by taking
$W\equiv1$.

\begin{lemma}\label{L:O-TO}
For every orientation $\mfrako$ and every spanning tree $T$, Construction~\ref{Cs:CanoForm2} also represents the canonical character:
$$\theta=\underline{\omega_{\mfrako}}=\underline{\omega_{(T,\mfrako)}}.$$
\end{lemma}

\begin{proof}
Let $\{\ve_1,\ldots,\ve_g\}=\vE_{\mfrako}(G)\setminus T$, and let
$$u_i=\ve_i+T(\ve_i(1),\ve_i(0))^{\ab},\qquad i=1,\ldots,g,$$
be the spanning-tree basis of $H_1(G,\mbbZ)$ from Proposition~\ref{P:basis_gen}.  It suffices to show that $\omega_{\mfrako}(u_i)-\omega_{(T,\mfrako)}(u_i)\in\mbbZ$ for every $i$.  If $\ve_i$ joins $V_1$ and $V_2$, then $\tau(T(\ve_i(1),\ve_i(0)))$ is odd, so $\omega_{\mfrako}(u_i)\in\mbbZ$ and $\omega_{(T,\mfrako)}(u_i)=0$.  Otherwise $\tau(T(\ve_i(1),\ve_i(0)))$ is even, so $\omega_{\mfrako}(u_i)-1/2\in\mbbZ$ and $\omega_{(T,\mfrako)}(u_i)=1/2$.  Hence the two $1$-forms have the same image in $\mcalX$.
\end{proof}

\begin{lemma}\label{L:spectral-neg1}
For every orientation $\mfrako$ and every $\omega\in\Omega(G)$,
$$
\chi_{\omega+\omega_{\mfrako}}(\ve)=-\chi_\omega(\ve)
\qquad\text{for every }\ve\in\vE(G),
$$
and consequently
$$A_{\omega+\omega_{\mfrako},W}=-A_{\omega,W},\qquad
B_{\omega+\omega_{\mfrako},W}=-B_{\omega,W}.$$
\end{lemma}

\begin{proof}
The character identity follows from $\chi_{\omega_{\mfrako}}(\ve)=-1$, and the two matrix identities then follow entrywise from Definition~\ref{D:weighted-adj}.
\end{proof}

\begin{lemma}\label{L:spectral-neg2}
Fix an orientation $\mfrako$ and a spanning tree $T$. Let $\gamma$ be a complex gain on $G$, let $\gamma_\theta(\ve):=\chi_{\omega_{(T,\mfrako)}}(\ve)$, and put
$$\gamma':=\gamma_\theta\cdot\gamma.$$
Then
$$A_{\gamma',W}\sim -A_{\gamma,W},\qquad B_{\gamma',W}\sim -B_{\gamma,W}.$$
Consequently, for every full character $\xi\in\mcalX_\mbbC$,
$$\spec A_{\theta+\xi,W}=-\spec A_{\xi,W},\qquad
\spec B_{\theta+\xi,W}=-\spec B_{\xi,W}.$$
If $\omega,\omega'\in\Omega(G)$ are unitary representatives satisfying
$$\underline{\omega'}=\theta+\underline{\omega},$$
then
$$\spec A_{\omega',W}=-\spec A_{\omega,W},\qquad
\spec B_{\omega',W}=-\spec B_{\omega,W}.$$
\end{lemma}

\begin{proof}
Let $\gamma_{\mfrako}:=\chi_{\omega_{\mfrako}}$, so
$\gamma_{\mfrako}(\ve)=-1$ for every oriented edge $\ve$.  By
Lemma~\ref{L:O-TO}, $\gamma_\theta$ and $\gamma_{\mfrako}$ represent the same
character and hence are switching equivalent.  Therefore
$\gamma_\theta\cdot\gamma$ and $\gamma_{\mfrako}\cdot\gamma$ are switching
equivalent, so Lemma~\ref{L:switching-similarity} gives
$$
A_{\gamma',W}\sim A_{\gamma_{\mfrako}\cdot\gamma,W}
=-A_{\gamma,W},
\qquad
B_{\gamma',W}\sim B_{\gamma_{\mfrako}\cdot\gamma,W}
=-B_{\gamma,W}.
$$
The spectral statements follow, with the unitary formulation obtained by
representing the characters by $1$-forms.
\end{proof}

\begin{lemma}\label{L:weighted-spectral-rigidity}
For every positive directed weight $W$ and every $\omega\in\Omega(G)$,
\begin{align*}
\spec A_{\omega,W}=\spec A_{0,W}
&\Longleftrightarrow \underline{\omega}=0
\Longleftrightarrow \spec B_{\omega,W}=\spec B_{0,W},\\
\spec A_{\omega,W}=-\spec A_{0,W}
&\Longleftrightarrow \underline{\omega}=\theta
\Longleftrightarrow \spec B_{\omega,W}=-\spec B_{0,W}.
\end{align*}
\end{lemma}

\begin{proof}
The reverse implications in the first line follow from
Lemma~\ref{L:switching-similarity}.

We first prove the converse for vertex matrices.  Suppose that $A_{\omega,W}$ and $A_{0,W}$ are isospectral.  Then their traces agree in every positive power.  Thus, for every $l\geq1$, the weighted trace expansion gives
$$0=\operatorname{tr}(A_{0,W}^l)-\operatorname{tr}(A_{\omega,W}^l)
=\sum_{\substack{\Delta\text{ closed}\\ \tau(\Delta)=l}}
W(\Delta)\left(1-\chi_\omega(\Delta)\right),$$
where the sum is over all closed walks of length $l$.  Taking real parts yields
$$\sum_{\substack{\Delta\text{ closed}\\ \tau(\Delta)=l}}
W(\Delta)\left(1-\mathrm{Re}\,\chi_\omega(\Delta)\right)=0.$$
Since $W(\Delta)>0$ and $\chi_\omega(\Delta)\in S^1$, each summand is nonnegative.  Hence $\chi_\omega(\Delta)=1$ for every closed walk $\Delta$.  In particular, $\chi_\omega$ is trivial on the spanning-tree basis of $H_1(G,\mbbZ)$ from Proposition~\ref{P:basis_gen}, so $\underline{\omega}=0$.

We next prove the converse for edge matrices.  Suppose that $B_{\omega,W}$ and $B_{0,W}$ are isospectral.  The edge determinant identity in Theorem~\ref{T:DertminantL}(a) gives
$$L_W(u,\chi_\omega)=L_W(u,\chi_0)=z_W(u).$$
Proposition~\ref{P:LFuncIden}(a) therefore gives $\underline{\omega}=0$.

For the second line, let $\omega_\theta$ represent $\theta$.  If $\underline{\omega}=\theta$, then Lemma~\ref{L:switching-similarity}, Lemma~\ref{L:spectral-neg2}, and the reality of $A_{0,W}$ and $B_{0,W}$ give
$$\spec A_{\omega,W}=-\spec A_{0,W},\qquad
\spec B_{\omega,W}=-\spec B_{0,W}.$$
Conversely, suppose first that $\spec A_{\omega,W}=-\spec A_{0,W}$.  Lemma~\ref{L:spectral-neg2}, applied to $\omega$, gives
$$\spec A_{\omega_\theta+\omega,W}
=-\spec A_{\omega,W}
=\spec A_{0,W}.$$
The first line gives $\underline{\omega_\theta+\omega}=0$, and therefore $\underline{\omega}=-\theta=\theta$.  The same argument, with $B$ in place of $A$, proves the converse from $\spec B_{\omega,W}=-\spec B_{0,W}$.
\end{proof}

\begin{lemma}\label{L:path}
Consider a graph $G$ and an arbitrary walk $\Delta$ on $G$.  There exists $a\in\mbbN$ such that for each pair of vertices $v$ and $v'$ (not necessarily distinct), and each nonnegative integer $b$, we have either
\begin{enumerate}[(i)]
\item there is no $x\in\mbbN$ for which a walk from $v$ to $v'$ has length $ax+b$; or
\item there exists some $x_0\in\mbbN$ such that for all $x>x_0$, there exists a walk from $v$ to $v'$ of length $ax+b$ which contains $\Delta$ as a sub-walk.
\end{enumerate}
\end{lemma}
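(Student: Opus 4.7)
The plan is to take $a$ to be the greatest common divisor of the lengths of all closed walks in $G$, so $a=1$ if $G$ is non-bipartite and $a=2$ if $G$ is bipartite (the case where $G$ has no edges is degenerate and handled trivially). The starting observation is a canonical coloring $c\colon V(G)\to\mbbZ/a\mbbZ$: fix a base vertex $v_0$ and set $c(v)\equiv\tau(W)\pmod{a}$ for any walk $W$ from $v_0$ to $v$. This is well-defined because any two such walks differ by a closed walk whose length lies in $a\mbbZ$. Consequently every walk from $v$ to $v'$ has length $\equiv c(v')-c(v)\pmod{a}$, so if $b\not\equiv c(v')-c(v)\pmod{a}$ then no walk of length $ax+b$ exists for any $x\in\mbbN$, placing us in case~(i).

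In the remaining case $b\equiv c(v')-c(v)\pmod{a}$, I would set $u=\Delta(0)$, $u'=\Delta(1)$, and $l=\tau(\Delta)$, then fix auxiliary walks $\Gamma_1$ from $v$ to $u$ of length $p_1$ and $\Gamma_2$ from $u'$ to $v'$ of length $p_2$, which exist because $G$ is connected. The coloring forces $p_1+l+p_2\equiv b\pmod{a}$, hence $ax+b-p_1-l-p_2=aK$ for an integer $K$ that grows linearly in $x$. The desired walk will be the concatenation of a long closed walk $C_1$ at $v$ of length $aK_1$, then $\Gamma_1$, then $\Delta$, then $\Gamma_2$, then a long closed walk $C_2$ at $v'$ of length $aK_2$, with $K_1+K_2=K$. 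This concatenation is a walk from $v$ to $v'$ of length exactly $ax+b$ and manifestly contains $\Delta$ as a sub-walk.

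The main step, and the only genuine work, is the Sylvester--Frobenius (Chicken McNugget) argument that produces $C_1$ and $C_2$ for all sufficiently large $x$. For each vertex $w$, the set $S_w$ of lengths of closed walks at $w$ is a numerical semigroup contained in $a\mbbN$. One checks $\gcd(S_w)=a$ by observing that $2\in S_w$ (traverse an incident edge and return) and, in the non-bipartite case, producing an odd element of $S_w$ by relocating an odd cycle from elsewhere in $G$ to $w$ via an out-and-back detour; in the bipartite case $\{2\}$ already generates all of $2\mbbN_{>0}\subseteq S_w$. Hence there is a threshold $N_w$ beyond which every multiple of $a$ lies in $S_w$. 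For $x$ large enough we can split $K=K_1+K_2$ with both $aK_1$ and $aK_2$ at least $\max(N_v,N_{v'})$, securing the required $C_1$ and $C_2$ and placing $(v,v',b)$ in case~(ii). The only mild obstacle is bookkeeping around the semigroup threshold together with the residue check at the start; no conceptual difficulty arises.
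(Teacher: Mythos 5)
Your proof is correct and takes a genuinely different route from the paper. The lemma only asserts the existence of \emph{some} $a$, and your argument identifies the minimal natural choice: $a=1$ for non‑bipartite $G$ and $a=2$ for bipartite $G$, i.e., the gcd of all closed‑walk lengths (the period of $G$). The mod‑$a$ coloring $c$ then cleanly determines which residue classes of walk lengths between $v$ and $v'$ are achievable, disposing of case~(i), and the Sylvester--Frobenius (numerical‑semigroup) estimate on each $S_w$ produces the padding closed walks needed for case~(ii). The paper instead sidesteps the residue analysis entirely by choosing a much larger $a$: it fixes, for every vertex $v$, a closed walk $\Upsilon'_v$ at $v$ containing $\Delta$, sets $a=\operatorname{lcm}_v\tau(\Upsilon'_v)$, and defines $\Upsilon_v=(\Upsilon'_v)^{a/\tau(\Upsilon'_v)}$, a closed walk at $v$ of length exactly $a$ containing $\Delta$. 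Then in case~(ii) one simply prepends $(\Upsilon_v)^{x-x_0}$ to any witness walk of length $ax_0+b$, with no semigroup threshold needed. The trade‑off: the paper's $a$ is uncontrolled but the proof is shorter and self‑contained; your $a$ is sharp and the residue structure is explicit, at the modest cost of invoking the semigroup threshold and the out‑and‑back detour to relocate an odd cycle in the non‑bipartite case. For the downstream application (Lemma~4.8, where only existence of such an $a$ matters) both are equally serviceable.
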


\begin{proof}
First we choose a period which works simultaneously for all base vertices.  For each vertex $v$, choose a walk from $v$ to $\Delta(0)$ and a walk from $\Delta(1)$ back to $v$.  Since $G$ is connected, this gives a closed walk $\Upsilon'_v$ based at $v$ which contains $\Delta$ as a sub-walk.  Let $a$ be a common multiple of the finitely many lengths $\tau(\Upsilon'_v)$.  Replacing $\Upsilon'_v$ by the appropriate positive power, we obtain a closed walk $\Upsilon_v$ based at $v$, of length $a$, containing $\Delta$.

Now fix vertices $v,v'$ and a residue $b\geq0$.  If there is no $x\in\mbbN$ for which such a walk exists, then we are in the first alternative.  Otherwise choose $x_0\in\mbbN$ and a walk $\Gamma$ from $v$ to $v'$ of length $ax_0+b$.  For every $x>x_0$, the concatenation
$$\Upsilon_v^{x-x_0}\cdot\Gamma$$
is a walk from $v$ to $v'$ of length $ax+b$, and it contains $\Delta$ because $\Upsilon_v$ does.
\end{proof}

We say two walks $\Delta$ and $\Delta'$ of the same length with the same initial vertex and the same terminal vertex are \emph{coherent} with respect to $\chi_\omega$ if
$$\chi_\omega(\Delta)=\chi_\omega(\Delta'),$$
and \emph{incoherent} with respect to $\chi_\omega$ otherwise. 

\begin{lemma}\label{L:incoherent}
$\rho(A_{\omega,W})\leq\rho(A_{0,W})$ for all $\omega\in\Omega(G)$. In addition, $\rho(A_{\omega,W})<\rho(A_{0,W})$ if and only if there is a pair of incoherent walks with respect to $\chi_\omega$.
\end{lemma}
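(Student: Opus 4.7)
The inequality $\rho(A_\omega)\leq\rho(A)$ will follow from the combinatorial expansion
\[
(A_\omega^k)_{vw}\;=\;\sum_{\Delta:\,v\to w,\ \tau(\Delta)=k}\chi_\omega(\Delta),
\]
together with the triangle inequality $|(A_\omega^k)_{vw}|\leq (A^k)_{vw}$ and Gelfand's formula: since the Frobenius norm is submultiplicative and monotone in entrywise absolute values, $\|A_\omega^k\|_F\leq\|A^k\|_F$ for every $k$, so $\rho(A_\omega)=\lim_k\|A_\omega^k\|_F^{1/k}\leq\rho(A)$.

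For the ``if'' direction of the characterization, I would note that coherence of every pair of equal-length walks with common endpoints forces the sum above to consist of a single repeated unit complex number, so $|(A_\omega^k)_{vw}|=(A^k)_{vw}$ for all $v,w,k$. Hence $\|A_\omega^k\|_F=\|A^k\|_F$ for every $k$, and Gelfand's formula yields $\rho(A_\omega)=\rho(A)$.

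The main work lies in the ``only if'' direction. Given an incoherent pair $\Delta_1,\Delta_2\colon v_\ast\to v_\ast'$ of common length $\ell$ with $\chi_\omega(\Delta_1)\neq\chi_\omega(\Delta_2)$, my plan is to amplify this one pair into many via Lemma~\ref{L:path}: for a common period $a$ and each admissible residue $b$, for every pair of vertices $v,w$ and every sufficiently large $K\equiv b\pmod{a}$ with $(A^K)_{vw}>0$, we can splice $\Delta_1$ or $\Delta_2$ into a common frame $\alpha\cdot\beta$ to produce two distinct walks $\alpha\Delta_j\beta$ of length $K$ from $v$ to $w$ whose $\chi_\omega$-values differ. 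This yields the strict entrywise domination $|(A_\omega^K)_{vw}|<(A^K)_{vw}$ at each such entry. I would then invoke the strict form of Wielandt's comparison theorem: for $K$ chosen so that $A^K$ is nonnegative and irreducible, strict dominance at even a single entry forces $\rho(A_\omega^K)<\rho(A^K)=\rho(A)^K$, hence $\rho(A_\omega)<\rho(A)$. The principal obstacle will be the imprimitive case (in particular bipartite graphs), where $A^K$ fails to be irreducible for arbitrary $K$; this I plan to resolve by passing to $A^{2K}$, applying Lemma~\ref{L:path} within each bipartite component, and running Wielandt's strict comparison block by block.
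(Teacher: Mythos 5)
Your proof of $\rho(A_\omega)\leq\rho(A)$ and the ``if'' direction is the same in substance as the paper's (walk-sum expansion of $(A_\omega^k)_{vw}$, entrywise triangle inequality, Gelfand); the choice of Frobenius versus $\infty$-norm is immaterial. The ``only if'' direction, however, takes a genuinely different route from the paper. You want to amplify a single incoherent pair to obtain strict entrywise domination $|(A_\omega^K)_{vw}|<(A^K)_{vw}$ at every positive entry, then invoke Wielandt's strict comparison theorem for an irreducible nonnegative matrix. The paper instead avoids all nonnegative-matrix spectral theory: it uses Lemma~\ref{L:path} to produce a single exponent $y$ large enough so that \emph{every} entry satisfies $|(A_\omega^y)_{ij}|\leq c\,A^y_{ij}$ for one fixed constant $0<c<1$, propagates this to $\|A_\omega^{ky}\|_\infty\leq c^k\|A^{ky}\|_\infty$ by the triangle inequality inside the $\infty$-norm, and closes with Gelfand, yielding $\rho(A_\omega)\leq c^{1/y}\rho(A)<\rho(A)$. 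That argument needs no irreducibility, no primitivity, and no period analysis, which is what makes it work uniformly for bipartite and non-bipartite graphs.

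The concrete gap in your plan is the imprimitive case. You propose ``passing to $A^{2K}$ \ldots and running Wielandt block by block,'' but for a connected bipartite graph the period of $A$ need not be $2$; it is the gcd of the cycle lengths and can be any even number (for $C_4$ it is $4$). In general one must pass to $A^{pK}$ where $p$ is the period, and $A^{pK}$ decomposes into $p$ primitive diagonal blocks that all share the spectral radius $\rho(A)^{pK}$, so strict dominance must be arranged in each block (or at least in every block achieving the max), not just in one. Since $\big(A_\omega^{pK}\big)$ inherits the same block-diagonal sparsity pattern as $A^{pK}$, these steps can be repaired with care, but as written the blueprint has a real hole. You would spare yourself all of this by replacing the Wielandt invocation with the paper's uniform-$c$ argument, which reaches the conclusion from the very same entrywise strict inequalities you already constructed.
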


\begin{proof}
We will employ Gelfand's formula, which relates spectral radius to matrix norms:
\begin{quote}
\textbf{(Gelfand's formula.)} \emph{For any square matrix $M$ and any matrix norm $\|\cdot\|$,}
$$\rho(M)=\lim_{k\to\infty}\|M^k\|^{1/k}.$$
\end{quote}

Write $V(G)=\{v_1,\ldots,v_n\}$.  We use the $\infty$-norm
$$\|M\|:=\max_{1\leq i\leq n}\sum_{j=1}^n |M_{ij}|.$$
We also use the equality case of the triangle inequality: for positive real
numbers $c_1,\ldots,c_N$ and unit complex numbers $z_1,\ldots,z_N$,
$$
\left|\sum_{j=1}^N c_jz_j\right|\leq\sum_{j=1}^N c_j,
$$
with equality if and only if $z_1=\cdots=z_N$.
For vertices $v_i,v_j$, let $\Gamma_k(i,j)$ be the set of walks from $v_i$ to $v_j$ of length $k$.  Then
$$\left(A_{\omega,W}^k\right)_{ij}
=\sum_{\Delta\in\Gamma_k(i,j)}W(\Delta)\chi_\omega(\Delta),\qquad
\left(A_{0,W}^k\right)_{ij}
=\sum_{\Delta\in\Gamma_k(i,j)}W(\Delta).$$
Since $W(\Delta)>0$ and $|\chi_\omega(\Delta)|=1$, the triangle inequality gives
$$\left|\left(A_{\omega,W}^k\right)_{ij}\right|
\leq \left(A_{0,W}^k\right)_{ij}.$$
Thus $\|A_{\omega,W}^k\|\leq \|A_{0,W}^k\|$ for every $k$, and Gelfand's formula gives
$$\rho(A_{\omega,W})\leq \rho(A_{0,W}).$$

Now suppose that there are no pairs of incoherent walks with respect to $\chi_\omega$.  Then all walks in each nonempty set $\Gamma_k(i,j)$ are coherent.  Hence the weighted sum above has no cancellation:
$$\left|\left(A_{\omega,W}^k\right)_{ij}\right|
=\sum_{\Delta\in\Gamma_k(i,j)}W(\Delta)
=\left(A_{0,W}^k\right)_{ij}$$
for all $i,j,k$.  Consequently $\|A_{\omega,W}^k\|=\|A_{0,W}^k\|$ for every $k$, and Gelfand's formula implies
$$\rho(A_{\omega,W})=\rho(A_{0,W}).$$
This proves one direction of the strict statement.

It remains to prove that one incoherent pair forces a strict drop.  Suppose that $\Delta,\Delta'\in\Gamma_r(s,t)$ are incoherent.  Apply Lemma~\ref{L:path} to $\Delta$ with $b=0$.  There exists $a\in\mbbN$ such that, for each pair $v_i,v_j$, either
\begin{enumerate}[(i)]
\item $\Gamma_{ax}(i,j)=\emptyset$ for all $x\in\mbbN$; or
\item for all sufficiently large $x$, there exists a walk $\Upsilon\in\Gamma_{ax}(i,j)$ containing $\Delta$ as a sub-walk.
\end{enumerate}
For every pair $(i,j)$ satisfying the second alternative, choose a threshold $x_0(i,j)$ after which the stated walk exists.  Since there are only finitely many pairs of vertices, choose $x$ larger than all these thresholds, and set $y=ax$.

Now fix $i,j$.  If the first alternative holds for $(i,j)$, then $\Gamma_y(i,j)=\emptyset$, so $\left(A_{\omega,W}^{y}\right)_{ij}=\left(A_{0,W}^{y}\right)_{ij}=0$.  If the second alternative holds, choose a walk $\Upsilon\in\Gamma_y(i,j)$ containing $\Delta$ as a sub-walk.  Replace one chosen copy of $\Delta$ in $\Upsilon$ by $\Delta'$ and leave the rest of the walk unchanged.  The resulting walk $\Upsilon'\in\Gamma_y(i,j)$ has the same endpoints and length, but
$$\chi_\omega(\Upsilon')\cdot\chi_\omega(\Upsilon)^{-1}
=\chi_\omega(\Delta')\cdot\chi_\omega(\Delta)^{-1}\neq1.$$
Since the two corresponding weights $W(\Upsilon)$ and $W(\Upsilon')$ are positive, at least two summands in $\left(A_{\omega,W}^{y}\right)_{ij}$ have distinct arguments.  Therefore the triangle inequality is strict:
$$\left|\left(A_{\omega,W}^{y}\right)_{ij}\right|
<\left(A_{0,W}^{y}\right)_{ij}$$
whenever $\left(A_{0,W}^{y}\right)_{ij}\neq0$.  Since there are only finitely many pairs $(i,j)$, we may choose $0<c<1$ such that
$$\left|\left(A_{\omega,W}^{y}\right)_{ij}\right|
\leq c\left(A_{0,W}^{y}\right)_{ij}$$
for all $i,j$.

For every $i,j$ and every $k\geq1$,
\begin{align*}
\left|\left(A_{\omega,W}^{ky}\right)_{ij}\right|
&=\left|\sum_{i_1,\ldots,i_{k-1}}\left(A_{\omega,W}^{y}\right)_{ii_1}
\left(A_{\omega,W}^{y}\right)_{i_1i_2}\cdots
\left(A_{\omega,W}^{y}\right)_{i_{k-1}j}\right|\\
&\leq \sum_{i_1,\ldots,i_{k-1}}\left|\left(A_{\omega,W}^{y}\right)_{ii_1}\right|
\left|\left(A_{\omega,W}^{y}\right)_{i_1i_2}\right|\cdots
\left|\left(A_{\omega,W}^{y}\right)_{i_{k-1}j}\right|\\
&\leq c^k\sum_{i_1,\ldots,i_{k-1}}\left(A_{0,W}^{y}\right)_{ii_1}
\left(A_{0,W}^{y}\right)_{i_1i_2}\cdots
\left(A_{0,W}^{y}\right)_{i_{k-1}j}\\
&=c^k\left(A_{0,W}^{ky}\right)_{ij}.
\end{align*}
Hence
$$\|A_{\omega,W}^{ky}\|\leq c^k\|A_{0,W}^{ky}\|.$$
Since $\rho(M^y)=\rho(M)^y$, Gelfand's formula applied to $M^y$ gives
$$
\rho(M)=\lim_{k\to\infty}\|M^{ky}\|^{1/(ky)}.
$$
Applying this identity to $A_{\omega,W}$ and $A_{0,W}$, we obtain
$$\rho(A_{\omega,W})
=\lim_{k\to\infty}\|A_{\omega,W}^{ky}\|^{1/(ky)}
\leq \lim_{k\to\infty}\left(c^k\|A_{0,W}^{ky}\|\right)^{1/(ky)}
=c^{1/y}\rho(A_{0,W})<\rho(A_{0,W}).$$
\end{proof}

\begin{lemma}\label{L:spe-ineq}
For the weighted vertex matrices,
$$\rho(A_{\omega,W})<\rho(A_{0,W})$$
whenever $\underline{\omega}\notin\{0,\theta\}$.\footnote{For $W\equiv1$, this also follows from Theorem~4.4 of Mehatari--Kannan--Samanta~\cite{MKS2022On}, Lemma~\ref{L:O-TO}, and Lemma~\ref{L:switching-similarity}.  Their Perron--Frobenius argument concerns vertex adjacency; our walk proof treats arbitrary positive directed weights and extends to edge adjacency.}
\end{lemma}

\begin{proof}
Fix an orientation $\mfrako$ and a spanning tree $T$.  By Remark~\ref{R:X_TO}, represent $\underline{\omega}$ by the unique element of $\mcalX_{(T,\mfrako)}$,
$$\omega=\sum_{\ve\in\vE_{\mfrako}(G)\setminus T}\omega(\ve)d\ve,\qquad 0\leq\omega(\ve)<1.$$
For a cotree oriented edge $\ve$, let
$$C_\ve:=\ve\cdot T(\ve(1),\ve(0))$$
be the fundamental closed walk.  Since $\omega$ vanishes on tree edges, $\chi_\omega(C_\ve)=\chi_\omega(\ve)$.

Using Construction~\ref{Cs:CanoForm2}, let
$$E_0=\{\ve\in\vE_{\mfrako}(G)\setminus T:
\tau(T(\ve(1),\ve(0)))\text{ is odd}\}
=\{\ve\in\vE_{\mfrako}(G)\setminus T:\kappa(\ve)=0\}$$
and
$$E_1=\{\ve\in\vE_{\mfrako}(G)\setminus T:
\tau(T(\ve(1),\ve(0)))\text{ is even}\}
=\{\ve\in\vE_{\mfrako}(G)\setminus T:\kappa(\ve)=1/2\}.$$
If $\ve\in E_0$, then $C_\ve$ has even length; hence $\chi_\omega(\ve)\neq1$ gives an even closed walk with nontrivial character.  If $\ve\in E_1$, then $C_\ve$ has odd length; hence $C_\ve^2$ is even and has character $\chi_\omega(\ve)^2$.  Thus $\chi_\omega(\ve)\notin\{\pm1\}$ again gives an even closed walk with nontrivial character.

It remains to consider the case in which $\chi_\omega(\ve)=1$ for all $\ve\in E_0$ and $\chi_\omega(\ve)\in\{\pm1\}$ for all $\ve\in E_1$.  If two edges $\ve,\ve'\in E_1$ have different signs, then the closed walk
$$\ve\cdot T(\ve(1),\ve'(0))\cdot \ve'\cdot T(\ve'(1),\ve(0))$$
has even length and character $\chi_\omega(\ve)\cdot\chi_\omega(\ve')=-1$.  Therefore, if no even closed walk has nontrivial character, all edges in $E_1$ have the same sign and all edges in $E_0$ have sign $1$.  This says precisely that $\omega=0$ or $\omega=\omega_{(T,\mfrako)}$ in $\mcalX_{(T,\mfrako)}$.

Thus, whenever $\underline{\omega}\notin\{0,\theta\}$, there is an even closed walk $C$ with $\chi_\omega(C)\neq1$.  Compare $C$ with a back-and-forth closed walk of the same even length based at the same vertex; the latter has character $1$.  This gives an incoherent pair of walks, so Lemma~\ref{L:incoherent} applies.
\end{proof}

We now turn from the weighted vertex adjacency matrices to the weighted edge
adjacency matrices.  Recall from Subsection~\ref{SS:weighted-adjacency} that
an edge-walk begins and ends at oriented edges, while its associated walk
begins and ends at vertices.  The corresponding
spectral-radius analysis requires a connectivity notion for nonbacktracking
edge-walks.

We say that a graph $H$ with at least one edge is \emph{edge-walk-connected}
if, for every pair of oriented edges $\ve,\ve'\in\vE(H)$, there is a
nonbacktracking edge-walk from $\ve$ to $\ve'$.  In matrix form, for every
$\ve,\ve'$ there is an integer $k\geq0$ such that
$$
\left(B^k\right)_{\ve\ve'}>0.
$$

\begin{lemma}\label{L:edge-walk}
Consider an edge-walk-connected graph $G$ and an arbitrary non-backtracking edge-walk $\vDelta$ on $G$.  There exists $a\in\mbbN$ such that, for each pair of oriented edges $\ve,\ve'\in\vE(G)$ (not necessarily distinct) and each nonnegative integer $b$, either
\begin{enumerate}[(i)]
\item there is no $x\in\mbbN$ for which a non-backtracking edge-walk from $\ve$ to $\ve'$ has length $ax+b$; or
\item there exists $x_0\in\mbbN$ such that, for every $x>x_0$, there is a non-backtracking edge-walk from $\ve$ to $\ve'$ of length $ax+b$ which contains $\vDelta$ as a sub-edge-walk.
\end{enumerate}
\end{lemma}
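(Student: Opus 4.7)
The plan is to mirror the proof of Lemma~\ref{L:path} in the edge-walk setting, working essentially in the oriented line graph of $G$, whose directed strong connectivity is exactly the hypothesis of edge-walk-connectedness. The key adaptation is to perform all concatenations at a \emph{shared oriented edge}, so that the non-backtracking condition at every junction is inherited from the constituent edge-walks rather than created at the interface.

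I would first establish the following claim analogous to the opening claim of Lemma~\ref{L:path}: there exists $a\in\mathbb{N}$ such that for each $\ve\in\vE(G)$ there is a closed non-backtracking edge-walk $\Upsilon_\ve$ at the base oriented edge $\ve$, of length $a$, that contains $\vDelta$ as a sub-edge-walk. To build $\Upsilon_\ve$, let $\va_0=\vDelta(0)$ and $\va_N=\vDelta(1)$. By edge-walk-connectedness, choose a non-backtracking edge-walk $\vDelta^{(1)}_\ve$ from $\ve$ to $\va_0$ and a non-backtracking edge-walk $\vDelta^{(2)}_\ve$ from $\va_N$ to $\ve$. Concatenating at the shared oriented edges gives a closed non-backtracking edge-walk $\Upsilon'_\ve:=\vDelta^{(1)}_\ve\cdot\vDelta\cdot\vDelta^{(2)}_\ve$ at $\ve$ containing $\vDelta$. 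Let $a$ be a common multiple of $\{\tau(\Upsilon'_\ve):\ve\in\vE(G)\}$ (finite since $|\vE(G)|<\infty$) and set $\Upsilon_\ve:=(\Upsilon'_\ve)^{a/\tau(\Upsilon'_\ve)}$; because consecutive copies share the oriented edge $\ve$ and every transition in $\Upsilon_\ve$ already appears in $\Upsilon'_\ve$, this is a closed non-backtracking edge-walk of length $a$ containing $\vDelta$.

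Given the claim, fix $\ve,\ve'\in\vE(G)$ and $b\ge 0$. If for every $x\in\mathbb{N}$ there is no non-backtracking edge-walk from $\ve$ to $\ve'$ of length $ax+b$, we are in case~(i). Otherwise, pick the smallest $x_0$ for which such an edge-walk $\vUpsilon^\ast$ exists; its length is $ax_0+b$. For any $x>x_0$, concatenate $(\Upsilon_\ve)^{x-x_0}$ with $\vUpsilon^\ast$ at their shared initial/terminal oriented edge $\ve$. The resulting edge-walk goes from $\ve$ to $\ve'$, has length $a(x-x_0)+(ax_0+b)=ax+b$, is non-backtracking (since both pieces are non-backtracking and the junction transition coincides with one inside $\Upsilon_\ve$), and contains $\vDelta$ as a sub-edge-walk via its $\Upsilon_\ve$-factor. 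This yields case~(ii).

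The main obstacle is bookkeeping the non-backtracking property across all junctions: unlike the situation in Lemma~\ref{L:path}, a naive concatenation of edge-walks can introduce a new backtrack at the seam. My proposed fix is to always concatenate at a shared oriented edge $\ve$, so that the two relevant seam transitions $\cdots\blacktriangleright\ve$ and $\ve\blacktriangleright\cdots$ coincide with transitions that already occur inside one of the pieces being joined, and therefore no backtrack is created. Once this convention is fixed, the combinatorial skeleton of Lemma~\ref{L:path} transports without change.
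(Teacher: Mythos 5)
Your proof follows the paper's argument step for step: build a closed non-backtracking edge-walk $\vUpsilon_\ve$ of uniform length $a$ at each base oriented edge containing $\vDelta$, then prepend powers of $\vUpsilon_\ve$ to a shortest length-$(ax_0+b)$ edge-walk from $\ve$ to $\ve'$. Your added observation that concatenation at a \emph{shared oriented edge} inherits the non-backtracking property at every seam is a correct and worthwhile clarification of a point the paper leaves implicit.
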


\begin{proof}
As in the proof of Lemma~\ref{L:path}, we first construct a common period.  For every $\ve\in\vE(G)$, edge-walk-connectedness provides a non-backtracking edge-walk from $\ve$ to $\vDelta(0)$ and another from $\vDelta(1)$ to $\ve$.  Concatenating these two edge-walks with $\vDelta$ gives a closed non-backtracking edge-walk $\vUpsilon'_\ve$ based at $\ve$ and containing $\vDelta$.

Let $a$ be a common multiple of the finitely many positive integers $\tau(\vUpsilon'_\ve)$, and put
$$a_\ve:=\frac{a}{\tau(\vUpsilon'_\ve)},
\qquad
\vUpsilon_\ve:=(\vUpsilon'_\ve)^{a_\ve}.$$
Then $\vUpsilon_\ve$ is a closed non-backtracking edge-walk based at $\ve$, has length $a$, and contains $\vDelta$ as a sub-edge-walk.

Fix $\ve,\ve'$ and $b$.  If (i) does not hold, there are $x_0\in\mbbN$ and a non-backtracking edge-walk $\Gamma$ from $\ve$ to $\ve'$ of length $ax_0+b$.  For every $x>x_0$, the concatenation
$$\vUpsilon_\ve^{x-x_0}\cdot\Gamma$$
is a non-backtracking edge-walk from $\ve$ to $\ve'$ of length $ax+b$ and contains $\vDelta$.  Thus (ii) holds.
\end{proof}

The next lemma characterizes edge-walk-connected graphs and identifies the edge-walk-connected core of an arbitrary graph of genus at least two.

\begin{lemma}\label{L:genus-2-edge-walk}
A graph is edge-walk-connected if and only if it has genus at least two and has no degree-one vertices.  If $g\geq2$, then $G$ contains a unique subgraph $G^\circ$ of genus $g$ which is edge-walk-connected.
\end{lemma}

\begin{proof}
First consider the necessary conditions.  If $v$ has degree one and $\ve$ is its incident edge oriented toward $v$, then no non-backtracking edge-walk beginning at $\ve$ can continue beyond $\ve$.  Hence the graph is not edge-walk-connected.  If a connected graph with at least one edge has genus zero, it is a tree and has a degree-one vertex.  If it has genus one and no degree-one vertices, then
$$
\sum_{v\in V(G)}\deg(v)=2m=2n,
$$
so every vertex has degree two.  The graph is therefore a cycle.  Its two
directions of travel cannot be joined by a nonbacktracking edge-walk, so it is
again not edge-walk-connected.  Thus edge-walk-connectedness requires genus
at least two and the absence of degree-one vertices.

Now assume $g\geq2$.
Repeatedly perform the following trimming operation: whenever a vertex has degree one, remove that vertex and its unique incident edge.  When the process terminates, it produces a subgraph $G^\circ$ without degree-one vertices.  Each removal decreases both the numbers of vertices and edges by one, so it does not change the genus.

The result is independent of the order of trimming.  Indeed, no vertex or edge of a subgraph of minimum degree at least two can be removed at any stage, whereas the terminal subgraph itself has minimum degree at least two.  Thus $G^\circ$ is intrinsically the unique maximal subgraph of $G$ having minimum degree at least two.

Since $g\geq2$, the graph $G^\circ$ is not a cycle.  Its
edge-walk-connectedness also follows from Ortner--Woess
\cite[Lemma~3.1]{OW2007Nonbacktracking}: for a finite connected graph of
minimum degree at least two that is not a cycle, every oriented edge can be
joined to every other by a nonbacktracking edge-walk.  We retain the following
direct proof for completeness.

We now prove that $G^\circ$ is edge-walk-connected.  If $K\subseteq G^\circ$ is connected and has positive genus, then every vertex $v\in V(K)$ supports a non-backtracking closed walk: follow a path from $v$ to a cycle, traverse the cycle, and return along the path.  Denote the set of such walks by $\mcalW_v(K)$.

Let $\ve,\ve'\in\vE(G^\circ)$, and let $e,e'$ be their underlying unoriented edges.  The absence of degree-one vertices gives the following three facts:
\begin{enumerate}[(1)]
\item if $e$ is a cut edge, then $G^\circ\setminus e$ has two connected components, both of positive genus;
\item if both $e$ and $e'$ are cut edges, then $G^\circ\setminus\{e,e'\}$ has three connected components, the two outer components have positive genus, and both cut edges meet the middle component;
\item if $\{e,e'\}$ is a cut while neither $e$ nor $e'$ is a cut edge, then $G^\circ\setminus\{e,e'\}$ has two connected components, at least one of which has positive genus.
\end{enumerate}
Indeed, a genus-zero component excluded by any of these assertions would be a tree attached to the rest of the graph and would have disappeared during trimming.

We construct a non-backtracking edge-walk $\vDelta_{\ve,\ve'}$ from $\ve$ to $\ve'$.  Write $\vDelta_{\ve,\ve'}^\times$ for its interior walk.

First suppose that $\ve'=\ve$ or $\ve'=\ve^{-1}$.
\begin{enumerate}[(a)]
\item If $G^\circ\setminus e$ is connected, choose a spanning tree $T_0$ of $G^\circ\setminus e$.  We may take
$$\vDelta_{\ve,\ve}^\times=T_0(\ve(1),\ve(0)),$$
and we may take $\vDelta_{\ve,\ve^{-1}}^\times$ to be any element of $\mcalW_{\ve(1)}(G^\circ\setminus e)$.
\item If $e$ is a cut edge, write $G^\circ\setminus e=G_1\sqcup G_2$, with $\ve(0)\in G_1$ and $\ve(1)\in G_2$.  Choose
$$\Delta_1\in\mcalW_{\ve(0)}(G_1),
\qquad
\Delta_2\in\mcalW_{\ve(1)}(G_2).$$
Then we may take
$$\vDelta_{\ve,\ve}^\times=\Delta_2\cdot\ve^{-1}\cdot\Delta_1,
\qquad
\vDelta_{\ve,\ve^{-1}}^\times=\Delta_2.$$
\end{enumerate}

Now suppose that $\{\ve,\ve^{-1}\}\neq\{\ve',\ve'^{-1}\}$.
\begin{enumerate}[(a)]
\item If $G^\circ\setminus\{e,e'\}$ is connected, choose a spanning tree $T_0$ of that graph and set
$$\vDelta_{\ve,\ve'}^\times=T_0(\ve(1),\ve'(0)).$$

\item Suppose $e$ is a cut edge and $G^\circ\setminus e=G_1\sqcup G_2$, where $e'$ lies in $G_1$ and is not a cut edge of $G_1$.  Choose a spanning tree $T_1$ of $G_1\setminus e'$.  If $\ve(1)\in G_1$, take
$$\vDelta_{\ve,\ve'}^\times=T_1(\ve(1),\ve'(0)).$$
If $\ve(1)\in G_2$, choose $\Delta_2\in\mcalW_{\ve(1)}(G_2)$ and take
$$\vDelta_{\ve,\ve'}^\times
=\Delta_2\cdot\ve^{-1}\cdot T_1(\ve(0),\ve'(0)).$$

\item Suppose both $e$ and $e'$ are cut edges.  Write
$$G^\circ\setminus\{e,e'\}=G_1\sqcup G_2\sqcup G_3,$$
where $e$ meets $G_1$, $e'$ meets $G_3$, and both meet $G_2$.  Choose a spanning tree $T_2$ of $G_2$.  According to the orientations of $\ve$ and $\ve'$, the interior walk is obtained from the central path in $T_2$ and, when needed, one or both of the detours
$$\Delta_1\cdot\ve^{-1},
\qquad
\ve'^{-1}\cdot\Delta_3,$$
where $\Delta_1\in\mcalW_{\ve(1)}(G_1)$ and $\Delta_3\in\mcalW_{\ve'(0)}(G_3)$.  Explicitly, the four possibilities are
\begin{align*}
T_2(\ve(1),\ve'(0)),&\\
T_2(\ve(1),\ve'(1))\cdot\ve'^{-1}\cdot\Delta_3,&\\
\Delta_1\cdot\ve^{-1}\cdot T_2(\ve(0),\ve'(0)),&\\
\Delta_1\cdot\ve^{-1}\cdot T_2(\ve(0),\ve'(1))\cdot\ve'^{-1}\cdot\Delta_3.&
\end{align*}

\item Finally, suppose $\{e,e'\}$ is a cut while neither edge alone is a cut edge.  Write
$$G^\circ\setminus\{e,e'\}=G_1\sqcup G_2,$$
where $G_1$ has positive genus, and choose spanning trees $T_1,T_2$ of the two components.  A tree path in the component containing $\ve(1)$ and $\ve'(0)$ gives the desired interior walk whenever these vertices lie in the same component.  In either crossed orientation, insert a walk in $\mcalW_v(G_1)$ before traversing the inverse of the boundary edge which would otherwise be an immediate backtrack.  This gives a non-backtracking edge-walk in all four endpoint configurations.
\end{enumerate}
Thus $G^\circ$ is edge-walk-connected.  In particular, if $G$ has no degree-one vertices, then the trimming process removes nothing, so $G=G^\circ$ and $G$ is edge-walk-connected.  This proves the converse in the characterization.

Finally, let $H\subseteq G$ be an edge-walk-connected subgraph of genus $g$.  By the characterization just proved, $H$ has no degree-one vertices, so the maximality of $G^\circ$ gives $H\subseteq G^\circ$.  If this inclusion were strict, the equality of the genera would force every component of the omitted part to be a tree attached to $H$ at a single vertex; such a nonempty tree would have a degree-one vertex in $G^\circ$, a contradiction.  Hence $H=G^\circ$, proving the asserted uniqueness.
\end{proof}

We say that two non-backtracking edge-walks $\vDelta$ and $\vDelta'$ of the same length, with the same initial oriented edge and the same terminal oriented edge, are \emph{coherent} with respect to $\chi_\omega$ if
$$\chi_\omega(\vDelta)=\chi_\omega(\vDelta'),$$
and \emph{incoherent} with respect to $\chi_\omega$ if
$$\chi_\omega(\vDelta)\neq\chi_\omega(\vDelta').$$

\begin{lemma}\label{L:edge-incoherent}
For every $\omega\in\Omega(G)$,
$$\rho(B_{\omega,W})\leq\rho(B_{0,W}).$$
If $G$ is edge-walk-connected, then
$$\rho(B_{\omega,W})<\rho(B_{0,W})$$
if and only if $G$ contains a pair of incoherent non-backtracking edge-walks.
\end{lemma}

\begin{proof}
For a matrix indexed by oriented edges, use the row-sum norm
$$\|M\|:=\max_{\va\in\vE(G)}\sum_{\vb\in\vE(G)}|M_{\va\vb}|.$$
Gelfand's formula gives
$$\rho(B_{\omega,W})
=\lim_{k\to\infty}\|B_{\omega,W}^k\|^{1/k}.$$

Let $\Gamma_k(\va,\vb)$ be the set of all length-$k$ non-backtracking edge-walks from $\va$ to $\vb$.  By Definition~\ref{D:weighted-adj},
$$\left(B_{\omega,W}^k\right)_{\va\vb}
=\sum_{\vDelta\in\Gamma_k(\va,\vb)}
W(\vDelta)\chi_\omega(\vDelta),
\qquad
\left(B_{0,W}^k\right)_{\va\vb}
=\sum_{\vDelta\in\Gamma_k(\va,\vb)}W(\vDelta).$$
Therefore
$$\left|\left(B_{\omega,W}^k\right)_{\va\vb}\right|
\leq\left(B_{0,W}^k\right)_{\va\vb}$$
for all $\va,\vb$ and $k$.  Taking row-sum norms and applying Gelfand's formula proves
$$\rho(B_{\omega,W})\leq\rho(B_{0,W}).$$

Assume now that $G$ is edge-walk-connected.  If there is no incoherent pair in $G$, then all terms in each sum over $\Gamma_k(\va,\vb)$ have the same argument.  Hence
$$\left|\left(B_{\omega,W}^k\right)_{\va\vb}\right|
=\left(B_{0,W}^k\right)_{\va\vb}$$
for every $\va,\vb$ and $k$.  Thus $\|B_{\omega,W}^k\|=\|B_{0,W}^k\|$ for all $k$, and Gelfand's formula gives equality of the spectral radii.  This proves one direction of the strict statement.

Conversely, suppose that
$$\vDelta,\vDelta'\in\Gamma_r(\mathbf a,\mathbf b)$$
are incoherent.  Apply Lemma~\ref{L:edge-walk} to $\vDelta$ with $b=0$.  There exists $a\in\mbbN$ such that, for each pair $\va,\vb\in\vE(G)$, either
\begin{enumerate}[(i)]
\item $\Gamma_{ax}(\va,\vb)=\emptyset$ for every $x\in\mbbN$; or
\item for all sufficiently large $x$, there is an edge-walk in $\Gamma_{ax}(\va,\vb)$ containing $\vDelta$ as a sub-edge-walk.
\end{enumerate}
Choose $x$ large enough that (ii) holds simultaneously for every pair for which it can hold, and set $y=ax$.

In Case (i),
$$\left(B_{\omega,W}^y\right)_{\va\vb}
=\left(B_{0,W}^y\right)_{\va\vb}=0.$$
In Case (ii), choose $\vUpsilon\in\Gamma_y(\va,\vb)$ containing $\vDelta$ and replace one chosen copy of $\vDelta$ by $\vDelta'$.  The resulting edge-walk $\vUpsilon'$ has the same endpoints and length, while
$$\chi_\omega(\vUpsilon')\cdot\chi_\omega(\vUpsilon)^{-1}
=\chi_\omega(\vDelta')\cdot\chi_\omega(\vDelta)^{-1}\neq1.$$
Because $W(\vUpsilon)$ and $W(\vUpsilon')$ are positive, the corresponding summands have distinct arguments.  The triangle inequality is therefore strict:
$$\left|\left(B_{\omega,W}^y\right)_{\va\vb}\right|
<\left(B_{0,W}^y\right)_{\va\vb}$$
whenever the right-hand side is nonzero.

There are only finitely many pairs $(\va,\vb)$, so there exists $0<c<1$ such that
$$\left|\left(B_{\omega,W}^y\right)_{\va\vb}\right|
\leq c\left(B_{0,W}^y\right)_{\va\vb}$$
for every $\va,\vb\in\vE(G)$.  Consequently, for every $k\geq1$,
\begin{align*}
\left|\left(B_{\omega,W}^{ky}\right)_{\va\vb}\right|
&\leq \sum_{\mathbf c_1,\ldots,\mathbf c_{k-1}}
\left|\left(B_{\omega,W}^{y}\right)_{\va\mathbf c_1}\right|
\cdots
\left|\left(B_{\omega,W}^{y}\right)_{\mathbf c_{k-1}\vb}\right|\\
&\leq c^k\sum_{\mathbf c_1,\ldots,\mathbf c_{k-1}}
\left(B_{0,W}^{y}\right)_{\va\mathbf c_1}
\cdots
\left(B_{0,W}^{y}\right)_{\mathbf c_{k-1}\vb}\\
&=c^k\left(B_{0,W}^{ky}\right)_{\va\vb},
\end{align*}
where the intermediate oriented edges range over $\vE(G)$.  Hence
$$\|B_{\omega,W}^{ky}\|\leq c^k\|B_{0,W}^{ky}\|.$$
Applying Gelfand's formula to the powers of $B_{\omega,W}^y$ and $B_{0,W}^y$ gives
$$\rho(B_{\omega,W})
\leq c^{1/y}\rho(B_{0,W})
<\rho(B_{0,W}),$$
which completes the proof.
\end{proof}

\begin{lemma}[Length coherence and spectral rotation]
\label{L:length-coherence}
Let $G$ be edge-walk-connected.  If every pair of non-backtracking
edge-walks of the same length with the same initial and terminal oriented
edges is coherent with respect to $\chi_\omega$, then there is a number
$\zeta\in S^1$ such that
$$
\chi_\omega(P)=\zeta^{\tau(P)}
\qquad\text{for every prime cycle }[P]\in\bmcalP.
$$
Conversely, if such a $\zeta$ exists, then there is a unitary diagonal matrix
$D$, indexed by $\vE(G)$, for which
$$
B_{\omega,W}=\zeta\,D^{-1}\cdot B_{0,W}\cdot D.
$$
In particular,
$$
\spec B_{\omega,W}=\zeta\,\spec B_{0,W}.
$$
\end{lemma}

\begin{proof}
Fix an oriented edge $\ve_\ast$, and let $\mathcal L$ be the additive semigroup
of lengths of closed non-backtracking edge-walks based at $\ve_\ast$.
Coherence makes
$$
c(r):=\chi_\omega(\vDelta),\qquad r\in\mathcal L,
$$
independent of the choice of the based closed edge-walk $\vDelta$ of length
$r$.  Concatenation gives $c(r+s)=c(r)c(s)$.  The group generated by
$\mathcal L$ is $\nu_G\mbbZ$, because $\nu_G$ is the greatest common divisor
of the lengths of closed nonbacktracking edge-walks.  Hence $c$ extends
uniquely to a homomorphism
$\nu_G\mbbZ\to S^1$.  Choose $\zeta\in S^1$ whose $\nu_G$-th power is the
image of $\nu_G$.  Then
$$c(r)=\zeta^r\qquad(r\in\mathcal L).$$

Let $P$ be a prime cycle based at an arbitrary oriented edge $\ve$.  Choose
non-backtracking edge-walks $Q$ from $\ve_\ast$ to $\ve$ and $R$ from $\ve$
to $\ve_\ast$.  Comparing the based closed edge-walks
$Q\cdot P\cdot R$ and $Q\cdot R$ gives
$$
\chi_\omega(P)
=\frac{\chi_\omega(Q\cdot P\cdot R)}
       {\chi_\omega(Q\cdot R)}
=\zeta^{\tau(P)}.
$$

Conversely, attach to every allowed transition
$\ve\shortrightarrow\ve'$ the normalized factor
$\zeta^{-1}\chi_\omega(\ve')$.  The hypothesis says that the product of these
factors is $1$ along every closed nonbacktracking edge-walk.  Fix a base
oriented edge and define $d(\ve)$ by multiplying the factors along any
nonbacktracking edge-walk from the base edge to $\ve$.  Edge-walk-connectedness
and a return edge-walk show that this is independent of the chosen edge-walk.
Thus the normalized factor on $\ve\shortrightarrow\ve'$ is
$d(\ve)^{-1}d(\ve')$.  Taking $D_{\ve\ve}=d(\ve)$ gives the displayed
diagonal similarity and the spectral identity.
\end{proof}

\begin{lemma}\label{L:spe-eq-genus1}
If $g=1$, then $\rho(B_{\omega,W})=\rho(B_{0,W})$ for every $\omega$.
\end{lemma}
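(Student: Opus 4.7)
The plan is to compute the $L$-function $L(u,\chi_\omega)$ explicitly by exploiting the rigid circuit structure of a genus-$1$ graph, and then invoke the Ihara determinant formula (Theorem~\ref{T:DertminantL}(a)) to read off the spectrum.

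The main combinatorial step is to identify all prime cycles of $G$. A connected graph of genus $1$ is a single cycle $C$ of some length $l$ with trees attached at various vertices of $C$. I would argue that no circuit can traverse any tree edge: if a non-backtracking closed walk enters an attached tree, the acyclicity of the tree together with the no-backtracking condition forces the walk to proceed monotonically away from the cycle until it terminates at a leaf, so it can never return to its initial oriented edge. Hence every circuit lies entirely on $C$ and is a power of $C$ traversed in a single direction, which yields $\bmcalP = \{[C],[C^{-1}]\}$, both of length $l$.

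Using $\chi_\omega(C^{-1}) = \overline{\chi_\omega(C)}$, the definition of the $L$-function then collapses to the clean factorization
\[
L(u,\chi_\omega)^{-1} \;=\; \bigl(1-\chi_\omega(C)\,u^l\bigr)\bigl(1-\overline{\chi_\omega(C)}\,u^l\bigr),
\]
and by Theorem~\ref{T:DertminantL}(a) this polynomial equals $\det(I - uW_{1,\omega})$. Its roots in $u$ are exactly the $l$-th roots of $\chi_\omega(C)^{\pm 1}$, and all $2l$ of them have modulus $1$ because $|\chi_\omega(C)|=1$.

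Finally, the nonzero eigenvalues of $W_{1,\omega}$ are the reciprocals of these $2l$ roots and thus all lie on the unit circle, while the remaining $2m-2l$ eigenvalues (corresponding to the tree edges) are zero. Consequently $\rho(W_{1,\omega})=1$. The only nontrivial step is the structural claim that every prime cycle of a genus-$1$ graph is either $[C]$ or $[C^{-1}]$; once that is established, everything reduces to a direct computation with the $L$-function.
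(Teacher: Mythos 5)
Your proof is correct and follows essentially the same route as the paper: identify the unique underlying cycle, observe that the only prime cycles are $[C]$ and $[C^{-1}]$, factor the $L$-function, and read off the spectrum from the determinant formula in Theorem~\ref{T:DertminantL}(a). You supply a slightly more explicit justification (the tree-entry argument) for why no circuit can use a pendant-tree edge, a step the paper treats as immediate.
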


\begin{proof}
After pruning the attached trees, $G$ has a unique genus-one core $G^\circ$, which is a cycle graph with exactly two directed prime cycles $P$ and $P^{-1}$.  Put
$$r:=\tau(P)=\tau(P^{-1}).$$
Since the attached trees contain no prime cycles, the weighted $L$-function is
\begin{align*}
L_W(u,\chi_\omega)^{-1}
&=\left(1-W(P)\chi_\omega(P)u^r\right)
\left(1-W(P^{-1})\chi_\omega(P^{-1})u^r\right).
\end{align*}
By Theorem~\ref{T:DertminantL}(a), the characteristic polynomial of $B_{\omega,W}$ is therefore
\begin{align*}
p_{\omega,W}(\lambda)
=\lambda^{2m-2r}
\left(\lambda^r-W(P)\chi_\omega(P)\right)
\left(\lambda^r-W(P^{-1})\chi_\omega(P^{-1})\right).
\end{align*}
Because $|\chi_\omega(P)|=|\chi_\omega(P^{-1})|=1$, the moduli of the nonzero roots are
$$W(P)^{1/r}\qquad\text{and}\qquad W(P^{-1})^{1/r}.$$
They are independent of $\omega$, and hence
$$\rho(B_{\omega,W})=\rho(B_{0,W}).$$
\end{proof}

\begin{lemma}\label{L:spe-ineq-genus2}
Assume $g\geq2$.
\begin{enumerate}[(i)]
\item The period character, whenever it exists, is a nontrivial $2$-torsion
character depending only on $G$.  It always exists when $G$ is non-bipartite,
and then $\eta=\theta$.
\item If the period character $\eta$ exists, then for every full complex
character $\xi\in\mcalX_\mbbC$,
$$
\spec B_{\eta+\xi,W}
=e\left(\frac{1}{2\nu_G}\right)\spec B_{\xi,W}.
$$
In particular, $\rho(B_{\eta,W})=\rho(B_{0,W})$ but
$\spec B_{\eta,W}\neq\spec B_{0,W}$.
\item If $G$ is non-bipartite, then
$$
\rho(B_{\omega,W})=\rho(B_{0,W})
\quad\Longleftrightarrow\quad
\underline{\omega}\in\{0,\theta\}.
$$
If $G$ is bipartite and the period character does not exist, then
equality holds if and only if $\underline{\omega}=0$.  If the period
character $\eta$ exists, then equality holds if and only if
$\underline{\omega}\in\{0,\eta\}$.
\end{enumerate}
\end{lemma}

\begin{proof}
Let $G^\circ$ be the core in Lemma~\ref{L:genus-2-edge-walk}, and let $W^\circ$ be the restriction of $W$ to $G^\circ$.  Every circuit of $G$ is contained in $G^\circ$, so
$$L_{G,W}(u,\chi_\omega)=L_{G^\circ,W^\circ}(u,\chi_\omega).$$
Writing $B^\circ_{\omega,W^\circ}$ for the twisted weighted edge adjacency matrix of $G^\circ$, Theorem~\ref{T:DertminantL}(a) shows that $B_{\omega,W}$ and $B^\circ_{\omega,W^\circ}$ have the same nonzero spectrum.  Thus
$$\rho(B_{\omega,W})=\rho(B^\circ_{\omega,W^\circ}),
\qquad
\rho(B_{0,W})=\rho(B^\circ_{0,W^\circ}).$$

Assume these two radii are equal.  The core $G^\circ$ is
edge-walk-connected, so Lemma~\ref{L:edge-incoherent} implies that all
equal-length non-backtracking edge-walks with common endpoints are coherent.
Lemma~\ref{L:length-coherence} therefore gives $\zeta\in S^1$ such that
$$
\chi_\omega(P)=\zeta^{\tau(P)}
\qquad([P]\in\bmcalP).
$$
Applying the same identity to the inverse prime cycle $P^{-1}$ gives
$$
\chi_\omega(P)^{-1}=\chi_\omega(P^{-1})
=\zeta^{\tau(P)}=\chi_\omega(P),
$$
so every prime-cycle value is a sign.  In particular,
$\underline{\omega}$ is $2$-torsion because the fundamental cycles of a
spanning tree are prime cycles and form a basis of $H_1(G,\mbbZ)$.

Put $n_P:=\tau(P)/\nu_G$.  The integers $n_P$ have greatest common divisor
$1$.  Since $(\zeta^{\nu_G})^{2n_P}=1$ for every prime cycle, B\'ezout's
identity gives
$$
(\zeta^{\nu_G})^2=1.
$$
If $\zeta^{\nu_G}=1$, then $\chi_\omega(P)=1$ for every prime cycle and
$\underline{\omega}=0$.  If $\zeta^{\nu_G}=-1$, then
$$
\chi_\omega(P)=(-1)^{\tau(P)/\nu_G}.
$$
Such a character exists precisely when normalized length parity descends to
homology, and it is then the unique period character $\eta$ of
Definition~\ref{D:period-character}.  Conversely, if $\eta$ exists, take
$\zeta=e(1/(2\nu_G))$.  For every full complex character $\xi$, choose
complex gains $\gamma_\eta$ and $\gamma$ representing $\eta$ and $\xi$,
respectively.  For every prime cycle $[P]$, we have
$$
(\gamma_\eta\gamma)(P)
=\eta(P)\xi(P)=\zeta^{\tau(P)}\xi(P).
$$
We compare the two edge determinants by their Leibniz expansions.  Every
nonconstant contribution is a product over pairwise disjoint cyclic strings
of allowed nonbacktracking transitions.  If their total length is $k$,
multiplication by $\gamma_\eta$ multiplies that contribution by $\zeta^k$.
Hence
$$
\det(I-uB_{\gamma_\eta\gamma,W})
=\det(I-\zeta uB_{\gamma,W}),
$$
which proves the asserted spectral rotation over every full complex
character.  Taking $\xi=0$ also proves the equality of radii.
The two spectra are unequal by Lemma~\ref{L:weighted-spectral-rigidity}, since
$\eta$ is nontrivial.

If $G$ is non-bipartite, then $\nu_G$ is odd.  Thus normalized length parity
equals ordinary length parity, so the period character exists and is
$\eta=\theta$.  If $G$ is bipartite, $\nu_G$ is even, and the preceding
classification gives exactly the two alternatives in (iii).
\end{proof}

\long\def\CotreeStrictnessProof{%
\begin{proof}[Proof of Proposition~\ref{P:cotree-edge-rigidity}]
Let $G^\circ$ be the core in Lemma~\ref{L:genus-2-edge-walk}, and let
$W^\circ$ be the restriction of $W$ to $G^\circ$.  As in the proof of
Lemma~\ref{L:spe-ineq-genus2}, the weighted edge matrices of $G$ and
$G^\circ$ have the same nonzero spectra.  In particular,
$$
\rho(B_{\omega,W})=\rho(B^\circ_{\omega,W^\circ}),
\qquad
\rho(B_{0,W})=\rho(B^\circ_{0,W^\circ}).
$$

Fix an orientation $\mfrako$, and choose the spanning tree $T$ so that it contains all tree components attached to $G^\circ$; then every cotree edge lies in $G^\circ$.  Represent $\underline{\omega}$ by
$$\omega=\sum_{\ve\in\vE_{\mfrako}(G)\setminus T}\omega(\ve)d\ve\in\mcalX_{(T,\mfrako)}.$$
It is enough to prove the contrapositive.  Assume
$$\rho(B_{\omega,W})=\rho(B_{0,W}).$$
The preceding spectral-radius identities give
$$\rho(B^\circ_{\omega,W^\circ})=\rho(B^\circ_{0,W^\circ}).$$
Since $G^\circ$ is edge-walk-connected, the if-and-only-if assertion of Lemma~\ref{L:edge-incoherent} shows that every pair of non-backtracking edge-walks in $G^\circ$ with the same length and the same initial and terminal oriented edges is coherent.

For a cotree oriented edge $\ve$, write
$$C_\ve:=\ve\cdot T(\ve(1),\ve(0))$$
for its fundamental closed walk.  Fix two cotree oriented edges $\ve_a,\ve_b$ with distinct underlying edges, and put
$$T_a:=T(\ve_a(1),\ve_a(0)),
\qquad
T_b:=T(\ve_b(1),\ve_b(0)),
\qquad
C_a:=C_{\ve_a},
\qquad
C_b:=C_{\ve_b}.$$
For brevity, write
$$
\tau_a:=\tau(C_a),
\qquad
\tau_b:=\tau(C_b).
$$
After replacing $\ve_a$ or $\ve_b$ by its inverse when necessary, the tree paths $T_a$ and $T_b$ have exactly one of the following relative positions:
\begin{enumerate}[(a)]
\item they overlap in a maximal tree path $T(v,w)$;
\item they meet in exactly one vertex $v$;
\item they are disjoint and are joined by a minimal tree path $T(v,w)$.
\end{enumerate}

\begin{figure}[tbp]
\centering
\begin{tikzpicture}[>=to,x=2.2cm,y=2.2cm]

\begin{scope}[shift={(0,0)}]
\draw (-0.6,1.2) node {(a)};

\coordinate (v) at (0.5,0);
\coordinate (w) at (-0.5,0);
\coordinate (ea1) at (0.5,0.5);
\coordinate (ea0) at (0.2,0.8);
\coordinate (eb1) at (0.5,-0.7);
\coordinate (eb0) at (0.2,-1);
\coordinate (Ca) at (0,0.4);
\coordinate (Cb) at (0,-0.5);

\path[-,font=\scriptsize,line width=1.2pt,gray]
(ea0) edge[out=-45,in=135,line width=1.5pt,black]
node[pos=0.5,sloped,allow upside down]{\midarrow}
node[pos=0.5,anchor=south west]{\large $\ve_a$} (ea1)
(eb0) edge[out=-135,in=45,line width=1.5pt,black]
node[pos=0.5,sloped,allow upside down]{\midarrow}
node[pos=0.5,anchor=north west]{\large $\ve_b$} (eb1)
(v) edge[out=180,in=0] node[pos=0.5,sloped,allow upside down]{\midarrow} (w)
(ea1) edge[out=-45,in=45] (v)
(eb1) edge[out=45,in=-45] (v)
(ea0) edge[out=135,in=135] (w)
(eb0) edge[out=-135,in=-135] (w);

\fill[black] (v) circle (2.5pt);
\draw (v) node[anchor=west] {\large $v$};
\fill[black] (w) circle (2.5pt);
\draw (w) node[anchor=east] {\large $w$};
\fill[black] (ea1) circle (2.5pt);
\fill[black] (ea0) circle (2.5pt);
\fill[black] (eb1) circle (2.5pt);
\fill[black] (eb0) circle (2.5pt);

\draw[->,line width=1pt,dash pattern=on 2pt off 1pt] (.3,0.4) arc (0:-270:0.3);
\draw (Ca) node {\Large $C_a$};
\draw[->,line width=1pt,dash pattern=on 2pt off 1pt] (.3,-0.5) arc (0:270:0.3);
\draw (Cb) node {\Large $C_b$};
\end{scope}

\begin{scope}[shift={(2,0)}]
\draw (-0.6,1.2) node {(b)};

\coordinate (v) at (0,0);
\coordinate (ea1) at (0.46,0.46*2-0.3);
\coordinate (ea0) at (0.46-0.3,0.46*2);
\coordinate (eb1) at (0.56,-0.56*2+0.3);
\coordinate (eb0) at (0.56-0.3,-.56*2);
\coordinate (Ca) at (0,0.46);
\coordinate (Cb) at (0,-0.56);

\draw[line width=1.2pt,gray] (.46,0.46) arc (0:-270:0.46);
\draw[line width=1.2pt,gray] (.56,-.56) arc (0:270:0.56);

\path[-,font=\scriptsize,line width=1.2pt,gray]
(ea0) edge[out=-45,in=135,line width=1.5pt,black]
node[pos=0.5,sloped,allow upside down]{\midarrow}
node[pos=0.5,anchor=south west]{\large $\ve_a$} (ea1)
(eb0) edge[out=-135,in=45,line width=1.5pt,black]
node[pos=0.5,sloped,allow upside down]{\midarrow}
node[pos=0.5,anchor=north west]{\large $\ve_b$} (eb1)
(ea1) edge[out=-90,in=90] (0.46,0.46)
(eb1) edge[out=90,in=-90] (0.56,-0.56)
(ea0) edge[out=180,in=0] (0,.46*2)
(eb0) edge[out=180,in=0] (0,-.56*2);

\fill[black] (v) circle (2.5pt);
\draw (v) node[anchor=north] {\large $v$};
\fill[black] (ea1) circle (2.5pt);
\fill[black] (ea0) circle (2.5pt);
\fill[black] (eb1) circle (2.5pt);
\fill[black] (eb0) circle (2.5pt);

\draw[->,line width=1pt,dash pattern=on 2pt off 1pt] (.3,0.46) arc (0:-270:0.3);
\draw (Ca) node {\Large $C_a$};
\draw[->,line width=1pt,dash pattern=on 2pt off 1pt] (.3,-0.55) arc (0:270:0.3);
\draw (Cb) node {\Large $C_b$};
\end{scope}

\begin{scope}[shift={(4,0)}]
\draw (-0.6,1.2) node {(c)};

\coordinate (v) at (0,0.3);
\coordinate (w) at (0,-0.3);
\coordinate (ea1) at (0.46,0.3+0.46*2-0.3);
\coordinate (ea0) at (0.46-0.3,0.3+0.46*2);
\coordinate (eb1) at (0.56,-0.3-0.56*2+0.3);
\coordinate (eb0) at (0.56-0.3,-0.3-.56*2);
\coordinate (Ca) at (0,0.3+0.46);
\coordinate (Cb) at (0,-0.3-0.56);

\draw[line width=1.2pt,gray] (.46,.3+0.46) arc (0:-270:0.46);
\draw[line width=1.2pt,gray] (.56,-.3-.56) arc (0:270:0.56);

\path[-,font=\scriptsize,line width=1.2pt,gray]
(ea0) edge[out=-45,in=135,line width=1.5pt,black]
node[pos=0.5,sloped,allow upside down]{\midarrow}
node[pos=0.5,anchor=south west]{\large $\ve_a$} (ea1)
(eb0) edge[out=-135,in=45,line width=1.5pt,black]
node[pos=0.5,sloped,allow upside down]{\midarrow}
node[pos=0.5,anchor=north west]{\large $\ve_b$} (eb1)
(v) edge[out=-90,in=90] node[pos=0.5,sloped,allow upside down]{\midarrow} (w)
(ea1) edge[out=-90,in=90] (0.46,.3+0.46)
(eb1) edge[out=90,in=-90] (0.56,-.3-0.56)
(ea0) edge[out=180,in=0] (0,.3+.46*2)
(eb0) edge[out=180,in=0] (0,-.3-.56*2);

\fill[black] (v) circle (2.5pt);
\draw (v) node[anchor=north east] {\large $v$};
\fill[black] (w) circle (2.5pt);
\draw (w) node[anchor=south east] {\large $w$};
\fill[black] (ea1) circle (2.5pt);
\fill[black] (ea0) circle (2.5pt);
\fill[black] (eb1) circle (2.5pt);
\fill[black] (eb0) circle (2.5pt);

\draw[->,line width=1pt,dash pattern=on 2pt off 1pt] (.3,.3+0.46) arc (0:-270:0.3);
\draw (Ca) node {\Large $C_a$};
\draw[->,line width=1pt,dash pattern=on 2pt off 1pt] (.3,-.3-0.55) arc (0:270:0.3);
\draw (Cb) node {\Large $C_b$};
\end{scope}

\end{tikzpicture}
\caption{The three relative positions of the fundamental closed walks $C_a$ and $C_b$: (a) their tree parts overlap in a path; (b) they meet in one vertex; (c) they are disjoint and joined by a minimal tree path.}
\label{F:two-edges}
\end{figure}

\emph{Step 1: cotree character values are signs.}
Let $\ve_b$ be arbitrary.  Since $g\geq2$, choose $\ve_a$ with a different underlying edge.  In each of the three cases in Figure~\ref{F:two-edges}, decomposition at the common vertices $v,w$ gives two non-backtracking edge-walks $\vDelta,\vDelta'$ from $\ve_a$ to $\ve_a^{-1}$ whose interior walks are
$$\vDelta^\times
=T(\ve_a(1),\ve_b(0))\cdot\ve_b\cdot T(\ve_b(1),\ve_a(1))$$
and
$$\vDelta'^\times
=T(\ve_a(1),\ve_b(1))\cdot\ve_b^{-1}\cdot T(\ve_b(0),\ve_a(1)).$$
The two tree portions have the same total length in all three cases: common branches occur once on both sides, while the maximal overlap or connecting path occurs twice on both sides.  Thus $\vDelta$ and $\vDelta'$ have the same length.  Since $\omega$ vanishes on tree edges,
$$\chi_\omega(\vDelta)=\chi_\omega(\ve_b),
\qquad
\chi_\omega(\vDelta')=\chi_\omega(\ve_b^{-1})
=\chi_\omega(\ve_b)^{-1}.$$
Coherence implies
$$\chi_\omega(\ve_b)=\chi_\omega(\ve_b)^{-1},$$
and hence $\chi_\omega(\ve_b)=\pm1$.  This holds for every cotree edge.  Therefore equality of the two spectral radii can occur only at a $2$-torsion character, proving (ii).

\emph{Step 2: refinement when $G$ is non-bipartite.}
Assume now that $G$ is non-bipartite.  In the notation of Construction~\ref{Cs:CanoForm2}, set
$$E_0:=\{\ve\in\vE_{\mfrako}(G)\setminus T:
\tau(T(\ve(1),\ve(0)))\text{ is odd}\}$$
and
$$E_1:=\{\ve\in\vE_{\mfrako}(G)\setminus T:
\tau(T(\ve(1),\ve(0)))\text{ is even}\}.$$
The fundamental walk $C_\ve$ has even length for $\ve\in E_0$ and odd length for $\ve\in E_1$.  The set $E_1$ is empty exactly when $G$ is bipartite, so it is nonempty under the present hypothesis.

We compare the signs attached to $\ve_a$ and $\ve_b$ by constructing two closed non-backtracking edge-walks based at $\ve_a$.

\emph{Strategy (I).} Wind $k$ times around $C_a$:
$$\Delta_k:=C_a^k,
\qquad
\tau(\Delta_k)=k\tau_a,
\qquad
\chi_\omega(\Delta_k)=\chi_\omega(\ve_a)^k.$$

\emph{Strategy (II).} Wind $k'$ times around
$$D_{ab}:=\ve_a\cdot T(\ve_a(1),\ve_b(1))
\cdot\ve_b^{-1}\cdot T(\ve_b(0),\ve_a(0)).$$
Write $\tau_{ab}:=\tau(D_{ab})$.  In the three relative positions of Figure~\ref{F:two-edges},
$$\tau_{ab}=\begin{cases}
\tau_a+\tau_b-2\tau(T(v,w)),&\text{in Case (a),}\\
\tau_a+\tau_b,&\text{in Case (b),}\\
\tau_a+\tau_b+2\tau(T(v,w)),&\text{in Case (c).}
\end{cases}$$
In particular,
$$\tau_{ab}\equiv\tau_a+\tau_b\pmod2.$$

Choose
$$k:=\frac{\tau_{ab}}{\gcd(\tau_a,\tau_{ab})},
\qquad
k':=\frac{\tau_a}{\gcd(\tau_a,\tau_{ab})}.$$
Then $k\tau_a=k'\tau_{ab}$, so $\Delta_k$ and $D_{ab}^{k'}$ are coherent.  Since the cotree character values are signs,
$$\chi_\omega(\ve_a)^k
=\chi_\omega(\ve_a)^{k'}\cdot\chi_\omega(\ve_b)^{k'}.$$
We now distinguish the four parity cases.

\emph{Case (1):} If $\tau_a$ and $\tau_b$ are both odd, then $\tau_{ab}$ and $k$ are even while $k'$ is odd.  Hence
$$\chi_\omega(\ve_a)=\chi_\omega(\ve_b).$$

\emph{Case (2):} If $\tau_a$ is even and $\tau_b$ is odd, then $\tau_{ab}$ and $k$ are odd while $k'$ is even.  Hence
$$\chi_\omega(\ve_a)=1.$$

\emph{Case (3):} If $\tau_a$ is odd and $\tau_b$ is even, then $\tau_{ab}$, $k$, and $k'$ are odd.  Hence
$$\chi_\omega(\ve_b)=1.$$

\emph{Case (4):} If $\tau_a$ and $\tau_b$ are both even, the parity identity gives no further restriction.

Because $E_1$ is nonempty, Case (1) shows that all character values on $E_1$ have one common sign, while Cases (2) and (3) show that every character value on $E_0$ is $1$.  If the common sign on $E_1$ is $1$, then $\underline{\omega}=0$.  If it is $-1$, then $\underline{\omega}=\theta$ by Construction~\ref{Cs:CanoForm2} and Lemma~\ref{L:O-TO}.  Thus, whenever $G$ is non-bipartite, equality of the edge spectral radii can occur only at $0$ and $\theta$, proving (i).
\end{proof}
}

\begin{proof}[Proof of Theorem~\ref{T:SpecDuality}]
The spectral identities in (SA1) follow from Lemma~\ref{L:spectral-neg2}.  In particular the same character $\theta$ works for every positive directed weight $W$, because its construction is independent of $W$.  Part (SA2) is Lemma~\ref{L:unique} together with Lemma~\ref{L:O-TO}; the $2$-torsion assertion follows because $\theta(C)^2=1$ for every closed walk $C$.  Part (SA3) follows because $\theta=0$ is equivalent to every closed walk having even length, which is equivalent to $G$ being bipartite.  If $G$ is bipartite, then $\theta=0$, and (SA1) gives invariance under negation for every unitary character.  Part (SA4) is Lemma~\ref{L:weighted-spectral-rigidity}.

The period-character assertions (PC1)--(PC2) follow from
Lemma~\ref{L:spe-ineq-genus2}; the non-isospectrality in (PC2) also follows
from (SA4), since $\eta$ is nontrivial.  For the spectral-radius assertions,
(SR1) is Lemma~\ref{L:incoherent} for vertex matrices and
Lemma~\ref{L:edge-incoherent} for edge matrices.  Part (SR2) is
Lemma~\ref{L:spe-ineq}.  Part (SR3) is Lemma~\ref{L:spe-eq-genus1}, and
parts (SR4)--(SR5) are Lemma~\ref{L:spe-ineq-genus2}.
\end{proof}

We finish the section by proving Proposition~\ref{P:period-character-algorithm}.

\begin{proof}[Proof of Proposition~\ref{P:period-character-algorithm}]
First, every stage is finite.  Each trimming step deletes at least one vertex,
so the construction of $G^\circ$ terminates after at most $|V(G)|$ steps.
The cyclic core has finitely many oriented edges and feeding transitions, and
its period $\nu_G$, as the period of its finite edge adjacency matrix, can be
determined in finite time.  The predecessor choices require only finite
searches in this transition relation.  Step~3 then assigns one residue class
to each oriented edge and checks one congruence for each remaining feeding
transition.  Hence the algorithm terminates.

The fundamental cycles $C_1,\ldots,C_g$ form a basis of
$H_1(G,\mbbZ)$.  Each $C_i$ is a prime cycle, so
$\nu_G\mid\tau(C_i)$.  Consequently
\eqref{E:period-cotree-form} represents the unique candidate $2$-torsion
character $\eta_T$ whose values on this basis are
$$
\eta_T(C_i^{\ab})=(-1)^{q_i}=(-1)^{\tau(C_i)/\nu_G}.
$$
A gain representative of $\eta_T$ is
$$
\gamma_T(\ve):=(-1)^{c_T(\ve)};
$$
it equals $1$ on tree edges and has the prescribed sign on each cotree edge
and its inverse.

Put $\zeta:=e(1/(2\nu_G))$.  For every allowed transition
$\va\shortrightarrow\vb$, define the transition factor
$$
g_T(\va,\vb):=\zeta^{-1}\gamma_T(\vb).
$$
If
$$
P=\va_0\va_1\cdots\va_{l-1}
$$
is a closed nonbacktracking edge-walk, where
$\va_j\shortrightarrow\va_{j+1}$ and the indices are read modulo $l$, then
$$
\prod_{j=0}^{l-1}g_T(\va_j,\va_{j+1})
=\zeta^{-l}\prod_{j=0}^{l-1}\gamma_T(\va_j)
=\zeta^{-l}\eta_T(P^{\ab}).
$$
Since $\nu_G\mid l$, this product equals $1$ exactly when
$$
\eta_T(P^{\ab})=\zeta^l=(-1)^{l/\nu_G}.
$$
Every closed nonbacktracking edge-walk is a positive power of a prime cycle,
so the defining identity for the period character extends to all such $P$.
It follows that $\eta_T$ is the period character if and only if the product
of the transition factors is $1$ along every closed nonbacktracking
edge-walk.

We now express this closed-edge-walk condition as the finite congruence test
in Step~3.  By Lemma~\ref{L:genus-2-edge-walk}, $G^\circ$ is
edge-walk-connected.  Fix a base oriented edge $\va_\ast$.  If every closed
edge-walk has transition-factor product $1$, define $d(\va)$ as the product
of the transition factors along any nonbacktracking edge-walk from
$\va_\ast$ to $\va$.  This is independent of the chosen edge-walk: given two
such edge-walks, append to both an edge-walk from $\va$ back to
$\va_\ast$ and use the closed-edge-walk condition.  Hence
$$
g_T(\va,\vb)=d(\va)^{-1}d(\vb)
$$
for every allowed transition $\va\shortrightarrow\vb$.  Conversely, any
function $d$ satisfying these identities makes the product telescope to $1$
along every closed edge-walk.

Every transition factor is a power of $\zeta$, so the preceding construction
allows $d(\va)=\zeta^{p(\va)}$ with
$p(\va)\in\mbbZ/(2\nu_G)\mbbZ$.  Since
$\gamma_T(\vb)=\zeta^{\nu_Gc_T(\vb)}$, the transition identity becomes
$$
p(\vb)-p(\va)
\equiv\nu_Gc_T(\vb)-1\pmod{2\nu_G},
$$
which is precisely~\eqref{E:period-character-congruence}.  Assigning $p$
along the chosen predecessor steps determines the only possible values after
the normalization $p(\va_\ast)=0$; checking the remaining allowed
transitions is therefore equivalent to solvability of the full system.

When the test succeeds,
$\eta_T=\underline{\omega^{\mathrm{per}}_{(T,\mfrako)}}$ satisfies the
defining normalized-parity identity and is therefore the period character.
This intrinsic characterization proves both the asserted correctness and
the independence of the output character and the success or failure of the
test from $T$ and $\mfrako$.
\end{proof}

\bibliographystyle{alpha}

\begin{thebibliography}{BCCM22}

\bibitem[ABG25]{ABG2025Smooth}
Lior Alon, Gregory Berkolaiko, and Mark Goresky.
\newblock Smooth critical points of eigenvalues on the torus of magnetic
  perturbations of graphs, 2025.

\bibitem[ABKS14]{ABKS2014canonical}
Yang An, Matthew Baker, Greg Kuperberg, and Farbod Shokrieh.
\newblock Canonical representatives for divisor classes on tropical curves and
  the matrix--tree theorem.
\newblock {\em Forum Math. Sigma}, 2:e24, 2014.

\bibitem[AG23]{AG2023Morse}
Lior Alon and Mark Goresky.
\newblock Morse theory for discrete magnetic operators and nodal count
  distribution for graphs.
\newblock {\em J. Spectr. Theory}, 13(4):1225--1260, 2023.

\bibitem[AS87a]{AS1987Homology}
Toshiaki Adachi and Toshikazu Sunada.
\newblock Homology of closed geodesics in a negatively curved manifold.
\newblock {\em J. Differ. Geom.}, 26:81--99, 1987.

\bibitem[AS87b]{AS1987Twisted}
Toshiaki Adachi and Toshikazu Sunada.
\newblock Twisted {Perron}-{Frobenius} theorem and {{\(L\)}}-functions.
\newblock {\em J. Funct. Anal.}, 71:1--46, 1987.

\bibitem[Bas92]{B1992Ihara}
Hyman Bass.
\newblock The {I}hara-{S}elberg zeta function of a tree lattice.
\newblock {\em Internat. J. Math.}, 3(6):717--797, 1992.

\bibitem[BCCM22]{BCCM2022Local}
Gregory Berkolaiko, Yaiza Canzani, Graham Cox, and Jeremy~L. Marzuola.
\newblock A local test for global extrema in the dispersion relation of a
  periodic graph.
\newblock {\em Pure Appl. Anal.}, 4(2):257--286, 2022.

\bibitem[Ber13]{B2013Nodal}
Gregory Berkolaiko.
\newblock Nodal count of graph eigenfunctions via magnetic perturbation.
\newblock {\em Anal. PDE}, 6(5):1213--1233, 2013.

\bibitem[BF11]{BF2011Metric}
Matthew Baker and Xander Faber.
\newblock Metric properties of the tropical {Abel}-{Jacobi} map.
\newblock {\em J. Algebr. Comb.}, 33(3):349--381, 2011.

\bibitem[BH12]{BH2012Spectra}
Andries~E. Brouwer and Willem~H. Haemers.
\newblock {\em Spectra of graphs}.
\newblock Universitext. Springer, New York, 2012.

\bibitem[BN07]{BN2007Riemann}
Matthew Baker and Serguei Norine.
\newblock Riemann--roch and abel--jacobi theory on a finite graph.
\newblock {\em Advances in Mathematics}, 215(2):766--788, 2007.

\bibitem[BPT15]{BPT2015Matrix}
Yurii Burman, Andrey Ploskonosov, and Anastasia Trofimova.
\newblock Matrix-tree theorems and discrete path integration.
\newblock {\em Linear Algebra Appl.}, 466:64--82, 2015.

\bibitem[CdV13]{CV2013Magnetic}
Yves Colin~de Verdi{\`e}re.
\newblock Magnetic interpretation of the nodal defect on graphs.
\newblock {\em Anal. PDE}, 6(5):1235--1242, 2013.

\bibitem[Chu97]{C1997Spectral}
Fan R.~K. Chung.
\newblock {\em Spectral graph theory}, volume~92 of {\em Reg. Conf. Ser. Math.}
\newblock Providence, RI: AMS, American Mathematical Society, 1997.

\bibitem[CRS10]{CRS2010Introduction}
Drago{\v{s}} Cvetkovi{\'c}, Peter Rowlinson, and Slobodan Simi{\'c}.
\newblock {\em An introduction to the theory of graph spectra}, volume~75 of
  {\em Lond. Math. Soc. Stud. Texts}.
\newblock Cambridge: Cambridge University Press, 2010.

\bibitem[Dei23]{D2023Weighted}
Anton Deitmar.
\newblock Weighted prime geodesic theorems.
\newblock {\em Eur. J. Math.}, 9(3):Paper No. 52, 2023.

\bibitem[Del42]{D1942Sur}
Jean Delsarte.
\newblock Sur le gitter fuchsien.
\newblock {\em C. R. Acad. Sci., Paris}, 214:147--149, 1942.

\bibitem[FS25]{FS2025Corners}
Matthew Faust and Frank Sottile.
\newblock The spectral edges conjecture via corners, 2025.

\bibitem[GRW17]{GRW2017Balance}
Pierre-Louis Giscard, Paul Rochet, and Richard~C. Wilson.
\newblock Evaluating balance on social networks from their simple cycles.
\newblock {\em J. Complex Netw.}, 5(5):750--775, 2017.

\bibitem[Has89]{H1989Zeta}
Ki-ichiro Hashimoto.
\newblock Zeta functions of finite graphs and representations of {$p$}-adic
  groups.
\newblock In {\em Automorphic forms and geometry of arithmetic varieties},
  volume~15 of {\em Adv. Stud. Pure Math.}, pages 211--280. Academic Press,
  Boston, MA, 1989.

\bibitem[Has92]{H1992Artin}
Ki-ichiro Hashimoto.
\newblock Artin type {$L$}-functions and the density theorem for prime cycles
  on finite graphs.
\newblock {\em Internat. J. Math.}, 3(6):809--826, 1992.

\bibitem[Hej76]{H1976Selberg}
Dennis~A. Hejhal.
\newblock The {Selberg} trace formula and the {Riemann} zeta-function.
\newblock {\em Duke Math. J.}, 43(3):441--482, 1976.

\bibitem[HFGO99]{HFGO1999Emerging}
Dennis~A. Hejhal, Joel Friedman, Martin~C. Gutzwiller, and Andrew~M. Odlyzko,
  editors.
\newblock {\em Emerging applications of number theory}, volume 109 of {\em The
  IMA Volumes in Mathematics and its Applications}.
\newblock Springer-Verlag, New York, 1999.
\newblock Papers from the IMA Summer Program held at the University of
  Minnesota, Minneapolis, MN, July 15--26, 1996.

\bibitem[HS15]{HS2015Gen}
Takehiro Hasegawa and Seiken Saito.
\newblock A generalization of the graph theory prime-number theorem of a finite
  graph.
\newblock {\em Internat. J. Math.}, 26(9):1550071, 19, 2015.

\bibitem[HST06]{HST2006What}
Matthew~D. Horton, H.~M. Stark, and Audrey~A. Terras.
\newblock What are zeta functions of graphs and what are they good for?
\newblock In {\em Quantum graphs and their applications}, volume 415 of {\em
  Contemp. Math.}, pages 173--189. Amer. Math. Soc., Providence, RI, 2006.

\bibitem[Hub61]{H1961Zur}
Heinz Huber.
\newblock Zur analytischen {Theorie} hyperbolischer {Raumformen} und
  {Bewegungsgruppen}. {II}.
\newblock {\em Math. Ann.}, 142(4):385--398, 1961.

\bibitem[Iha66]{I1966Padic}
Yasutaka Ihara.
\newblock On discrete subgroups of the two by two projective linear group over
  {${p}$}-adic fields.
\newblock {\em J. Math. Soc. Japan}, 18:219--235, 1966.

\bibitem[Iwa84]{I1984Prime}
Henryk Iwaniec.
\newblock Prime geodesic theorem.
\newblock {\em J. Reine Angew. Math.}, 349:136--159, 1984.

\bibitem[Ken11]{K2011Spanning}
Richard Kenyon.
\newblock Spanning forests and the vector bundle {L}aplacian.
\newblock {\em Ann. Probab.}, 39(5):1983--2017, 2011.

\bibitem[KS88]{KS1987Homology}
Atsushi Katsuda and Toshikazu Sunada.
\newblock Homology and closed geodesics in a compact {Riemann} surface.
\newblock {\em Am. J. Math.}, 110(1):145--155, 1988.

\bibitem[KS00]{KS2000Zeta}
Motoko Kotani and Toshikazu Sunada.
\newblock Zeta functions of finite graphs.
\newblock {\em J. Math. Sci., Tokyo}, 7(1):7--25, 2000.

\bibitem[Kuc16]{K2016Overview}
Peter Kuchment.
\newblock An overview of periodic elliptic operators.
\newblock {\em Bull. Amer. Math. Soc. (N.S.)}, 53(3):343--414, 2016.

\bibitem[Lal89]{L1989Closed}
Steven~P. Lalley.
\newblock Closed geodesics in homology classes on surfaces of variable negative
  curvature.
\newblock {\em Duke Math. J.}, 58(3):795--821, 1989.

\bibitem[LHW24]{LHW2024SignedZeta}
Deqiong Li, Yaoping Hou, and Dijian Wang.
\newblock Zeta functions of signed graphs.
\newblock {\em Linear Multilinear Algebra}, 72(16):2719--2731, 2024.

\bibitem[Liu04]{L2004Asymptotic}
Dongsheng Liu.
\newblock Asymptotic expansion for cycles in homology classes for graphs, 2004.
\newblock arXiv:math-ph/0407072.

\bibitem[LL93]{LL1993Fluxes}
Elliott~H. Lieb and Michael Loss.
\newblock Fluxes, {L}aplacians, and {K}asteleyn's theorem.
\newblock {\em Duke Math. J.}, 71(2):337--363, 1993.

\bibitem[LS95]{LS1995Quantum}
Wenzhi Luo and Peter Sarnak.
\newblock Quantum ergodicity of eigenfunctions on {{\(\text{PSL}_ 2(\mathbb{Z})
  \backslash H^ 2\)}}.
\newblock {\em Publ. Math., Inst. Hautes {\'E}tud. Sci.}, 81:207--237, 1995.

\bibitem[MKS22]{MKS2022On}
Ranjit Mehatari, M.~Rajesh Kannan, and Aniruddha Samanta.
\newblock On the adjacency matrix of a complex unit gain graph.
\newblock {\em Linear Multilinear Algebra}, 70(9):1798--1813, 2022.

\bibitem[MS04]{MS2004Weighted}
Hirobumi Mizuno and Iwao Sato.
\newblock Weighted zeta functions of graphs.
\newblock {\em J. Combin. Theory Ser. B}, 91(2):169--183, 2004.

\bibitem[OW07]{OW2007Nonbacktracking}
Ronald Ortner and Wolfgang Woess.
\newblock Non-backtracking random walks and cogrowth of graphs.
\newblock {\em Canad. J. Math.}, 59(4):828--844, 2007.

\bibitem[Pol91]{P1991Homology}
Mark Pollicott.
\newblock Homology and closed geodesics in a compact negatively curved surface.
\newblock {\em Am. J. Math.}, 113(3):379--385, 1991.

\bibitem[PS87]{PS1987Geodesics}
Ralph Phillips and Peter Sarnak.
\newblock Geodesics in homology classes.
\newblock {\em Duke Math. J.}, 55:287--297, 1987.

\bibitem[Ref12]{R2012Spectral}
Nathan Reff.
\newblock Spectral properties of complex unit gain graphs.
\newblock {\em Linear Algebra Appl.}, 436(9):3165--3176, 2012.

\bibitem[Sar82]{S1982Class}
Peter Sarnak.
\newblock Class numbers of indefinite binary quadratic forms.
\newblock {\em Journal of Number Theory}, 15(2):229--247, 1982.

\bibitem[Sen81]{S1981Nonnegative}
Eugene Seneta.
\newblock {\em Non-negative Matrices and Markov Chains}.
\newblock Springer Series in Statistics. Springer, New York, 2 edition, 1981.

\bibitem[Ser80]{S1980Trees}
Jean-Pierre Serre.
\newblock {\em Trees}.
\newblock Springer Berlin Heidelberg, 1980.

\bibitem[ST96]{ST1996Zeta}
H.~M. Stark and A.~A. Terras.
\newblock Zeta functions of finite graphs and coverings.
\newblock {\em Adv. Math.}, 121(1):124--165, 1996.

\bibitem[ST00]{ST2000Zeta}
H.~M. Stark and A.~A. Terras.
\newblock Zeta functions of finite graphs and coverings. {II}.
\newblock {\em Adv. Math.}, 154(1):132--195, 2000.

\bibitem[SY13]{SY2013The}
K.~Soundararajan and Matthew~P. Young.
\newblock The prime geodesic theorem.
\newblock {\em J. Reine Angew. Math.}, 676:105--120, 2013.

\bibitem[Ter11]{T2011Zeta}
Audrey Terras.
\newblock {\em Zeta functions of graphs}, volume 128 of {\em Cambridge Studies
  in Advanced Mathematics}.
\newblock Cambridge University Press, Cambridge, 2011.
\newblock A stroll through the garden.

\bibitem[TS07]{TS2007Zeta}
A.~A. Terras and H.~M. Stark.
\newblock Zeta functions of finite graphs and coverings. {III}.
\newblock {\em Adv. Math.}, 208(1):467--489, 2007.

\bibitem[WvD25]{WvD2023Symmetry}
Pepijn Wissing and Edwin~R. van Dam.
\newblock Symmetry in complex unit gain graphs and their spectra.
\newblock {\em Linear Algebra Appl.}, 722:164--177, 2025.

\bibitem[Zas89]{Z1989Biased}
Thomas Zaslavsky.
\newblock Biased graphs. {I}: {Bias}, balance, and gains.
\newblock {\em J. Comb. Theory, Ser. B}, 47(1):32--52, 1989.

\end{thebibliography}

\newpage
\appendix

\section{Notation guide}
\label{A:notation-guide}

\subsubsection*{Section~\ref{S:intro}}\quad

\begin{longtable}{@{}p{.29\textwidth}p{.65\textwidth}@{}}
$G$ & a connected finite graph allowing multiple edges and loops \\
$g$ & the genus (first Betti number) of $G$ \\
$V(G)$ & the vertex set of $G$ \\
$E(G)$ & the edge set of $G$ \\
$\vE(G)$ & the set of all oriented edges of $G$ \\
$\vE_{\mfrako}(G)$ & the set of all positively oriented edges with respect to an orientation $\mfrako$ of $G$ \\
$\va(0)$ and $\va(1)$ & the initial and terminal vertices of an oriented edge $\va$\\
$\va\shortrightarrow\vb$ & $\va$ feeds into $\vb$, meaning $\va(1)=\vb(0)$ and $\vb\neq\va^{-1}$ \\
$\Delta(0)$ and $\Delta(1)$ & the initial and terminal vertices of a walk $\Delta$\\
$\vDelta(0)$ and $\vDelta(1)$ & the initial and terminal oriented edges of an edge-walk $\vDelta$\\
$\tau(\Delta)$ & the length of $\Delta$ \\
$z_W(u)$ & the weighted Ihara zeta function; $z(u)=z_1(u)$ \\
$\mcalT(G)$ or $\mcalT$ & the tangent space of $G$ (elements being vector fields) \\
$\Omega(G)$ or $\Omega$ & the cotangent space of $G$ (elements being $1$-forms) \\
$\Delta^{\ab}$ & the abelianization of a walk $\Delta$ \\
$T(x,y)$ & the unique path in a spanning tree $T$ from $x$ to $y$ \\
$\chi_\omega$ & the unitary character associated to a $1$-form $\omega$ \\
$L_W(u,\chi_\omega)$ & the weighted L-function; $L(u,\chi_\omega)=L_1(u,\chi_\omega)$
\end{longtable}

\subsubsection*{Section~\ref{S:prelim}}\quad

\begin{longtable}{@{}p{.29\textwidth}p{.65\textwidth}@{}}
$\mcalH^1(G)$ & the space of harmonic $1$-forms on $G$ \\
$ \imag(d)$ & the space of exact $1$-forms on $G$ \\
$\omega=\phi_1(\omega)+\phi_2(\omega)$ & orthogonal decomposition of a $1$-form $\omega$, with $\phi_1(\omega)\in\mcalH^1(G)$ and $\phi_2(\omega)\in\imag(d)$ \\
$H_1(G,\mbbR)$ & the first real homology group of $G$ \\
$H_1(G,\mbbZ)$ & the first integral homology group of $G$, which is a full-rank lattice in $H_1(G,\mbbR)$ \\
$H_1(G,\mbbZ)^\vee$ & the dual of $H_1(G,\mbbZ)$, which is a full-rank lattice in $\mcalH^1(G)$  \\
$e(\cdot)$ & $e(x):=\exp\big(2\pi\sqrt{-1}x\big)$ \\
$\mcalX(G)$ or $\mcalX$ &  $\mcalH^1(G)/H_1(G,\mbbZ)^\vee$, the unitary character group of $G$   \\
$\mcalX_\mbbC(G)$ or $\mcalX_\mbbC$ & $\Hom(H_1(G,\mbbZ),\mbbC^\times)$, the full complex character torus of $G$ \\
$\underline{\omega}$ & $\phi_1(\omega)+H_1(G,\mbbZ)^\vee$, the projection of a $1$-form $\omega$ to $\mcalX$ \\
$\gamma$ & a complex gain representing a full character $\xi\in\mcalX_\mbbC(G)$ \\
$\Lambda$ & a full-rank sublattice of  $H_1(G,\mbbZ)$\\
$\Lambda^\vee$ & the dual lattice of $\Lambda$ \\
$\mcalX_\Lambda$ & $\mcalH^1(G)/\Lambda^\vee$ \\
$\dunderline{\omega}$ & $\phi_1(\omega)+\Lambda^\vee$, the  projection of a $1$-form $\omega$ to $\mcalX_\Lambda$ \\
$Q_\Lambda$ & $H_1(G,\mbbZ)/\Lambda$ \\
$\underline{\alpha}$ & $\alpha+\Lambda\in Q_\Lambda$ for $\alpha\in H_1(G,\mbbZ)$ \\
$\widehat{Q_\Lambda}$  & $\Lambda^\vee/H_1(G,\mbbZ)^\vee$ \\
$A$ & the adjacency matrix of $G$ \\
$B$ & the edge adjacency matrix of $G$ \\
$A_\omega$ & the adjacency matrix of $G$ twisted by $\chi_\omega$ \\
$B_{\omega}$ & the edge adjacency matrix of $G$ twisted by $\chi_\omega$ \\
$W$ & a positive directed weight $W:\vE(G)\to\mbbR_{>0}$ \\
$W\equiv 1$ & the trivial weight, satisfying $W(\ve)=1$ for all $\ve\in\vE(G)$ \\
$W(\Delta)$ & the product of the directed weights along a walk $\Delta$ \\
$W(\vDelta)$ & $W(\Delta)$, where $\Delta$ is the associated walk of the edge-walk $\vDelta$ \\
$W^r$ & the pointwise powered weight $W^r(\ve)=W(\ve)^r$ \\
special weights & the symmetric and reciprocal cases of $W$ \\
$A_{\omega,W}$ & the twisted weighted vertex adjacency matrix \\
$B_{\omega,W}$ & the twisted weighted edge adjacency matrix \\
$A_{\gamma,W}$, $B_{\gamma,W}$ & the twisted weighted matrices associated to a complex gain $\gamma$ \\
$\xi$ & a full character in $\mcalX_\mbbC(G)$; in the Floquet subsection, a unitary character in $\mcalX(G)$ \\
$\spec A_{\xi,W}$, $\spec B_{\xi,W}$ & the spectra associated to a full character $\xi$ \\
$\spec M$ & the spectrum of a square matrix $M$ 
\end{longtable}

\subsubsection*{Section~\ref{S:main}}\quad

\begin{longtable}{@{}p{.29\textwidth}p{.65\textwidth}@{}}
$\rho(M)$ & the spectral radius of a square matrix $M$ \\
$\theta$ & the canonical character of $G$ \\
$\eta$ & the period character of $G$, when normalized length parity descends to homology \\
$\varepsilon_G$ & the normalized length-parity homomorphism defining $\eta$ \\
$\omega^{\mathrm{per}}_{(T,\mfrako)}$ & the cotree-supported candidate in Algorithm~\ref{A:period-character-algorithm}; a period \\& $1$-form when the test succeeds \\
$\Sigma_{\operatorname{Floq}}(W)$ & the Floquet spectrum of the symmetric weighted vertex family \\
$\lambda_j(\xi)$ & the $j$-th band function over the character torus \\
$\nu_G$ & the nonbacktracking period, or period, of $G$ \\
$\mathcal E_G(\Lambda)$ & the weight-independent extremal characters in $\widehat{Q_\Lambda}$ for $B_{\omega,W}$ \\
$N(l)$ & the number of circuits of length $l$ \\
$N(\alpha,l)$ & the number of circuits $C$ of length $l$ such that $C^{\ab}=\alpha\in H_1(G,\mbbZ)$ \\
$N(\underline{\alpha},l)$ & the number of circuits $C$ of length $l$ such that $\underline{C^{\ab}}=\underline{\alpha}\in Q_\Lambda$ \\
$N_W(\alpha,l)$ & the weighted sum of circuits $C$ of length $l$ such that $C^{\ab}=\alpha$ \\
$N_W(\underline{\alpha},l)$ & the weighted sum of circuits $C$ of length $l$ such that $\underline{C^{\ab}}=\underline{\alpha}$ \\
$\mcalK(\omega,l)$ & the trace distribution function of order $l$ \\
$\mcalK_W(\omega,l)$ & the weighted trace distribution function $\operatorname{tr}(B_{\omega,W}^l)$ \\
$\pi(l)$ & the number of prime cycles of length $l$ \\
$\Pi(l)$ & the number of cycles of length $l$ \\
$\pi(\alpha,l)$ & the number of prime cycles $P$ of length $l$ such that $P^{\ab}=\alpha\in H_1(G,\mbbZ)$ \\
$\Pi(\alpha,l)$ & the number of cycles $C$ of length $l$ such that $C^{\ab}=\alpha\in H_1(G,\mbbZ)$ \\
$\pi(\underline{\alpha},l)$ & the number of prime cycles $P$ of length $l$ such that $\underline{P^{\ab}}=\underline{\alpha}\in Q_\Lambda$ \\
$\Pi(\underline{\alpha},l)$ & the number of cycles $C$ of length $l$ such that $\underline{C^{\ab}}=\underline{\alpha}\in Q_\Lambda$ \\
$\pi_W(\alpha,l)$ & the $W$-weighted sum of prime cycles in the class $\alpha$ \\
$\Pi_W(\alpha,l)$ & the $W$-weighted sum of cycles in the class $\alpha$ \\
$\pi_W(\underline{\alpha},l)$ & the $W$-weighted sum of prime cycles in the class $\underline{\alpha}$ \\
$\Pi_W(\underline{\alpha},l)$ & the $W$-weighted sum of cycles in the class $\underline{\alpha}$
\end{longtable}

\subsubsection*{Section~\ref{S:examples}}\quad

\begin{longtable}{@{}p{.29\textwidth}p{.65\textwidth}@{}}
$G_1,G_2,G_3$ & the three genus-$2$ graphs in Subsection~\ref{SS:example_spec} \\
$W^\star$ & the nontrivial positive symmetric weight used in each example \\
$W_{a,b}$ & the two-parameter symmetric weight on $K_4$
\end{longtable}

\subsubsection*{Section~\ref{S:proof-antisymmetry}}\quad

\begin{longtable}{@{}p{.29\textwidth}p{.65\textwidth}@{}}
$\omega_{\mfrako}$ & a canonical $1$-form associated with an orientation $\mfrako$, as in \\& Lemma~\ref{L:unique} and Definition~\ref{D:canonical-character}  \\
$\omega_{(T,\mfrako)}$ & a canonical $1$-form with respect to an orientation $\mfrako$ and a spanning\\& tree $T$, which is the output of Construction~\ref{Cs:CanoForm2} \\
 $\mcalX_{(T,\mfrako)}$ & $\{\sum_{i=1}^g\omega_id\ve_i\mid 0\leq \omega_i< 1\}$ where  $\{\ve_1,\cdots,\ve_g\}=\vE_{\mfrako}(G)\setminus T$
\end{longtable}

\section{Proofs of preliminary facts}
\label{B:prelim-proofs}
This appendix contains the proofs of the statements collected in Section~\ref{S:prelim}.

\begin{proof}[Proof of Lemma~\ref{L:harmonic}]
Since $\Delta\omega=dd^*\omega$, we have $\Delta\omega=0$ if and only if
$d^*\omega$ is constant.  On the other hand,
$\sum_{v\in V(G)}d^*\omega(v)=0$, so a constant function of the form
$d^*\omega$ must vanish.  Hence $\omega$ is harmonic if and only if
$d^*\omega=0$.  Writing
$\omega=\sum_{\ve\in\vE_{\mfrako}(G)}\omega_\ve\,d\ve$, the value
$d^*\omega(v)$ is the sum of the coefficients on edges entering $v$ minus
the sum of the coefficients on edges leaving $v$.  Thus $d^*\omega=0$ is
equivalent to the stated balance condition at every vertex.
\end{proof}

\begin{proof}[Proof of Proposition~\ref{P:hodge}]
Consider the function $d^*\omega$.  Since
$\sum_{v\in V(G)}d^*\omega(v)=0$, the equation
$\Delta f=d^*df=d^*\omega$ has a solution, unique up to addition of a
constant.  Indeed, because $G$ is connected, the Laplacian has rank $n-1$
and its kernel consists of the constant functions.  Then $df$ is exact and
$\omega-df$ is harmonic by Lemma~\ref{L:harmonic}.  Write
$df=\sum_{\ve\in\vE_{\mfrako}(G)}\omega'_\ve\,d\ve$ and
$\omega-df=\sum_{\ve\in\vE_{\mfrako}(G)}\omega''_\ve\,d\ve$.  Since
$\omega'_\ve=f(\ve(1))-f(\ve(0))$, we have

\begin{align*}
&\sum_{\ve\in \vE_{\mfrako}(G)}\omega'_\ve\omega''_\ve = \sum_{\ve\in \vE_{\mfrako}(G)}\Big(f(\ve(1))-f(\ve(0))\Big)\omega''_\ve \\
&= \sum_{v\in V(G)}f(v)\Bigg(\sum_{\ve\in \vE_{\mfrako}(G),\ve(1)=v}\omega''_\ve-\sum_{\ve\in \vE_{\mfrako}(G),\ve(0)=v}\omega''_\ve\Bigg)=0
\end{align*}
Finally, because $G$ is connected, $\ker(d)$ consists of the constant
functions, and hence
$$
\dim\operatorname{Im}(d)=\operatorname{rank}(d)=n-1.
$$
Since $d$ and its adjoint $d^*$ have the same rank and
$\mathcal H^1(G)=\ker(d^*)$, the rank--nullity theorem gives
$$
\dim\mathcal H^1(G)=m-\operatorname{rank}(d^*)
=m-(n-1)=g.
$$
Thus these dimensions add to $m=\dim C^1(G,\mathbb R)$, as required for
the orthogonal direct-sum decomposition above.
\end{proof}

\begin{proof}[Proof of Lemma~\ref{L:GraphHomology}]
Let $T$ be a spanning tree of $G$. Then $G\setminus T$ contains $g$ edges. Fix an orientation of $G$. We may label the positively oriented cotree edges as $\ve_1,\ldots,\ve_g$ and the positively oriented tree edges as $\ve_{g+1},\ldots,\ve_m$. Let $K$ be the free $R$-module on $\{\ve_1,\ldots,\ve_g\}$. Clearly $K\simeq R^g$, and we claim that $H_1(G,R)\simeq K$. Consider the restriction homomorphism $\rho:H_1(G,R)\to K$ sending $\sum_{i=1}^m c_i \ve_i\in H_1(G,R)$ to $\sum_{i=1}^g c_i \ve_i\in K$. For each $i=1,\ldots,g$, let
$$u_i=\ve_i+T(\ve_i(1),\ve_i(0))^{\ab}.$$
Then $u_i\in H_1(G,\mbbZ)$, and
$\rho(\sum_{i=1}^g c_iu_i)=\sum_{i=1}^g c_i\ve_i$, so $\rho$ is
surjective.  If an element of $\ker\rho$ is written as
$\sum_{i=g+1}^m c_i\ve_i$, it is a $1$-cycle supported entirely on the
tree $T$.  A tree has no nonzero $1$-cycles, so all $c_i$ vanish.  Hence
$\rho$ is injective.

(b) is straightforward to verify. 

For (c), consider the canonical homomorphism
$$
\phi:H_1(G,\mbbZ)\longrightarrow H_1(G,\mbbZ/t\mbbZ),
\qquad
\sum_{i=1}^m c_i\ve_i\longmapsto
\sum_{i=1}^m \overline{c}_i\ve_i,
$$
where $\overline{c}_i$ is the reduction of $c_i$ modulo $t$. Then
$\ker\phi=tH_1(G,\mbbZ)$, and it remains to prove surjectivity.
As in the proof of (a), let $T$ be a spanning tree whose positively oriented
cotree edges are $\ve_1,\ldots,\ve_g$.  The isomorphism in (a) identifies
$H_1(G,R)$ with the free $R$-module on these cotree edges.  We may therefore
lift an element of $H_1(G,\mbbZ/t\mbbZ)$ by lifting its $g$ cotree
coefficients from $\mbbZ/t\mbbZ$ to $\mbbZ$.  This proves that $\phi$ is
surjective.
\end{proof}

\begin{proof}[Proof of Proposition~\ref{P:basis_gen}]
Using the orthogonal decomposition in Proposition~\ref{P:hodge}, we have $d\ve_i = \phi_1(d\ve_i)+\phi_2(d\ve_i)$ where $\phi_1(d\ve_i)\in\mcalH^1(G)$ is a harmonic $1$-form and $\phi_2(d\ve_i)\in \imag(d)$ is an exact form. 

For a spanning tree $T$ of $G$, let
$$u_i=\ve_i+T(\ve_i(1),\ve_i(0))^{\ab},\qquad i=1,\ldots,g,$$
as in the proof of Lemma~\ref{L:GraphHomology}. Then $\{u_1,\ldots,u_g\}$ is a basis of $H_1(G,\mbbZ)$. Since no cotree edge $\ve_i$ occurs in the tree path $T(\ve_j(1),\ve_j(0))$, we have
$$d\ve_i(u_j)=d\ve_i(\ve_j)+d\ve_i\bigl(T(\ve_j(1),\ve_j(0))^{\ab}\bigr)=d\ve_i(\ve_j)=\delta_{ij}$$
for all $i,j=1,\cdots,g$. 

Note that $d\ve_i(u)=\phi_1(d\ve_i)(u)$ for all $u\in H_1(G,\mbbZ)$. This implies $\phi_1(d\ve_i)(u_j)=\delta_{ij}$, i.e., $\{\phi_1(d\ve_1), \cdots, \phi_1(d\ve_g)\}$ is a basis of $H_1(G,\mbbZ)^\vee$ as desired, which is the dual to the basis $\{u_1,\cdots,u_g\}$ of $H_1(G,\mbbZ)$.

It remains to show that
$H_1(G,\mbbZ)^\vee=\phi_1(C^1(G,\mbbZ))$.  The cotree forms considered
above already show that
$H_1(G,\mbbZ)^\vee\subseteq\phi_1(C^1(G,\mbbZ))$.  For the reverse
inclusion, it is enough to prove that
$\phi_1(d\ve)\in H_1(G,\mbbZ)^\vee$ for every oriented edge $\ve$.  If the
underlying edge $e$ is a bridge, then $d\ve$ is exact and
$\phi_1(d\ve)=0$.  Otherwise, there is a spanning tree not containing $e$;
using that tree makes $\ve$ a cotree edge, and the first part of the proof
shows that $\phi_1(d\ve)$ belongs to $H_1(G,\mbbZ)^\vee$.  Recall that $e$
is a bridge exactly when $G\setminus e$ is disconnected, equivalently when
$e$ belongs to every spanning tree.  This completes the proof.
\end{proof}

\begin{proof}[Proof of Lemma~\ref{L:PontrDual}]
We make the standard facts about Pontryagin duals explicit in our setting:
the Pontryagin dual of a finite-dimensional real vector space is a real
vector space of the same dimension, and the Pontryagin dual of a rank-$g$
lattice is a $g$-dimensional real torus.

First note that since the unit circle $S^1\simeq \mbbR/\mbbZ$ is an injective $\mbbZ$-module, the sequence $H_1(G,\mbbZ)\hookrightarrow H_1(G,\mbbR) \hookrightarrow \mcalT(G)$ induces the sequence on their Pontryagin duals $\widehat{\mcalT(G)} \twoheadrightarrow \widehat{H_1(G,\mbbR)} \twoheadrightarrow \widehat{H_1(G,\mbbZ)}$. 

We next show that the homomorphism
$\Omega(G)\to\widehat{\mcalT(G)}$ defined by
$\omega\mapsto\chi_\omega$ is an isomorphism.  If
$\omega=\sum_{i=1}^m\omega_i\,d\ve_i$ is nonzero, say
$\omega_1\neq0$, choose $c\in\mbbR$ such that $c\omega_1\notin\mbbZ$.
Then $\chi_\omega(c\ve_1)=e(c\omega_1)\neq1$, proving injectivity.
Conversely, after identifying $\mcalT(G)$ with $\mbbR^m$, every continuous
character of $\mbbR^m$ has the form
$$
(c_1,\ldots,c_m)\longmapsto
e\left(\sum_{i=1}^m\omega_i c_i\right)
$$
for uniquely determined $\omega_1,\ldots,\omega_m\in\mbbR$.  Taking
$\omega=\sum_{i=1}^m\omega_i\,d\ve_i$ proves surjectivity.

By Proposition~\ref{P:hodge}, $\omega=\phi_1(\omega)+\phi_2(\omega)$ where $\phi_1(\omega)\in \mcalH^1(G)$ and $\phi_2(\omega)\in \imag(d)$. Then $\chi_\omega|_{H_1(G,\mbbR)}=\chi_{\phi_1(\omega)}|_{H_1(G,\mbbR)}$, which induces a compatible isomorphism between $\mcalH^1(G)$ and $\widehat{H_1(G,\mbbR)}$. 

It remains to show that there is a compatible isomorphism between $\mcalX(G)$ and $\widehat{H_1(G,\mbbZ)}$. For $\omega,\omega'\in\Omega(G)$,   $\chi_{\omega}|_{H_1(G,\mbbZ)}=\chi_{\omega'}|_{H_1(G,\mbbZ)}$, if and only if $e(\omega(\alpha))=e(\omega'(\alpha))$ for all $\alpha\in H_1(G,\mbbZ)$, if and only if $\omega(\alpha)-\omega'(\alpha)\in \mbbZ$ for all $\alpha\in H_1(G,\mbbZ)$, if and only if $\phi_1(\omega)-\phi_1(\omega')\in H_1(G,\mbbZ)^\vee$. Thus, we have $\widehat{H_1(G,\mbbZ)}\simeq \mcalH^1(G)/H_1(G,\mbbZ)^\vee$. 
\end{proof}

\begin{proof}[Proof of Proposition~\ref{P:CharacterOrth}]
We identify the domain of integration $\mcalX$ with the fundamental domain of the lattice $H_1(G,\mbbZ)^\vee$ in $\mcalH^1(G)$. 

Let $\{u_1,\cdots,u_g\}$ be a basis of $H_1(G,\mbbZ)$ and $\{u^1,\cdots,u^g\}$ be the corresponding dual basis of $H_1(G,\mbbZ)^\vee$, i.e., $u^i(u_j)=\delta_{ij}$. Then we can write $\alpha=\sum_{i=1}^g \alpha_iu_i$, $\beta=\sum_{i=1}^g\beta_iu_i$ and $\omega=\sum_{i=1}^g \omega_iu^i$. 

Now $$\chi_\omega(\alpha)\overline{\chi_\omega(\beta)}=e\left(\sum_{i=1}^g\omega_i(\alpha_i-\beta_i)\right)$$ and $$dV_\omega = \sqrt{|\det((\langle u^i,u^j\rangle)_{i,j})|}d\omega_1\cdots d\omega_g.$$ Since the integration is over the fundamental domain of the lattice $H_1(G,\mbbZ)^\vee$, we have
\begin{align*}
&\int_{\mcalX}\chi_\omega(\alpha)\overline{\chi_\omega(\beta)}dV_\omega \\
&=\int_{\omega_1=0}^1\cdots \int_{\omega_g=0}^1e\left(\sum_{i=1}^g\omega_i(\alpha_i-\beta_i)\right) \sqrt{|\det((\langle u^i,u^j\rangle)_{i,j})|}d\omega_1\cdots d\omega_g  \\
&=  \vol(\mcalX) \delta_{\alpha\beta}.
\end{align*}
\end{proof}

\begin{proof}[Proof of Lemma~\ref{L:PontrDualLambda}]
Since the unit circle $S^1\simeq\mbbR/\mbbZ$ is an injective
$\mbbZ$-module, the exact sequence
$0\to\Lambda\hookrightarrow H_1(G,\mbbZ)\twoheadrightarrow Q_\Lambda\to0$
induces an exact sequence on Pontryagin duals,
$0\to\widehat{Q_\Lambda}\hookrightarrow
\widehat{H_1(G,\mbbZ)}\twoheadrightarrow\widehat{\Lambda}\to0$.
Since $\mcalX=\mcalH^1(G)/H_1(G,\mbbZ)^\vee$ and
$\mcalX_\Lambda=\mcalH^1(G)/\Lambda^\vee$, we also have the exact sequence
$0\to\Lambda^\vee/H_1(G,\mbbZ)^\vee\hookrightarrow
\mcalX\twoheadrightarrow\mcalX_\Lambda\to0$.

The isomorphism $\mcalX\simeq \widehat{H_1(G,\mbbZ)}$ defined by $\underline{\omega}\mapsto \chi_{\underline{\omega}}$ is the same as in Lemma~\ref{L:PontrDual}, which naturally induces compatible isomorphisms $\Lambda^\vee/H_1(G,\mbbZ)^\vee\simeq \widehat{Q_\Lambda}$ and $\mcalX_\Lambda\simeq \widehat{\Lambda}$. In particular, for each $\omega\in\Lambda^\vee$, $\chi_{\underline{\omega}}$ is a character of $H_1(G,\mbbZ)$ such that for each $\alpha,\alpha'\in H_1(G,\mbbZ)$ such that $\alpha-\alpha'\in \Lambda$, we have $\chi_{\underline{\omega}}(\alpha) = \chi_{\underline{\omega}}(\alpha') $. This means that $\chi_{\underline{\omega}}$ can be considered as a character of $Q_\Lambda$. Moreover, for $\omega,\omega'\in \Omega(G)$, $\chi_{\omega}|_\Lambda = \chi_{\omega'}|_\Lambda$ if and only if $\phi_1(\omega)-\phi_1(\omega')\in \Lambda^\vee$. 
\end{proof}

\begin{proof}[Proof of Proposition~\ref{P:CharacterOrthLambda}]
(a) follows by the same argument as Proposition~\ref{P:CharacterOrth}.

Since
$\sum_{\underline{\omega}\in\widehat{Q_\Lambda}}
\chi_{\underline{\omega}}(\alpha)
\overline{\chi_{\underline{\omega}}(\beta)}
=\sum_{\underline{\omega}\in\widehat{Q_\Lambda}}
\chi_{\underline{\omega}}(\alpha-\beta)$,
part (b) is the standard character-orthogonality relation for finite abelian
groups.
\end{proof}

\begin{proof}[Proof of Lemma~\ref{L:switching-similarity}]
Suppose that
$$\gamma'(\ve)=s(\ve(0))^{-1}\cdot\gamma(\ve)\cdot s(\ve(1)),$$
for some $s:V(G)\to\mbbC^\times$.  Let $D_s$ be the diagonal matrix indexed
by vertices with $(D_s)_{vv}=s(v)$.  For $v,w\in V(G)$, we have
\begin{align*}
\left(D_s^{-1}\cdot A_{\gamma,W}\cdot D_s\right)_{vw}
&=s(v)^{-1}\cdot
\sum_{\substack{\ve\in\vE(G)\\ \ve(0)=v,\ \ve(1)=w}}
W(\ve)\gamma(\ve)\cdot s(w)\\
&=\sum_{\substack{\ve\in\vE(G)\\ \ve(0)=v,\ \ve(1)=w}}
W(\ve)\,
s(\ve(0))^{-1}\cdot\gamma(\ve)\cdot s(\ve(1))\\
&=\sum_{\substack{\ve\in\vE(G)\\ \ve(0)=v,\ \ve(1)=w}}
W(\ve)\gamma'(\ve)
=\left(A_{\gamma',W}\right)_{vw}.
\end{align*}
Hence
$$A_{\gamma',W}=D_s^{-1}\cdot A_{\gamma,W}\cdot D_s.$$

Let $E_s$ be the diagonal matrix indexed by oriented edges with
$(E_s)_{\va\va}=s(\va(1))$.  If $\va\shortrightarrow\vb$, then
$\va(1)=\vb(0)$, and therefore
\begin{align*}
\left(E_s^{-1}\cdot B_{\gamma,W}\cdot E_s\right)_{\va\vb}
&=s(\va(1))^{-1}\cdot W(\vb)\gamma(\vb)\cdot s(\vb(1))\\
&=W(\vb)\,
s(\vb(0))^{-1}\cdot\gamma(\vb)\cdot s(\vb(1))\\
&=W(\vb)\gamma'(\vb)
=\left(B_{\gamma',W}\right)_{\va\vb}.
\end{align*}
If $\va\not\shortrightarrow\vb$, both entries are zero.  Thus
$$B_{\gamma',W}=E_s^{-1}\cdot B_{\gamma,W}\cdot E_s.$$
Since $s(v)\neq0$ for every vertex $v$, both $D_s$ and $E_s$ are invertible,
which proves the two similarity assertions.  If $|s(v)|=1$ for every $v$,
then both diagonal matrices are unitary, so the similarities are unitary.
\end{proof}

\begin{proof}[Proof of Lemma~\ref{L:PropAdj}]
For (a), since $W$ is real,
\begin{align*}
(A_{-\omega,W})_{vw}
&=\sum_{\substack{\ve\in\vE(G)\\ \ve(0)=v,\ \ve(1)=w}}W(\ve)e(-\omega(\ve))
=\overline{(A_{\omega,W})_{vw}},
\end{align*}
and similarly $B_{-\omega,W}=\overline{B_{\omega,W}}$.  Taking complex conjugates of characteristic polynomials gives the asserted conjugation of spectra.

For (b), assume $W$ is symmetric.  Then
\begin{align*}
(A_{-\omega,W})_{vw}
&=\sum_{\substack{\ve\in\vE(G)\\ \ve(0)=v,\ \ve(1)=w}}W(\ve)e(-\omega(\ve))  \\
&=\sum_{\substack{\ve\in\vE(G)\\ \ve(0)=w,\ \ve(1)=v}}W(\ve)e(\omega(\ve))
=(A_{\omega,W})_{wv}.
\end{align*}
Thus $\overline{A_{\omega,W}}=A_{-\omega,W}=A_{\omega,W}^T$, so $A_{\omega,W}$ is Hermitian and has real eigenvalues.  Since every matrix is similar to its transpose, $A_{\omega,W}$ and $A_{-\omega,W}$ are similar.

For (c), assume again that $W$ is symmetric.  Let $D_{\omega,W}$ be the diagonal matrix indexed by $\vE(G)$ whose $\ve$-diagonal entry is
$$\sqrt{W(\ve)}\,e(\omega(\ve)/2).$$
Then
$$B_{\omega,W}=B\cdot D_{\omega,W}^2,\qquad
C_{\omega,W}:=D_{\omega,W}\cdot B\cdot D_{\omega,W}
=D_{\omega,W}\cdot B_{\omega,W}\cdot D_{\omega,W}^{-1},$$
so $C_{\omega,W}$ is similar to $B_{\omega,W}$.  Let $R$ be the permutation matrix on oriented edges induced by $\ve\mapsto\ve^{-1}$.  Since $W$ is symmetric, $D_{-\omega,W}=R\cdot D_{\omega,W}\cdot R^{-1}$, and the unweighted edge adjacency matrix satisfies $R\cdot B\cdot R^{-1}=B^T$.  Therefore
$$C_{-\omega,W}=R\cdot C_{\omega,W}^T\cdot R^{-1}.$$
As every matrix is similar to its transpose, $C_{-\omega,W}$ is similar to $C_{\omega,W}$.  Hence $B_{-\omega,W}$ is similar to $B_{\omega,W}$.  Together with (a), this gives
$$\spec B_{\omega,W}=\spec B_{-\omega,W}=\overline{\spec B_{\omega,W}},$$
as desired.
\end{proof}

\begin{proof}[Proof of Lemma~\ref{L:LFuncExp}]
Since a finite graph has only finitely many closed walks of each length,
every coefficient below receives only finitely many contributions.  Thus the
Euler product, logarithmic expansion, and regrouping by prime powers are all
well-defined coefficientwise as formal power series.
\begin{align*}
L_W(u,\chi_\omega)
&=\prod_{[P]\in\bmcalP}\left(1-W(P)\chi_\omega(P)u^{\tau(P)}\right)^{-1} \\
&=\exp\left(\sum_{[P]\in\bmcalP}\sum_{n=1}^\infty
\frac{W(P)^n\chi_\omega(P)^n u^{n\tau(P)}}{n}\right) \\
&=\exp\left(\sum_{[C]\in\bmcalC}
\frac{W(C)\chi_\omega(C)}{r(C)}u^{\tau(C)}\right).
\end{align*}
Setting $\omega=0$ gives the formula for $z_W(u)$.
\end{proof}

\begin{proof}[Proof of Lemma~\ref{L:L-symmetry}]
Assume that $W$ is symmetric.  Since $[P]\in\bmcalP$ if and only if $[P^{-1}]\in\bmcalP$, and since $W(P^{-1})=W(P)$, we have
\begin{align*}
L_W(u,\chi_\omega)
&=\prod_{[P^{-1}]\in\bmcalP}
\left(1-W(P^{-1})\chi_\omega(P^{-1})u^{\tau(P^{-1})}\right)^{-1} \\
&=\prod_{[P]\in\bmcalP}
\left(1-W(P)\chi_{-\omega}(P)u^{\tau(P)}\right)^{-1} \\
&=L_W(u,\chi_{-\omega}).
\end{align*}
\end{proof}

\begin{proof}[Proof of Proposition~\ref{P:LFuncIden}]
The class $\underline{\omega}$ is trivial if and only if $\chi_\omega$ is trivial on $H_1(G,\mbbZ)$.  Since the abelianizations of circuits generate $H_1(G,\mbbZ)$, this is equivalent to
$$\chi_\omega(C)=1\qquad\text{for every circuit }C.$$
Thus $\underline{\omega}=0$ immediately implies $L_W(u,\chi_\omega)=z_W(u)$.

Conversely, assume $L_W(u,\chi_\omega)=z_W(u)$.  Comparing the real parts of the coefficient of $u^l$ in their logarithms, using Lemma~\ref{L:LFuncExp}, gives
$$\sum_{\substack{[C]\in\bmcalC\\ \tau(C)=l}}
\frac{W(C)}{r(C)}\bigl(1-\operatorname{Re}\chi_\omega(C)\bigr)=0.$$
Every summand is nonnegative because $W(C)>0$ and $|\chi_\omega(C)|=1$.  Hence $\chi_\omega(C)=1$ for every circuit $C$, and therefore $\underline{\omega}=0$.  The remaining equivalent descriptions in (a) follow from the definition of $\mcalX(G)$ and the Hodge decomposition.

For (b), if $\underline{\omega}=\underline{\omega'}$, then $\chi_\omega$ and $\chi_{\omega'}$ agree on $H_1(G,\mbbZ)$ and hence on every prime circuit.  Their weighted Euler products are therefore equal.
\end{proof}

\begin{proof}[Proof of Theorem~\ref{T:DertminantL}]
For (a), the standard formal identity
$$-\log\det(I-uB_{\omega,W})
=\sum_{l\geq1}\frac{\operatorname{tr}(B_{\omega,W}^l)}{l}u^l$$
applies.  The trace is the sum over closed nonbacktracking edge-walks of length $l$, with a walk $C$ contributing $W(C)\chi_\omega(C)$.  A cycle $[C]$ of length $l$ has $l/r(C)$ distinct choices of initial oriented edge.  Hence
$$\operatorname{tr}(B_{\omega,W}^l)
=\sum_{\substack{[C]\in\bmcalC\\ \tau(C)=l}}
\frac{l}{r(C)}W(C)\chi_\omega(C).$$
Together with Lemma~\ref{L:LFuncExp}, this proves
$$L_W(u,\chi_\omega)^{-1}=\det(I-uB_{\omega,W}).$$
The characteristic-polynomial identity follows immediately from the fact that $B_{\omega,W}$ has size $2m$.

For (b), let $O$ and $H$ be the $n\times2m$ origin- and terminal-incidence matrices, respectively:
$$O_{v\ve}=\begin{cases}1,&\ve(0)=v,\\0,&\text{otherwise,}\end{cases}
\qquad
H_{v\ve}=\begin{cases}1,&\ve(1)=v,\\0,&\text{otherwise.}\end{cases}$$
Let $R$ be the reversal permutation matrix on $\vE(G)$, and put
$$Z:=\operatorname{diag}\bigl(W(\ve)\chi_\omega(\ve)\bigr)_{\ve\in\vE(G)}.$$
The transition convention in Definition~\ref{D:weighted-adj} gives
$$B_{\omega,W}=(H^T\cdot O-R)\cdot Z.$$
Set $M:=I+uR\cdot Z$.  Then
$$I-uB_{\omega,W}=M-uH^T\cdot O\cdot Z,$$
and the matrix determinant lemma yields
\begin{align*}
\det(I-uB_{\omega,W})
&=\det(M)\det\bigl(I-uO\cdot Z\cdot M^{-1}\cdot H^T\bigr).
\end{align*}

Order the oriented edges in inverse pairs.  On the pair $\{\ve,\ve^{-1}\}$, the corresponding block of $M$ is
$$\begin{pmatrix}
1&uW(\ve^{-1})\chi_\omega(\ve^{-1})\\
uW(\ve)\chi_\omega(\ve)&1
\end{pmatrix},$$
whose determinant is $1-u^2r_W(\ve)$.  It follows that
$$\det(M)=\prod_{\ve\in\vE_{\mfrako}(G)}\bigl(1-u^2r_W(\ve)\bigr).$$
Inverting these $2\times2$ blocks and multiplying by the incidence matrices gives
$$I-uO\cdot Z\cdot M^{-1}\cdot H^T
=I-\mathsf A_{\omega,W}(u)+\mathsf D_W(u).$$
This proves the general vertex determinant formula.

If $W$ is reciprocal, then $r_W(\ve)=1$ for every oriented edge, and therefore
$$\mathsf A_{\omega,W}(u)=\frac{u}{1-u^2}A_{\omega,W},
\qquad
\mathsf D_W(u)=\frac{u^2}{1-u^2}(Q+I).$$
Since $m-n=g-1$, substitution into the general formula gives
$$\det(I-uB_{\omega,W})
=(1-u^2)^{g-1}\det\bigl(I-uA_{\omega,W}+u^2Q\bigr),$$
as claimed.
\end{proof}

\section{Regular graph specializations}
\label{C:regular-specializations}

For a regular graph, Theorem~\ref{T:SpecDuality} and the twisted
Ihara--Bass formula give the following concrete description of the
unweighted twisted vertex and edge spectra.

\begin{theorem}\label{T:SpecDualityReg}
Let $G$ be a connected $(q+1)$-regular graph of genus $g$, and let $\theta$
be its canonical character.  For $\omega\in\Omega(G)$,
$$
\rho(A_\omega)\leq q+1,
\qquad
\rho(B_\omega)\leq q.
$$
More precisely:
\begin{enumerate}[(a)]
\item if $\underline{\omega}=0$, then $q+1\in \spec A_\omega$ and
$q\in \spec B_\omega$, both with multiplicity $1$;
\item if $\underline{\omega}=\theta$, then $-(q+1)\in \spec A_\omega$ and
$-q\in \spec B_\omega$, both with multiplicity $1$;
\item if $\underline{\omega}\notin\{0,\theta\}$, then
$\rho(A_\omega)<q+1$, and if in addition $g\geq2$, then
$\rho(B_\omega)<q$;
\item if $G$ is bipartite, then $\theta=0$; if in addition $g\geq2$, then
the period character does not exist, and $|\lambda|<q$ for every
$\lambda\in\spec B_\omega\setminus\{\pm q\}$ with
$\underline{\omega}=0$;
\item if $G$ is non-bipartite, then $\theta\neq0$, and:
\begin{enumerate}[(i)]
\item if $\underline{\omega}=0$, then $-(q+1)\notin\spec A_\omega$; if in
addition $g\geq2$, then $|\lambda|<q$ for every
$\lambda\in\spec B_\omega\setminus\{q\}$;
\item if $\underline{\omega}=\theta$, then $q+1\notin\spec A_\omega$; if
in addition $g\geq2$, then $|\lambda|<q$ for every
$\lambda\in\spec B_\omega\setminus\{-q\}$.
\end{enumerate}
\end{enumerate}
\end{theorem}

\begin{proof}
We first consider the twisted adjacency matrices $A_\omega$.  By
Lemma~\ref{L:PropAdj}(b), $A_\omega$ is Hermitian.  Since $G$ is connected
and $(q+1)$-regular, the standard Perron--Frobenius facts give
\begin{enumerate}[(1)]
\item $\rho(A)=q+1$;
\item $q+1$ is an eigenvalue of $A$ of multiplicity $1$;
\item $-(q+1)\in\spec A$ if and only if $G$ is bipartite.
\end{enumerate}

Choose $\omega,\omega'\in\Omega(G)$ with $\underline{\omega}=0$ and
$\underline{\omega'}=\theta$.  Since
$\underline{\omega+\omega'}=\theta$, Theorem~\ref{T:SpecDuality}(SA1)
gives
$$
\spec A_\omega=\spec A=-\spec A_{\omega'}.
$$
It follows that $q+1\in\spec A_\omega$ and
$-(q+1)\in\spec A_{\omega'}$, both with multiplicity $1$.  Moreover,
$-(q+1)\in\spec A_\omega$ if and only if
$q+1\in\spec A_{\omega'}$, if and only if $G$ is bipartite, if and only if
$\theta=0$ by Theorem~\ref{T:SpecDuality}(SA3).  Theorem~
\ref{T:SpecDuality}(SR2) also gives
$\rho(A_\omega)<q+1$ whenever
$\underline{\omega}\notin\{0,\theta\}$.

We next consider $B_\omega$.  Since $G$ is $(q+1)$-regular, $Q=qI_n$.
The twisted Ihara--Bass formula in Theorem~\ref{T:DertminantL}(b) gives
$$
\prod_{\kappa\in\spec B_\omega}(1-\kappa u)
=(1-u^2)^{g-1}
\prod_{\lambda\in\spec A_\omega}(1-\lambda u+qu^2).
$$
Write
$$
(1-\kappa_1u)(1-\kappa_2u)=1-\lambda u+qu^2,
\qquad |\kappa_1|\geq|\kappa_2|.
$$
The quadratic formula gives:
\begin{enumerate}[(1)]
\item if $\lambda=q+1$, then $\kappa_1=q$ and $\kappa_2=1$;
\item if $\lambda=-(q+1)$, then $\kappa_1=-q$ and $\kappa_2=-1$;
\item if $2\sqrt q<|\lambda|<q+1$, then
$$
\kappa_{1,2}=\frac{2q}{\lambda\pm\sqrt{\lambda^2-4q}},
$$
with $|\kappa_1|\in(\sqrt q,q)$ and
$|\kappa_2|\in(1,\sqrt q)$;
\item if $|\lambda|\leq2\sqrt q$, then
$$
\kappa_{1,2}
=\frac{2q}{\lambda\pm\sqrt{4q-\lambda^2}\sqrt{-1}},
$$
and $|\kappa_1|=|\kappa_2|=\sqrt q$.
\end{enumerate}
Therefore $q\in\spec B_\omega$ with multiplicity $1$ whenever
$q+1\in\spec A_\omega$ with multiplicity $1$, and similarly
$-q\in\spec B_\omega$ with multiplicity $1$ whenever
$-(q+1)\in\spec A_\omega$ with multiplicity $1$.  If
$\rho(A_\omega)<q+1$ and $g\geq2$, then $\rho(B_\omega)<q$.  The asserted
edge-spectral properties now follow from those of $A_\omega$.

Finally, suppose that $G$ is bipartite and $g\geq2$.  If the period
character $\eta$ existed, then Theorem~\ref{T:SpecDuality}(PC1) would give
$\eta\neq0$, whereas (PC2) would give
$\rho(B_\eta)=\rho(B)=q$.  This contradicts part~(c), so the period
character does not exist.
\end{proof}

The regular bipartite case of Corollary~
\ref{C:weighted-t-circulations} consequently takes a particularly simple
unweighted form.

\begin{corollary}[\textbf{Unweighted $t$-circulations on regular bipartite
graphs}]\label{C:regular-t-circulations}
Let $G$ be a connected bipartite $(q+1)$-regular graph of genus $g\geq2$,
let $t\geq1$, and let
$\underline{\alpha}\in H_1(G,\mbbZ/t\mbbZ)$.  Then the period character
does not exist and, along even integers $l$,
$$
\pi(\underline{\alpha},l)
\sim\Pi(\underline{\alpha},l)
\sim\frac{N(\underline{\alpha},l)}l
\sim\frac{2q^l}{lt^g}.
$$
\end{corollary}

\begin{proof}
Theorem~\ref{T:SpecDualityReg}(d) shows that the period character does not
exist.  Moreover, $\nu_G=2$ and $\rho(B)=q$.  Corollary~
\ref{C:weighted-t-circulations}(a)--(b), specialized to $W\equiv1$, now
gives the result for every $t$.
\end{proof}

\section{Supplementary cotree arguments}
\label{D:supplementary-cotree}

This appendix develops a cotree viewpoint on equality in the weighted edge
spectral-radius bound.  Subsection~\ref{D:cotree-strictness} gives a direct
combinatorial comparison of equal-length nonbacktracking edge-walks.  Its
conclusion is weaker than the complete bipartite classification in
Theorem~\ref{T:SpecDuality}(SR5), but the comparison exposes arithmetic
information that the classification itself does not display.  In
Subsection~\ref{SS:2adic-refinement} returns to the bipartite Case~(4) of
that comparison and extracts a finite $2$-adic obstruction to the existence
of the period character.

\subsection{A cotree argument for edge spectral-radius rigidity}
\label{D:cotree-strictness}

\begin{proposition}[Cotree rigidity for extremal edge twists]
\label{P:cotree-edge-rigidity}
Let $G$ be a connected graph of genus $g\geq2$, let $W$ be an arbitrary
positive directed weight, and let $\underline{\omega}\in\mcalX(G)$.  If
$$
\rho(B_{\omega,W})=\rho(B_{0,W}),
$$
then $\underline{\omega}$ is a $2$-torsion character.  If, in addition, $G$
is non-bipartite, then
$$
\underline{\omega}\in\{0,\theta\}.
$$
\end{proposition}

For bipartite graphs, Proposition~\ref{P:cotree-edge-rigidity} is weaker than
Theorem~\ref{T:SpecDuality}(SR5): it leaves all $2$-torsion characters as
possible equality points, whereas (SR5) leaves only $0$ and, when it exists,
the period character $\eta$.  Its value is the purely combinatorial cotree
proof, whose explicit equal-length comparisons lead to the local and
$2$-adic obstructions in Subsection~\ref{SS:2adic-refinement}.

\CotreeStrictnessProof

\subsection{A \texorpdfstring{$2$-adic}{2-adic} cotree obstruction in the bipartite case}
\label{SS:2adic-refinement}

Assume throughout this subsection that $G$ is bipartite and has genus
$g\geq2$.  Then every
fundamental cycle has even length, so every pair of cotree edges falls under
Case~(4) in the proof of Proposition~\ref{P:cotree-edge-rigidity}; ordinary
parity gives no restriction.  The exact $2$-adic valuations retain useful
information.

Fix an orientation $\mfrako$ and a spanning tree $T$, and write
$\ve_1,\ldots,\ve_g$ for the positively oriented cotree edges.  Let
$$
C_i:=\ve_i\cdot T(\ve_i(1),\ve_i(0)),
\qquad
\tau_i:=\tau(C_i).
$$
For each pair $i<j$, replace $\ve_i$ or $\ve_j$ by its inverse when
necessary, as in the proof of Proposition~\ref{P:cotree-edge-rigidity}, and
form the closed nonbacktracking walk
$$
D_{ij}:=\ve_i\cdot T(\ve_i(1),\ve_j(1))
\cdot\ve_j^{-1}\cdot T(\ve_j(0),\ve_i(0)).
$$
Put $\tau_{ij}:=\tau(D_{ij})$ and, for convenience,
$\tau_{ji}:=\tau_{ij}$.  Thus $D_{ij}^{\ab}=C_i^{\ab}\pm C_j^{\ab}$;
the sign is immaterial for a $2$-torsion character.

\begin{algorithm}[Combined $2$-adic cotree obstruction]
\label{A:2adic-cotree-obstruction}
\textbf{Input.}\quad The period $\nu_G$ and the lengths
$\tau_i,\tau_{ij}$ above.

\smallskip
\noindent\textbf{Step 1: normalized-parity compatibility.}\quad
For every pair $i<j$, test
\begin{equation}\label{E:2adic-pair-compatibility}
\frac{\tau_{ij}}{\nu_G}
\equiv
\frac{\tau_i}{\nu_G}+\frac{\tau_j}{\nu_G}
\pmod2.
\end{equation}
Record every pair for which this congruence fails.

\smallskip
\noindent\textbf{Step 2: forced-sign propagation.}\quad
Initialize a marking vector
$$
\mathbf m=(m_1,\ldots,m_g)=(0,\ldots,0).
$$
Here $m_i=1$ records that the cotree comparison forces the character value on
$\ve_i$ to be $+1$.  For every ordered pair $i\neq j$, apply the following
rules:
\begin{enumerate}[(a)]
\item if $v_2(\tau_i)>v_2(\tau_{ij})$, set $m_i:=1$;
\item if $v_2(\tau_i)=v_2(\tau_{ij})$, set $m_j:=1$;
\item if $v_2(\tau_i)<v_2(\tau_{ij})$, record the relation $i\sim j$.
\end{enumerate}
For every recorded relation $i\sim j$, whenever one of $m_i,m_j$ is $1$,
set the other equal to $1$.  Repeat this propagation until no entry changes.

\smallskip
\noindent\textbf{Decision and output.}\quad
The algorithm has two possible outputs.  It certifies that the period
character does not exist if at least one of the following three obstruction
certificates is obtained:
\begin{enumerate}[(i)]
\item a pair tested in Step~1 fails
      \eqref{E:2adic-pair-compatibility};
\item the stabilized marking vector has an index $i$ such that $m_i=1$ and
      $$
      \frac{\tau_i}{\nu_G}\equiv1\pmod2;
      $$
\item the stabilized marking vector is $\mathbf m=(1,\ldots,1)$.
\end{enumerate}
If none of these certificates is obtained, the algorithm reports that the
test is \emph{inconclusive}.
\end{algorithm}

We explain why the algorithm is correct.  If the period character $\eta$
exists, then
$$
\eta(C_i^{\ab})=(-1)^{\tau_i/\nu_G},
\qquad
\eta(D_{ij}^{\ab})=(-1)^{\tau_{ij}/\nu_G}.
$$
Since $D_{ij}^{\ab}=C_i^{\ab}\pm C_j^{\ab}$ and $\eta$ is a $2$-torsion
character, multiplicativity gives exactly
\eqref{E:2adic-pair-compatibility}.

For Step~2, write $s_i$ for the character value on $\ve_i$.  The equal-length
comparison in the proof of Proposition~\ref{P:cotree-edge-rigidity} gives
$$
s_i^k=s_i^{k'}s_j^{k'},
\qquad
k=\frac{\tau_{ij}}{\gcd(\tau_i,\tau_{ij})},
\qquad
k'=\frac{\tau_i}{\gcd(\tau_i,\tau_{ij})}.
$$
If $v_2(\tau_i)>v_2(\tau_{ij})$, then $k$ is odd and $k'$ is even, so
$s_i=1$.  If the two valuations are equal, both exponents are odd, so
$s_j=1$.  If $v_2(\tau_i)<v_2(\tau_{ij})$, then $k$ is even and $k'$ is
odd, so $s_i=s_j$.  These are precisely the marking and propagation rules.
The invariant $m_i=1\Rightarrow s_i=1$ is therefore preserved throughout.
The propagation terminates after finitely many updates because each of the
$g$ entries can change only once, from $0$ to $1$.
Certificate~(ii) detects a contradiction between this forced value and
$\eta(C_i^{\ab})=-1$.  Under certificate~(iii), all cotree values are
$+1$, so the resulting character is trivial, whereas the period character,
when it exists in the present bipartite setting, is nontrivial.

For the bipartite theta graphs in Subsection~\ref{SS:example_spec}, this
obstruction separates the two examples immediately.  For $G_2$, one may take
$$
\bigl(\tau_1,\tau_2,\tau_{12}\bigr)=(4,6,8),
\qquad \nu_{G_2}=2.
$$
For $G_2$, the compatibility test in Step~1 produces certificate~(i), so the
period character cannot exist.  For $G_3$, Step~1 passes and Step~2 marks only
the index whose normalized length is even; here
$(\tau_1,\tau_2,\tau_{12})=(4,6,6)$.  Hence none of the three obstruction
conditions is met.  The combined algorithm is still only an
obstruction strategy:
Algorithm~\ref{A:period-character-algorithm} remains the complete decision
procedure.

\section{Weighted transform formulas}
\label{E:weighted-transforms}
Fix a positive directed weight $W$ and a full-rank sublattice
$\Lambda\subset H_1(G,\mbbZ)$.  For $\omega\in\Omega(G)$ and
$\alpha\in H_1(G,\mbbZ)$, retain the notation
$\underline{\omega}\in\mcalX$, $\dunderline{\omega}\in\mcalX_\Lambda$, and
$\underline{\alpha}=\alpha+\Lambda\in Q_\Lambda$.

Define the homology-refined weighted zeta functions by
\begin{align*}
z_{W,\alpha}(u)&:=\exp\left(
\sum_{\substack{[C]\in\bmcalC\\C^{\ab}=\alpha}}
\frac{W(C)}{r(C)}u^{\tau(C)}\right),\\
z_{W,\underline{\alpha}}(u)&:=\exp\left(
\sum_{\substack{[C]\in\bmcalC\\
\underline{C^{\ab}}=\underline{\alpha}}}
\frac{W(C)}{r(C)}u^{\tau(C)}\right).
\end{align*}
For $\alpha\in H_1(G,\mbbZ)$ and
$\dunderline{\omega}\in\mcalX_\Lambda$, define
\begin{align*}
L_{W,(\Lambda,\alpha)}(u,\chi_\omega):=
\exp\left(
\sum_{\substack{[C]\in\bmcalC\\C^{\ab}-\alpha\in\Lambda}}
\frac{W(C)\chi_\omega(C^{\ab}-\alpha)}{r(C)}u^{\tau(C)}
\right).
\end{align*}
Finally, put
$$
z_{W,\Lambda}(u):=z_{W,\underline0}(u),\qquad
L_{W,\Lambda}(u,\chi_\omega):=L_{W,(\Lambda,0)}(u,\chi_\omega).
$$
If $\alpha-\alpha'\in\Lambda$, then
\begin{align*}
\log L_{W,(\Lambda,\alpha)}(u,\chi_\omega)
=\chi_\omega(\alpha'-\alpha)
\log L_{W,(\Lambda,\alpha')}(u,\chi_\omega).
\end{align*}
When $W\equiv1$, we omit $W$ from all the notation above.

The cycle expansion and Definition~\ref{D:weighted-circuit-sums} give
\begin{align}
\log z_W(u)&=\sum_{l\geq1}\frac{N_W(l)}l u^l,\nonumber\\
\log z_{W,\alpha}(u)&=
\sum_{l\geq1}\frac{N_W(\alpha,l)}l u^l,
&
\log z_{W,\underline{\alpha}}(u)&=
\sum_{l\geq1}\frac{N_W(\underline{\alpha},l)}l u^l.
\label{E:weighted-refined-zeta}
\end{align}

\begin{proposition}[Weighted zeta and $L$-transforms]\label{T:LTransform}
The following identities hold as formal power series in $u$.
\begin{enumerate}[(a)]
\item For $\dunderline{\omega}\in\mcalX_\Lambda$ and $\alpha\in\Lambda$,
\begin{align*}
\log L_{W,\Lambda}(u,\chi_\omega)
&=\sum_{\beta\in\Lambda}\chi_\omega(\beta)
\log z_{W,\beta}(u),\\
\log z_{W,\alpha}(u)
&=\frac1{\vol(\mcalX_\Lambda)}
\int_{\mcalX_\Lambda}\chi_{-\omega}(\alpha)
\log L_{W,\Lambda}(u,\chi_\omega)dV_\omega.
\end{align*}
In particular,
$\log z_{W,\Lambda}(u)=\sum_{\beta\in\Lambda}\log z_{W,\beta}(u)$.

\item For $\underline{\omega}\in\mcalX$ and
$\alpha\in H_1(G,\mbbZ)$,
\begin{align*}
\log L_W(u,\chi_\omega)
&=\sum_{\beta\in H_1(G,\mbbZ)}\chi_\omega(\beta)
\log z_{W,\beta}(u),\\
\log z_{W,\alpha}(u)
&=\frac1{\vol(\mcalX)}
\int_{\mcalX}\chi_{-\omega}(\alpha)
\log L_W(u,\chi_\omega)dV_\omega.
\end{align*}
In particular,
$\log z_W(u)=\sum_{\beta\in H_1(G,\mbbZ)}\log z_{W,\beta}(u)$.

\item For $\dunderline{\omega}\in\mcalX_\Lambda$,
$\underline{\omega'}\in\widehat{Q_\Lambda}$, and
$\alpha\in H_1(G,\mbbZ)$,
\begin{align*}
\log L_W(u,\chi_{\omega+\omega'})
&=\sum_{\underline{\beta}\in Q_\Lambda}
\chi_{\omega+\omega'}(\beta)
\log L_{W,(\Lambda,\beta)}(u,\chi_\omega),\\
\log L_{W,(\Lambda,\alpha)}(u,\chi_\omega)
&=\frac1{|Q_\Lambda|}
\sum_{\underline{\omega'}\in\widehat{Q_\Lambda}}
\chi_{-(\omega+\omega')}(\alpha)
\log L_W(u,\chi_{\omega+\omega'}).
\end{align*}

\item For $\underline{\omega}\in\widehat{Q_\Lambda}$ and
$\underline{\alpha}\in Q_\Lambda$,
\begin{align*}
\log L_W(u,\chi_\omega)
&=\sum_{\underline{\beta}\in Q_\Lambda}
\chi_{\underline{\omega}}(\underline{\beta})
\log z_{W,\underline{\beta}}(u),\\
\log z_{W,\underline{\alpha}}(u)
&=\frac1{|Q_\Lambda|}
\sum_{\underline{\omega}\in\widehat{Q_\Lambda}}
\chi_{-\underline{\omega}}(\underline{\alpha})
\log L_W(u,\chi_\omega).
\end{align*}

\item The specializations at the zero class are
\begin{align*}
\log L_W(u,\chi_\omega)
&=\sum_{\underline{\alpha}\in Q_\Lambda}
\chi_\omega(\alpha)
\log L_{W,(\Lambda,\alpha)}(u,\chi_\omega),\\
\log L_{W,\Lambda}(u,\chi_\omega)
&=\frac1{|Q_\Lambda|}
\sum_{\underline{\omega'}\in\widehat{Q_\Lambda}}
\log L_W(u,\chi_{\omega+\omega'}),\\
\log z_W(u)&=\sum_{\underline{\alpha}\in Q_\Lambda}
\log z_{W,\underline{\alpha}}(u),\\
\log z_{W,\Lambda}(u)&=\frac1{|Q_\Lambda|}
\sum_{\underline{\omega}\in\widehat{Q_\Lambda}}
\log L_W(u,\chi_\omega).
\end{align*}
\end{enumerate}
\end{proposition}

\begin{proof}
All identities in this proof are identities of formal power series.  For
each power of $u$, only finitely many cycles contribute, so the groupings and
the passage of the resulting coefficientwise finite sums through character
integrals are legitimate.
Part (a), first identity, and part (c), first identity, follow by grouping the
weighted cycle expansion in Lemma~\ref{L:LFuncExp} by the relevant homology
class.  For the inverse formula in (a), character orthogonality gives
\begin{align*}
&\frac1{\vol(\mcalX_\Lambda)}
\int_{\mcalX_\Lambda}\chi_{-\omega}(\alpha)
\log L_{W,\Lambda}(u,\chi_\omega)dV_\omega\\
&\quad=\sum_{\substack{[C]\in\bmcalC\\C^{\ab}\in\Lambda}}
\frac{W(C)u^{\tau(C)}}{r(C)\vol(\mcalX_\Lambda)}
\int_{\mcalX_\Lambda}\chi_\omega(C^{\ab}-\alpha)dV_\omega\\
&\quad=\sum_{\substack{[C]\in\bmcalC\\C^{\ab}=\alpha}}
\frac{W(C)}{r(C)}u^{\tau(C)}
=\log z_{W,\alpha}(u).
\end{align*}
Part (b) is the case $\Lambda=H_1(G,\mbbZ)$.

For the inverse formula in (c), finite character orthogonality gives
\begin{align*}
&\frac1{|Q_\Lambda|}
\sum_{\underline{\omega'}\in\widehat{Q_\Lambda}}
\chi_{-(\omega+\omega')}(\alpha)
\log L_W(u,\chi_{\omega+\omega'})\\
&\quad=\sum_{[C]\in\bmcalC}
\frac{W(C)\chi_\omega(C^{\ab}-\alpha)}{r(C)}u^{\tau(C)}
\left(\frac1{|Q_\Lambda|}
\sum_{\underline{\omega'}\in\widehat{Q_\Lambda}}
\chi_{\omega'}(C^{\ab}-\alpha)\right)\\
&\quad=\sum_{\substack{[C]\in\bmcalC\\
C^{\ab}-\alpha\in\Lambda}}
\frac{W(C)\chi_\omega(C^{\ab}-\alpha)}{r(C)}u^{\tau(C)}
=\log L_{W,(\Lambda,\alpha)}(u,\chi_\omega).
\end{align*}
Part (d) is the specialization of (c) to
$\underline{\omega}\in\widehat{Q_\Lambda}$.  The identities in (e) follow
from (c) and (d) by taking the indicated zero character or zero homology
class.
\end{proof}

We finish by deriving Theorem~\ref{T:Fourier} from the weighted transforms,
without first using the circuit expansion of the matrix trace.
\begin{proof}[Alternative proof of Theorem~\ref{T:Fourier}]
The manipulations below are coefficientwise: at each power of $u$, only
finitely many circuits occur.
By \eqref{E:weighted-refined-zeta} and Proposition~\ref{T:LTransform}(b),
\begin{align*}
\sum_{l\geq1}N_W(\alpha,l)u^l
&=u\frac{d}{du}\log z_{W,\alpha}(u)\\
&=\frac1{\vol(\mcalX)}\int_{\mcalX}
\chi_{-\omega}(\alpha)
u\frac{d}{du}\log L_W(u,\chi_\omega)dV_\omega.
\end{align*}
The weighted determinant formula in Theorem~\ref{T:DertminantL}(a) gives
\begin{align*}
u\frac{d}{du}\log L_W(u,\chi_\omega)
&=\sum_{l\geq1}
\left(\sum_{\lambda\in\spec B_{\omega,W}}\lambda^l\right)u^l
=\sum_{l\geq1}\mcalK_W(\omega,l)u^l.
\end{align*}
Comparing coefficients proves the continuous inverse transform
\begin{align*}
N_W(\alpha,l)=\frac1{\vol(\mcalX)}
\int_{\mcalX}\chi_{-\omega}(\alpha)
\mcalK_W(\omega,l)dV_\omega.
\end{align*}
Fourier inversion on the compact torus then gives
\begin{align*}
\mcalK_W(\omega,l)=
\sum_{\alpha\in H_1(G,\mbbZ)}
\chi_\omega(\alpha)N_W(\alpha,l).
\end{align*}

Similarly, \eqref{E:weighted-refined-zeta} and
Proposition~\ref{T:LTransform}(d) give
\begin{align*}
N_W(\underline{\alpha},l)
&=\frac1{|Q_\Lambda|}
\sum_{\underline{\omega}\in\widehat{Q_\Lambda}}
\chi_{-\underline{\omega}}(\underline{\alpha})
\mcalK_W(\underline{\omega},l).
\end{align*}
Finite Fourier inversion yields
\begin{align*}
\mcalK_W(\underline{\omega},l)
=\sum_{\underline{\alpha}\in Q_\Lambda}
\chi_{\underline{\omega}}(\underline{\alpha})
N_W(\underline{\alpha},l),
\end{align*}
which proves the remaining assertions.
\end{proof}

\end{document}